\documentclass[reqno]{amsart}
\usepackage{amsfonts,amsmath,amssymb,dsfont}
\usepackage{color}
\usepackage[utf8]{inputenc}  
\usepackage[T1]{fontenc}  
\usepackage{graphicx} 
\numberwithin{equation}{section}

\usepackage[colorinlistoftodos]{todonotes}

\usepackage{xcolor}

\newcommand{\R}{\mathbb{R}}
\newcommand{\e}{\varepsilon}
\newcommand{\ve}{\varepsilon}
\newcommand{\Id}{\mathrm{Id}}
\newcommand{\dist}{\mathrm{dist}}

\newcommand{\N}{\mathbb{N}}
\newcommand{\<}{\left<}
\renewcommand{\>}{\right>}

\renewcommand{\(}{\left(}
\renewcommand{\)}{\right)}

\newcommand{\dmu}{{\delta_{\gamma}}}

\newcommand{\pmx}{\varphi_{\gamma,x}}

\newcommand{\pmxi}{\varphi_{\gamma,x_i}}

\newcommand{\pmxj}{\varphi_{\gamma,x_j}}
\newcommand{\pms}{\varphi_{\gamma,\sigma}}

\newcommand{\rmu}{r_{\gamma}}

\newtheorem{thm}{Theorem}[section]
\newtheorem*{thm*}{Theorem}
\newtheorem{lem}{Lemma}[section]

\newtheorem{cor}{Corollary}[section]
\newtheorem*{cor*}{Corollary}
\newtheorem{prop}{Proposition}[section]

\newtheorem{rem}{Remark}[section]

\newtheorem{Step}{Step}[section]

\begin{document}
\title[Critical points of the Moser-Trudinger functional]{Critical points of the Moser-Trudinger functional on closed surfaces}

\begin{abstract}
Given a closed Riemann surface $(\Sigma,{g_0})$ {and any positive weight $f\in C^\infty(\Sigma)$}, we use a minmax scheme together with compactness, quantization results and with sharp energy estimates to prove the existence of positive critical points of the functional
$${I_{p,\beta}}(u)=\frac{2-p}{2}\left(\frac{p\|u\|_{H^1}^2}{2\beta} \right)^{\frac{p}{2-p}}-\ln \int_\Sigma \(e^{u_+^p}-1\) {f}\, dv_{{g_0}}\,,$$
for every $p\in (1,2)$ and $\beta>0$, {or} for $p=1$ and $\beta\in (0,\infty)\setminus 4\pi\mathbb{N}$. Letting $p\uparrow 2$ we obtain positive critical points of the Moser-Trudinger functional
$$F(u):=\int_\Sigma \(e^{u^2}-1\){f}\,dv_{{g_0}}$$
constrained to $\mathcal{E}_\beta:=\left\{v\text{ s.t. }\|v\|_{H^1}^2=\beta\right\}$ for any $\beta>0$.  
\end{abstract}

\author[F. de Marchis]{Francesca De Marchis}
\address[Francesca De Marchis]{Dipartimento di Matematica Guido Castelnuovo, Universit\`a La Sapienza, Piazzale Aldo Moro 5, 00185 Roma, Italy}
\email{demarchis@mat.uniroma1.it}

\author[A. Malchiodi]{Andrea Malchiodi}
\address[Andrea Malchiodi]{Scuola Normale Superiore, Piazza dei Cavalieri 7, 56126 Pisa, Italy}
\email{andrea.malchiodi@sns.it}

\author[L. Martinazzi]{Luca Martinazzi}
\address[Luca Martinazzi]{Dipartimento di Matematica Guido Castelnuovo, Universit\`a La Sapienza, Piazzale Aldo Moro 5, 00185 Roma, Italy}
\email{luca.martinazzi@uniroma1.it}

\author[P.-D. Thizy]{Pierre-Damien Thizy}
\address[Pierre-Damien Thizy]{Institut Camille Jordan, Universit\'e Claude Bernard Lyon 1, B\^atiment Braconnier, 21 avenue Claude Bernard, 69622 Villeurbanne Cedex, France}
\email{pierre-damien.thizy@univ-lyon1.fr}

\maketitle

\section*{Introduction}
We consider a smooth, closed Riemann surface $(\Sigma,{g_0})$ ($2$-dimensional, connected and without boundary) {and a smooth positive function $f$}, and we endow the usual Sobolev space $H^1=H^1(\Sigma)$ with the {standard} norm $\|\cdot\|_{H^1}$ given by
\begin{equation}\label{H1Norm}
\|u\|_{H^1}^2=\int_{\Sigma} \left(|\nabla u|^2_{g_0}+u^2\right) dv_{g_0}\,.
\end{equation}
Building up on previous works, see e.g.  \cite{CarlesonChang, Flucher, Fontana, Judovic, MoserIneq, Poho65, StruweCrit, TrudingerOrlicz}, Yuxiang Li \cite{MTIneqSurface} proved that the following Moser-Trudinger inequality holds
\begin{equation}\tag{$MT$}\label{MTSurf}
\sup_{u\in H^1\,,~\|u\|_{H^1}^2=\beta } \int_\Sigma e^{u^2} {f} dv_{g_0}<+\infty\quad \Leftrightarrow\quad \beta \le 4\pi\,,
\end{equation}
(see also Remark \ref{RkZeroAv}) and that there is an extremal function for \eqref{MTSurf} even in the critical case $\beta=4\pi$ (see also Remark \ref{RkExtremals}). Such an extremal is (up to a sign change) a positive critical point of 
\begin{equation}\label{MTFunctional}
F(u):=\int_\Sigma \(e^{u^2} -1\){f}\,dv_{g_0}\,, 
\end{equation}
constrained to 
\begin{equation}\label{Constraint}
u\in \mathcal{E}_\beta:=\left\{v \in H^1 \text{ s.t. }\|v\|_{H^1}^2=\beta\right\}\,
\end{equation}
when $\beta\in (0,4\pi]$. A positive function $u$ is a critical point of $F$ constrained to $\mathcal{E}_\beta$ if and only if it satisfies the Euler-Lagrange equation
\begin{equation}\label{ELMainEqf}
\Delta_{g_0} u+u= 2\lambda {f}u e^{u^2}\,,\quad u>0\text{ in }\Sigma\,,
\end{equation}
where our convention for the Laplacian is with the sign that makes it nonnegative and where $\lambda>0$ is given by
\begin{equation}\label{FormulaLambdaPEq2}
{2 \lambda \int_\Sigma u^2 e^{u^2} f dv_{g_0}=\beta}=\|u\|^2_{H^1}\,.
\end{equation}
For $\beta<4\pi$, finding critical points of $F$ constrained to $\mathcal{E}_\beta$ reduces to a standard maximization argument. Finding such critical points for larger $\beta$'s is a  more challenging problem, since upper bounds on the functional fail, and this will be the main achievement of this paper. Some results in this direction, for planar domains and in {\em slightly supercritical regimes} {$0< \beta-4\pi \ll 1$} were obtained in  \cite{StruweCrit} and \cite{LRS}.

In order to do {handle the case of general $\beta's$ greater than $4\pi$}, we would like to use a variational method, more precisely a minmax method, to produce a converging Palais-Smale sequence. The two main analytic difficulties are that the functional $F$ does not satisfy the Palais-Smale condition and that its criticality is of borderline type, which prevents us from using the methods of \cite{LRS,StruweCrit} for $\beta$ large. To overcome these problems we will introduce a family of subcritical functional ${I_{p,\beta}}$, $p\in [1,2)$, that, in some sense, interpolate between a Liouville-type problem and our critical Moser-Trudinger problem, apply the minmax method to obtain critical points of ${I_{p,\beta}}$, and then prove new compactness and quantization results for such critical points.

More precisely, given $p\in [1,2)$ and $\beta>0$, we let ${I_{p,\beta}}$ be given in $H^1$ by
\begin{equation}\label{Energyf}
{I_{p,\beta}}(u)=\frac{2-p}{2}\left(\frac{p\|u\|_{H^1}^2}{2\beta} \right)^{\frac{p}{2-p}}-\ln \int_\Sigma \(e^{u_+^p}-1\) {f} dv_{g_0}\,,
\end{equation}
where $u_+=\max\{u,0\}$ and we set ${I_{p,\beta}}(u)=+\infty$ if $u\le 0$. By Trudinger's result \cite{TrudingerOrlicz}, for $p\in (1,2)$, ${I_{p,\beta}}$ is finite and of class $C^1$ on the subset of $H^1$ of  functions with non-trivial positive part, and its critical points are the solutions of
\begin{equation}\label{MainEquationf}
\Delta_{g_0} u+u=p \lambda   {f} u^{p-1} e^{u^p}  \,,\quad u>0\text{ in }\Sigma\,,
\end{equation}
where the positivity follows from the maximum principle, see Lemma \ref{l:pos}, and $\lambda>0$ is given by the relation 
\begin{equation}\label{FormulaLambdaf}
\frac{\lambda p^2}{2} \left(\frac{p \|u\|^2_{H^1}}{2\beta} \right)^{\frac{2(p-1)}{2-p}} \int_\Sigma \(e^{u^p}-1\) {f} dv_{g_0}=\beta\,.
\end{equation}
While {$I_{1,\beta}$} is not differentiable at functions $u$ vanishing on sets of positive measure, it is differentiable at any $u>0$ a.e., and $u>0$ is a critical point if and only if it solves \eqref{MainEquationf}{-\eqref{FormulaLambdaf}} with $p=1$. Smoothness follows by standard elliptic theory and \cite{TrudingerOrlicz}, see Lemma \ref{l:pos}. {Now, multiplying \eqref{MainEquationf} by $u$ and integrating by parts in $\Sigma$, \eqref{FormulaLambdaf} may be rewritten as 
\begin{equation}\label{AlternativeFormulaBetaf}
\frac{\lambda p^2}{2} \left( \int_\Sigma  \(e^{u^{p}}-1\) f dv_{g_0}\right)^{\frac{2-p}{p}}\left(  \int_\Sigma u^{p} e^{u^{p}} f dv_{g_0}\right)^{\frac{2(p-1)}{p}}=\beta\,.
\end{equation}}
By \eqref{MTSurf} and Young's inequality, ${I_{p,\beta}}$ is bounded from below for all $\beta\le 4\pi$, and for $\beta<4\pi$ finding critical points of ${I_{p,\beta}}$ reduces to a standard minimization argument, as it happens for the constrained extremization of $F$: similarly, finding such critical points for larger $\beta$'s is  much more difficult. As we shall discuss, compactness and quantization  (see Corollary \ref{CorThmBlowUpAnalysis}) give that, as $p$ approaches the borderline case $p_0=2$, the critical points of ${I_{p,\beta}}$ converge to critical points of the functional $F$ in \eqref{MTFunctional} constrained to $\mathcal{E}_\beta$,
at least when $\beta>0$ is given out of $4\pi \mathbb{N}^\star$, where $\mathbb{N}^\star$ denotes the set of the positive integers. 

Our main results read as follows:

\begin{thm}\label{MainThm2}
Let $(\Sigma,{g_0})$ be a {smooth} closed surface {and $f$ be a smooth positive function}. Let $p\in (1,2)$ and $\beta>0$ be given. Then the set $\mathcal{C}_{p,\beta}$ of the positive critical points of ${I_{p,\beta}}$ is not empty and compact. The same is true for $p=1$ and every $\beta \in (0,\infty)\setminus 4\pi\mathbb{N}^\star$.
\end{thm}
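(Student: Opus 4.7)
The plan is to combine a Djadli--Malchiodi style minmax scheme with the blow-up/quantization analysis of the paper (Corollary \ref{CorThmBlowUpAnalysis}) to get both existence and compactness. I would treat $p\in(1,2)$ first, since in this range the nonlinearity is Moser--Trudinger-subcritical and the Palais--Smale condition should hold, and then pass to $p=1$ via a limiting argument, using the $4\pi$-quantization to exclude bubbling exactly when $\beta\notin 4\pi\mathbb{N}^\star$.

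Step 1 (subcritical regime $\beta\le 4\pi$). For $\beta\le 4\pi$, Young's inequality applied to the logarithm in $I_{p,\beta}$, combined with \eqref{MTSurf}, shows that $I_{p,\beta}$ is bounded below, because the coercive first term dominates the exponential second term. Direct minimization then yields a minimizer: a minimizing sequence is bounded in $H^1$, weak convergence is standard, and strong convergence of the exponential term holds by the compactness of the Moser--Trudinger embedding for $p\in(1,2)$ (and by standard subcritical compactness for $p=1,\,\beta<4\pi$). Positivity of the minimizer is provided by Lemma \ref{l:pos}.

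Step 2 (minmax for $\beta>4\pi$ and $p\in(1,2)$). Choose $k\ge 1$ with $4\pi k<\beta\le 4\pi(k+1)$ and consider the barycenter space $\Sigma_k$ of formal convex combinations of $k$ Dirac masses on $\Sigma$. Following the Djadli--Malchiodi scheme, I would construct continuous test maps $\Phi_\lambda:\Sigma_k\to H^1$ from bubble-like profiles concentrated at the atoms of $\sigma\in\Sigma_k$ at scale $\lambda^{-1}$, and check two complementary facts: (i) sharp estimates, using the strict inequality $\beta>4\pi k$ together with \eqref{MTSurf}, give $\sup_{\sigma\in\Sigma_k}I_{p,\beta}(\Phi_\lambda(\sigma))\to -\infty$ as $\lambda\to\infty$; (ii) for $L$ large, the sublevel $\{I_{p,\beta}\le -L\}$ retracts continuously onto $\Sigma_k$, so the minmax value $c$ lies strictly above such sublevels. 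Because $p<2$ is strictly Moser--Trudinger-subcritical, Palais--Smale sequences at level $c$ are bounded in $H^1$ (using coercivity of the first term and a priori control of the Lagrange multiplier coming from \eqref{FormulaLambdaf}) and converge strongly, thanks to the compactness of the relevant exponential embedding. The minmax value is therefore critical and yields a nontrivial critical point, positive by the maximum principle (Lemma \ref{l:pos}).

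Step 3 (case $p=1$ and compactness). For $p=1$ and $\beta\notin 4\pi\mathbb{N}^\star$, Palais--Smale may fail at the minmax level, so I would instead take $p_n\downarrow 1$ and critical points $u_n$ of $I_{p_n,\beta}$ obtained in Step 2, and pass to the limit via the blow-up analysis of Corollary \ref{CorThmBlowUpAnalysis}: the $H^1$-mass can concentrate only in quanta of $4\pi$ at finitely many blow-up points, and the hypothesis $\beta\notin 4\pi\mathbb{N}^\star$ forbids any such concentration, forcing smooth convergence of $u_n$ to a positive critical point of $I_{1,\beta}$. The same blow-up/quantization analysis applied to an arbitrary sequence $(u_n)\subset\mathcal{C}_{p,\beta}$ yields compactness of the critical set: subcriticality excludes blow-up when $p\in(1,2)$, while the combination of $4\pi$-quantization with $\beta\notin 4\pi\mathbb{N}^\star$ excludes it when $p=1$. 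The main obstacle is the quantitative, near-threshold analysis underlying both the retraction of low sublevels onto $\Sigma_k$ and the quantization in Corollary \ref{CorThmBlowUpAnalysis}, since everything takes place at the borderline of the Moser--Trudinger inequality, where ordinary compactness arguments are just barely insufficient.
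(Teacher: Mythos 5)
Your outline shares the paper's overall architecture (barycenter minmax plus blow-up/quantization), but it contains two substantive gaps that the paper's actual proof works hard to close, and they are not cosmetic.

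\textbf{Boundedness of Palais--Smale sequences for $p\in(1,2)$.} You assert that PS sequences at the minmax level are bounded ``using coercivity of the first term and a priori control of the Lagrange multiplier.'' This is not correct as stated. The two terms in $I_{p,\beta}$ grow at the \emph{same} rate $\|u\|_{H^1}^{2p/(2-p)}$ (this is precisely what the subcritical Moser--Trudinger inequality \eqref{MTp0} shows), so for $\beta>4\pi$ the functional is unbounded below and the first term is not dominant; no extra coercivity is available. Moreover, PS sequences do not solve the Euler--Lagrange equation, so the Lagrange multiplier identity \eqref{FormulaLambdaf} cannot be invoked for them. The paper circumvents exactly this difficulty with \emph{Struwe's monotonicity trick} (Proposition \ref{PrboundedPS}): by working with the monotone family $\beta\mapsto\alpha_\beta$, one gets bounded PS sequences only at almost every $\beta$, hence existence in $\mathcal{C}_{p,\beta_\varepsilon}$ for a sequence $\beta_\varepsilon\uparrow\beta$. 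Your Step 2, as written, would produce critical points at every $\beta$ directly, which is much stronger than what the techniques yield.

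\textbf{Compactness for $p\in(1,2)$ and the case $\beta\in4\pi\mathbb{N}^\star$.} You say ``subcriticality excludes blow-up when $p\in(1,2)$.'' This is false. Theorem \ref{ThmBlowUpAnalysis} (in particular Step \ref{StSubcriticalCase}) shows that even for $p_\varepsilon\to p_0<2$ blow-up is possible, the constraint being only that $\beta\in4\pi\mathbb{N}$. To exclude blow-up for a sequence $(u_n)\subset\mathcal{C}_{p,\beta}$ at fixed $\beta=4\pi k$, or when passing $\beta_\varepsilon\uparrow\beta=4\pi k$ as produced by the monotonicity trick, the paper uses the sharp energy expansion of Theorem \ref{ThmCritLevels}, which shows that blow-up forces $\beta_\varepsilon>\beta$ strictly. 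This ``convergence from above'' mechanism is what closes the gap at critical levels, and it is entirely missing from your proposal; without it, you cannot conclude either existence or compactness at $\beta\in4\pi\mathbb{N}^\star$ for $p\in(1,2)$, which the theorem claims. A minor related point: your invocation of Corollary \ref{CorThmBlowUpAnalysis} for $p_n\downarrow 1$ is a misfire, since that corollary is formulated for $p_\varepsilon\to 2$; the correct tool there is Theorem \ref{ThmBlowUpAnalysis} itself.
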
 

Letting $p\uparrow 2$ suitably, we will obtain the following result, which according to us is the  most relevant achievement of this paper.

\begin{thm}\label{MainThm}
Let $(\Sigma,{g_0})$ be a {smooth closed surface and $f$ be a smooth positive function. Let $\beta>0$ be given.} Then the set $\mathcal{C}_{2,\beta}$ of the positive critical points of the functional $F$ constrained to $\mathcal{E}_\beta$ is not empty and compact in $C^2$.
\end{thm}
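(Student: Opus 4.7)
The plan is to obtain the desired critical points as limits, for $p\uparrow 2$, of positive critical points of the subcritical functionals ${I_{p,\beta}}$ provided by Theorem \ref{MainThm2}. Fix $\beta>0$ and a sequence $p_k\uparrow 2$. By Theorem \ref{MainThm2}, for each $k$ the set $\mathcal{C}_{p_k,\beta}$ is non-empty, so we pick $u_k\in \mathcal{C}_{p_k,\beta}$. Each $u_k$ is a positive solution of the Euler-Lagrange equation \eqref{MainEquationf} with $p=p_k$ and a Lagrange multiplier $\lambda_k>0$ determined by \eqref{FormulaLambdaf}. We aim to show that a subsequence converges in $C^2(\Sigma)$ to a function $u_\infty$, which must then solve \eqref{ELMainEqf} and satisfy $\|u_\infty\|_{H^1}^2=\beta$; non-negativity passes to the limit, and strict positivity follows from the strong maximum principle via Lemma \ref{l:pos}.

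The next step is to apply the compactness and quantization results stated in Corollary \ref{CorThmBlowUpAnalysis} to the sequence $(u_k)$. This yields the standard dichotomy for equations of Liouville-Moser-Trudinger type: after passing to a subsequence, either $u_k\to u_\infty$ smoothly on $\Sigma$ (in which case $u_\infty\in\mathcal{C}_{2,\beta}$ and we are done), or $u_k$ blows up at a finite set of points of $\Sigma$, each absorbing $H^1$-mass exactly $4\pi$ under a Liouville-bubble profile, together with a weak $H^1$-limit that solves a limiting equation (possibly the trivial one).

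The main obstacle, and the reason the sharp energy estimates advertised in the abstract enter, is to exclude the bubbling alternative for \emph{every} $\beta>0$, including the resonant values $\beta\in 4\pi\mathbb{N}^\star$ where crude mass quantization alone does not suffice. Here I would leverage the min-max construction underlying Theorem \ref{MainThm2}: the $u_k$ arise as min-max critical points whose energy levels ${I_{p_k,\beta}(u_k)}$ admit an \emph{a priori} upper bound uniform in $k$. On the other hand, a sharp asymptotic expansion of ${I_{p_k,\beta}}$ along blowing-up sequences — obtained by integrating the standard bubble profile against the regular part of the Green's function for $\Delta_{g_0}+1$ around each concentration point, together with a careful Taylor expansion of the $p$-dependent quantities in \eqref{Energyf} and \eqref{AlternativeFormulaBetaf} as $p\uparrow 2$ — forces ${I_{p_k,\beta}(u_k)}$ to overshoot the uniform min-max bound. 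The resulting contradiction rules out blow-up and forces the smooth-convergence alternative, proving non-emptiness of $\mathcal{C}_{2,\beta}$.

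Finally, for the $C^2$-compactness of $\mathcal{C}_{2,\beta}$, one applies the same machinery directly at $p=2$: any sequence $(v_n)\subset\mathcal{C}_{2,\beta}$ consists of positive solutions to \eqref{ELMainEqf} with $H^1$-norm equal to $\sqrt{\beta}$, so Corollary \ref{CorThmBlowUpAnalysis} applies and delivers the same dichotomy; once blow-up is excluded as above (or by the analogous sharp estimate for the functional $F$ itself), $v_n$ converges in $H^1$, after which a standard bootstrap in \eqref{ELMainEqf} using \eqref{MTSurf} and elliptic regularity upgrades the convergence to $C^2$.
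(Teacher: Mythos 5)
Your overall strategy — extract positive critical points of the subcritical functionals and pass to the limit $p\uparrow 2$, with the quantization result policing the convergence — is the right framework, and your compactness paragraph essentially matches the paper. However, your treatment of the resonant case $\beta\in 4\pi\mathbb{N}^\star$ contains a genuine gap. You propose to keep $\beta$ fixed and let $p_k\uparrow 2$, and then rule out bubbling by a uniform upper bound on the min-max levels ${I_{p_k,\beta}(u_k)}$. There are two problems. First, the critical points produced by Theorem \ref{MainThm2} are obtained in the paper as limits (over $\beta_\varepsilon\uparrow\beta$) of min-max critical points, not directly as min-max critical points at level $\alpha_\beta$, so the claimed energy-level control is not available without extra work; and even granting it, a uniform bound on $\alpha_\beta^{(p)}$ as $p\to 2$ is never established and does not obviously hold (the constants in Lemma \ref{lemma:lowsublevels} and Lemma \ref{l:minmax} depend on $p$). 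Second, and more fundamentally, the sharp blow-up result that is actually available, Theorem \ref{ThmCritLevels}, controls the \emph{constraint parameter} $\beta_\varepsilon$, not the value of the functional, and it is stated only for a sequence with \emph{constant} $p_\varepsilon=p$. It does not apply to your sequence, in which $p_k\uparrow 2$ while $\beta$ is held fixed.

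The paper instead uses a two-step bootstrap. One first proves non-emptiness of $\mathcal{C}_{2,\beta}$ for every $\beta\notin 4\pi\mathbb{N}^\star$, taking simultaneously $\beta_\varepsilon\uparrow\beta$ and $p_\varepsilon\uparrow 2$ and invoking Theorem \ref{ThmBlowUpAnalysis}. Then, for $\beta\in 4\pi\mathbb{N}^\star$, one sets $\beta_\varepsilon=\beta-\varepsilon$ (so $\beta_\varepsilon\notin 4\pi\mathbb{N}^\star$) and keeps $p_\varepsilon=2$ fixed, picking $u_\varepsilon\in\mathcal{C}_{2,\beta_\varepsilon}$ from the first step. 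If the $u_\varepsilon$ blew up, Theorem \ref{ThmCritLevels} (which applies since $p_\varepsilon\equiv 2$) would force $\beta_\varepsilon>\beta$ for $\varepsilon$ small, contradicting $\beta_\varepsilon<\beta$. Hence the $u_\varepsilon$ are uniformly bounded, converge in $C^2$ to some $u\in\mathcal{C}_{2,\beta}$, and positivity of the limit follows from \eqref{FormulaLambda} and Lemma \ref{l:pos}. The key point you are missing is precisely this reordering: fix $p=2$ and approach the critical $\beta$ from below, rather than fixing $\beta$ at the critical value and varying $p$. Your compactness argument (any sequence in $\mathcal{C}_{2,\beta}$ with $\beta\in 4\pi\mathbb{N}^\star$ has constant $\beta_\varepsilon=\beta$, so blow-up would force $\beta>\beta$ via Theorem \ref{ThmCritLevels}) is correct and is what the paper does.
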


A notable fact in Theorems \ref{MainThm2} and \ref{MainThm} is that, except for $p=1$, \emph{the full range $\beta>0$ is covered}  and in particular also the case $\beta\in 4\pi \mathbb{N}^\star$. If fact we will also prove that the sets
$$\bigcup_{ {\beta\in [4\pi(k-1)+\delta,4\pi k]}\atop {p\in [1+\delta,2]}} {\mathcal{C}_{p,\beta}}, \quad \bigcup_{{\beta\in [4\pi(k-1)+\delta,4\pi k-\delta]}\atop{p\in [1,2]}} {\mathcal{C}_{p,\beta}}$$
are compact for any $\delta>0$, i.e. blow-up can occur only for $\beta\downarrow 4\pi \mathbb{N}^\star$ or for $p\to 1$ and $\beta\to 4\pi \mathbb{N}^\star$, as we shall see.

Let us explain the strategy of the proofs. We shall start with the existence part of Theorem \ref{MainThm2}. Here with a minmax scheme based on so called \emph{baricenters}, as originally used in \cite{DjaMal}, we show that given $p\in (1,2)$ and $\beta \in (4 \pi, + \infty) \setminus 4 \pi \N^\star$, the very low sublevels of {$I_{p,\beta}$} are topologically non-trivial, see Proposition \ref{prop:homoteq}. This would allow to construct a Palais-Smale sequence at some minmax level, but it is only with a \emph{monotonicity trick } introduced by Struwe, see \cite{StruweTrick}, that we are able to construct Palais-Smale sequences that are bounded for \emph{almost every} $\beta>0$ and for $p\in (1,2)$. Then, again using the \emph{subcriticality} of $e^{u^p}$ with respect to \eqref{MTSurf}, a $H^1$-bounded subsequence {strongly} converges to a positive critical point of {$I_{p,\beta}$}, see Proposition \ref{PrConvergence} (see also \cite[Thm. 5.1]{CostaTintarev} for counterexamples to the {strong} convergence of bounded Palais-Smale sequences when $p=2$).

The next step is to extend this result from the existence for a.e. $\beta$ to the existence for \emph{every} $\beta\in (0,\infty)\setminus 4\pi \mathbb{N}^\star$. This is done via the crucial compactness Theorem \ref{ThmBlowUpAnalysis},  showing that a sequence $(u_\ve)_\varepsilon$ of positive critical points of $J_{p_\ve,\beta_\ve}$ with $p_\ve \in [1,2)$ and $\beta_\ve\to \beta\in [0,\infty)$ can fail to be precompact only if $\beta \in 4\pi \mathbb{N}$. If fact, as $p_\ve\uparrow 2$, this also allows to show that the positive critical points of $J_{p_\ve,\beta_\ve}$ converge to positive critical points of $F|_{\mathcal{E}_\beta}$ if $\beta\not\in 4\pi \mathbb{N}$, (see Corollary \ref{CorThmBlowUpAnalysis}), hence proving Theorem \ref{MainThm}, except for $\beta\in 4\pi\mathbb{N}^\star$. This quantization property ($\beta\in 4\pi \mathbb{N}^\star$ in case of blow up) can be seen as a no-neck energy result, but not only. Indeed, in the specific case where $p=2$, extending the quantization of \cite{DruetDuke} to the surface setting, Yang \cite{YangQuantization} already proved a no-neck energy result for such sequences, but without excluding that some nonzero weak limit $u_0\not \equiv 0$ appears. We know now that ruling this situation out, or in other words getting the sharp quantization \eqref{Quantization} instead of \eqref{BadQuantization}, is a very sensitive property, which depends also on the lower-order terms appearing in the RHS of \eqref{ELMainEq} (see for instance \cite{ThiMan2} for counterexamples with a perturbed version of the nonlinearity $e^{u^2}$) and which requires to be  more careful in the way we approach the border case $p=2$. In this sense, our Theorem \ref{ThmBlowUpAnalysis} cannot be seen as a perturbation of previous results, but it is a novelty in itself. We also mention that the proof of Theorem \ref{ThmBlowUpAnalysis} never uses the Pohozaev identity, which is however quite classical in proving such quantization results. {Instead, we first compare in small disks our blow-up solutions with some radially symmetric functions solving the same PDE, sometimes called "bubbles", and we directly show that the difference must satisfy some balance condition (see \eqref{GradNull2}). From this balance condition, we get that the union of these separate disks is large in the sense that the complementary region cannot contribute in the quantization \eqref{Quantization}. In this last part of the proof, we also show that our {specific} family of nonlinearities forces the Lagrange multipliers to converge appropriately to $0$ (see Step \ref{StSubcriticalCase}) as blow-up occurs. One delicate consequence is that each disk only brings the minimal energy $4\pi$ in \eqref{Quantization} (see also Remark \ref{RemTower}).}

Finally, covering the case $\beta\in 4\pi\mathbb{N}^\star$ relies on delicate energy expansions of the blowing-up sequences carried out in Theorem \ref{ThmCritLevels} below. When $\beta=4\pi$ and $p=2$, it was already observed in a slightly different setting (see \cite{MalchMartJEMS,MartMan}) that such expansions do not clearly depend on the geometric quantities of the problem and that the energy always converges to $4\pi$ from above. In the present paper, we observe that this is still true at any level $\beta\in 4\pi \mathbb{N}^\star$ and for all $p\in (1,2]$, so that if we let $\beta_\ve \uparrow \beta\in 4\pi\mathbb{N}^\star$ no blow-up occurs, while it could occur for $\beta_\ve\downarrow \beta\in 4\pi\mathbb{N}^\star$. In striking contrast, the analogous expansions in  \cite{ChenLinSharpEst}, dealing with an equation qualitatively similar to the case $p=1$ (see Remark \ref{RkZeroAv} below), are different in nature: for instance, the Gauss curvature of the surface appears and compactness is not always true at critical levels $\beta \in 4\pi \mathbb{N}^\star$ (see the discussion below \cite[Corollary 1.2]{ChenLinSharpEst}). 

\medskip

{We conclude this introduction with some remarks.}

\begin{rem}\label{RemCompactPathPEq1To2}
When $\Sigma$ is a non-simply connected bounded domain in $\R^2$, in \cite{DruMalMarThi} the authors compute the Leray-Schauder degree of the Euler-Lagrange equation of the functional $F|_{\mathcal{E}_\beta}$, showing that it is non-zero if $\Sigma$ is not simply connected. Even if we were able to adapt the strategy to the case of a closed manifold $\Sigma$, when the genus of $\Sigma$ is $0$ (i.e. if $\Sigma$ is topologically a sphere), the Leray-Schauder degree of the Euler-Lagrange equation is expected to be $1$ for $\beta\in (0,4\pi]$, $-1$ for $\beta\in (4\pi,8\pi]$ and $0$ for $\beta > 8\pi$. Hence this topological method fails to completely answer the question of the existence of critical points of $F|_{\mathcal{E}_\beta}$ on a closed surface. 

In any case, \textbf{the Leray-Schauder degree does not depend on $p\in [1,2]$} by compactness (except for $p=1$ and $\beta\in 4\pi \mathbb{N}^\star$), and coincides with that of the \emph{mean field equation} (with the full $H^1$-norm, slightly different from \cite{ChenLin-Liouville} or \cite{DegreeLiouvMalch}), namely \eqref{MainEquation} $p=1$. For the case $p\in (1,2]$ and $\beta =4\pi k$ the L-S degree is equal to the degree for $\beta \in (4\pi (k-1),4\pi k)$ by Theorem \ref{ThmCritLevels}.
\end{rem}

\begin{rem}\label{RkZeroAv}
It is worth mentioning that, on a surface, there is a Moser-Trudinger inequality with a zero average constraint, namely
\begin{equation}\label{MTIneqZeroAv}\tag{$MT_{\mathcal{Z}}$}
\sup_{u\in \mathcal{Z}_\beta} \int_\Sigma e^{u^2} dv_{{g_0}}<+\infty\quad \Leftrightarrow \quad \beta \le 4\pi\,,
\end{equation}
where $\mathcal{Z}_\beta=\left\{u\in H^1\text{ s.t. }\int_\Sigma |\nabla u|_{g_0}^2 dv_{g_0}= \beta\text{ and }\int_\Sigma u~ dv_{g_0}=0 \right\}$. This inequality was already proven in the original paper of Moser \cite{MoserIneq}, if $(\Sigma,g)$ is the standard $2$-sphere, and in the general case by Fontana \cite{Fontana}, dealing also with the higher dimensional case. The functional $I_\beta$, qualitatively 
related to {$I_{p,\beta}$} in \eqref{Energyf} for $p=1$,
\begin{equation}\label{EnergyZeroAv}
I_\beta(u)=\frac{1}{4\beta } \int_\Sigma |\nabla u|^2_{{g_0}} dv_{{g_0}}+\int_\Sigma u~ dv_{{g_0}}-\ln \int_\Sigma e^u dv_{{g_0}}
\end{equation}
attracted a huge attention in the literature (see \cite{YYLi,ChenLin-Liouville,Djadli} and references therein) and its critical points give rise to the very much studied mean-field equation. As a remark, for all $\beta\le 4\pi$, as \eqref{MTSurf} implies that {$I_{1,\beta}$} is bounded below, we get from \eqref{MTIneqZeroAv} that $I_\beta$ is bounded below. 
\end{rem}

{
\begin{rem}
In the papers \cite{BarLin}, \cite{CCL}, \cite{LinLucia}, \cite{Suz92} 
some uniqueness results  for Liouville equations in planar domains 
or on closed surfaces were proved, while in \cite{Dem08}, \cite{Dem10} 
some multiplicity results as well. It would be worthwhile to consider such 
issues for the critical points of the Moser-Trudinger functional as well. 
\end{rem}
}

\begin{rem}
Different kinds of bubbling solutions for the Moser-Trudinger inequalities on domains and surfaces were built in \cite{DelPNewSol,DengMusso,FigueroaMusso}.
\end{rem}

{
\section*{Preliminaries}
It is convenient to get rid of the smooth weight function $f$ and to reformulate the problem by introducing the norm $\|\cdot\|_h$ given by
\begin{equation}\label{defnormh}
\|u\|_h^2=\int_{\Sigma} \left(|\nabla u|_g^2+h u^2\right) dv_g\,,
\end{equation}
where $h:=1/f\in C^\infty(\Sigma)$ and where the new metric $g$ is conformal to $g_0$ and given by $g=f g_0$. Keeping then the notation in \eqref{H1Norm}, we have $\|u\|_h=\|u\|_{H^1}$ for all $u\in H^1(\Sigma)$. Besides, since $\Delta_g=\Delta_{fg_0}=f^{-1}\Delta_{g_0}$ by the \emph{conformal covariance of the Laplacian}, we obtain that $u$ solves \eqref{MainEquationf} if and only if it solves \begin{equation}\label{MainEquation}
\Delta_{g} u+hu=\lambda p u^{p-1}e^{u^p}\,,\quad u>0\text{ in }\Sigma\,.
\end{equation} 
Then, for all $p\in[1,2)$, the aforementioned critical points $u$ of $I_{p,\beta}$ solving \eqref{MainEquationf}-\eqref{FormulaLambdaf} are exactly those of the functional $J_{p,\beta}$ given by
\begin{equation}\label{Energy}
J_{p,\beta}(u)=\frac{2-p}{2}\left(\frac{p\|u\|_h^2}{2\beta} \right)^{\frac{p}{2-p}}-\ln \int_\Sigma \(e^{u_+^p}-1\) dv_g\,,
\end{equation}
solving \eqref{MainEquation} with $\lambda>0$ given by
\begin{equation}\label{FormulaLambda}
\frac{\lambda p^2}{2} \left(\frac{p \|u\|^2_h}{2\beta} \right)^{\frac{2(p-1)}{2-p}} \int_\Sigma \(e^{u^p}-1\) dv_g=\beta\,.
\end{equation}
Again, multiplying \eqref{MainEquation} by $u$ and integrating by parts, \eqref{FormulaLambda} may be rephrased as 
\begin{equation}\label{AlternativeFormulaBeta}
\beta=\frac{\lambda p^2}{2} \left( \int_\Sigma  \(e^{u^{p}}-1\) dv_g\right)^{\frac{2-p}{p}}\left(  \int_\Sigma u^{p} e^{u^{p}} dv_g\right)^{\frac{2(p-1)}{p}}\,.
\end{equation}
Now, even for $p=2$, we have that $u\in H^1$ solves our problem \eqref{ELMainEqf}-\eqref{FormulaLambdaPEq2}, if and only if we have \eqref{MainEquation} for $p=2$, namely
\begin{equation}\label{ELMainEq}
\Delta_{g} u+hu=2\lambda  u e^{u^2}\,,\quad u>0\text{ in }\Sigma\,,
\end{equation}
with $\lambda>0$ given by \eqref{AlternativeFormulaBeta}. 
\begin{rem} Working with \eqref{MainEquation} instead of \eqref{MainEquationf} will considerably simplify the choice of constants and the writing of some estimates in the blow-up analysis. Yet, if one consents to burden the presentation, a straightforward adaptation of our proofs can handle the case where two independent weights appear, namely for the equation $\Delta_g u +h u=p\lambda f u^{p-1} e^{u^p}$.
\end{rem}
}

\section{Variational part}\label{sec:variational}
\noindent The main goal of the section is to prove the following theorem, with $J_{p,\beta}$  
as in \eqref{Energy}. 

\begin{thm}\label{ThmVariationalPart}
Let $(\Sigma,g)$ be a closed surface, {a positive function $h\in C^{\infty}(\Sigma)$} and let $p\in (1,2)$ be given. Then, for almost every $\beta>0$, $J_{p,\beta}$ possesses a smooth and positive critical point $u$, where $J_{p,\beta}$ is as in \eqref{Energy}.
\end{thm}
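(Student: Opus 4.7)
\bigskip

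\noindent\textbf{Proof plan.} The strategy is to run a Djadli--Malchiodi--type minmax on the family $J_{p,\beta}$, exploit the \emph{subcriticality} $p<2$ to control Palais--Smale sequences, and use Struwe's monotonicity trick to get the boundedness of those sequences for almost every $\beta$.

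\smallskip

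\emph{Step 1: improved Moser--Trudinger and low sublevels.} Fix $\beta\in (4\pi k,4\pi(k+1))$ with $k\ge 1$. I would first establish an improved Moser--Trudinger inequality of the following form: there exists $\tilde\beta>\beta$ such that if $u_+$ is \emph{not} $\e$-concentrated near any $k$ points of $\Sigma$ (in the sense that $\int_{\cup_{i=1}^k B_r(x_i)} e^{u_+^p} dv_g \le (1-\e) \int_\Sigma e^{u_+^p} dv_g$ for every $x_1,\dots,x_k$), then
$$\ln \int_\Sigma (e^{u_+^p}-1)\,dv_g \le \frac{2-p}{2}\Bigl(\frac{p\|u\|_h^2}{2\tilde\beta}\Bigr)^{\frac{p}{2-p}}+C_\e.$$
The proof is the classical localization--covering argument of Chen--Li adapted to the $e^{u^p}$ nonlinearity, using the subcritical Trudinger inequality on small disks. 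The conclusion is that if $J_{p,\beta}(u)$ is sufficiently negative, then $e^{u_+^p}/\int e^{u_+^p}$, viewed as a probability measure, is close to the set $\Sigma_k$ of formal baricenters of order $k$ on $\Sigma$.

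\smallskip

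\emph{Step 2: topological minmax.} Building on Step 1, I would construct maps $\Phi:\Sigma_k \to \{J_{p,\beta}\le -L\}$ and $\Psi:\{J_{p,\beta}\le -L\}\to \Sigma_k$ with $L$ large, $\Psi\circ\Phi$ homotopic to $\mathrm{Id}_{\Sigma_k}$. The map $\Phi$ sends $\sigma=\sum t_i\delta_{x_i}$ to a suitable superposition of standard bubbles $\pmxi$ (normalized); direct computation gives $J_{p,\beta}(\Phi(\sigma))\to -\infty$ uniformly in $\sigma$ as the concentration parameter $\gamma\to 0$. Since $\Sigma_k$ is non-contractible, one defines the minmax class $\Gamma$ of extensions $\Pi:C\Sigma_k\to H^1$ of $\Phi$, where $C\Sigma_k$ is the topological cone, and sets
$$c(\beta):=\inf_{\Pi\in\Gamma}\sup_{z\in C\Sigma_k} J_{p,\beta}(\Pi(z)).$$
A standard min--max / compression argument using Step 1 shows $c(\beta)>-L$, so $c(\beta)$ is a genuine minmax level, greater than the level reached by $\Phi$ on its boundary.

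\smallskip

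\emph{Step 3: Struwe's monotonicity trick and strong convergence.} The functional $\beta \mapsto J_{p,\beta}(u)$ has a sign-definite derivative in $\beta$, so $\beta\mapsto c(\beta)$ is monotone (up to rescaling) and hence differentiable at almost every $\beta$. At each such point of differentiability, Struwe's trick, \cite{StruweTrick}, yields a bounded Palais--Smale sequence $(u_n)$ for $J_{p,\beta}$ at level $c(\beta)$. Since $p<2$ the nonlinearity $e^{u^p}$ is strictly subcritical with respect to \eqref{MTSurf}, so Trudinger's inequality together with elliptic regularity promotes weak $H^1$-convergence of $(u_n)$ to strong $H^1$-convergence to a critical point $u$ of $J_{p,\beta}$, as stated in Proposition \ref{PrConvergence}. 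Because $c(\beta)>-L$ and $J_{p,\beta}(u)=c(\beta)$, we have $u_+\not\equiv 0$, so $u$ is a nontrivial solution of \eqref{MainEquation}; the maximum principle, Lemma \ref{l:pos}, and standard elliptic bootstrap then give $u>0$ and $u\in C^\infty(\Sigma)$.

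\smallskip

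\emph{Main obstacle.} The most delicate step is Step 1, i.e.\ getting an improved Moser--Trudinger inequality tailored to the functional $J_{p,\beta}$ (with the exponent $p/(2-p)$ on the Dirichlet term rather than a linear one). One has to carry out the covering and concentration--compactness argument \emph{uniformly in $p\in (1,2)$} in order for the same minmax scheme to be later passed to the limit $p\uparrow 2$; the interaction between the Young-type inequality used to absorb the Dirichlet contribution and the exponent $p/(2-p)$, which blows up as $p\to 2$, is the subtle point. The monotonicity trick in Step 3 is then routine once uniform-in-$\beta$ control of $c(\beta)$ has been obtained.
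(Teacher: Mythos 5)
Your plan matches the paper's proof almost line by line: the improved Moser--Trudinger inequality \`a la Chen--Li (Lemma \ref{l:imprc}), the map of low sublevels to barycenters $\Sigma_k$ (Lemmas \ref{l:ls}--\ref{l:ls2}), the cone-over-$\Sigma_k$ minmax with test bubbles $\varphi_{\gamma,\sigma}$ (Proposition \ref{prop:homoteq}, Lemma \ref{l:minmax}), Struwe's monotonicity trick for a.e.\ $\beta$ (Proposition \ref{PrboundedPS}), and strong subcritical convergence of bounded Palais--Smale sequences plus Lemma \ref{l:pos} for positivity and regularity (Proposition \ref{PrConvergence}). Two small remarks: the bubbles concentrate as $\gamma\to\infty$ (not $\gamma\to 0$), and for this fixed-$p$ theorem one does not need the improved Moser--Trudinger inequality uniformly in $p$ --- the passage $p\uparrow 2$ is handled separately via the blow-up/quantization analysis, not by keeping the minmax uniform in $p$.
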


As discussed in introduction, $u$ given by Theorem \ref{ThmVariationalPart} is smooth, positive and solves \eqref{MainEquation}-\eqref{FormulaLambda} for some $\lambda>0$, as we shall now prove.

\begin{lem}\label{l:pos} Every non-trivial critical point of $J_{p,\beta}$, $p\in (1,2)$, is a smooth and positive solution to \eqref{MainEquation}. Moreover, for every $p\in [1,2]$ every solution to \eqref{MainEquation} is smooth.
\end{lem}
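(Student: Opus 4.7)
The plan has three steps: derive the Euler--Lagrange equation for $p\in(1,2)$, upgrade an $H^1$ critical point to a strictly positive function, and then bootstrap smoothness uniformly for $p\in[1,2]$.

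For the Euler--Lagrange step, since $p>1$ the map $s\mapsto s_+^p$ is $C^1$ on $\R$ with derivative $p\,s_+^{p-1}$, so $J_{p,\beta}$ is differentiable at any nontrivial $u\in H^1$. Computing $\frac{d}{dt}\big|_{t=0}J_{p,\beta}(u+t\varphi)=0$ yields, for every $\varphi\in H^1$,
$$\frac{p^2}{2\beta}\left(\frac{p\|u\|_h^2}{2\beta}\right)^{\!\frac{2p-2}{2-p}}\!\<u,\varphi\>_h \;=\; \frac{p\int_\Sigma e^{u_+^p}u_+^{p-1}\varphi\,dv_g}{\int_\Sigma(e^{u_+^p}-1)\,dv_g},$$
where $\<\cdot,\cdot\>_h$ denotes the inner product associated to $\|\cdot\|_h$. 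Defining $\lambda>0$ by \eqref{FormulaLambda} (which is legitimate precisely because $u\not\equiv 0$), this identity is exactly the weak form of \eqref{MainEquation} with $u$ replaced by $u_+$ on the right-hand side. To pass to strict positivity, I would test with $\varphi=u^-\in H^1$: the pointwise identity $u_+^{p-1}u^-\equiv 0$ kills the right-hand side and leaves $-\|u^-\|_h^2=0$, so $u^-\equiv 0$ by the positivity of $h$, i.e.\ $u\ge 0$ a.e. The equation then reads $\Delta_g u+hu=\lambda p\,u^{p-1}e^{u^p}\ge 0$, and the strong maximum principle (with Hopf's lemma) for the operator $\Delta_g+h$ with $h>0$ forbids any interior zero of a nontrivial nonnegative $u$, giving $u>0$ on $\Sigma$.

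For smoothness (the second assertion, for every $p\in[1,2]$), I would iterate. From $u\in H^1$ and Moser--Trudinger one has $u\in L^q$ for every finite $q$. For $p\in[1,2)$ the subcritical estimate $u^p\le \eta u^2+C_\eta$ (valid for every $\eta>0$, since $p<2$) combined with \cite{TrudingerOrlicz} gives $e^{u^p}\in L^q$ for every finite $q$; elliptic regularity then places $u\in W^{2,q}\hookrightarrow C^{1,\alpha}$, and Schauder iteration using $u>0$ (so that $y\mapsto y^{p-1}e^{y^p}$ is smooth on the range of $u$) promotes $u$ to $C^\infty$. The borderline $p=2$ is the only delicate point: higher $L^q$ integrability of $e^{u^2}$ does not follow directly from bare Moser--Trudinger, so I would recover $e^{u^2}\in L^q_{\mathrm{loc}}$ for some $q>1$ via a Brezis--Merle localization on disks where the local Dirichlet energy is below the threshold $4\pi$, handle the finitely many possible concentration points by a Moser iteration on $\Delta_g u+hu=2\lambda u e^{u^2}$, and then close with the same Schauder scheme. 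This borderline case is the main obstacle; the rest is routine variational calculus, a test-function argument for positivity, and standard elliptic bootstrapping.
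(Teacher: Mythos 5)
Your Euler--Lagrange derivation and the test-function argument for $u\ge 0$ are correct; testing with $u^-$ is a clean alternative to the paper's route, which first upgrades $u$ to $C^2$ and then excludes the set where $u<0$ via the weak maximum principle at an interior minimum. Your use of the strong maximum principle for $u>0$ is also valid and somewhat cleaner: writing $\Delta_g u + hu\ge 0$ with $h>0$ puts the inequality in the form required by the strong minimum principle (zeroth-order coefficient $-h\le 0$), which rules out interior zeros of a nonnegative nontrivial $u$ without any restriction on $p$. The paper instead carves out a disk $D\subset\{u>0\}$ touching the boundary on which $\lambda p\,u^{p-1}e^{u^p}-hu>0$ (and this is precisely where the hypothesis $p<2$ enters their argument) so as to apply Hopf's lemma with vanishing zeroth-order coefficient. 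Either way, the maximum-principle step requires some pointwise regularity of $u$, so the correct logical order is: derive the EL equation, get $u\ge 0$ from the test function, upgrade to $u\in W^{2,q}\hookrightarrow C^{1,\alpha}$ via Trudinger and elliptic estimates, then apply the maximum principle, then bootstrap to $C^\infty$.

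The substantive weakness is your treatment of the case $p=2$. The claimed obstruction (``higher $L^q$ integrability of $e^{u^2}$ does not follow directly from bare Moser--Trudinger'') and the proposed Brezis--Merle/concentration-point detour are unnecessary and rest on a misconception. For an \emph{individual} $u\in H^1(\Sigma)$ one does get $e^{u^2}\in L^q(\Sigma)$ for every finite $q$: split $u=u_1+u_2$ with $u_1\in C^\infty(\Sigma)$ and $\|u_2\|_{H^1}$ arbitrarily small, bound $qu^2\le q(1+\delta)u_2^2+qC_\delta\|u_1\|_\infty^2$, and pick $\|u_2\|_{H^1}$ so small that $q(1+\delta)\|u_2\|_{H^1}^2<4\pi$; then \eqref{MTSurf} yields $e^{qu^2}\in L^1$. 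What is lost is uniformity of this bound over $H^1$-bounded sets, which is irrelevant to the regularity of a single fixed solution. This is exactly what the paper invokes via \cite{TrudingerOrlicz}, after which the $p=2$ bootstrap is identical to the subcritical one; concentration points are a phenomenon of \emph{sequences} of solutions with unbounded energies and play no role in the regularity of a single fixed $H^1$ critical point.
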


\proof Assume $p\in (1,2)$. One easily verifies that the Euler-Lagrange equation of $J_{p,\beta}$ is
\begin{equation}\label{EqLpos1}
\Delta u +{h}u =\lambda u_+^{p-1} e^{u_+^p},
\end{equation}
where $\lambda>0$. Since $e^{u_+^{p}}\in L^q(\Sigma)$ for every $q\in [1,\infty)$ thanks to \cite{TrudingerOrlicz}, elliptic estimates imply that $u\in C^2(\Sigma)$.

We first claim that $u\ge 0$. Indeed, assume that $\Sigma_-:=\{x\in \Sigma:u(x)<0\}\neq\emptyset$. Then $\Delta u =-{h}u>0$ in $\Sigma_-$, violating the weak maximum principle at a point of minimum.

Now consider $\Sigma_+:=\{x\in \Sigma:u(x)>0\}$. We claim that $\Sigma_+=\Sigma$, i.e. $u>0$ everywhere. Otherwise $\partial \Sigma_+\ne \emptyset$. Let then $x_0\in \partial \Sigma_+$ be a point satisfying the interior sphere condition, and let $D\subset\Sigma_+$ be a disk with $x_0\in \partial D$ and such that
$$\Delta u= \lambda u^{p-1}e^{u^p}-{h}u >0\quad \text{in }D.$$
It is possible to find such $D$ because $u(x_0)=0$, $\lambda>0$, and $p<2$. Then, by the Hopf lemma, see e.g. \cite[Lemma 3.4]{Gilbarg},
$$
\frac{\partial u}{\partial \nu}(x_0)<0,
$$
where $\nu$ is the outer normal to $\partial \Sigma_+$ at $x_0$. This violates the non-negativity of $u$, leading to a contradition. Hence $u>0$. Going back to \eqref{EqLpos1}, we can now bootstrap regularity, hence $u\in C^\infty(\Sigma)$. 

Also for $p=1,2$ the regularity of solutions to \eqref{MainEquation} follows from elliptic estimates and \cite{TrudingerOrlicz}, which implies that the right-hand side of \eqref{MainEquation} belongs to $L^q(\Sigma)$ for $q\in [1,\infty)$.
\endproof

\medskip

In the rest of the section we consider $p\in (1,2)$ {and the positive function $h\in C^\infty(\Sigma)$} fixed. The first tools we shall need in the proof of Theorem \ref{ThmVariationalPart} are improved Moser-Trudinger inequalities. Let us first observe that from Young's inequality $ab\le \frac{a^q}{q}+\frac{b^r}{r}$ applied with $q=\frac{2}{p}$ and $r=q'=\frac{2}{2-p}$ we obtain{, for $u\not\equiv0$,}
\begin{align*}
|u|^p&= \(\frac{|u|}{\|u\|_{{h}}}\sqrt{\frac{8\pi}{p}}\)^p \(\|u\|_{{h}}\sqrt{\frac{p}{8\pi}}\)^p\\
& \le 4\pi\frac{u^2}{\|u\|_{{h}}^2}+\|u\|_{{h}}^\frac{2p}{2-p}\frac{2-p}{2}\(\frac{p}{8\pi}\)^\frac{p}{2-p}, 
\end{align*}
hence with \eqref{MTSurf} we get
\begin{equation}\label{MTp0}
\ln\int_\Sigma \left(e^{|u|^p}-1\right)dv_g\le {\ln\int_\Sigma e^{|u|^p}\,dv_g \le} \frac{2-p}{2}  \(\frac{p\|u\|_{{h}}^2}{8\pi}\)^\frac{p}{2-p}+C.
\end{equation}
It follows that
$$J_{p,\beta}(u)\ge \frac{2-p}{2}\(p\|u\|_{{h}}^2\)^\frac{p}{2-p}\(\(\frac{1}{2\beta}\)^\frac{p}{2-p}- \(\frac{1}{8\pi}\)^\frac{p}{2-p}\)-C,$$
so that $J_{p,\beta}$ is coercive for $\beta<4\pi$.

On the other hand, if the density $e^{|u|^p}-1$ is \emph{spread} into $k+1\ge 1$ disjoint regions we have the following improved Moser-Trudinger inequality which gives a uniform {lower} bound on $J_{p,\beta}(u)$ for each $\beta<4\pi (k+1)$, see 
\cite{CL91} for a related argument.

\begin{lem}\label{l:imprc}
For any fixed $k\in\mathbb{N}$, let $\Omega_1, \dots, \Omega_{k+1}$
be subsets of $\Sigma$ satisfying
$\dist(\Omega_i,\Omega_j) \geq \delta_0$ for $i \neq j$ and some $\delta_0>0$. Let also $\gamma_0 \in \left( 0,
\frac{1}{k+1} \right)$, $\delta_1\in(0,8\pi(k+1))$. Then there
exists a constant $C = C(k,\delta_0, \delta_1, \gamma_0,\Sigma)$ such that
\begin{equation}\label{eq:MTimpr}
 \ln\int_\Sigma  \left(e^{|u|^p}-1\right)dv_g\le \frac{2-p}{2}\(\frac{p \|u\|^2_{{h}}}{8\pi(k+1)-\delta_1}\)^\frac{p}{2-p} +C
 \end{equation}
for all the functions $u \in H^1(\Sigma){\setminus\{0\}}$ satisfying
\begin{equation}\label{eq:ddmm}
    \frac{\int_{\Omega_i}  \left(e^{|u|^p}-1\right)dv_g}{\int_\Sigma  \left(e^{|u|^p}-1\right)dv_g} \geq \gamma_0,
  \quad  \forall \; i \in \{1, \dots, k+1\}.
\end{equation}
\end{lem}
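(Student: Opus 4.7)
The strategy is a cutoff argument in the spirit of Chen--Li \cite{CL91}, combined with the basic estimate \eqref{MTp0}. First, I would fix smooth cutoffs $\eta_1,\ldots,\eta_{k+1}\colon\Sigma\to[0,1]$ with $\eta_i\equiv 1$ on $\Omega_i$, supports contained in the $(\delta_0/4)$-neighborhood of $\Omega_i$ (hence pairwise disjoint by the separation hypothesis), and satisfying $\|\nabla\eta_i\|_\infty\leq C/\delta_0$ and $\|\Delta\eta_i\|_\infty\leq C/\delta_0^2$. Setting $\tilde u_i:=\eta_i|u|$, the identity $\tilde u_i=|u|$ on $\Omega_i$ together with the spread hypothesis \eqref{eq:ddmm} immediately yields $\int_\Sigma(e^{\tilde u_i^{\,p}}-1)\,dv_g\geq \gamma_0\int_\Sigma(e^{|u|^p}-1)\,dv_g$ for each $i$.

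Applying \eqref{MTp0} to each $\tilde u_i\in H^1(\Sigma)$ and selecting $i^\star$ that minimizes $\|\tilde u_i\|_h^2$ reduces the problem to controlling $\|\tilde u_{i^\star}\|_h^2$ by (essentially) $\|u\|_h^2/(k+1)$. For this, I would use the integration-by-parts identity
\begin{equation*}
\int_\Sigma|\nabla(\eta_i|u|)|^2\,dv_g\;=\;\int_\Sigma\eta_i^2|\nabla u|^2\,dv_g + \int_\Sigma \eta_i\Delta\eta_i\,u^2\,dv_g,
\end{equation*}
sum over $i$, and invoke the disjoint-support inequalities $\sum_i\eta_i^2\leq 1$ and $\sum_i|\eta_i\Delta\eta_i|\leq C/\delta_0^2$, obtaining $\sum_i\|\tilde u_i\|_h^2\leq \|u\|_h^2+C_{\delta_0}\|u\|_{L^2}^2$ and hence $\|\tilde u_{i^\star}\|_h^2\leq(\|u\|_h^2+C_{\delta_0}\|u\|_{L^2}^2)/(k+1)$.

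Inserting this into \eqref{MTp0} and using the convexity inequality $(A+B)^r\leq (1+\theta)A^r+C_\theta B^r$ for $r=p/(2-p)\geq 1$ splits the estimate into a main term of coefficient $(1+\theta)\bigl(p/(8\pi(k+1))\bigr)^{p/(2-p)}$ in front of $\|u\|_h^{2p/(2-p)}$, plus a remainder of order $\|u\|_{L^2}^{2p/(2-p)}$. The main obstacle is to absorb this remainder while retaining the sharp coefficient $\bigl(p/(8\pi(k+1)-\delta_1)\bigr)^{p/(2-p)}$ for \emph{arbitrarily small} $\delta_1>0$: a naive combination of $\|u\|_{L^2}^2\leq \|u\|_h^2/\min_\Sigma h$ with an optimized choice of $\theta$ only yields the claim for $\delta_1$ above some threshold $\delta_1^\star(\delta_0,h)$, and the small-$\delta_1$ regime likely requires a refined cutoff construction (e.g.\ after subtracting local averages and invoking Poincaré inequalities on each $\mathrm{supp}(\eta_i)$) or a concentration--compactness argument exploiting the quantization of possible blow-up profiles across the $k+1$ prescribed regions.
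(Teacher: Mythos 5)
Your argument correctly reproduces the first half of the paper's proof (the cutoff functions, the choice of the $i$ minimizing the $\|g_i\cdot\|_h$-type quantity, and the dichotomy that the only obstruction is an $L^2$-error term), and you have honestly flagged the gap: the $L^2$ remainder cannot be absorbed by the crude bound $\|u\|_{L^2}^2\leq\|u\|_h^2/\min_\Sigma h$, which only gives the conclusion for $\delta_1$ above a threshold. So the proposal is incomplete rather than wrong.

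The missing ingredient is a \emph{global} low/high frequency decomposition of $u$ with respect to the spectrum of $\Delta_g+h$, not a local Poincar\'e argument or concentration--compactness. Concretely, after establishing (your notation)
$$\|g_i v\|_h^2 \leq \int_\Sigma g_i^2|\nabla v|^2\,dv_g + \varepsilon\int_\Sigma |\nabla v|^2\,dv_g + C_{\varepsilon,\delta_0}\int_\Sigma v^2\,dv_g\,,$$
the paper chooses an eigenvalue $\lambda_{\varepsilon,\delta_0}$ of $\Delta_g+h$ with $C_{\varepsilon,\delta_0}/\lambda_{\varepsilon,\delta_0}<\varepsilon$, sets $V_{\varepsilon,\delta_0}$ to be the span of the eigenfunctions with eigenvalue $\leq\lambda_{\varepsilon,\delta_0}$, and writes $u=u_1+u_2$ with $u_1=P_{V_{\varepsilon,\delta_0}}u$ and $u_2$ the orthogonal complement. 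For the high-frequency part one has $C_{\varepsilon,\delta_0}\|u_2\|_{L^2}^2\leq (C_{\varepsilon,\delta_0}/\lambda_{\varepsilon,\delta_0})\|u_2\|_h^2\leq\varepsilon\|u_2\|_h^2$, so the $L^2$-error is \emph{absorbed into a term of order $\varepsilon\|u\|_h^2$}, giving $\|g_iu_2\|_h^2\leq\left(\frac{1}{k+1}+2\varepsilon\right)\|u\|_h^2$ for the minimizing $i$. The low-frequency part $u_1$ lives in a finite-dimensional space, hence satisfies $\|u_1\|_{L^\infty}\leq \hat C_{\varepsilon,\delta_0}\|u_1\|_h$; feeding this into the inequality $(a+b)^p\leq C_{\varepsilon,p}a^p+(1+\varepsilon)b^p$ produces an additive correction $C_{\varepsilon,p}\|u_1\|_{L^\infty}^p$ which, after Young, contributes only to the constant $C$ in \eqref{eq:MTimpr} and does not degrade the sharp coefficient. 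Since $\varepsilon$ can be taken as small as one wishes (at the price of a worse additive constant), one reaches any $\delta_1>0$. This spectral splitting is exactly the device that resolves the small-$\delta_1$ obstruction you identified; without it, the argument indeed stalls where you said it would.
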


\proof Fix $u$ satisfying \eqref{eq:ddmm}.
We can find
$k + 1$ functions $g_1, \dots, g_{k+1}$ such that
\begin{equation}\label{eq:gi}
    \left\{%
\begin{array}{ll}
    g_i(x) \in [0,1] & \hbox{ for every } x \in \Sigma; \\
    g_i(x) = 1, & \hbox{ for every } x \in \Omega_i; \\
    g_i(x) = 0, & \hbox{ if } \dist(x,\Omega_i) \geq \frac{\delta_0}{2}; \\
    \|g_i\|_{C^1} \leq C_{\delta_0,\Sigma}. &  \\
\end{array}%
\right.
\end{equation}
For $\ve > 0$ small (to be fixed depending on $k$ and $\delta_1$) using the inequality $2ab\le \ve a^2+\ve^{-1}b^2$ we can find a constant $C_{\ve,\delta_0}$ (the dependence of the constants on $\Sigma$ {and $h$} will be omitted) such that, for any $i \in \{1, \dots, k + 1\}$ and $v\in H^1(\Sigma)$ there
holds
\begin{equation}\label{eq:pgiv}
\begin{split}
    \|g_i v\|_{{h}}^2& \leq \int_{\Sigma} g_i^2  |\nabla v|^2 dv_g
    + \ve \int_{\Sigma} |\nabla v|^2 dv_g + C_{\ve,\delta_0} \int_\Sigma v^2 dv_g.
\end{split}
\end{equation}
Now let $\lambda_{\e,\delta_0}$ be an eigenvalue of {$\Delta_g+{h}$} such
that $\frac{C_{\e,\delta_0}}{\lambda_{\e,\delta_0}} < \ve$, where $C_{\e,\delta_0}$ is as in \eqref{eq:pgiv}, and write
$$
 u = P_{V_{\e,\delta_0}} u+ P_{V_{\e,\delta_0}^\perp}u =:u_1+u_2, 
$$
where $V_{\e,\delta_0}\subset H^1(\Sigma)$ is the direct sum of the eigenspaces of {$\Delta_g+{h}$}
with eigenvalues less than or equal to $\lambda_{\e,\delta_0}$, and
$P_{V_{\e,\delta_0}}, P_{V_{\e,\delta_0}^\perp}$ denote the projections onto
$V_{\e,\delta_0}$ and $V_{\e,\delta_0}^\perp$ respectively.

We now choose $i$ such that
$$\int_{\Sigma} g_i^2  |\nabla u_2|^2 dv_g\le \int_{\Sigma} g_j^2  |\nabla u_2|^2 dv_g \quad \text{for every }j \in \{1, \dots, k + 1\}.$$
Since the
functions $g_1, \dots, g_{k+1}$ have disjoint supports, \eqref{eq:pgiv} applied with $v=u_2$ gives
$$
  \|g_i u_2\|_{{h}}^2 \leq \frac{1}{k + 1} \int_{\Sigma} |\nabla u_2|^2 dv_g
    + \e \int_{\Sigma} |\nabla u_2|^2 dv_g + C_{\ve,\delta_0} \int_\Sigma u_2^2 dv_g.
$$
This, together with the inequalities
$$
 C_{\e,\delta_0} \int_\Sigma u_2^2 dx \leq
  \frac{C_{\e,\delta_0}}{\lambda_{\e,\delta_0}} \|u_2\|_{{h}}^2 \le 
  \ve \|u_2\|_{{h}}^2,
$$
implies
\begin{equation}\label{eq:gigi1}
  \|g_i u_2\|_{{h}}^2 \leq \left( \frac{1}{k + 1} + 2 \e \right)
  \|u_2\|_{{h}}^2\leq \left( \frac{1}{k + 1} + 2 \ve \right)\|u\|_{{h}}^2.
\end{equation}
In particular from the Moser-Trudinger inequality \eqref{MTp0} and \eqref{eq:gigi1}, we have for $\ve$ small enough, which we now fix depending on $\delta_1$ and $k$,
\begin{equation}\label{eq:gigi2}
\begin{split}
 \ln \int_\Sigma e^{(1+\e) |g_i u_2|^p} dv_g &\leq \frac{2-p}{2}  \(\frac{p(1+\ve)^\frac2p \|g_i u_2\|_{H^1}^2}{8\pi}\)^\frac{p}{2-p}+C\\
 & \leq \frac{2-p}{2}  \(\frac{p\|u_2\|_{H^1}^2}{8\pi(k+1)-\delta_1}\)^\frac{p}{2-p}+C.
 \end{split}
\end{equation}
Notice also that since $V_{\e,\delta_0}$ is finite dimensional, we have
$$\|v\|_{L^\infty}\le \tilde C_{\ve,\delta_0}\|v\|_{L^2}\le \hat C_{\ve,\delta_0} \|v\|_{{h}}, \quad \text{for }v\in V_{\e,\delta_0},$$
hence
$$
 \|u_1\|_{L^\infty(\Omega)} \leq \hat{C}_{\ve,\delta_0}\|u_1\|_{{h}}.
$$
Now, using the inequality
$$(a+b)^p\le C_{\ve,p} a^p+ (1+\ve)b^p,$$
we get
$$  \int_\Sigma e^{|g_i u|^p} dv_g \leq e^{C_{\ve,p}\|u_1\|_\infty^p} \int_\Sigma e^{(1+\ve) |g_i u_2|^p}dv_g,$$
hence, from \eqref{eq:ddmm} and \eqref{eq:gigi2} we deduce 
\begin{equation}\label{eq:inteu2}
\begin{split}
  \ln \int_\Sigma \(e^{|u|^p}-1\)dv_g  &\leq \ln \frac{1}{\gamma_0}+\ln \int_{\Omega_i} \(e^{|u|^p}-1\)dv_g \\
  &\leq \ln\frac{1}{\gamma_0} +\ln \int_\Sigma
    e^{|g_i u|^p}dv_g \\
    &\le \ln\frac{1}{\gamma_0} + C_{\ve, p} \|u_1\|_{L^\infty}^p+ \ln \int_\Sigma
    e^{(1 + \ve) |g_i u_2|^p}dv_g \\
    & \le \frac{2-p}{2}  \(\frac{p\|u_2\|_{{h}}^2}{8\pi(k+1)-\delta_1}\)^\frac{p}{2-p}+{\tilde C_{\ve,p}}\|u_1\|_{{h}}^p+C',
\end{split}
\end{equation}
with $C' = C'(k,\delta_0, \delta_1, \gamma_0,\Sigma)$. A further application of  Young's inequality to the term ${\tilde C_{\ve,p}}\|u_1\|_{{h}}^p$ and the inequality $a^q + b^q \leq (a+b)^q$ for $q > 1$ then gives
$$\ln \int_\Sigma \(e^{|u|^p}-1\)dv_g \le \frac{2-p}{2}  \(\frac{p(\|u_2\|_{H^1}^2+\|u_1\|_{{h}}^2)}{8\pi(k+1)-\delta_1}\)^\frac{p}{2-p} + C$$
with $C = C(k,\delta_0, \delta_1, \gamma_0,\Sigma)$, and since $\|u_2\|_{{h}}^2+\|u_1\|_{{h}}^2=\|u\|_{{h}}^2$ we conclude.
\endproof

The next lemma, proven in \cite[Lemma 2.3]{DjaMal}, is a criterion which implies the situation described by   condition  \eqref{eq:ddmm}.

\begin{lem}\label{l:criterion}
Let $k$ be a given positive integer, and consider $\e,r>0$. Suppose that for a non-negative function $f\in L^1(\Sigma)$ with $\|f\|_{L^1}=1$ there holds
\begin{equation}\label{eq:ddmm3}
\int_{\bigcup_{i=1}^k B_r(x_i)} f dx <1-\e\qquad\quad \textrm{for every $k$-tuple $x_1,\ldots,x_k\in \Sigma$.}
\end{equation}
Then there exist $\bar{\e}>0$ and $\bar{r}>0$, depending only on $\e$, $r$, $k$ and $\Omega$ (but not on $f$), and $k+1$ points $\bar{x}_{1,f},\ldots,\bar{x}_{k+1,f}\in \Sigma$ such that
$$
\int_{B_{\bar{r}(\bar{x}_{j,f})}} f dx\geq\bar{\e},\quad \text{for } j=1,\dots,k+1,$$
and $B_{2\bar{r}}(\bar{x}_{i,f})\cap B_{2\bar{r}}(\bar{x}_{j,f})=\emptyset$ for $i\neq j$.
\end{lem}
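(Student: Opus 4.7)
My plan is to argue by contradiction using a greedy selection combined with a covering argument. Suppose the conclusion fails for some constants $\bar\e,\bar r>0$ yet to be determined. Fix $\bar r:=r/5$ and run the following greedy procedure on $\Sigma$: let $\bar x_{1,f}$ maximize the mass functional $y\mapsto \int_{B_{\bar r}(y)} f\,dv_g$; having chosen $\bar x_{1,f},\dots,\bar x_{j-1,f}$, let $\bar x_{j,f}$ maximize the same quantity among points $y$ satisfying $B_{2\bar r}(y)\cap B_{2\bar r}(\bar x_{i,f})=\emptyset$ for every $i<j$. Terminate when $j=k+1$ is reached, or when the maximum value falls below $\bar\e$.

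If the procedure produces $k+1$ points, the statement holds. Otherwise it halts at some $j^\star\le k$, and every $y\in\Sigma$ whose ball $B_{2\bar r}(y)$ is disjoint from all $B_{2\bar r}(\bar x_{i,f})$, $i\le j^\star$, satisfies $\int_{B_{\bar r}(y)} f\,dv_g<\bar\e$. Since $\bar r=r/5$, any point outside $R:=\bigcup_{i=1}^{j^\star} B_{r}(\bar x_{i,f})$ lies at distance $>r>4\bar r$ from every $\bar x_{i,f}$, hence fulfills the disjointness condition, so $\int_{B_{\bar r}(y)} f\,dv_g<\bar\e$. By compactness of $(\Sigma,g)$ one can cover $\Sigma\setminus R$ by a family of balls $B_{\bar r}(z_\ell)$, $\ell=1,\dots,N$, with cardinality $N=N(\bar r,\Sigma)=N(r,\Sigma)$. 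Summing yields $\int_{\Sigma\setminus R} f\,dv_g<N\bar\e$.

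Choose now $\bar\e:=\e/(2N)$, which depends only on $\e,r,\Sigma$. Then $\int_R f\,dv_g>1-\e/2$. Padding the at most $k$ points $\bar x_{i,f}$ with arbitrary extra points of $\Sigma$ to form a $k$-tuple only enlarges $R$, so hypothesis \eqref{eq:ddmm3} applied to this $k$-tuple yields $\int_R f\,dv_g<1-\e$, contradicting the previous bound. This completes the scheme.

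The main subtlety I anticipate is calibrating the constants so as to avoid circular dependencies: the stopping threshold $\bar\e$ must be chosen small relative to the covering number $N$, which itself depends on the ball radius $\bar r$. Fixing $\bar r$ explicitly in terms of $r$ (e.g. $\bar r=r/5$) first, then computing $N=N(r,\Sigma)$, and finally setting $\bar\e=\e/(2N)$, produces constants depending only on the prescribed data $(\e,r,k,\Sigma)$, as required. A secondary point is verifying the separation requirement $B_{2\bar r}(\bar x_{i,f})\cap B_{2\bar r}(\bar x_{j,f})=\emptyset$, which is built directly into the greedy procedure.
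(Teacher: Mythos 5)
Your proof is correct, and it follows essentially the same route as the cited source (Djadli--Malchiodi, Lemma 2.3): the paper itself only cites that reference rather than proving the lemma, and the standard argument there is precisely this greedy selection of maximal-mass balls combined with a finite covering of $\Sigma$ by balls of radius comparable to $\bar r$ to bound the residual mass. Your calibration of constants ($\bar r=r/5$, then $N=N(r,\Sigma)$, then $\bar\e=\e/(2N)$) correctly avoids circularity, and the padding step at the end to invoke hypothesis \eqref{eq:ddmm3} is handled properly.
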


Lemma \ref{l:imprc} and Lemma \ref{l:criterion} then imply the following other result.

\begin{lem}\label{l:ls}
If $\beta\in(4\pi k,4\pi(k+1))$ with $k\geq1$, the following property holds. For any $\ve>0$ and any $r>0$ there exists a large positive constant $L=L(\ve,r,p,\beta)$ such that, for every $u\in H^1(\Sigma)$ with $J_{p,\beta}(u)\leq-L$ there exist $k$ points $x_{1},\ldots,x_{k}\in\Sigma$ such that
\begin{equation}\label{eq:ddmm2}
	\frac{\int_{\Sigma\setminus\cup_{i=1}^k B_r(x_{i})} \(e^{|u|^p}-1\) dv_g}{\int_\Sigma  \(e^{|u|^p}-1\) dv_g}<\ve.
\end{equation}
\end{lem}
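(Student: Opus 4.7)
The plan is to argue by contradiction using the combination of Lemmas \ref{l:imprc} and \ref{l:criterion}. Suppose the conclusion fails: there exist $\varepsilon_0,r_0>0$ and a sequence $(u_n)\subset H^1(\Sigma)$ with $J_{p,\beta}(u_n)\to-\infty$ such that, for every $n$ and every choice of points $x_1,\dots,x_k\in\Sigma$,
\[
\frac{\int_{\bigcup_{i=1}^{k} B_{r_0}(x_i)}\bigl(e^{|u_n|^p}-1\bigr)dv_g}{\int_\Sigma\bigl(e^{|u_n|^p}-1\bigr)dv_g}\ge 1-\varepsilon_0.
\]

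Normalize $f_n:=(e^{|u_n|^p}-1)/\int_\Sigma(e^{|u_n|^p}-1)\,dv_g$, so $\|f_n\|_{L^1}=1$ and $f_n\ge0$. The failure of \eqref{eq:ddmm2} is exactly the hypothesis \eqref{eq:ddmm3} of Lemma \ref{l:criterion} (with $\varepsilon=\varepsilon_0$, $r=r_0$). Applied to each $f_n$, it provides $\bar\varepsilon,\bar r>0$ and $k+1$ points $\bar x_{1,n},\dots,\bar x_{k+1,n}\in\Sigma$ with pairwise disjoint balls $B_{2\bar r}(\bar x_{i,n})$ and
\[
\int_{B_{\bar r}(\bar x_{i,n})} f_n\,dv_g\ge \bar\varepsilon\qquad\text{for every }i\in\{1,\dots,k+1\}.
\]
In particular, setting $\Omega_i^{(n)}:=B_{\bar r}(\bar x_{i,n})$ we have $\dist(\Omega_i^{(n)},\Omega_j^{(n)})\ge 2\bar r$ for $i\ne j$, and condition \eqref{eq:ddmm} of Lemma \ref{l:imprc} holds with $\gamma_0=\bar\varepsilon$ (we may shrink $\bar\varepsilon$ if needed so that $\bar\varepsilon<1/(k+1)$).

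Now choose $\delta_1\in(0,8\pi(k+1))$ so small that $8\pi(k+1)-\delta_1>2\beta$; this is possible because $\beta<4\pi(k+1)$. Applying Lemma \ref{l:imprc} to each $u_n$ with these parameters yields a constant $C=C(k,\bar r,\delta_1,\bar\varepsilon,\Sigma)$, independent of $n$, such that
\begin{equation*}
\ln\int_\Sigma\bigl(e^{|u_n|^p}-1\bigr)\,dv_g\;\le\;\frac{2-p}{2}\left(\frac{p\|u_n\|_h^2}{8\pi(k+1)-\delta_1}\right)^{\!\frac{p}{2-p}}+C.
\end{equation*}
Substituting into the definition \eqref{Energy} of $J_{p,\beta}(u_n)$, we obtain
\begin{equation*}
J_{p,\beta}(u_n)\;\ge\;\frac{2-p}{2}\bigl(p\|u_n\|_h^2\bigr)^{\!\frac{p}{2-p}}\!\left[\left(\frac{1}{2\beta}\right)^{\!\frac{p}{2-p}}-\left(\frac{1}{8\pi(k+1)-\delta_1}\right)^{\!\frac{p}{2-p}}\right]-C.
\end{equation*}
Since $2\beta<8\pi(k+1)-\delta_1$, the bracket is strictly positive, so $J_{p,\beta}(u_n)\ge -C$ uniformly in $n$, contradicting $J_{p,\beta}(u_n)\to-\infty$. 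This contradiction proves the lemma.

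The only delicate point is the calibration of $\delta_1$: one needs the improved constant $8\pi(k+1)-\delta_1$ appearing in Lemma \ref{l:imprc} to strictly exceed $2\beta$, which is exactly what the hypothesis $\beta<4\pi(k+1)$ provides. Everything else is a direct assembly of the two preceding lemmas with the appropriate normalization.
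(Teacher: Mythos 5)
Your proposal is correct and follows essentially the same route as the paper's proof: normalize the density, invoke Lemma \ref{l:criterion} to produce $k+1$ well-separated balls each carrying a definite fraction $\bar\ve$ of the mass, feed these into Lemma \ref{l:imprc} with a $\delta_1$ calibrated against $\beta<4\pi(k+1)$, and conclude that $J_{p,\beta}$ is bounded below — contradicting the sublevel assumption. The only cosmetic differences are that the paper argues directly for a fixed $u$ (setting $L>\bar C$) rather than via a contradiction sequence, and that it picks $\delta_1=8\pi(k+1)-2\beta$ exactly so that the first term of $J_{p,\beta}$ cancels outright, whereas you choose $\delta_1$ a bit smaller so the bracket is strictly positive; both are equivalent.
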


\proof Fix $\ve$, $r$, $p$, and $\beta$ as in the statement of the lemma and let $u\in H^1(\Sigma)$ be such that $J_{p,\beta}(u)\le -L$ for some constant $L\ge 0$, and assume by that \eqref{eq:ddmm2} fails for every $k$-tuple of points $x_1,\dots,x_k$. Then setting
$$f:=\frac{e^{|u|^p}-1}{\|e^{|u|^p}-1 \|_{L^1}}$$
we have that \eqref{eq:ddmm3} holds.
Therefore, by Lemma \ref{l:criterion} we can find $\bar \ve=\bar \ve(\ve,r,k,\Sigma)$, $\bar r=\bar r(\ve,r,k,\Sigma)$ and points $\bar x_{1},\dots,\bar x_{k+1}\in \Sigma$ such that the assumptions of Lemma \ref{l:imprc} hold with $\Omega_i= B_{\bar r}(\bar x_{i})$, $\gamma_0=\bar \ve$ and $\delta_0=2\bar r$.
Fix also $\delta_1=8\pi(k+1)-2\beta$. Then by Lemma \ref{l:imprc} there exists a constant $\bar C$ depending on $k$, $\delta_0$, $\delta_1$, $\gamma_0$, $p$ and $\Sigma$, hence depending on $\ve$, $r$, $p$, $\beta$, $k$ and $\Sigma$ such that 
$$\ln \int_\Sigma \(e^{|u|^p}-1\)dv_g   \le \frac{2-p}{2}  \(\frac{p\|u\|_{H^1}^2}{2\beta}\)^\frac{p}{2-p}+\bar C,
$$
hence $J_{p,\beta}(u)\ge -\bar C$, and up to choosing $L>\bar C$ we obtain a contradiction, unless \eqref{eq:ddmm2} holds for a suitable $k$-tuple $x_{1},\dots, x_{k}\in \Sigma$.
\endproof

Given $k\in \N$ we introduce the set of formal \emph{barycenters of} $\Sigma$ \emph{of order} $k$, namely
$$\Sigma_k=\left\{ \sigma=\sum_{i=1}^k t_i\delta_{x_i}:x_i\in \Sigma, \,t_i\ge 0,\, \sum_{i=1}^k t_i=1\right\},$$
 where $\delta_{x_i}$ is the Dirac mass at $x_i$, see \cite{DjaMal}, \cite{MalchiodiTopological}.\\ 
 We will see $\Sigma_k$ as a subset of $\mathcal M(\Sigma)$, the set of all probability Radon measures on $\Sigma$, endowed with the distance defined using duality versus Lipschitz functions: 
\begin{equation}\label{KR}
 \dist(\mu,\nu):=\sup_{\|{\psi}\|_{Lip(\Sigma)}\leq1}\left|\int_{\Sigma} {\psi}\,d\mu - \int_{\Sigma} {\psi}\,d\nu \right|,\qquad\quad\mu, \nu\in \mathcal M(\Sigma), 
\end{equation}
which receives the name of \emph{Kantorovich-Rubinstein distance}.

\begin{lem}\label{l:RK} For any $\ve>0$ there exist $\delta>0$ and $r_\ve>0$ such that, for any $r\in(0,r_\ve]$, if $f\in L^1(\Sigma)$ is a non-negative function such that
\begin{equation}\label{eq:ddmm4}
	\frac{\int_{\Sigma\setminus\cup_{i=1}^k B_r(x_{i})} f dv_g}{\int_\Sigma  f dv_g}<\delta
\end{equation}
for some $x_1,\dots,x_k\in \Sigma$, then
$$\dist\(\frac{f dv_g}{\int_\Sigma  f dv_g},\sigma\)<\ve,$$
where
$$\sigma=\sum_{i=1}^k t_i \delta_{x_i},\qquad t_i=\frac{\int_{B_r(x_i)}f dv_g}{\int_{\cup_{j=1}^k B_r(x_j)} fdv_g}.$$
\end{lem}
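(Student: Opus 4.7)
The plan is to bound the Kantorovich-Rubinstein distance by a direct computation against test Lipschitz functions. Up to rescaling $f$ by a positive constant, which affects neither \eqref{eq:ddmm4} nor the conclusion, I may assume $\int_\Sigma f\,dv_g=1$, set $\mu:=f\,dv_g$ and $A:=\bigcup_{i=1}^k B_r(x_i)$, so that $\mu(\Sigma\setminus A)<\delta$ and $\mu(A)>1-\delta$. After merging coincident $x_i$'s and choosing $r_\ve$ smaller than half the minimum of the pairwise distances of the distinct points, the balls $B_r(x_i)$ are pairwise disjoint, so that $\sigma$ is a bona fide element of $\Sigma_k$ with $\sum_i t_i=1$.

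Fix a 1-Lipschitz function $\psi:\Sigma\to\R$. Since both $\mu$ and $\sigma$ are probability measures, adding a constant to $\psi$ leaves $\int_\Sigma\psi\,d(\mu-\sigma)$ invariant, so I may normalize $\psi(x_1)=0$, which together with compactness of $\Sigma$ yields $|\psi|\le D:=\mathrm{diam}(\Sigma,g)$ on $\Sigma$. I then decompose
$$
\int_\Sigma\psi\,d(\mu-\sigma)=\int_{\Sigma\setminus A}\psi\,d\mu+\sum_{i=1}^k\int_{B_r(x_i)}\bigl[\psi(x)-\psi(x_i)\bigr]\,d\mu(x)+\sum_{i=1}^k\psi(x_i)\bigl[\mu(B_r(x_i))-t_i\bigr]
$$
and estimate piecewise. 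The first term is at most $D\delta$, using $|\psi|\le D$ and \eqref{eq:ddmm4}. The second is at most $r\,\mu(A)\le r$, since $\mathrm{dist}(x,x_i)\le r$ on $B_r(x_i)$ and $\psi$ is 1-Lipschitz. For the third, the identity $\mu(B_r(x_i))-t_i=\mu(B_r(x_i))\,(\mu(A)-1)/\mu(A)$ and the bound $\mu(A)>1-\delta$ give $|\mu(B_r(x_i))-t_i|\le \mu(B_r(x_i))\,\delta/(1-\delta)$; summing in $i$ (using $\sum_i\mu(B_r(x_i))=\mu(A)\le 1$) and multiplying by the uniform bound $|\psi|\le D$ contributes at most $D\delta/(1-\delta)$.

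Altogether $\bigl|\int_\Sigma\psi\,d(\mu-\sigma)\bigr|\le r+C_D\,\delta$ for a constant $C_D$ depending only on $D$; choosing first $\delta$ and then $r_\ve$ small enough in terms of $\ve$ and $D$ makes the right-hand side smaller than $\ve$, uniformly in $\psi$ and in $r\le r_\ve$. Taking the supremum over 1-Lipschitz $\psi$ in \eqref{KR} then yields the claim. No step here is genuinely difficult: the only mild subtleties are the normalization of $\psi$ (converting the Lipschitz seminorm to a uniform bound via compactness of $\Sigma$) and the preliminary reduction to disjoint balls.
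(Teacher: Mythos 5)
Your proof is correct and follows essentially the same route as the paper: both reduce to $\int_\Sigma f\,dv_g=1$, exploit the Lipschitz bound to replace the seminorm with a uniform bound via the diameter of $\Sigma$, and split the error into the contribution of $\Sigma\setminus\bigcup B_r(x_i)$ (of size $O(\delta)$), the Lipschitz oscillation on each ball (of size $O(r)$), and the mismatch between $\mu(B_r(x_i))$ and $t_i$ (of size $O(\delta)$). Your version is marginally more explicit about the disjointness of the balls and about constants, but the decomposition and the three estimates are the same.
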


\begin{proof}
Consider a function ${\psi}$ on $\Sigma$ with $\|{\psi}\|_{Lip(\Sigma)}\leq 1$, which we can assume to have zero average, and let us estimate  
for $\int_{\Sigma} f dv_g = 1$ (otherwise, we can rescale $f$ by a constant)
$$
   \left|  \int_{\Sigma} f \, {\psi} \, dv_g - \int_{\Sigma} {\psi} \, d \sigma \right| \leq \sum_{i = 1}^k \left| 
   \int_{B_r(x_i)} f\, {\psi} \, dv_g - \int_{B_r(x_i)} {\psi} \, d \sigma \right| +  \left|  \int_{\Sigma \setminus \cup_{i=1}^k B_r(x_i)} f\, {\psi} \, dv_g  \right|.  
$$
Since $f\geq0$, with  $\int_{\Sigma} f dv_g = 1$, and since ${\psi}$ is uniformly bounded by the diameter of $\Sigma$ (due to the fact that it is $1$-Lipschitz and 
has zero average), by \eqref{eq:ddmm4} we clearly have that 
$$
   \left|  \int_{\Sigma \setminus \cup_{i=1}^k B_r(x_i)} f\, {\psi} \, dv_g  \right|  \leq \delta \, diam_g(\Sigma). 
$$
On the other hand, for the same reason we have that $\int_{\cup_{j=1}^k B_r(x_j)} fdv_g = 1 + O(\delta)$, which 
implies that $t_i = (1 + O(\delta)) \int_{B_r(x_i)}f dv_g$ and in turn that 
\begin{equation*}
\begin{split}
  \int_{B_r(x_i)} {\psi} d \sigma & = t_i \Psi(x_i)\\
  &=  {\psi}(x_i)  \int_{B_r(x_i)}f dv_g + O(\delta).  
\end{split}
\end{equation*}
Again from the fact that ${\psi}$ is $1$-Lipschitz, we get that 
$$
\int_{B_r(x_i)} f\, {\psi} \, dv_g = {\psi}(x_i) \int_{B_r(x_i)}f dv_g + O(r). 
$$
Since ${\psi}$ was arbitrary, the conclusion follows from the last four formulas. 
\end{proof}

An immediate consequence of Lemma \ref{l:ls} and Lemma \ref{l:RK} is that the low sublevels of $J_{p,\beta}$ can be mapped close to $\Sigma_k$, in the sense of the following lemma.

\begin{lem}\label{l:ls2}  Given $\beta\in(4\pi k,4\pi(k+1))$ with $k\geq1$, $\ve>0$ there exists $L=L(\ve,p,\beta)$ such that for every $u\in H^1(\Sigma)$ with $J_{p,\beta}\le -L$ we have
$$\dist\(\frac{\(e^{|u|^p}-1\)dv_g}{\int_\Sigma \(e^{|u|^p}-1\)dv_g}, \Sigma_k\)<\ve.$$
\end{lem}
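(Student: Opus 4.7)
The plan is to combine Lemma \ref{l:ls} and Lemma \ref{l:RK} in a direct chain. The key point is that the low sublevels of $J_{p,\beta}$ force $e^{|u|^p}-1$ to concentrate around $k$ points (by Lemma \ref{l:ls}), while any such concentrated $L^1$-density lies close to $\Sigma_k$ in the Kantorovich-Rubinstein distance (by Lemma \ref{l:RK}). So the structure is essentially: pick the concentration parameter small enough to feed Lemma \ref{l:RK}, then use Lemma \ref{l:ls} to realize this concentration on a sufficiently low sublevel.

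First I would fix $\ve>0$ and apply Lemma \ref{l:RK} to this $\ve$, obtaining $\delta=\delta(\ve)>0$ and $r_\ve>0$ such that, for every $r\in(0,r_\ve]$ and every non-negative $f\in L^1(\Sigma)$ satisfying \eqref{eq:ddmm4} with parameter $\delta$, the normalized measure $f\,dv_g/\int_\Sigma f\,dv_g$ lies within distance $\ve$ of some element of $\Sigma_k$. Next, I would pick any $r\in(0,r_\ve]$ (e.g.\ $r=r_\ve$) and apply Lemma \ref{l:ls}, with the choice of parameters $(\delta,r,p,\beta)$, to produce a constant $L=L(\delta,r,p,\beta)=L(\ve,p,\beta)$ such that whenever $J_{p,\beta}(u)\le -L$ there exist points $x_1,\dots,x_k\in\Sigma$ for which
\begin{equation*}
\frac{\int_{\Sigma\setminus\cup_{i=1}^k B_r(x_{i})} \bigl(e^{|u|^p}-1\bigr) dv_g}{\int_\Sigma  \bigl(e^{|u|^p}-1\bigr) dv_g}<\delta.
\end{equation*}

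Finally I would take $f:=e^{|u|^p}-1$, which is non-negative and belongs to $L^1(\Sigma)$ by the Moser-Trudinger inequality, and observe that the displayed bound is precisely \eqref{eq:ddmm4}. Applying Lemma \ref{l:RK} to this $f$ then produces an explicit measure
\begin{equation*}
\sigma=\sum_{i=1}^k t_i\delta_{x_i}\in\Sigma_k, \qquad t_i=\frac{\int_{B_r(x_i)}f\,dv_g}{\int_{\cup_{j=1}^k B_r(x_j)}f\,dv_g},
\end{equation*}
at distance less than $\ve$ from the normalized density, so the distance from $\Sigma_k$ is also less than $\ve$, as required.

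There is essentially no obstacle in this argument: both input lemmas are already established, the dependence of the constants is compatible (Lemma \ref{l:ls} needs $\delta$ and $r$ fixed before producing $L$, and Lemma \ref{l:RK} produces $\delta$ and $r_\ve$ from $\ve$ alone), and the Moser-Trudinger inequality \eqref{MTSurf} guarantees that $f=e^{|u|^p}-1$ is a bona fide non-negative $L^1$ function. The only mild care needed is to ensure the radius $r$ fed into Lemma \ref{l:ls} does not exceed $r_\ve$, which is automatic by choosing $r=r_\ve$.
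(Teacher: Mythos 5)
Your proof is correct and takes exactly the approach the paper intends: the paper states Lemma \ref{l:ls2} as ``an immediate consequence of Lemma \ref{l:ls} and Lemma \ref{l:RK}'' without writing out the argument, and your chaining of the two lemmas (first fixing $\delta$ and $r_\ve$ from Lemma \ref{l:RK}, then feeding $\delta$ and $r=r_\ve$ into Lemma \ref{l:ls} to produce $L$, and finally applying Lemma \ref{l:RK} to $f=e^{|u|^p}-1$) is precisely that unwritten proof, with the parameter dependencies tracked correctly.
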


 Let us first recall a well known result about $\Sigma_k$, endowed with the topology induced by $\dist(\cdot, \cdot).$
 
 \begin{lem}[\cite{MalchiodiTopological}]\label{l:noncontr}
 	For any $k\geq 1$ the set $\Sigma_k$ is non-contractible.
 \end{lem}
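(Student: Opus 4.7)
The plan is to rule out contractibility by exhibiting a non-trivial reduced \v{C}ech cohomology class in $\Sigma_k$ with $\mathbb{Z}_2$-coefficients.

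The case $k=1$ is immediate, since $\Sigma_1$ is canonically homeomorphic to $\Sigma$, and for the closed surface $\Sigma$ one has $\check H^2(\Sigma;\mathbb{Z}_2)\cong\mathbb{Z}_2\neq 0$.

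For $k\geq 2$, I would proceed by induction on the natural closed filtration $\Sigma_1\subset\Sigma_2\subset\cdots\subset\Sigma_k$, where $\Sigma_{j-1}$ is realized in $\Sigma_j$ as the subset of barycenters with at least one vanishing weight. The open stratum $\Sigma_k\setminus\Sigma_{k-1}$ consists of measures supported on exactly $k$ distinct points with strictly positive weights summing to one, and is homeomorphic to the $S_k$-quotient of $F_k(\Sigma)\times\mathring\Delta^{k-1}$, where $F_k(\Sigma)$ is the ordered configuration space of $k$ distinct points in $\Sigma$ and $\mathring\Delta^{k-1}$ is the open standard simplex; in particular this stratum is a (real) manifold of dimension $3k-1$. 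By excision the relative cohomology $\check H^*(\Sigma_k,\Sigma_{k-1};\mathbb{Z}_2)$ coincides with the compactly supported cohomology of that open stratum, which is non-trivial in top degree $3k-1$. Since $\Sigma_{k-1}$ has top stratum of strictly smaller dimension $3(k-1)-1$, a class $\omega\in \check H^{3k-1}(\Sigma_k,\Sigma_{k-1};\mathbb{Z}_2)$ cannot be killed in the long exact sequence of the pair, so that its image in $\check H^{3k-1}(\Sigma_k;\mathbb{Z}_2)$ is non-zero. This produces the desired obstruction to contractibility.

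The main obstacle I expect is the singular nature of $\Sigma_k$: it is not a CW complex in the standard sense, which forces us to work with \v{C}ech (or Alexander--Spanier) cohomology and to verify carefully that excision applies near the degenerate strata, where weights vanish or points collide. A more concrete alternative, which appears in \cite{MalchiodiTopological}, avoids these abstract complications by constructing explicit continuous test maps $\phi\colon X\to\Sigma_k$ from closed manifolds $X$ (e.g.\ configuration spaces of $k$ distinct points carrying the uniform weights $1/k$) and detecting non-triviality by pairing with suitable \emph{barycenter-type} cohomology classes on $\Sigma_k$; in that route the delicate step becomes the explicit verification of the non-trivial pairing.
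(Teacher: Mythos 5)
Your stratification argument is correct and is, up to phrasing, the one used in the cited reference \cite{MalchiodiTopological}: one produces a non-zero class in $\check H^{3k-1}(\Sigma_k;\mathbb{Z}_2)$ from the long exact sequence of the pair $(\Sigma_k,\Sigma_{k-1})$, using that the open top stratum is a connected $(3k-1)$-manifold (so $H_c^{3k-1}\cong\mathbb{Z}_2$ by Poincar\'e duality mod $2$; connectedness of $F_k(\Sigma)$ uses $\dim\Sigma=2\geq 2$) and that $\dim\Sigma_{k-1}=3k-4<3k-2$ forces $\check H^{3k-2}(\Sigma_{k-1};\mathbb{Z}_2)=0$, so the boundary map vanishes and $\check H^{3k-1}(\Sigma_k,\Sigma_{k-1})\to\check H^{3k-1}(\Sigma_k)$ is injective. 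One small correction to your last paragraph: the ``explicit test map'' alternative you attribute to \cite{MalchiodiTopological} is not in fact how that reference establishes non-contractibility; the dimension-count route you gave first is the one in the literature.
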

 
Our goal is to show that, if $\beta\in(4\pi k,4\pi(k+1))$, $\Sigma_k$ can be mapped into very negative sublevels of $J_{p,\beta}$ and that this map is non trivial in the sense that it carries some homology. Then, as a consequence of the previous Lemma we will get the non contractibility of low sublevels of $J_{p,\beta}$. 

Let us first define the \emph{standard bubble} $\varphi_\gamma:\R^2\to\R$ for $\gamma>0$,
$$\varphi_\gamma(x):=\(\frac 2p\)^{\tfrac1p}\gamma\(1-\frac1{\gamma^p}\ln\(1+\frac{|x|^2}{r_\gamma^2}\)\)_+, $$
where  $r_\gamma$ is chosen so that 
\begin{equation}\label{rgammadef}
r_\gamma=o\(e^{-\gamma^p}\),\quad \ln\(r_\gamma e^{\gamma^p}\)=o(\gamma^p),
\end{equation}
for instance, $r_\gamma=\gamma^{-1}e^{-\gamma^p}$.

Now, given $x\in\Sigma$ we define the function $\pmx:\Sigma\to\R$ as
$$
\varphi_{\gamma,x}=\(\frac 2p\)^{\tfrac1p}\gamma\(1-\frac1{\gamma^p}\ln\(1+\frac{d^2(y,x)}{r_\gamma^2}\)\)_+. 
$$
Notice that $\pmx(y)>0$ if and only if $y\in B_{\dmu}(x)$, where
\begin{equation}\label{deltamu}
\delta_\gamma^2:=\rmu^2(e^{\gamma^p}-1)\to 0 \quad \text{as }\gamma\to\infty.
\end{equation}
For a barycenter $\sigma=\sum_{i=1}^{k}t_i\delta_{x_i}\in\Sigma_k$ we now want to construct test functions $\varphi_{\gamma,\sigma}$ continuous with respect to $\sigma$ (from $\mathcal{M}(\Sigma)$ into $H^1(\Sigma)$) concentrating mass near the points $x_i$, in the sense that
\begin{equation}\label{convtest}
\frac{ (e^{\pms^p}-1)dv_g}{\int_\Sigma\(e^{\pms^p}-1\)dv_g}\to\sigma,\quad \text{as }\gamma\to\infty.
\end{equation}

In order to do so, to each $t\in [0,1]$ and $\gamma>0$ we associate $\tau=\tau(t,\gamma)$ such that
\begin{equation}\label{deftau}
\frac{\int_{\R^2}\(e^{(\varphi_\gamma-\tau)^p_+}-1\)dx}{\int_{\R^2}\(e^{\varphi_\gamma^p}-1\)dx}=t.
\end{equation}
Notice that $\tau$ is decreasing with respect to $t$  and that $\tau(0,\gamma)=\(\frac 2p\)^{\tfrac1p}\gamma$, $\tau(1,\gamma)=0$ for every $\gamma>0$. We will need the following elementary estimate.

\begin{lem}\label{lemma:tau} For any fixed $\overline t\in(0,1]$ we have $\tau(t,\gamma)\to 0$ as $\gamma\to\infty$ uniformly for $t\in [\overline t,1]$.
\end{lem}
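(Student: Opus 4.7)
The plan is to first reduce to a pointwise statement: since $\tau \mapsto \int_{\R^2}(e^{(\varphi_\gamma - \tau)_+^p} - 1)\,dx$ is continuous and strictly decreasing on $[0, (2/p)^{1/p}\gamma]$, the implicit relation \eqref{deftau} makes $\tau(t,\gamma)$ continuous and strictly decreasing in $t$. Hence $0 \le \tau(t,\gamma) \le \tau(\bar t,\gamma)$ for $t \in [\bar t,1]$, so the uniformity in $t$ is automatic and it suffices to prove $\tau(\bar t,\gamma)\to 0$ as $\gamma\to\infty$ for each fixed $\bar t \in (0,1]$.

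I would then argue by contradiction: assume there exist $\tau_0>0$ and $\gamma_n\to\infty$ with $\tau_n:=\tau(\bar t,\gamma_n)\ge \tau_0$. The goal is to show that the ratio in \eqref{deftau} then tends to $0$, contradicting $\bar t>0$. The key reduction is a Laplace-type computation: passing to polar coordinates and substituting $s=\ln(1+|x|^2/r_\gamma^2)$, $w=s/\gamma^p$ inside the support $\{|x|<\delta_\gamma\}$ of $\varphi_\gamma$, one obtains
\begin{equation*}
\int_{\R^2}\(e^{\varphi_\gamma^p}-1\)dx = \pi r_\gamma^2\gamma^p \int_0^1 e^{\gamma^p h(1-w)}\,dw - \pi\delta_\gamma^2, \qquad h(u):=\tfrac{2}{p}u^p + 1 - u.
\end{equation*}
Since $h$ attains its maximum $2/p$ at $u=1$ with $h'(1)=1$, standard Laplace asymptotics give $\int_{\R^2}(e^{\varphi_\gamma^p}-1)\,dx \sim \pi r_\gamma^2 e^{(2/p)\gamma^p}$ (the volume correction $\pi\delta_\gamma^2$ is exponentially smaller since $p<2$), with mass concentrated in the regime $|x|=O(r_\gamma)$.

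Next, I would split the numerator in \eqref{deftau} into a bulk $B:=\{|x|\le Ar_{\gamma_n}\}$ and a tail $T:=\{Ar_{\gamma_n}<|x|<\delta_{\gamma_n}\}$, for a large parameter $A$ to be fixed later. On $B$, the bound $\ln(1+A^2)/\gamma_n^p\to 0$ gives $\varphi_{\gamma_n}\ge (2/p)^{1/p}\gamma_n/2$ for $n$ large, so the elementary inequality $\varphi_\gamma^p - (\varphi_\gamma - \tau)^p \ge p\tau(\varphi_\gamma-\tau)^{p-1}$ (valid for $p>1$, $\varphi_\gamma>\tau>0$) produces an exponential bulk gain of the form $e^{(\varphi_{\gamma_n}-\tau_n)_+^p}\le e^{\varphi_{\gamma_n}^p}e^{-c_0\gamma_n^{p-1}}$ on $B$, for some $c_0=c_0(\tau_0,p)>0$. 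On $T$, the trivial bound $e^{(\varphi_\gamma-\tau)_+^p}-1\le e^{\varphi_\gamma^p}-1$ combined with a quantitative refinement of Laplace, exploiting $h(1-w)\le 2/p - w/2$ for small $w$, should give $\int_T e^{\varphi_{\gamma_n}^p}\,dx \le (C/A)\int_{\R^2}e^{\varphi_{\gamma_n}^p}\,dx$ for $A$ and $n$ large. Combining the two contributions,
\begin{equation*}
\bar t = \frac{\int_{\R^2}(e^{(\varphi_{\gamma_n}-\tau_n)_+^p}-1)\,dx}{\int_{\R^2}(e^{\varphi_{\gamma_n}^p}-1)\,dx} \le e^{-c_0\gamma_n^{p-1}}+\frac{C}{A}+o(1),
\end{equation*}
and sending $n\to\infty$ and then $A\to\infty$ delivers the desired contradiction.

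The main obstacle will be the tail estimate on $T$: one must exploit the quadratic correction in the Taylor expansion $h(1-w)=2/p-w+O(w^2)$ to get $A$-quantitative exponential decay outside $\{|x|\le Ar_\gamma\}$, and handle separately the region where $h(1-w)$ approaches its secondary local maximum $h(0)=1<2/p$ (which is exponentially smaller and hence negligible, but still requires an explicit bound). The bulk estimate, by contrast, is essentially a one-line calculation once the mean-value inequality is set up.
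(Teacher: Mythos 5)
Your argument is correct and reaches the conclusion, but it takes a genuinely different (and somewhat heavier) route than the paper.  The monotonicity reduction to the single value $\bar t$ at the start is the same, and so is the basic mean-value inequality at the core of the bulk gain.  The decisive difference is the choice of decomposition.  The paper fixes a threshold $L$ in the \emph{value} of $\varphi_\gamma$ and splits $\R^2$ into $\{\varphi_\gamma < L\}$ and $\{\varphi_\gamma \ge L\}$.  On the paper's tail the estimate is trivially crude: $\{\varphi_\gamma<L\}$ has measure at most $\pi\delta_\gamma^2\to 0$, so $\int_{\{\varphi_\gamma<L\}}(e^{(\varphi_\gamma-\tau)_+^p}-1)\,dx\le (e^{L^p}-1)\,\pi\delta_\gamma^2=o_\gamma(1)$ in \emph{absolute} terms, while the denominator diverges; no Laplace-type information about where the mass concentrates is needed.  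On the paper's bulk, since $\varphi_\gamma\ge L$ there, Taylor's theorem gives $(\varphi_\gamma-\tau)_+^p\le\varphi_\gamma^p-\tfrac{p}{2}\tau\varphi_\gamma^{p-1}\le\varphi_\gamma^p-\tfrac{p}{2}\tau L^{p-1}$ for $L=L(\tau,p)$ large, so the bulk ratio is at most $e^{-\frac p2\tau L^{p-1}}$, small in $L$ but not vanishing in $\gamma$.  You instead split by $|x|\lessgtr A r_\gamma$, prove via a Laplace computation that $\int(e^{\varphi_\gamma^p}-1)\,dx\sim\pi r_\gamma^2 e^{(2/p)\gamma^p}$ with all but an $O(1/A)$ fraction of the mass inside $B_{Ar_\gamma}(0)$, and use that on this bulk $\varphi_\gamma\sim\gamma$ so the mean-value bound produces the stronger gain $e^{-c_0\gamma^{p-1}}\to 0$.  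What each route buys: the paper's argument is shorter and avoids any quantitative asymptotics of the Moser-type integral (only a lower bound that forces the denominator to infinity), while your argument gives a sharper bulk estimate and more information about the concentration profile at the cost of the $C/A$-type tail bound, which — as you rightly flag — requires handling both the $w\to 0$ Laplace decay and the secondary local maximum of $h$ at $w=1$.  Both proofs require $p>1$ in essentially the same way.

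One small caveat: make explicit that the bulk gain is uniform in $\tau\ge\tau_0$. Since $(\varphi-\tau)_+^p$ is decreasing in $\tau$, one has $\varphi^p-(\varphi-\tau)_+^p\ge\varphi^p-(\varphi-\tau_0)_+^p$, so replacing $\tau_n$ by $\tau_0$ in the mean-value inequality yields $c_0=c_0(\tau_0,p)$ independent of how large $\tau_n$ may be; this step is needed because you have only assumed $\tau_n\ge\tau_0$, not an upper bound on $\tau_n$.
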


\begin{proof}
Given $\gamma, \tau>0$, consider $L\ge \frac{2}{\gamma}$ to be fixed later. We easily see that
\begin{equation}\label{eqtau1}
\begin{split}
\int_{\{\varphi_\gamma< L\}}\(e^{(\varphi_\gamma-\tau)_+^p}-1\)dx&\le \int_{\{\varphi_\gamma< L\}}\(e^{\varphi_\gamma^p}-1\)dx\\
&\le \(e^{L^p} -1\){\pi}\delta_\gamma^2=o_\gamma(1).
\end{split}
\end{equation}
Moreover, for $\gamma$ such that
$$\frac{2}{p}\(1-\frac{\ln 2}{\gamma^p}\)^p\ge 1+\ve>1,$$
also using that $\varphi_\gamma\ge L$ on $B_{r_\gamma}(0)$ for $\gamma$ large, we get
\begin{equation}\label{eqtau2}
\begin{split}
\int_{\{\varphi_\gamma\ge  L\}}\(e^{\varphi_\gamma^p}-1\)dx&\ge \int_{B_{r_\gamma}(0)}\(e^{\frac{2}{p}\gamma^p(1-\frac{\ln 2}{\gamma^p})^p}-1\)dx\\
&\ge\int_{B_{r_\gamma}(0)}\(e^{(1+\ve)\gamma^p}-1\)dx\\
&\ge \pi r_\gamma^2 \(e^{(1+\ve)\gamma^p}-1\)\to\infty.
\end{split}
\end{equation}
By the Taylor expansion
$$(1-x)^p=1-px+\frac{p(p-1)}{2(1-\xi)^{2-p}}x^2\le 1-px+C_px^2,\quad 0\le \xi\le x\le \frac{1}{2}, $$
we get for $\varphi_\gamma\ge L\ge 2\tau$
$$(\varphi_\gamma-\tau)^p_+\le \varphi_\gamma^p -p\tau\varphi_\gamma^{p-1}+C_p\tau^2\varphi_\gamma^{p-2}\le \varphi_\gamma^p-\frac{p}{2}\tau\varphi_\gamma^{p-1}$$
up to choosing $L\ge L_0(p)$ sufficiently large. 
We then infer
\begin{equation}\label{eqtau3}
\begin{split}
\int_{\{\varphi_\gamma\ge  L\}}\(e^{(\varphi_\gamma-\tau)_+^p}-1\)dx&\le  e^{-\frac{p}{2}\tau L^{p-1}} \int_{\{\varphi_\gamma\ge  L\}}e^{\varphi_\gamma^p}dx\\
&=o_L\(\int_{\R^2}\(e^{\varphi_\gamma^p}-1\)dx\),\quad \text{as }L\to\infty.
\end{split}
\end{equation}
Putting \eqref{eqtau1}-\eqref{eqtau3} together it follows that
$$t(\tau,\gamma):=\frac{\int_{\R^2}\(e^{(\varphi_\gamma-\tau)^p_+}-1\)dx}{\int_{\R^2}\(e^{\varphi_\gamma^p}-1\)dx}=o(1)\quad \text{as }\gamma\to\infty
$$
for any $\tau>0$. This implies that $\tau(\bar t,\gamma)=o(1)$ as $\gamma\to\infty$ for any $\bar t\in (0,1]$ since otherwise there would be sequences $\gamma_\ve\to 0$ and $\tau_\ve\in (0,\gamma_\ve]$ such that $\tau_\ve(\bar t,\gamma_\ve)\ge\tau_*>0$, and by monotonicity
$$0<\bar t=t(\tau_\ve,\gamma_\ve)\le  t(\tau_*,\gamma_\ve)=o(1)\quad \text{as }\ve\to 0,$$
a contradiction. 
Using the monotonicity of $\tau$ with respect to $t$ the conclusion follows at once.
\end{proof}

Now call $\tau_i=\tau(t_i,\gamma)$, $1\le i\le k$ and define $\pms$ by the formula 
$$e^{\pms^p}-1=\sum_{i=1}^k \left(e^{(\pmxi-\tau_i)^p_+}-1\right),$$
or, explicitly
\begin{equation}\label{pms}
\pms =\ln^\frac{1}{p}\left(1+\sum_{i=1}^k \left(e^{(\pmxi-\tau_i)_+^p} -1\right)\right).
\end{equation}

\noindent \emph{Notation.} Until the end of the section $o(1)$ (resp. $O(1)$) will denote a quantity tending to $0$ (resp. a bounded quantity) as $\gamma\to\infty$, uniformly with respect to $x\in \Sigma$ and $\sigma\in\Sigma_k$.

\begin{lem}\label{lemma:gradpmx} For every $x\in \Sigma$,  we have
$$
\int_\Sigma |\nabla \pmx|^2\,dv_g=\left(\frac2p\right)^{\frac2p}4\pi\gamma^{2-p}(1+o(1)),\quad \text{as }\gamma\to\infty.
$$
\end{lem}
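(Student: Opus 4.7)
The plan is a direct Euclidean computation in normal coordinates centred at $x$, exploiting that $\pmx$ is supported in the shrinking ball $B_{\dmu}(x)$, with $\dmu\to 0$ by \eqref{deltamu}.

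First I fix normal coordinates at $x$ on a ball $B_{2\dmu}(x)$ (for $\gamma$ large enough so that $2\dmu$ lies inside the injectivity radius). In these coordinates the distance function satisfies $d(y,x)=|y|$ and $|\nabla d|_{g}=1$, while $\sqrt{\det g}=1+O(|y|^2)$. On the set $\{\pmx>0\}=B_{\dmu}(x)$, the positive-part bracket in the definition of $\pmx$ is active, so
$$\pmx(y)=\left(\tfrac{2}{p}\right)^{1/p}\!\gamma-\tfrac{1}{\gamma^{p-1}}\!\left(\tfrac{2}{p}\right)^{1/p}\ln\!\left(1+\tfrac{|y|^2}{\rmu^2}\right),$$
and one computes
$$|\nabla \pmx|_g^2=\tfrac{1}{\gamma^{2(p-1)}}\!\left(\tfrac{2}{p}\right)^{2/p}\!\frac{4|y|^2}{(\rmu^2+|y|^2)^2}\,,$$
with an $O(|y|^2)$ multiplicative correction coming from the metric, which will ultimately be absorbed in a $1+o(1)$ factor.

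Next I pass to polar coordinates $|y|=r$ and perform the substitution $s=r/\rmu$. The upper limit becomes $\dmu/\rmu=\sqrt{e^{\gamma^p}-1}\to\infty$ by \eqref{deltamu}, and a direct computation gives
$$\int_0^{R}\frac{s^3}{(1+s^2)^2}\,ds=\tfrac{1}{2}\Bigl[\ln(1+R^2)+\tfrac{1}{1+R^2}-1\Bigr],$$
so with $R^2=e^{\gamma^p}-1$ the bracket equals $\gamma^p+O(1)$. Putting things together yields
$$\int_\Sigma |\nabla \pmx|_g^2\,dv_g=\tfrac{1}{\gamma^{2(p-1)}}\!\left(\tfrac{2}{p}\right)^{2/p}\cdot 8\pi\cdot\tfrac{1}{2}\gamma^{p}(1+o(1))=\left(\tfrac{2}{p}\right)^{2/p}4\pi\,\gamma^{2-p}(1+o(1)),$$
which is the claim.

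There is really no serious obstacle: the only thing to double-check is that the metric corrections $\sqrt{\det g}=1+O(r^2)$ and $g^{ij}=\delta^{ij}+O(r^2)$ produce only a $1+o(1)$ error after integration. Since $r\le\dmu=o(1)$ on the support of $\pmx$, each such correction contributes at most an $O(\dmu^2)$ factor in front of the leading $\gamma^{2-p}$ term, which is negligible. The uniformity in $x\in\Sigma$ follows because the normal-coordinate expansions of $g$ depend on $x$ only through a bounded quantity on the compact surface $\Sigma$, and the injectivity radius has a positive lower bound.
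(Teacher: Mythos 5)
Your proof is correct and takes essentially the same approach as the paper: an explicit Euclidean computation in normal coordinates at $x$, using that the metric corrections on the shrinking support $B_{\dmu}(x)$ contribute only a $1+o(1)$ multiplicative error, followed by a change of variables to reduce the radial integral to an explicit primitive. The only cosmetic difference is your substitution $s=r/\rmu$ versus the paper's $s=\rmu^2+\rho^2$; both lead directly to the leading term $\frac12\gamma^p(1+o(1))$ in the bracketed integral, and your antiderivative and final bookkeeping are accurate.
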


\begin{proof}
By a straightforward computation, for any $y\in B_{\delta_\gamma}(x)$, we get 
$$
\nabla\pmx(y)=-\left(\frac2p\right)^{\frac1p}\gamma^{1-p}\:\frac{\rmu^{-2}\nabla_y(d^2(y,x))}{1+\rmu^{-2}d^2(y,x)}, 
$$
while $\nabla \pmx(y)=0$ in $\Sigma\setminus B_{\delta_\gamma}(x)$.\\
Using geodesic coordinates centered at $x$, with an abuse of notation, we identify the points in $\Sigma$ with their
pre-image under the exponential map. Using these coordinates, and recalling that $\delta_\gamma\to 0$ we have that $$d(y,x)=|y-x|(1+o(1)),\quad |\nabla_y(d^2(y,x))|=2|y-x|(1+o(1)),\quad y\in B_{\delta_\gamma}(x)$$
hence
\begin{equation}\label{stimagrad}
|\nabla \pmx(y)|=\left(\frac2p\right)^{\frac1p}\gamma^{1-p}(1+o(1))\frac{2|y-x|}{r_\gamma^2+|y-x|^2},\quad y\in B_{\delta_\gamma}(x).
\end{equation}
Thanks to the change of variable $s=r_\gamma^2+\rho^2$, we are able to conclude that 
\begin{equation*}
\begin{split}
\int_\Sigma |\nabla \pmx|^2\,dv_g&=\int_{B_{\delta_\gamma}(x)} |\nabla \pmx|^2\,dv_g\\
&=\left(\frac2p\right)^{\frac2p}\gamma^{2-2p}(1+o(1))\int_{B^{\R^2}_{\delta_\gamma}(x)}\frac{4|y-x|^2}{(r_\gamma^2+|y-x|^2)^2}dy\\
&=\left(\frac2p\right)^{\frac2p}4\pi\gamma^{2-2p}(1+o(1))\int_0^{\dmu}\frac{2\rho^3}{(r_\gamma^2+\rho^2)^2}d\rho\\
&=\left(\frac2p\right)^{\frac2p}4\pi\gamma^{2-2p}(1+o(1))\int^{r_\gamma^2 e^{\gamma^p}}_{r_\gamma^2}\(\frac{1}{s}-\frac{r_\gamma^2}{s^2}\)ds\\
&=\left(\frac2p\right)^{\frac2p}4\pi\gamma^{2-p}(1+o(1)), 
\end{split}
\end{equation*}
yielding the result. 
\end{proof}

\begin{lem}\label{lemma:gradptms} In the above notation we have, uniformly for $\sigma \in \Sigma_k$
$$\int_\Sigma |\nabla \pms|^2\,dv_g \le \left(\frac2p\right)^{\frac2p}4\pi k \gamma^{2-p}(1+o(1)). $$
\end{lem}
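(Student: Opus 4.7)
The plan is to express $\nabla\pms$ via the chain rule as a linear combination of the $\nabla u_i$'s, where $u_i:=(\pmxi-\tau_i)_+$, then combine Cauchy--Schwarz with a Jensen-type bound on the coefficients. Setting $S:=1+\sum_j(e^{u_j^p}-1)$ so that $\pms^p=\ln S$, differentiation gives
$$\nabla\pms=\sum_{i=1}^k\alpha_i\nabla u_i,\qquad \alpha_i:=\frac{u_i^{p-1}e^{u_i^p}}{\pms^{p-1}S}.$$
Two pointwise inequalities for the $\alpha_i$ are crucial. First, $\pms\ge u_i$ and $S\ge e^{u_i^p}$ immediately yield $\alpha_i\le 1$. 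Second, applying Jensen's inequality to the concave function $t\mapsto t^{(p-1)/p}$ with weights $w_j:=e^{u_j^p}/\sum_\ell e^{u_\ell^p}$ gives $\sum_j w_j u_j^{p-1}\le(\sum_j w_j u_j^p)^{(p-1)/p}\le(\max_j u_j^p)^{(p-1)/p}\le\pms^{p-1}$, which rearranges to $\sum_i\alpha_i\le 1+(k-1)/S$.

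Combining Cauchy--Schwarz with $\alpha_i\le 1$ yields the pointwise estimate
$$|\nabla\pms|^2\le\Bigl(\sum_i\alpha_i\Bigr)\Bigl(\sum_i\alpha_i|\nabla u_i|^2\Bigr)\le\Bigl(1+\frac{k-1}{S}\Bigr)\sum_{i=1}^k|\nabla u_i|^2.$$
Since $|\nabla u_i|\le|\nabla\pmxi|$ almost everywhere, integrating and applying Lemma~\ref{lemma:gradpmx} to the main term gives
$$\int_\Sigma|\nabla\pms|^2\,dv_g\le k\Bigl(\frac{2}{p}\Bigr)^{2/p}4\pi\gamma^{2-p}(1+o(1))+(k-1)\sum_{i=1}^k\int_\Sigma\frac{|\nabla u_i|^2}{S}\,dv_g.$$

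The remaining task, which I expect to be the main technical step, is to show that the error $\sum_i\int|\nabla u_i|^2/S\,dv_g=o(\gamma^{2-p})$ uniformly in $\sigma\in\Sigma_k$. For each $i$ I would split at the level $\{u_i\ge\gamma^{1/2}\}$: on this set $S\ge e^{\gamma^{p/2}}$ makes $1/S$ super-exponentially small, so the contribution is bounded by $e^{-\gamma^{p/2}}\int|\nabla\pmxi|^2=o(\gamma^{2-p})$. On its complement intersected with $\mathrm{supp}(u_i)$, $\pmxi$ lies in the slab $(\tau_i,\tau_i+\gamma^{1/2}]$, of width $\gamma^{1/2}$; redoing the change of variables $s=r_\gamma^2+\rho^2$ from the proof of Lemma~\ref{lemma:gradpmx} shows that this slab contributes $O(\gamma^{1/2}\cdot\gamma^{1-p})=O(\gamma^{3/2-p})=o(\gamma^{2-p})$ to $\int|\nabla\pmxi|^2$. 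Both estimates are independent of $x_i$ and hold uniformly for $\tau_i\in[0,(2/p)^{1/p}\gamma]$, which provides the required uniformity in $\sigma$; the degenerate regimes of coinciding $x_i$'s or $t_i$'s close to $0$ are automatically covered, since in the first case the shared supports only make $S$ larger, and in the second case $u_i$ has small maximum so that either its total gradient energy is already $o(\gamma^{2-p})$ or the large/small splitting at $\gamma^{1/2}$ applies verbatim.
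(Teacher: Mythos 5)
Your proof is correct. It also departs in a useful way from the paper's argument. The paper writes $|\nabla\pms|\le\sum_i a_i|\nabla\pmxi|\chi_{\{\pmxi>\tau_i\}}$ with $a_i=e^{u_i^p}/S$ (your $\alpha_i$ bounded via $u_i\le\pms$), then partitions $\Sigma$ twice: into $\Omega_1,\dots,\Omega_k$ according to which $|\nabla\varphi_{\gamma,x_j}|$ is largest, and into $\Sigma_\pm$ according to whether $\sum_j e^{u_j^p}\ge\gamma$. On $\Sigma_+\cap\Omega_j$ it estimates $(\sum_i a_i)^2|\nabla\varphi_{\gamma,x_j}|^2$ using $\sum_i a_i\le 1+o(1)$; on $\Sigma_-\cap\Omega_j$ it notes $\sum_i a_i\le k$ and that $\Sigma_-\cap\Omega_j$ lies in a thin annulus whose gradient energy is $o(\gamma^{2-p})$. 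Your version eliminates the $\Omega_j$ partition entirely: the additional pointwise bound $\alpha_i\le 1$ (which the paper does not exploit) combined with Cauchy--Schwarz gives directly $|\nabla\pms|^2\le\bigl(\sum_i\alpha_i\bigr)\sum_i|\nabla u_i|^2$, and the Jensen computation gives $\sum_i\alpha_i\le 1+(k-1)/S$ (which is in fact what the paper's $\sum a_i$ equals). This replaces the two partitions by a single weighted error term $\int|\nabla u_i|^2/S$, which you estimate by splitting at $u_i\ge\gamma^{1/2}$ instead of the paper's threshold $\ln^{1/p}\gamma$; both thresholds land in the range where the slab width is $o(\gamma^p)$, so the annulus integral $\gamma^{2-2p}\cdot(\text{slab width in }t)$ is $o(\gamma^{2-p})$. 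The trade-off is that your route is structurally cleaner (no $\Omega_j$ decomposition, fewer cases), while the paper's is slightly more elementary (no Jensen, no Cauchy--Schwarz, and the crude $\sum a_i\le k$ suffices on $\Sigma_-$). One small remark: since $\sum_i a_i$ is exactly $1+(k-1)/S$, the Jensen step, though correct and nicely motivated, is not strictly necessary — the identity $\sum_i e^{u_i^p}=S+k-1$ already gives the bound on $\sum\alpha_i$ once you have used $u_i\le\pms$ to control the extra factor $u_i^{p-1}/\pms^{p-1}$.
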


\begin{proof} We compute
$$\nabla \pms =\frac{\sum_{i=1}^k (\pmxi-\tau_i)_+^{p-1}e^{(\pmxi- \tau_i)_+^p}\nabla \pmxi}{\ln^\frac{p-1}{p}\(1+\sum_{j=1}^k \(e^{(\pmxj-\tau_j)_+^p}-1\) \)\left(1+\sum_{j=1}^k \(e^{(\pmxj-\tau_j)_+^p}-1\)\right)}.$$
Notice that
\begin{equation*}
\begin{split}
0&\le \frac{(\pmxi-\tau_i)_+^{p-1}e^{(\pmxi- \tau_i)_+^p}}{\ln^\frac{p-1}{p}\(1+\sum_{j=1}^k \(e^{(\pmxj-\tau_j)_+^p}-1\) \)\left(1+\sum_{j=1}^k \(e^{(\pmxj-\tau_j)_+^p}-1\)\right)}\\
&\le a_i \chi_{\{\pmxi>\tau_i\}},
\end{split}
\end{equation*}
where
$$a_i:=\frac{e^{(\pmxi- \tau_i)_+^p}}{1+\sum_{j=1}^k \(e^{(\pmxj-\tau_j)_+^p}-1\)},$$
hence
$$|\nabla \pms(x)| \le \sum_{i=1}^k a_i(x)|\nabla \pmxi(x)|\chi_{\{\pmxi>\tau_i\}}(x).$$
Split now $\Sigma$ as a disjoint (up to sets of measure zero) union $\Omega_1\cup\dots\cup\Omega_k$, such that
$${|\nabla \pmxj(x)|=\max_{1\le i\le k}|\nabla \pmxi(x)|\quad \text{for }x\in \Omega_j,}$$
and further split $\Sigma$ as  $\Sigma=\Sigma_+\cup\Sigma_-$,
where 
$$\Sigma_+:=\left\{x\in\Sigma: \sum_{j=1}^k e^{(\varphi_{{\gamma,x_j}}(x)-\tau_j)^p_+} \ge \gamma\right \},\quad  \Sigma_-:=\Sigma\setminus \Sigma_+.$$
Notice that
$$\sum_{i=1}^{k}a_i(x)\le 1+o_\gamma(1)\quad \text{for }x\in \Sigma_+.$$

Then, with the help of Lemma \ref{lemma:gradpmx} we obtain 
\begin{equation}\label{eq:lemma1.8}
\begin{split}
\int_{\Sigma_+} |\nabla \pms|^2 dx&\le \sum_{j=1}^k \int_{\Sigma_+\cap \Omega_j}\(\sum_{i=1}^k a_i|\nabla \pmxj|\)^2dx\\
&\le (1+o(1))\sum_{j=1}^k\int_{\Sigma_+}|\nabla\pmxj|^2dx\\
&\le (1+o(1))\left(\frac2p\right)^{\frac2p}4\pi k \gamma^{2-p}.
\end{split}
\end{equation}
We now want to prove that the integral over $\Sigma_-$ is negligible. Indeed we have
$$\sum_{i=1}^{k}a_i(x)\le k \quad \text{for }x\in \Sigma_-,$$
since $ \frac{s}{s-k+1}\le k$ for $s\ge k$, and similarly to \eqref{eq:lemma1.8} we get
$$\int_{\Sigma_-} |\nabla \pms|^2 dx \le k^2 \sum_{j=1}^k\int_{\Sigma_-\cap \Omega_j}|\nabla\pmxj|^2dx.$$ 
In order to estimate the right-hand side, observe that
$$1\le e^{(\varphi_j(x)-\tau_j)_+^p}\le \gamma \quad \text{for }x\in \Sigma_-.$$
This implies that
$$\Sigma_-\cap \Omega_j \subset B_{R_1}(x_j)\setminus B_{r_1}(x_j)\quad \text{for every }j,$$
where $R_1$ and $r_1$ are given by the relations
$$1 \le e^{\(C_p\gamma\(1-\frac1{\gamma^p}\ln\(1+\frac{d^2(x,x_j)}{r_\gamma^2}\)\)-\tau_j\)^p } \le \gamma,\quad C_p:=\(\frac 2p\)^{\tfrac1p}. $$
This yields
$$\gamma^p-C_p^{-1}\tau_j\gamma^{p-1}\ge \ln\(1+\frac{d^2(x,x_j)}{r_\gamma^2}\) \ge \gamma^p-C_p^{-1}\tau_j \gamma^{p-1}-\gamma^{p-1}\ln^\frac1p \gamma,$$
and
$$R_1^2= \(e^{\gamma^p-C_p^{-1}\tau_j\gamma^{p-1}}-1\)r_\gamma^2, \quad r_1^2= \(e^{\gamma^p-C_p^{-1}\tau_j\gamma^{p-1}-\gamma^{p-1}\ln^\frac1p\gamma}-1\)r_\gamma^2.$$
We now integrate as in Lemma \ref{lemma:gradpmx}, and with the same change of variables $s=r_\gamma^2+\rho^2$ we obtain
\begin{equation*}
\begin{split}
\int_{B_{R_1}(x_j)\setminus B_{r_1}(x_j)} |\nabla \pmxj|^2 dv_g
&=O\(\gamma^{2-2p}\)\int_{r_\gamma^2+r_1^2}^{r_\gamma^2+R_1^2}\frac{s-r_\gamma^2}{s^2}ds\\
&\le O\(\gamma^{2-2p}\)\int^{r_\gamma^2 e^{\gamma^p-C_p^{-1}\tau_j\gamma^{p-1}}}_{r_\gamma^2 e^{\gamma^p-C_p^{-1}\tau_j\gamma^{p-1}-\gamma^{p-1}\ln^\frac1p\gamma}}\frac{ds}{s}\\
&=O\(\gamma^{1-p}\ln^\frac1p\gamma\)\\
&=o(\gamma^{2-p}).
\end{split}
\end{equation*}
Together with \eqref{eq:lemma1.8}, we conclude.
\end{proof}

\begin{lem}\label{lemma:L2} We have the following estimates, uniformly for $\sigma \in \Sigma_k$
$$
\int_\Sigma {h} \pms^2\,dv_g=o(\gamma^{2-p}).
$$
\end{lem}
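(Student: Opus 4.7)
The plan is to combine two simple facts: a crude pointwise bound $\pms\le C_{p,k}\gamma$, and the observation that $\pms$ is supported on a set of very small $g$-area, of order $\dmu^2$, which by \eqref{rgammadef} and \eqref{deltamu} decays faster than any polynomial in $\gamma$. This reduces the lemma to a one-line estimate.

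For the pointwise bound, I would note that $\pmxi\le(2/p)^{1/p}\gamma$ everywhere, so $(\pmxi-\tau_i)_+^p\le(2/p)\gamma^p$. Then \eqref{pms} yields $e^{\pms^p}\le 1+k\,e^{(2/p)\gamma^p}$, and taking the $\ln$ followed by the $1/p$-th power gives $\pms\le C_{p,k}\gamma$ for $\gamma$ large. For the support, since each $\pmxi$ vanishes outside $B_{\dmu}(x_i)$ by construction, \eqref{pms} forces $\pms$ to be supported in $\bigcup_{i=1}^k B_{\dmu}(x_i)$, a set whose $g$-volume is at most $k\pi\dmu^2(1+o(1))$ by smoothness of $\Sigma$, uniformly in the $x_i$'s.

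Combining these two with $\|h\|_{L^\infty(\Sigma)}<\infty$,
$$\int_\Sigma h\,\pms^2\,dv_g\;\le\;C_{p,k}^2\,\|h\|_{L^\infty}\,\gamma^2\cdot k\pi\dmu^2(1+o(1))\;=\;O(\gamma^2\dmu^2).$$
From \eqref{rgammadef}, $r_\gamma=o(e^{-\gamma^p})$ implies $\dmu^2\le r_\gamma^2 e^{\gamma^p}=o(e^{-\gamma^p})$, so $\gamma^2\dmu^2=o(\gamma^2 e^{-\gamma^p})=o(1)$. Since $p<2$ we have $\gamma^{2-p}\to\infty$, so $o(1)=o(\gamma^{2-p})$, as required. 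All constants depend only on $k$, $p$, $h$ and $\Sigma$, so the estimate is uniform in $\sigma\in\Sigma_k$.

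I do not anticipate any real obstacle here: the exponentially small support does all the work, and the crude $L^\infty$ bound (lossy by a factor of order $\gamma$) is still comfortably sufficient after squaring. If one wanted a sharper asymptotic, one could instead compute $\int\pmxi^2\,dv_g$ directly via the change of variables $s=r_\gamma^2+\rho^2$ used in Lemma \ref{lemma:gradpmx}, but this refinement is not needed for the stated $o(\gamma^{2-p})$ bound.
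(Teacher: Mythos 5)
Your proof is correct, and it takes a genuinely different (and somewhat more economical) route than the paper's. The paper first shows $\int_\Sigma \pmx^2\,dv_g=O(\gamma^{4-4p})=o(\gamma^{2-p})$ via the change of variables $s=1+r^2/r_\gamma^2$ and an explicit primitive, and separately reduces $\pms^2$ to $\sum_j\pmxj^2$ (up to an $O(1)$ error) by decomposing $\Sigma$ according to which $\pmxj$ is largest and then invoking the convexity of $t\mapsto t^{2/p}$. You bypass both steps by combining the crude uniform bound $\pms\le C_{p,k}\gamma$ with the observation that $\operatorname{supp}\pms\subset\bigcup_i B_{\dmu}(x_i)$ has $g$-volume $O(\dmu^2)=o(e^{-\gamma^p})$, which makes $\int h\pms^2\,dv_g$ exponentially small — comfortably $o(\gamma^{2-p})$, and in fact $o(1)$. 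Both arguments are valid, and each of your two ingredients is checked correctly; the uniformity in $\sigma$ and the handling of the $\tau_i$'s (using $(\pmxi-\tau_i)_+\le\pmxi$) are fine. One methodological remark worth noting: your shortcut is tied to the fact that the quantity being bounded is $L^2$ of a \emph{bounded} function on a tiny set; the same "$L^\infty$ times measure" reasoning would fail for the companion gradient estimates (Lemmas \ref{lemma:gradpmx}, \ref{lemma:gradptms}), where $|\nabla\pmx|\sim\gamma^{1-p}/r$ is not uniformly bounded and the integral over the same small annulus is genuinely of order $\gamma^{2-p}$ rather than $o(1)$. So the paper's heavier computation is partly done for consistency with those neighboring lemmas, where your trick has no analogue.
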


\begin{proof}
Let us first evaluate, for $x\in \Sigma$, $\int_{\Sigma}\pmx^2\,dv_g$. Being
$$\int_{B_{\rmu}(x)}\pmx^2\,dv_g=o(1),\quad \pmx=0 \text{ in }\Sigma\setminus B_{\delta_\gamma}(x),$$
it is enough to estimate $\int_{B_{\delta_\gamma}(x)\setminus B_{\rmu}(x)}\pmx^2\,dv_g$.\\
Using normal coordinates  at $x$ and the change of variables $s=1+\frac{r^2}{\rmu^2}$, we get
\begin{equation}\label{opiccolo}
\begin{split}
\int_{\Sigma\setminus B_{\rmu}(x)}\pmx^2 dv_g&=O(\gamma^2)\int_{\rmu}^{\delta_\gamma} r\left(1-\frac{2}{\gamma^p}\ln(1+\frac{r^2}{\rmu^2})+\frac{1}{\gamma^{2p}}\ln^2(1+\frac{r^2}{\rmu^2})\right)dr \\
&=O(\gamma^2)\int_{2}^{e^{\gamma^p}} \rmu^2\left(1-\frac{2}{\gamma^p}\ln(s)+\frac{1}{\gamma^{2p}}\ln^2(s)\right)ds\\
&=O(\gamma^2\rmu^2) \left[s-\frac{2}{\gamma^p}(-s+s\ln s)+\frac{1}{\gamma^{2p}}(2s-2s\ln s+s\ln^2 s)\right]_{2}^{e^{\gamma^p}}\\
&=O(\gamma^{4-4p})\\
&=o(\gamma^{2-p}).
\end{split}
\end{equation}
Splitting  $\Sigma$ as a disjoint (up to sets of measure zero) union $\tilde\Omega_1\cup\dots\cup\tilde\Omega_k$, so that
$$\pmxi(x)=\max_{1\le j\le k} \pmxj(x)\quad \text{for }x\in \tilde\Omega_j,$$
we have
\begin{eqnarray*}
\pms^2(x)&\leq& \ln^{\frac2p}\left(\sum_{i=1}^k e^{\pmxi^p(x)}\right)\leq \sum_{j=1}^k \chi_{\tilde\Omega_j}(x)\ln^{\frac2p}\left(e^{\pmxj^p(x)}\right)\\
&\leq& \sum_{j=1}^k \left(\ln k+\pmxj^p(x)\right)^{\frac2p}\\
&\leq& O(1)+O(1)\sum_{j=1}^k \pmxj^2(x),
\end{eqnarray*}
where in the last inequality we  used the convexity of the map $t\mapsto t^{\frac2p}$.

As a consequence{, since $h$ is bounded,}
$$
\int_\Sigma {h} \pms^2\,dv_g=O(1)+O(1)\sum_{j=1}^k \int_\Sigma \pmxj^2(x)\,dv_g\overset{\eqref{opiccolo}}{=}o(\gamma^{2-p}), 
$$
as desired. 
\end{proof}

\begin{lem}\label{lemma:expterm}
We have, uniformly for $\sigma \in \Sigma_k$
$$
\ln\int_\Sigma \(e^{\pms^p}-1\)dv_g\geq \frac {2-p}p \gamma^p (1+o(1)),\quad \text{as }\gamma\to\infty.
$$
\end{lem}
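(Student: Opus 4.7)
The plan is to exploit the particular design of $\pms$ in \eqref{pms} together with the normalization $\tau_i=\tau(t_i,\gamma)$ from \eqref{deftau} in order to reduce the global estimate on $\Sigma$ to a single scalar integral over $\R^2$ involving the standard bubble $\varphi_\gamma$; the lemma will then follow from a direct asymptotic analysis of that scalar integral.

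The reduction proceeds in three steps. First, by the very definition of $\pms$, one has the pointwise identity $e^{\pms^p}-1=\sum_{i=1}^k(e^{(\pmxi-\tau_i)_+^p}-1)$, so integrating splits the total integral into $k$ local contributions, each supported in the shrinking ball $B_{\delta_\gamma}(x_i)$. Second, since $\delta_\gamma\to 0$, working in normal coordinates around $x_i$ converts $dv_g$ into $(1+o(1))\,dx$ on this ball, so each piece equals $(1+o(1))\int_{\R^2}(e^{(\varphi_\gamma-\tau_i)_+^p}-1)\,dx$. Third, the very definition of $\tau_i$ gives $\int_{\R^2}(e^{(\varphi_\gamma-\tau_i)_+^p}-1)\,dx=t_i\int_{\R^2}(e^{\varphi_\gamma^p}-1)\,dx$, and summing on $i$ using $\sum_i t_i=1$ yields
$$\int_\Sigma(e^{\pms^p}-1)\,dv_g = (1+o(1))\int_{\R^2}(e^{\varphi_\gamma^p}-1)\,dx,$$
uniformly for $\sigma\in\Sigma_k$.

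The remaining task is to estimate $I_\gamma:=\int_{\R^2}(e^{\varphi_\gamma^p}-1)\,dx$ from below. Passing to polar coordinates and using the substitutions $s=1+\rho^2/r_\gamma^2$ and then $v=(\ln s)/\gamma^p$, one finds
$$I_\gamma = \pi r_\gamma^2\gamma^p\int_0^1\bigl(e^{\gamma^p[v+(2/p)(1-v)^p]}-e^{\gamma^p v}\bigr)\,dv.$$
The function $g(v)=v+(2/p)(1-v)^p$ attains its maximum $g(0)=2/p$ on $[0,1]$, with $g'(0)=-1$, so a standard Laplace-type argument at the boundary produces the asymptotic behavior $I_\gamma\sim c\, r_\gamma^2\,e^{(2/p)\gamma^p}$ for some positive constant $c$. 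Taking logarithms and invoking the precise size of $r_\gamma$ prescribed by \eqref{rgammadef} then yields the announced lower bound $\frac{2-p}{p}\gamma^p(1+o(1))$.

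The main obstacle is extracting the right coefficient in this last step: the prefactor $r_\gamma^2$ is exponentially small in $\gamma^p$ while the peak $e^{(2/p)\gamma^p}$ is exponentially large, so one has to verify that the size conditions on $r_\gamma$ in \eqref{rgammadef} are exactly what is needed to recover the coefficient $\tfrac{2-p}{p}$ in front of $\gamma^p$. Uniformity in $\sigma\in\Sigma_k$ requires no extra work: Lemma \ref{lemma:tau} gives that $\tau_i\to0$ uniformly when $t_i$ stays bounded away from $0$, while indices with $t_i\to0$ contribute negligibly since $\sum_i t_i=1$.
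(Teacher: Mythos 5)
Your reduction to the single scalar integral $I_\gamma=\int_{\R^2}(e^{\varphi_\gamma^p}-1)\,dx$ is a genuinely different and cleaner route than the paper's. The paper fixes one index $i$ with $t_i\ge 1/k$, invokes Lemma~\ref{lemma:tau} to ensure $\tau_i=o(1)$, and integrates the resulting pointwise lower bound on $\pms$ over the tiny ball $B_{r_\gamma}(x_i)$ --- a crude but sufficient one-bubble estimate. You instead exploit the \emph{exact} normalization \eqref{deftau} (no asymptotics of $\tau_i$, so Lemma~\ref{lemma:tau} is not needed) together with $\sum_i t_i=1$ to get $\int_\Sigma(e^{\pms^p}-1)\,dv_g=(1+o(1))I_\gamma$ uniformly in $\sigma$; this is essentially the computation \eqref{fact1}--\eqref{fact2} that the paper performs later for part (ii) of Lemma~\ref{lemma:lowsublevels}, so the two arguments could be merged. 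Your Laplace analysis of $I_\gamma$ correctly identifies the boundary maximum of $g(v)=v+\frac{2}{p}(1-v)^p$ at $v=0$ with $g'(0)=-1$, giving $I_\gamma\sim\pi r_\gamma^2 e^{(2/p)\gamma^p}$.

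Where you stop is, however, exactly where the argument actually lives, and I urge you not to treat it as a formality. Taking logarithms gives $\ln I_\gamma=2\ln r_\gamma+\tfrac{2}{p}\gamma^p+O(1)$. To obtain $\frac{2-p}{p}\gamma^p=(\tfrac{2}{p}-1)\gamma^p$ you need $2\ln r_\gamma=-\gamma^p(1+o(1))$, that is $\ln r_\gamma=-\tfrac{\gamma^p}{2}(1+o(1))$. But \eqref{rgammadef} as printed asserts $\ln(r_\gamma e^{\gamma^p})=o(\gamma^p)$, i.e.\ $\ln r_\gamma=-\gamma^p(1+o(1))$, with the explicit example $r_\gamma=\gamma^{-1}e^{-\gamma^p}$; plugging this into your formula yields $\ln I_\gamma=(\tfrac{2}{p}-2)\gamma^p(1+o(1))=\tfrac{2-2p}{p}\gamma^p(1+o(1))$, which is a different (and, for $p>1$, negative) constant, and would wreck the subsequent comparison with the gradient energy in Lemma~\ref{lemma:lowsublevels}. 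This is an off-by-a-factor-of-two slip in \eqref{rgammadef}: the scaling consistent with $\delta_\gamma^2=r_\gamma^2(e^{\gamma^p}-1)\to0$ in \eqref{deltamu}, with the divergence $\pi r_\gamma^2(e^{(1+\ve)\gamma^p}-1)\to\infty$ invoked in \eqref{eqtau2}, and with the saturation of the Moser--Trudinger bound \eqref{MTp0}, is $\ln r_\gamma\sim-\gamma^p/2$, e.g.\ $r_\gamma=\gamma^{-1}e^{-\gamma^p/2}$. So the verification you flag as the main obstacle is not routine: if you take \eqref{rgammadef} at face value, neither your computation nor the paper's own proof of this lemma reproduces the stated constant, and the fix is to correct the exponent of $r_\gamma$ rather than to massage the Laplace estimate.
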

\begin{proof}
Given $\sigma=\sum_{i=1}^k t_i\delta_{x_i}\in\Sigma_k$, fix $i$ such that $t_i\ge\frac{1}{k}$. Then, according to Lemma \ref{lemma:tau} we have $\tau_i=o(1)$ as $\gamma\to\infty$, hence
$$\pms^p\ge (\pmxi-\tau_i)_+^p \ge \frac{2}{p}\gamma^p\(1-\frac{\ln 2}{\gamma^p}-o_\gamma(1)\)_+^p \ge \frac{2}{p}\gamma^p(1+o(1)) \quad \text{on }B_{r_\gamma}(x_i)$$
for $\gamma$ sufficiently large.
Then, also using \eqref{rgammadef}, it follows
\begin{equation*}
\begin{split}
\ln\int_\Sigma \(e^{\pms^p}-1\)dv_g&\ge \ln\int_{B_{r_\gamma}(x_i)} \(e^{\pms^p}-1\)dv_g\\
&\ge \ln\((1+o(1))\pi r_\gamma^2 e^{\frac{2}{p}\gamma^p(1+o(1))} \)\\
&=\(\frac{2}{p}-1+o(1)\)\gamma^p\\
&=\frac{2-p}{p}\gamma^p(1+o(1)),
\end{split}
\end{equation*}
as claimed. 
\end{proof}

\begin{lem}
	\label{lemma:lowsublevels}
	Given $\beta\in(4\pi k,4\pi(k+1))$, with $k\geq1$, then as $\gamma\to+\infty$ we have:

	\begin{itemize}
		\item[\emph{i.}] $J_{p,\beta}(\varphi_{\gamma,\sigma})\to-\infty$ uniformly for $\sigma\in\Sigma_k$,
		\item[\emph{ii.}] $\dist\left(\frac{\(e^{\varphi_{\gamma,\sigma}^p}-1\)dv_g}{\int_\Sigma \(e^{\varphi_{\gamma,\sigma}^p}-1\) dv_g},\sigma \right)\to 0$  uniformly for $\sigma\in\Sigma_k$,  see \eqref{KR}.	\end{itemize}
\end{lem}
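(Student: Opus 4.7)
The plan is to combine the $H^1$-norm bound from Lemmas \ref{lemma:gradptms}--\ref{lemma:L2} with the lower bound from Lemma \ref{lemma:expterm} for (i), and to exploit the defining relation \eqref{deftau} together with the localization of $\pms$ for (ii). For (i), Lemmas \ref{lemma:gradptms}--\ref{lemma:L2} give, uniformly in $\sigma \in \Sigma_k$,
$$\|\pms\|_h^2 \leq \left(\frac{2}{p}\right)^{2/p} 4\pi k\, \gamma^{2-p}(1+o(1)).$$
Using the elementary identity $\left(p(2/p)^{2/p}/2\right)^{p/(2-p)} = 2/p$ (which follows from $(2/p-1)\cdot p/(2-p) = 1$), one deduces
$$\frac{2-p}{2}\left(\frac{p\|\pms\|_h^2}{2\beta}\right)^{p/(2-p)} \leq \frac{2-p}{p}\left(\frac{4\pi k}{\beta}\right)^{p/(2-p)} \gamma^p(1+o(1)),$$
and subtracting the lower bound from Lemma \ref{lemma:expterm} yields
$$J_{p,\beta}(\pms) \leq \frac{2-p}{p}\gamma^p\left[\left(\frac{4\pi k}{\beta}\right)^{p/(2-p)} - 1\right] + o(\gamma^p).$$
Since $\beta > 4\pi k$, the bracket in front of $\gamma^p$ is strictly negative, so $J_{p,\beta}(\pms) \to -\infty$ uniformly in $\sigma$.

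For (ii), by \eqref{deltamu} the function $\pms$ is supported in $\bigcup_i B_{\dmu}(x_i)$ with $\dmu \to 0$, and in geodesic normal coordinates centered at $x_i$ one has $dv_g = (1+O(\dmu^2))dy$ on $B_{\dmu}(x_i)$. Combining the decomposition $e^{\pms^p}-1 = \sum_i(e^{(\pmxi-\tau_i)_+^p}-1)$ with the defining relation \eqref{deftau}, we obtain for every $i$, uniformly in $t_i \in [0,1]$ (the case $t_i = 0$ being trivial since then $(\pmxi-\tau_i)_+ \equiv 0$),
$$\int_\Sigma (e^{(\pmxi-\tau_i)_+^p}-1)\,dv_g = (1+o(1))\, t_i \int_{\R^2}(e^{\varphi_\gamma^p}-1)\,dy.$$
Summing over $i$ and using $\sum_i t_i = 1$ gives $\int_\Sigma(e^{\pms^p}-1)dv_g = (1+o(1))\int_{\R^2}(e^{\varphi_\gamma^p}-1)dy$.

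For any $\psi$ with $\|\psi\|_{Lip(\Sigma)}\le 1$, which we may assume to have zero mean value, the estimate $|\psi(y)-\psi(x_i)| \leq \dmu$ on $B_{\dmu}(x_i)$ yields
$$\int_\Sigma \psi(e^{\pms^p}-1)dv_g = \sum_i \psi(x_i) \int_\Sigma (e^{(\pmxi-\tau_i)_+^p}-1)dv_g + O(\dmu)\int_\Sigma(e^{\pms^p}-1)dv_g,$$
and dividing by $\int_\Sigma(e^{\pms^p}-1)dv_g$ together with the uniform boundedness of $\psi$ gives
$$\left|\int_\Sigma \psi\,\frac{(e^{\pms^p}-1)dv_g}{\int_\Sigma(e^{\pms^p}-1)dv_g}-\sum_i t_i\psi(x_i)\right| = o(1)$$
uniformly in $\sigma\in\Sigma_k$ and in $\psi$, which is the Kantorovich-Rubinstein convergence claimed in (ii). The main delicate point throughout is the uniformity of all $o(1)$ terms in $\sigma$; this is not an issue because the decomposition of $e^{\pms^p}-1$ into pieces supported near each $x_i$ does not require the balls $B_{\dmu}(x_i)$ to be pairwise disjoint, so coinciding points $x_i$ cause no problem.
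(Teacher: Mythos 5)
Your argument is correct and follows essentially the same route as the paper's: part (i) combines Lemmas \ref{lemma:gradptms}--\ref{lemma:L2} with Lemma \ref{lemma:expterm} in exactly the paper's way (and your algebraic identity $(p(2/p)^{2/p}/2)^{p/(2-p)}=2/p$ is the clean form of the simplification the paper does implicitly), and part (ii) relies on the same two facts (the $\tau_i$-defining relation \eqref{deftau} giving relative masses $\approx t_i$, and the concentration of the density in $\bigcup_i B_{\dmu}(x_i)$). The only stylistic difference is that in (ii) you estimate the Kantorovich--Rubinstein distance to $\sigma$ directly against a Lipschitz test function, whereas the paper first invokes the separately established Lemma \ref{l:RK} to compare to the intermediate barycenter $\sigma_\gamma$ with the exact relative masses as weights, and then shows $\dist(\sigma_\gamma,\sigma)\to 0$; your inline version exploits the algebraic decomposition $e^{\pms^p}-1=\sum_i(e^{(\pmxi-\tau_i)_+^p}-1)$, which is available only for the test functions, while Lemma \ref{l:RK} is phrased for general densities because it is re-used elsewhere (Lemma \ref{l:ls2}). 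Both routes are sound and rest on the same estimates.
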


\begin{proof}
\emph{i.}\quad By definition of $\pms$ and {Lemmas \ref{lemma:gradptms}, \ref{lemma:L2} and \ref{lemma:expterm}}
 we have
\begin{equation*}
\begin{split}
J_{p,\beta}(\varphi_{\gamma,\sigma})&=\frac{2-p}{2}\(\frac{p\|\pms\|^2_{{h}}}{2\beta}\)^{\frac{p}{2-p}}-\ln\int_\Sigma \(e^{\pms^p}-1\)dv_g\\
&\leq \frac{2-p}{2}\(\frac{p (\frac 2p)^{\frac 2p} 4\pi k \gamma^{2-p}(1+o(1))}{2\beta}\)^\frac{p}{2-p}-\frac {2-p}p \gamma^p (1+o(1))\\
&=\frac{2-p}{2}\left[\(\frac{4\pi k}{\beta}\)^\frac{p}{2-p}\frac 2p \gamma^p(1+o(1))\right]-\frac {2-p}p \gamma^p (1+o(1))\\
&=\frac{2-p}{p}\gamma^p\left[\(\frac{4\pi k}{\beta}\)^\frac{p}{2-p}-1\right](1+o(1))\to -\infty, 
\end{split}
\end{equation*}
uniformly for $\sigma\in\Sigma_k$.

\emph{ii.} Let us first collect some simple calculations.\\
Let $\sigma=\sum_{i=1}^k t_i\delta_{x_i}\in\Sigma_k$: then, since $\delta_\gamma\to0$ when $\gamma\to+\infty$,
\begin{eqnarray}\label{fact1}
\int_{B_{\delta_{\gamma}}(x_i)}\left(e^{(\pmxi-\tau_i)_+^p} -1\right)\,dv_g&\!\!\!\!\!=&\!\!\!\!\!(1+o(1))\int_{B^{\R^2}_{\delta_{\gamma}}(0)}\left(e^{(\varphi_{\gamma}-\tau_i)_+^p} -1\right)\,dx\nonumber\\
\!\!\!\!\!&\!\!\!\!\!\overset{\eqref{deftau}}{=}&\!\!\!\!\!(1+o(1))\,t_i\int_{\R^2}\left(e^{\varphi_{\gamma}^p} -1\right)\,dx,
\end{eqnarray}
as a consequence
\begin{eqnarray}\label{fact2}
\int_{\cup_{j=1}^k B_{\delta_\gamma}(x_j)}(e^{\varphi_{\gamma,\sigma}^p}-1)\,dv_g&\overset{\eqref{pms}}{=}&\int_{\cup_{j=1}^k B_{\delta_\gamma}(x_j)}\left(\sum_{i=1}^k \left(e^{(\pmxi-\tau_i)_+^p} -1\right)\right)\,dv_g\nonumber\\
&=&\sum_{i=1}^k\int_{B_{\delta_{\gamma}}(x_i)}\left(e^{(\pmxi-\tau_i)_+^p} -1\right)dv_g\nonumber\\
&\overset{\eqref{fact1}}{=}&(1+o(1))\,\int_{\R^2}\left(e^{\varphi_{\gamma}^p} -1\right)\,dx,
\end{eqnarray}
where in the second identity we used that $\pmxi\equiv 0$ on $\Sigma\setminus B_{\delta_{\gamma}}(x_i)$.
Given $\ve>0$, we need to show that
$$
\dist\left(f_{\gamma,\sigma}dv_g,\sigma\right)<2\ve\qquad\text{for $\gamma$ sufficiently large,}
$$
uniformly for $\sigma\in\Sigma_k$, where
$$
f_{\gamma,\sigma}=\frac{\(e^{\varphi_{\gamma,\sigma}^p}-1\)}{\int_\Sigma \(e^{\varphi_{\gamma,\sigma}^p}-1\) dv_g}.
$$
Let $\delta>0$ and $r_\ve>0$ be the positive constants of the statement of Lemma \ref{l:RK}. \\
It is immediate to see that $f_{\gamma,\sigma}$ satisfies \eqref{eq:ddmm4}, being $\varphi_{\gamma,\sigma}\equiv0$ in $\Sigma\setminus\cup_{i=1}^k B_{\delta_\gamma}(x_i)$, then by Lemma \ref{l:RK} (which holds with $r=\delta_\gamma$, if $\gamma$ is sufficiently large)
\begin{equation}\label{star}
\dist\left(f_{\gamma,\sigma},\sigma_\gamma\right)<\ve\quad\text{where}\quad\sigma_\gamma:=\sum_{i=1}^{k}\frac{\int_{B_{\delta_\gamma}(x_i)}(e^{\varphi_{\gamma,\sigma}^p}-1)dv_g}{\int_{\cup_{j=1}^k B_{\delta_\gamma}(x_j)}(e^{\varphi_{\gamma,\sigma}^p}-1)dv_g}\delta_{x_i}.
\end{equation}
In virtue of \eqref{fact1} and \eqref{fact2} $\sigma_\gamma=\sum_{i=1}^{k}t_i(1+o(1))\delta_{x_i}$, and so
\begin{equation}
\label{sstar}
\dist(\sigma_\gamma,\sigma)<\ve\quad\text{for $\gamma$ sufficiently large.}
\end{equation}
The thesis follows from \eqref{star} and \eqref{sstar}.
\end{proof}

Let us set for $L>0$
$$J_{p,\beta}^{-L}:=\{u\in H^1(\Sigma):  J_{p,\beta}(u)\le -L\}.$$

\begin{prop}\label{prop:homoteq} 
Let $\beta\in(4\pi k+\delta,4\pi(k+1)-\delta)$, with $k\geq1$ and $\delta \in (0,\tfrac{1}{2})$. Then, there exist $L>0$ and $\gamma>0$ sufficiently large depending on $p$, $k$ and $\delta$, and a continuous function
$$\Psi:J_{p,\beta}^{-L}\longrightarrow \Sigma_k$$
such that i) $\Phi(\sigma):=\pms\in J_{p,\beta}^{-2L}$ for every $\sigma\in\Sigma_k$ and ii) the map $\Psi\circ \Phi:\Sigma_k\to\Sigma_k$, is homotopically equivalent to the identity on $\Sigma_k$.
\end{prop}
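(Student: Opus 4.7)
\emph{Proof strategy.} The plan is to construct $\Psi$ as a two-step map: first normalize the density $e^{|u|^p}-1$ to a probability measure on $\Sigma$, then retract it onto $\Sigma_k$ via a continuous retraction defined on a small neighborhood of $\Sigma_k$ in $(\mathcal{M}(\Sigma),\dist)$. Since $\Sigma_k$ is a compact finite CW-complex, hence an ANR, for some $\ve_0>0$ it admits a continuous retraction $\Pi : U_{\ve_0} \to \Sigma_k$ of the $\ve_0$-neighborhood $U_{\ve_0} := \{\mu \in \mathcal{M}(\Sigma) : \dist(\mu, \Sigma_k) < \ve_0\}$; such a $\Pi$ can also be built explicitly as in \cite{DjaMal}.

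For any non-trivial $u \in H^1(\Sigma)$ set $\mu_u := (e^{|u|^p}-1)dv_g / \int_\Sigma (e^{|u|^p}-1) dv_g \in \mathcal{M}(\Sigma)$. Lemma \ref{l:ls2} applied with $\ve = \ve_0$ yields $L = L(\ve_0, p, \beta) > 0$ such that $u \in J_{p,\beta}^{-L}$ forces $\mu_u \in U_{\ve_0}$, so I would define $\Psi(u) := \Pi(\mu_u)$, which is continuous on $J_{p,\beta}^{-L}$. For item (i), Lemma \ref{lemma:lowsublevels}(i) gives $J_{p,\beta}(\pms) \to -\infty$ uniformly in $\sigma \in \Sigma_k$, so I can then pick $\gamma$ so large that $\Phi(\sigma) := \pms \in J_{p,\beta}^{-2L}$ for every $\sigma \in \Sigma_k$. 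Enlarging $\gamma$ if needed, Lemma \ref{lemma:lowsublevels}(ii) also guarantees $\dist(\mu_{\pms}, \sigma) < \ve_0$ uniformly in $\sigma$.

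For (ii), I would use a convex-combination homotopy in $\mathcal{M}(\Sigma)$: define $H : [0,1] \times \Sigma_k \to \Sigma_k$ by $H(s, \sigma) := \Pi\bigl((1-s)\sigma + s\, \mu_{\pms}\bigr)$. The linearity of $\psi \mapsto \int_\Sigma \psi\, d\mu$ in the definition \eqref{KR} immediately yields
$$\dist\bigl((1-s)\sigma + s\,\mu_{\pms},\, \sigma\bigr) = s\, \dist(\mu_{\pms}, \sigma) < \ve_0 \quad \text{for every } s\in[0,1],$$
so the argument of $\Pi$ remains in $U_{\ve_0}$, making $H$ well defined and continuous. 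Since $H(0,\sigma) = \Pi(\sigma) = \sigma$ and $H(1, \sigma) = \Pi(\mu_{\pms}) = \Psi(\Phi(\sigma))$, this produces the required homotopy between $\mathrm{id}_{\Sigma_k}$ and $\Psi \circ \Phi$.

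The main technical obstacle is the existence of the retraction $\Pi$. Either one invokes the abstract fact that the compact ANR $\Sigma_k$ is a neighborhood retract of $(\mathcal{M}(\Sigma),\dist)$, or one exhibits $\Pi$ concretely via a barycenter-projection scheme as in \cite{DjaMal,MalchiodiTopological}, relying on the choice of an appropriate scale, a partition of unity on $\Sigma$, and an inductive clustering of Dirac masses. Once $\Pi$ is in hand, the remaining steps are direct consequences of Lemmas \ref{l:ls2} and \ref{lemma:lowsublevels} combined with the affine structure of $\mathcal{M}(\Sigma)$ used in the homotopy.
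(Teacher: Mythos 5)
Your proposal is correct and follows essentially the same route as the paper: normalize $e^{|u|^p}-1$ into $\mathcal{M}(\Sigma)$, retract onto $\Sigma_k$, invoke Lemma \ref{l:ls2} for the sublevel-set bound and Lemma \ref{lemma:lowsublevels} for (i) and (ii), and build the homotopy via the convex-combination path in $\mathcal{M}(\Sigma)$ (the paper parametrizes the segment in the opposite direction, which is immaterial, and it justifies $\dist(H(s,\sigma),\sigma)<\ve$ directly rather than via the linearity identity you spell out, but both are the same one-line estimate). The only cosmetic difference is the source of the retraction: the paper cites \cite[Proposition 2.2]{BattagliaJevnikarMalchiodi} for a ready-made retraction of an $\ve$-neighborhood of $\Sigma_k$ in $(\mathcal{M}(\Sigma),\dist)$ onto $\Sigma_k$, whereas you argue via the ANR property of $\Sigma_k$ (which indeed gives a neighborhood retraction in the ambient metric space $\mathcal{M}(\Sigma)$ since $\Sigma_k$ is compact, hence closed) or alternatively via the explicit barycenter-projection construction of \cite{DjaMal,MalchiodiTopological}; either way the retraction exists and the rest of the argument goes through unchanged.
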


\begin{proof}
By \cite[Proposition 2.2]{BattagliaJevnikarMalchiodi} there exist $\ve>0$ and a continuous retraction 
$$
\hat\Psi:\{\sigma\in \mathcal M(\Sigma)\,:\,\dist(\sigma,\Sigma_k)<\ve\}\to\Sigma_k.
$$
By Lemma \ref{l:ls2} there exists $L=L(\ve,p,\beta)$ such that for every $u\in J_{p,\beta}^{-L}$
$$
\dist\left(\frac{ \(e^{|u|^p}-1\)dv_g}{\int_\Sigma  \(e^{|u|^p}-1\) dv_g},\Sigma_k\right)<\ve.
$$
Since the map $u\mapsto \frac{ \(e^{|u|^p}-1\)dv_g}{\int_\Sigma  \(e^{|u|^p}-1\) dv_g}$ is continuous from $J^{-L}_{p,\beta}\subset H^1(\Sigma)$ into $\mathcal{M}(\Sigma)$,
for such $L$ the map $\Psi:J_{p,\beta}^{-L}\to\Sigma_k$ defined as
$$\Psi(u):=\hat\Psi\left(\frac{ \(e^{|u|^p}-1\)dv_g}{\int_\Sigma  \(e^{|u|^p}-1\) dv_g}\right)$$
is well posed and  continuous with respect to the $H^1(\Sigma)$ topology.

In turn, by Lemma \ref{lemma:lowsublevels} there exist $\gamma>0$ such that
\begin{equation}\label{eq:Pr1.13}
\pms\in J_{p,\beta}^{-2L},\quad \dist\left(\frac{\(e^{\varphi_{\gamma,\sigma}^p}-1\)dv_g}{\int_\Sigma \(e^{\varphi_{\gamma,\sigma}^p }-1\)dv_g},\sigma\right)<\ve,\quad \text{for any }\sigma\in\Sigma_k.
\end{equation}
Hence $\Psi\circ \Phi(\sigma)=\Psi(\pms)$ is well defined and we only need to show that $\Psi\circ \Phi\simeq \Id_{\Sigma_k}$. Consider the homotopy $H:[0,1]\times \Sigma_k\to \mathcal M(\Sigma)$ given by
$$
H(s,\sigma)=s\sigma+(1-s)\frac{\(e^{\varphi_{\gamma,\sigma}^p-1}\)dv_g}{\int_\Sigma \(e^{\varphi_{\gamma,\sigma}^p}-1\)dv_g}.
$$
From \eqref{eq:Pr1.13} we infer that 
$$\dist(H(s,\sigma),\Sigma_k)\le \dist(H(s,\sigma),\sigma)<\ve \quad \text{for } s\in [0,1], \,\sigma\in \Sigma_k,$$
so  $\hat\Psi$ is well defined on the image of $H$ and we can then define the homotopy $\mathcal H:[0,1]\times \Sigma_k\to  \Sigma_k$
$$
\mathcal H(s,\sigma)=\hat \Psi\circ H(s,\sigma).
$$
Clearly $\mathcal H(0,\cdot)=\Psi \circ\Phi$ and $\mathcal H(1,\cdot)=\Id_{\Sigma_k}$.
\end{proof}

We are now ready to construct a minmax scheme in the spirit of \cite{DjaMal}.
Given $p$, $k$ and $\delta>0$, fix $L>0$, $\gamma>0$ and $\Phi:\Sigma_k\to H^1(\Sigma)$ as in Proposition \ref{prop:homoteq}. 

Consider the topological cone $\mathcal{C}_k$ over $\Sigma_k$ defined as
$$\mathcal{C}_k=(\Sigma_k\times [0,1]) /\sim $$
where $(\sigma_1,r_1)\sim (\sigma_2,r_2)$ if and only if $r_1=r_2=1$. We shall also identify $\Sigma_k\times\{0\}$ with $\Sigma_k$. Set
$$\mathcal{A}_{k}:= \{ \bar\Phi \in C^0(\mathcal C_k, H^1(\Sigma)) \text{ s.t. } \bar\Phi|_{\Sigma_k}=\Phi\},$$
and call 
\begin{equation}\label{defalphabeta}
\alpha_\beta :=\inf_{\bar \Phi \in \mathcal{A}_k}\max_{\xi\in \mathcal{C}_k} J_{p,\beta}(\bar\Phi(\xi))
\end{equation}
the minmax value.

\begin{lem}\label{l:minmax}
With the above choice of $L$ and $\gamma$, depending on $p$, $k$ and $\delta$, we have
\begin{equation}\label{stimaminmax}
\alpha_\beta\ge -L,\quad \sup_{\bar\Phi\in \mathcal{A}_k} \sup_{\xi\in\Sigma_k} J_{p,\beta}(\bar\Phi(\xi))\le -2L.
\end{equation}
\end{lem}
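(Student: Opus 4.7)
\smallskip

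My plan is to dispatch the two inequalities separately, the first one being essentially a tautology given how $\mathcal{A}_k$ is defined, and the second one relying on the topological obstruction encoded in Proposition \ref{prop:homoteq} together with the non-contractibility of $\Sigma_k$.

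For the estimate $\sup_{\bar\Phi\in\mathcal{A}_k}\sup_{\xi\in\Sigma_k}J_{p,\beta}(\bar\Phi(\xi))\le -2L$, I would simply observe that every admissible $\bar\Phi\in\mathcal{A}_k$ agrees with $\Phi$ on the base $\Sigma_k\times\{0\}\simeq \Sigma_k$ of the cone, so that $\bar\Phi(\xi)=\Phi(\xi)=\varphi_{\gamma,\xi}$ for $\xi\in\Sigma_k$. Point i.\ of Proposition \ref{prop:homoteq} then gives $J_{p,\beta}(\bar\Phi(\xi))\le -2L$ directly, with no dependence on the chosen extension $\bar\Phi$.

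For the lower bound $\alpha_\beta\ge -L$, I would argue by contradiction. Suppose $\alpha_\beta<-L$; then by definition of $\alpha_\beta$ in \eqref{defalphabeta} there exists $\bar\Phi\in\mathcal{A}_k$ with $\bar\Phi(\mathcal{C}_k)\subset J_{p,\beta}^{-L}$. I then compose with the retraction $\Psi$ of Proposition \ref{prop:homoteq} to form the continuous map
\[
\Psi\circ\bar\Phi:\mathcal{C}_k\longrightarrow \Sigma_k.
\]
Because $\mathcal{C}_k$ is contractible and $\Psi\circ\bar\Phi$ restricted to the base equals $\Psi\circ\Phi$, this composition exhibits $\Psi\circ\Phi$ as null-homotopic in $\Sigma_k$: more concretely, the map $\mathcal{H}(s,\sigma):=\Psi\circ\bar\Phi(\sigma,s)$ provides a homotopy in $\Sigma_k$ between $\Psi\circ\Phi$ and the constant map sending everything to the image of the apex of the cone. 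On the other hand, point ii.\ of Proposition \ref{prop:homoteq} says $\Psi\circ\Phi\simeq \Id_{\Sigma_k}$, so by transitivity $\Id_{\Sigma_k}$ would itself be null-homotopic, i.e.\ $\Sigma_k$ would be contractible, contradicting Lemma \ref{l:noncontr}.

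The only real subtlety here — and the reason the two bounds in \eqref{stimaminmax} are set up with the strict gap $-L$ vs.\ $-2L$ — is the careful bookkeeping of constants: the same $L$ and $\gamma$ furnished by Proposition \ref{prop:homoteq} must ensure both that $\Phi(\Sigma_k)$ sits strictly below the minmax threshold (so that $\alpha_\beta$ is a nontrivial level, detected by topology rather than by the base values) and that $\Psi$ is defined on the sublevel $J_{p,\beta}^{-L}$ where any putative competing extension must live. Once this gap is in place the homotopy argument is routine topology; I do not foresee any analytic obstacle beyond invoking the already-proven Proposition \ref{prop:homoteq} and Lemma \ref{l:noncontr}.
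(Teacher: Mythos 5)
Your argument is correct and coincides with the paper's own proof: the second inequality is read off from Proposition \ref{prop:homoteq}(i) since every $\bar\Phi\in\mathcal{A}_k$ restricts to $\Phi$ on $\Sigma_k$, and the lower bound $\alpha_\beta\ge -L$ follows by contradiction, using $\Psi\circ\bar\Phi$ on the cone to exhibit $\Psi\circ\Phi\simeq\Id_{\Sigma_k}$ as null-homotopic against Lemma \ref{l:noncontr}. No substantive difference from the paper's reasoning.
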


\begin{proof} The second inequality follows immediately from Proposition \ref{prop:homoteq}. Assume by contradiction that $\alpha_\beta < -L$: then we can find $\bar\Phi\in \mathcal{A}_k$ such that
$$\bar\Phi(\mathcal{C}_k)\subset J_{p,\beta}^{-L}.$$
By Proposition \ref{prop:homoteq}, the map
$$\Psi\circ \bar\Phi :\mathcal{C}_k\to \Sigma_k$$
is well-defined and continuous. Moreover, on the one hand
\begin{equation}\label{homot}
\Psi\circ \bar\Phi|_{\Sigma_k}=\Psi \circ \Phi \simeq Id_{\Sigma_k},
\end{equation}
and on the other hand $\Psi\circ\bar\Phi$ gives a homotopy between $\Psi\circ\bar\Phi(\cdot, 0)=\Psi\circ \bar\Phi|_{\Sigma_k}$ and the constant map $\Psi\circ\bar\Phi(\cdot, 1)$.
This and \eqref{homot} imply that $\Sigma_k$ is homotopic to a point, which contradicts Lemma \ref{l:noncontr}.
\end{proof}

We will now use a  well-known monotonocity trick by Struwe to construct bounded Palais-Smale sequences for $J_{p,\beta}$ at level $\alpha_\beta$, as defined in \eqref{defalphabeta}:

\begin{prop}\label{PrboundedPS} For almost every $\beta >4\pi$ the functional $J_{p,\beta}$ admits a bounded Palais-Smale sequence at level $\alpha_\beta$, i.e. a sequence $(u_\ve)$ bounded in $H^1(\Sigma)$ such that 
\begin{equation}\label{PS}
J_{p,\beta}(u_\ve)\to \alpha_\beta,\quad J'_{p,\beta}(u_\ve)\to 0\quad \text{as }k\to \infty.
\end{equation}
\end{prop}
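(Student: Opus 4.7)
The argument is Struwe's monotonicity trick (see \cite{StruweTrick}), adapted to the family $\{J_{p,\beta}\}_{\beta>0}$. The first observation is that, because $\tfrac{p}{2-p}>0$, the function $\beta\mapsto J_{p,\beta}(u)$ is strictly decreasing for every fixed $u\in H^1(\Sigma)$ with $\|u\|_h>0$. Taking inf-max preserves this monotonicity, so $\beta\mapsto\alpha_\beta$ is non-increasing on $(4\pi,\infty)$; by the Lebesgue differentiation theorem for monotone functions it is differentiable at almost every such $\beta$. I fix any such point of differentiability $\beta>4\pi$ and work with it for the rest of the proof.

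Let $\beta_n\uparrow\beta$. For each $n$ choose $\bar\Phi_n\in\mathcal{A}_k$ with
$$\max_{\xi\in\mathcal{C}_k} J_{p,\beta_n}(\bar\Phi_n(\xi))\le\alpha_{\beta_n}+(\beta-\beta_n),$$
and let $\xi_n\in\mathcal{C}_k$ be a maximizer of $J_{p,\beta}(\bar\Phi_n(\cdot))$, $u_n:=\bar\Phi_n(\xi_n)$. By the monotonicity above, $J_{p,\beta}\le J_{p,\beta_n}$ pointwise, so
$$\alpha_\beta\le J_{p,\beta}(u_n)\le J_{p,\beta_n}(u_n)\le\alpha_{\beta_n}+(\beta-\beta_n).$$
A direct computation gives
$$J_{p,\beta_n}(u_n)-J_{p,\beta}(u_n)=\frac{2-p}{2}\left(\frac{p\|u_n\|_h^2}{2}\right)^{\frac{p}{2-p}}\left(\beta_n^{-\frac{p}{2-p}}-\beta^{-\frac{p}{2-p}}\right),$$
and the convexity of $t\mapsto t^{-p/(2-p)}$ on $(0,\infty)$ yields $\beta_n^{-p/(2-p)}-\beta^{-p/(2-p)}\ge c(\beta)(\beta-\beta_n)$ for some $c(\beta)>0$ and $n$ large. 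Combining with the previous chain of inequalities one finds, for a constant $C_0=C_0(p,\beta)>0$,
$$C_0\|u_n\|_h^{\frac{2p}{2-p}}\le\frac{\alpha_{\beta_n}-\alpha_\beta}{\beta-\beta_n}+1\xrightarrow[n\to\infty]{}-\alpha'(\beta)+1<\infty.$$
Hence $(u_n)$ is bounded in $H^1(\Sigma)$ and, squeezing between the two bounds above, $J_{p,\beta}(u_n)\to\alpha_\beta$.

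It remains to upgrade $(u_n)$ (possibly after a small perturbation in $H^1$) to a Palais-Smale sequence, i.e.\ to arrange $J_{p,\beta}'(u_n)\to 0$. This is the standard deformation step: if along some subsequence one had $\|J_{p,\beta}'(u_n)\|\ge\delta_0>0$, then flowing $\bar\Phi_n(\mathcal{C}_k)$ along a locally Lipschitz pseudo-gradient vector field for $J_{p,\beta}$ for a uniformly small time—making sure that the flow does not move the boundary $\bar\Phi_n|_{\Sigma_k}=\Phi$, which stays in $J_{p,\beta}^{-2L}$ by Lemma \ref{l:minmax}—would produce a new admissible path whose max of $J_{p,\beta}$ is strictly below $\alpha_\beta$, contradicting the definition of $\alpha_\beta$. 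The only point that requires some care is to carry out this deformation quantitatively within a sublevel where $\|u\|_h$ stays bounded, so that the obtained Palais-Smale sequence remains bounded; this is the main technical step, but it is entirely standard (cf.\ \cite{StruweTrick} and the analogous construction in \cite{DjaMal}) and uses only the $C^1$-regularity of $J_{p,\beta}$ on $\{u\in H^1:u_+\not\equiv 0\}$ established via Trudinger's inequality. This completes the proof.
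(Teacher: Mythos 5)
Your proposal follows the same monotonicity-trick-plus-deformation strategy as the paper, and the boundedness computation is carried out correctly and essentially verbatim. The deformation step you defer to the references does hide one wrinkle that the paper spells out: the auxiliary constraint \eqref{stimamono1} is stated in terms of $J_{p,\beta_\ve}$, not $J_{p,\beta}$, so before the pseudo-gradient flow built for $J_{p,\beta}$ can be shown to propagate the $H^1$-bound along the flow one must verify that $\|J'_{p,\beta}-J'_{p,\beta_\ve}\|_{H^{-1}}\to 0$ uniformly on the truncated sublevel $N_{\delta,M}$, which guarantees that the flow also does not increase $J_{p,\beta_\ve}$ and hence preserves \eqref{stimamono1}.
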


\begin{proof}
Since for all $u \in H^1$ $\beta \mapsto J_{p,\beta}(u)$ is monotone decreasing, 
 the function $\beta \mapsto \alpha_\beta$ is \emph{non-increasing}, hence it is differentiable almost everywhere. Set
$$D_p:=\{\beta \in (4\pi,\infty)\setminus 4\pi\mathbb{N} : \alpha_{\beta}\text{ is differentiable}\}.$$
Take $\beta\in D_p$, fix $\delta\in (0,\tfrac12)$ and $k\in\mathbb{N}^\star$ such that $\beta \in (4\pi k+\delta, 4\pi(k+1)-\delta)$, and  choose a sequence $\beta_\ve\uparrow \beta$ with $\beta_\ve \in (4\pi k+\delta, 4\pi(k+1)-\delta)$.
For every $\ve>0$ let a function $\bar\Phi_\ve\in \mathcal{A}_k$ be given such that
\begin{equation}\label{stimamono1}
\max_{\xi\in \mathcal{C}_k} J_{p,\beta_\ve}(\bar\Phi_\ve(\xi))\le \alpha_{\beta_\ve}+(\beta-\beta_\ve),
\end{equation}
and let also $\xi_\ve\in \mathcal{C}_k$ be given such that
\begin{equation}\label{stimamono2}
J_{p,\beta}(\bar\Phi_\ve(\xi_\ve))\ge \alpha_{\beta}.
\end{equation}
Notice that the set of  $(\bar\Phi_\ve,\xi_\ve)$'s satisfying \eqref{stimamono1}-\eqref{stimamono2} is non-empty thanks to \eqref{defalphabeta} (used with $\beta$ and $\beta_\ve$).

Set $v_\ve:=\bar\Phi_\ve(\xi_\ve)$. Then, posing $C_p:=\frac{2-p}{2}\(\frac{p}{2}\)^\frac{p}{2-p}$, we have that 
$$J_{p, \beta_\ve}(v_\ve)-  J_{p,\beta}(v_\ve)=C_p\|v_\ve\|_{{h}}^\frac{2p}{2-p}\left(\frac{1}{\beta_\ve^\frac{p}{2-p}}-\frac{1}{\beta^\frac{p}{2-p}}\right),$$
hence, setting $q=\frac{p}{2-p}$,  $\alpha'_\beta =\frac{d\alpha_\beta}{d\beta}$, and writing
$$\beta^q-\beta_\ve^q=-q\beta^{q-1}(\beta_\ve-\beta)+o(\beta_\ve-\beta),$$
we bound
\begin{equation}\label{Eqmontrick}
\begin{split}
\|v_\ve\|_{{h}}^{2q}&=\frac{(\beta_\ve\beta)^q}{C_p} \frac{ J_{p, \beta_\ve}(v_\ve)- J_{p,\beta}(v_\ve)}{\beta^q-\beta_\ve^q}\\
&\le \frac{(\beta_\ve\beta)^q}{C_p} \frac{\alpha_{\beta_\ve}- \alpha_{\beta} +\beta-\beta_\ve}{\beta^q-\beta_\ve^q}\\
&= \frac{\beta^{2q}+o(1)}{C_p}\cdot \frac{-\alpha'_{\beta}+1+o(1)}{q\beta^{q-1}}\\
&\le \bar C_{p,\beta}.
\end{split}
\end{equation}
In particular $\|v_\ve\|_{{h}}^\frac{2p}{p-2}=O(1)$ as $\ve\to 0$ for any sequence $v_\ve=\Phi_\ve(\xi_\ve)$, where $\Phi_\ve$ and $\xi_\ve$ satisfy \eqref{stimamono1} and \eqref{stimamono2}.

We now proceed similarly to \cite{DingJostLiWang2}. For every $\delta>0$ (not the same as in Lemma \ref{l:minmax}) consider the set
$$N_{\delta,M}:=\left\{u\in H^1(\Sigma): \|u\|_{{h}}\le M,\,|J_{p,\beta}(u)-\alpha_\beta|<\delta\right\}$$
for $M\ge \bar C_{p,\beta}^\frac{p-2}{2p}+1$, where $\bar C_{p,\beta}$ is as in \eqref{Eqmontrick}. Notice that $N_{\delta,M}$ is non-empty by the previous discussion.

Assume that the claim of the proposition is false, so that there exists $\delta>0$ small such that
$$\|J'_{p,\beta}(u)\|_{{H^{-1},h}}:=\sup_{\|v\|_{{h}}\le 1}\<J'_{p,\beta}(u),v\>\ge 2\delta \quad \text{for }u\in N_{\delta,M}.$$ 
Since $J_{p,\beta}$ is of class $C^1$ (on the open set of $H^1(\Sigma)$ where it is finite), we can construct a locally Lipschitz pseudo-gradient vector field (see e.g. \cite[Lemma 3.2]{StruweBook})
$$X:H^1(\Sigma) \to H^1(\Sigma)$$
such that
$$\sup_{u\in N_{\delta,M}}\|X(u)\|_{{h}}\le 1,\quad \sup_{u\in N_{\delta,M}}\<J'_{p,\beta}(u),X(u)\>\le -\delta.$$
We have
\begin{equation}\label{formulaJ'}
\<J_{p,\beta}'(u),v\>=C_{p,\beta}\|u\|_{H^1}^\frac{4p-4}{2-p}\<u,v\>_{{h}}-\frac{\int_\Sigma p u^{p-1}_+e^{u^p_+}v dv_g}{\int_{\Sigma}\(e^{u^p_+}-1\)dv_g},
\end{equation}
{where $\<u,v\>_h:=\int_\Sigma (\nabla u\nabla v+h uv) dv_g$ and} $C_{p,\beta}=p\(\frac{p}{2\beta}\)^\frac{p}{2-p}$, hence, for any sequence $\beta_\ve\uparrow \beta$
$$\|J_{p,\beta}'(u)-J_{p,\beta_\ve}'(u)\|_{H^{-1}{, h}}\le \(C_{p,\beta}-C_{p,\beta_\ve}\)\|u\|_{{h}}^\frac{3p-2}{2-p} =o(1)\quad \text{as }\ve\to 0,$$
uniformly for $u\in N_{\delta,M}$. Then for $\ve$ small we have
$$\sup_{u\in N_{\delta,M}}\<J'_{p,\beta_\ve}(u),X(u)\>\le 0.$$
We now choose a Lipschitz cut-off function $\eta:H^1(\Sigma)\to [0,1]$ such that
$$\eta(u)=0\text{ if } u\in H^1(\Sigma)\setminus N_{\delta, M}$$
and
$$\eta(u)=1\text{ if } u\in N_{\frac\delta2,M-1},$$
and consider the flow $\phi_t: H^1(\Sigma)\to H^1(\Sigma)$ generated by the vector field $\eta X$. Assuming with no loss of generality that $-2L< \alpha_\beta-\delta$, since $\Phi(\Sigma_k)\subset J_{p,\beta}^{-2L}$, it follows that
$$\phi_t \circ\bar\Phi|_{\Sigma_k}=\bar\Phi|_{\Sigma_k}=\Phi,$$
hence
$$\phi_t\circ \bar\Phi\in \mathcal{A}_{k} \quad \text{for every }\bar\Phi\in  \mathcal{A}_{k}, \quad t\ge 0.$$
Moreover
\begin{equation}\label{dJk<delta}
\frac{dJ_{p,\beta_\ve}(\phi_t(u))}{dt}\bigg|_{t=0}\le 0,\quad \text{for }u\in H^1(\Sigma),
\end{equation}
hence if $\bar\Phi_\ve$ satisfies \eqref{stimamono1}, so does $\phi_t\circ\bar\Phi_\ve$ for $t\ge 0$. Moreover, for $\varepsilon$ small, given any $\bar\Phi_\ve\in \mathcal{A}_k$ satisfying \eqref{stimamono1}
\begin{equation}\label{EqMono2}
\alpha_\beta\le \max_{\xi\in\mathcal{C}_{k}}J_{p,\beta}(\phi_t(\bar\Phi_\ve(\xi))) =\max_{\xi\in\mathcal{C}_k: \bar\Phi_\ve(\xi)\in N_{\frac\delta2,M-1} }J_{p,\beta}(\phi_t(\bar\Phi_\ve(\xi))),
\end{equation}
since every $\xi_\ve\in \mathcal{C}_k$ attaining the maximum of $J_{p,\beta}(\phi_t(\bar\Phi_\ve(\cdot)))$ satisfies \eqref{stimamono2}, so \eqref{EqMono2} follows from \eqref{Eqmontrick} and our choice of $M$. Therefore, since
\begin{equation*}
\frac{dJ_{p,\beta}(\phi_t(u))}{dt}\bigg|_{t=0}\le -\delta,\quad \text{for }u\in N_{\frac{\delta}{2},M-1},
\end{equation*}
we infer
$$\frac{d}{dt} \sup_{\xi\in\mathcal{C}_{k}}J_{p,\beta}(\phi_t(\bar\Phi_\ve(\xi)))\le -\delta\quad \text{for }t\ge 0,$$
which contradicts \eqref{EqMono2}.
\end{proof}

\begin{prop}\label{PrConvergence} Given $p\in (1,2)$ and $\beta>0$, let $(u_\ve)\subset H^1(\Sigma)$ be a bounded Palais-Smale sequence for $J_{p,\beta}$.
Then up to a subsequence we have $u_\ve\to u_0$ strongly in $H^1(\Sigma)$, where $u_0>0$ is a positive critical point of $J_{p,\beta}$.
\end{prop}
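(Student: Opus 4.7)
The plan is to exploit the subcriticality $p<2$: the Moser--Trudinger inequality \eqref{MTp0} applied to $q^{1/p} u$ (for any fixed $q \ge 1$) yields $\int_\Sigma e^{q|u|^p}\,dv_g \le C(q, p, \|u\|_h)$, so the $H^1$-boundedness of $(u_\ve)$ gives uniform $L^q$-bounds on $e^{u_\ve^p}$ for every $q<\infty$. Passing to a subsequence I have $u_\ve \rightharpoonup u_0$ weakly in $H^1(\Sigma)$, strongly in every $L^q(\Sigma)$ and a.e. Combined with the a.e. convergence, the $L^q$ bounds then imply via Vitali's convergence theorem that $e^{u_\ve^p} \to e^{u_0^p}$ and $u_{\ve,+}^{p-1}e^{u_{\ve,+}^p} \to u_{0,+}^{p-1}e^{u_{0,+}^p}$ strongly in $L^q(\Sigma)$ for every $q \in [1,\infty)$.

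Before passing to the limit in the equation, I would check that $u_{0,+} \not\equiv 0$ and $\|u_\ve\|_h$ stays bounded away from $0$. If either failed, then $u_{\ve,+} \to 0$ a.e., so uniform integrability would force $\int_\Sigma (e^{u_{\ve,+}^p}-1)\,dv_g \to 0$, hence $J_{p,\beta}(u_\ve) \to +\infty$, contradicting the Palais--Smale condition. With these two lower bounds at hand, I use formula \eqref{formulaJ'}: setting $c_\ve := C_{p,\beta}\|u_\ve\|_h^{(4p-4)/(2-p)}$ and $\tilde\lambda_\ve := p\bigl(\int_\Sigma (e^{u_{\ve,+}^p}-1)dv_g\bigr)^{-1}$, one has, up to a subsequence, $c_\ve \to c_\ast > 0$ and $\tilde\lambda_\ve \to \tilde\lambda_\ast \in (0,\infty)$. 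Testing $J_{p,\beta}'(u_\ve) \to 0$ against any fixed $v \in H^1(\Sigma)$ and using the strong $L^q$-convergence of the nonlinearity yields in the limit
$$c_\ast \langle u_0, v\rangle_h = \tilde\lambda_\ast \int_\Sigma u_{0,+}^{p-1}e^{u_{0,+}^p}\,v\,dv_g \qquad \text{for every } v \in H^1(\Sigma).$$

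The final step is to upgrade weak to strong convergence by testing $\langle J_{p,\beta}'(u_\ve), u_\ve - u_0\rangle = o(1)$. The nonlinear term vanishes by H\"older and the $L^q$-convergence established above, while $\langle u_0, u_\ve - u_0\rangle_h \to 0$ by weak convergence, so $c_\ve \|u_\ve - u_0\|_h^2 \to 0$; since $c_\ast > 0$ this gives $u_\ve \to u_0$ strongly in $H^1(\Sigma)$. In particular $c_\ast = C_{p,\beta}\|u_0\|_h^{(4p-4)/(2-p)}$, so $u_0$ is a nontrivial critical point of $J_{p,\beta}$, and smoothness together with strict positivity follow from Lemma \ref{l:pos}. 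The main obstacle is precisely the subcritical $L^q$-compactness of $e^{u_\ve^p}$, which is indispensable both for passing to the limit in the Lagrange-multiplier term and for the final testing argument; it is known to fail at the borderline $p=2$, as pointed out in \cite[Thm.~5.1]{CostaTintarev}, which is why the statement is limited to $p\in(1,2)$.
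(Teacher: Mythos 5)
Your proof is correct and follows essentially the same approach as the paper's: uniform $L^q$ bounds on $e^{u_{\ve,+}^p}$ via the subcritical Moser--Trudinger inequality, Vitali's theorem to pass to the limit in the nonlinear terms, testing $J_{p,\beta}'(u_\ve)\to 0$ against $u_\ve-u_0$ to upgrade weak to strong $H^1$-convergence, and the Palais--Smale level bound to exclude $u_0\equiv 0$. The only difference is one of exposition order: you rule out $u_{0,+}\equiv 0$ and $\|u_\ve\|_h\to 0$ before passing to the limit in the equation, whereas the paper does so at the end — a minor, arguably cleaner, rearrangement of the same ingredients.
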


\begin{proof}
Up to a subsequence we have $u_\ve\to  u_0$ in $L^q(\Sigma)$ for every $q<\infty,$ almost everywhere and weakly in $H^1(\Sigma)$. 
Moreover, by Young's inequality and the Moser-Trudinger inequality we infer
\begin{equation}\label{MTp}
\|e^{u_{\ve+}^p}\|_{L^q}\le C(p,q,\|u_\ve\|_{{h}})\quad \text{for every }q<\infty,
\end{equation}
hence from Vitali's theorem
\begin{equation}\label{MTp2}
\int_{\Sigma}e^{u_{\ve+}^p}dv_g\to \int_{\Sigma}e^{u_{0+}^p}dv_g  \quad \text{as }\ve\to 0.
\end{equation}
From \eqref{formulaJ'} we deduce that 
$$\<J_{p,\beta}'(u_\ve),u_\ve-u_0\>=o(1)\quad\text{as }\ve\to 0.$$
Using $u_\ve-u_0$ as test function in $J_{p,\beta}'(u_\ve)\to 0$, we obtain
\begin{align*}
o(1)&=\<J_{p,\beta}'(u_\ve)-J_{p,\beta}'(u_0),u_\ve-u_0\>\\
&=C_{p,\beta}\<\|u_\ve\|_{{h}}^\frac{4p-4}{2-p}u_\ve-\|u_0\|_{{h}}^\frac{4p-4}{2-p}u_0,u_\ve-u_0\>_{{h}}\\
&\quad - \frac{\int_\Sigma p u_{\ve+}^{p-1}e^{u_{\ve+}^p}(u_\ve-u_0) dv_g}{\int_{\Sigma}e^{u_{\ve+}^p}dv_g} +\frac{\int_\Sigma p u_{0+}^{p-1}e^{u_{0+}^p}(u_\ve-u_0) dv_g}{\int_{\Sigma}e^{u_{0+}^p}dv_g}.
\end{align*}
Taking \eqref{MTp}, \eqref{MTp2} and the Sobolev embedding into account we notice that the last two terms sum up to $o(1)$, so that
\begin{align*}
o(1)&=\<\|u_\ve\|_{{h}}^\frac{4p-4}{2-p}u_\ve-\|u_0\|_{{h}}^\frac{4p-4}{2-p}u_0,u_\ve-u_0\>_{{h}}\\
&=\|u_\ve\|_{{h}}^\frac{4p-4}{2-p}\|u_\ve-u_0\|_{{h}}^2+ \(\|u_\ve\|_{{h}}^\frac{4p-4}{2-p}-\|u_0\|_{{h}}^\frac{4p-4}{2-p}\)\<u_0,u_\ve-u_0\>_{{h}}\\
&=\|u_\ve\|_{{h}}^\frac{4p-4}{2-p}\|u_\ve-u_0\|_{{h}}^2+o(1),
\end{align*}
hence $u_\ve\to u_0$ strongly in $H^1(\Sigma)$.

In order to prove that $u_0$ is a critical point of $J_{p,\beta}$, for $v\in H^1(\Sigma)$ we write
\begin{align*}
J_{p,\beta}'(u_0)(v)&= J_{p,\beta}'(u_0)(v)-J_{p,\beta}'(u_\ve)(v)+  o(1)\\
&=C_{p,\beta}\<\|u_\ve\|_{{h}}^\frac{4p-4}{2-p}u_\ve-\|u_0\|_{{h}}^\frac{4p-4}{2-p}u_0,v\>_{{h}}\\
&\quad - \frac{\int_\Sigma p u_{\ve+}^{p-1}e^{u_{\ve+}^p}v dv_g}{\int_{\Sigma}e^{u_{\ve+}^p}dv_g} +\frac{\int_\Sigma p u_{0+}^{p-1}e^{u_{0+}^p}v dv_g}{\int_{\Sigma}e^{u_{0+}^p}dv_g}+o(1)\\
&=C_{p,\beta}\|u_0\|_{{h}}^\frac{4p-4}{2-p}\<u_\ve-u_0,v\>_{{h}}\\
&\quad+\(\|u_\ve\|_{{h}}^\frac{4p-4}{2-p}-\|u_0\|_{{h}}^\frac{4p-4}{2-p}\)\<u_\ve,v\>_{{h}}+o(1)\\
&=o(1),
\end{align*}
hence $J_{p,\beta}'(u_0)=0$.

Were $u_0\equiv 0$, with \eqref{MTp2} we would infer that $J_{p,\beta}(u_\ve)\to\infty$, which is impossible since $(u_\ve)$ is a Palais-Smale sequence. Then Lemma \ref{l:pos} implies that $u_0>0$.
\end{proof}

\begin{rem} The analogue of proposition \ref{PrConvergence} does not hold in the case $p=2$ as proven by Costa-Tintarev (Theorem 5.1 in \cite{CostaTintarev}).
\end{rem}

\noindent\emph{Proof of Theorem \ref{ThmVariationalPart} (completed).} For every $\beta\in (0,4\pi)$ the functional $J_{p,\beta}$ has a minimizer, hence a critical point, which can be obtained via direct methods, using \eqref{MTp0}, \eqref{MTp} and \eqref{MTp2}. The existence of critical points for a.e. $\beta>4\pi$ follows at once from Propositions \ref{PrboundedPS} and \ref{PrConvergence}. \hfill $\square$

\section{A first analysis in the radially symmetric case}\label{SectRad}
Let $(p_\gamma)_\gamma$ be any family of numbers in $[1,2]$, and let $(\mu_\gamma)_\gamma$ be a given family of positive real numbers. Let $\lambda_\gamma>0$ be given by
\begin{equation}\label{BigLambdaDef1}
\lambda_\gamma p_\gamma^2\gamma^{2(p_\gamma-1)}\mu_\gamma^2 e^{\gamma^{p_\gamma}}=8\,,
\end{equation}
and let $t_\gamma, \bar{t}_\gamma$ be defined in $\mathbb{R}^2$ by 
\begin{equation}\label{TGammaDef1}
t_\gamma(x)=\ln\left(1+\frac{|x|^2}{\mu_\gamma^2} \right); \qquad \bar{t}_\gamma=t_\gamma+1\,,
\end{equation}
for all $\gamma>0$ large. In the sequel, for any radially symmetric function $f$ around $0\in \mathbb{R}^2$, since no confusion is then possible, we often make an abuse of notation and write $f(r)$ instead of $f(x)$ for $|x|=r$. Let $\eta\in (0,1)$ be fixed. Let also $(\bar{r}_\gamma)_\gamma$ be any family of positive real numbers such that
\begin{align}
&\lim_{\gamma\to +\infty} \frac{\mu_\gamma}{\bar{r}_\gamma}=0\,,\label{RMuTo01}\\
&\quad t_\gamma(\bar{r}_\gamma)\le \eta \frac{p_\gamma \gamma^{p_\gamma}}{2}\,,\label{BarRNotTooLarge1}\\
& \quad \gamma^{2 p_\gamma}\bar{r}_\gamma^2=O(1) \label{LpBd1}
\end{align}
for all $\gamma\gg 1$ large. {Given a positive constant $h_0>0$, w}e study in this section the behavior as $\gamma\to +\infty$ of a family $(B_\gamma)_\gamma$ of functions solving
\begin{equation}\label{EqBubble1}
\begin{cases}
&\Delta B_\gamma+{h_0}B_\gamma= \lambda_\gamma p_\gamma B_\gamma^{p_\gamma-1}e^{B_\gamma^{p_\gamma}}\,,\\
& B_\gamma(0)=\gamma>0\,,\\
& B_\gamma\text{ is radially symmetric and positive in }B_{\bar{r}_\gamma}(0)\,,
\end{cases}
\end{equation}
where $\Delta=-\partial_{xx}-\partial_{yy}$ denotes the Euclidean Laplace operator in $\mathbb{R}^2$. For $\gamma$ fixed,  \eqref{EqBubble1} reduces to an ODE with respect to the radial variable $r=|x|$: then we may assume that $B_\gamma$, defined in $[0,s_\gamma)$, is the \emph{maximal} positive solution of \eqref{EqBubble1} and it may be checked that it does not blow-up before it vanishes, namely $s_\gamma<+\infty$ implies $\lim_{r\to s_\gamma^-}B_\gamma(r)=0$. Actually, the proof of Proposition \ref{PropRadAnalysis1} below shows that our assumptions \eqref{BarRNotTooLarge1}-\eqref{LpBd1} ensure that $B_\gamma$ is well defined and positive in $B_{\bar{r}_\gamma}(0)$ for all $\gamma\gg 1$. Let $w_\gamma$ be given by
\begin{equation}\label{WGamma1}
B_\gamma=\gamma\left(1-\frac{2 t_\gamma}{p_\gamma \gamma^{p_\gamma}}+\frac{w_\gamma}{\gamma^{p_\gamma}} \right)\,.
\end{equation}
Then we have the following result:

\begin{prop}\label{PropRadAnalysis1}
We have $B_\gamma\le \gamma$,
\begin{equation*}
 w_\gamma=O(\gamma^{-p_\gamma} t_\gamma)\,,\quad w'_\gamma=O(\gamma^{-p_\gamma} t'_\gamma)\,,
\end{equation*}
and
\begin{equation*}
 \lambda_\gamma p_\gamma B_\gamma^{p_\gamma-1}e^{B_\gamma^{p_\gamma}}=\frac{8 e^{-2t_\gamma}}{\mu_\gamma^{2}\gamma^{p_\gamma-1}p_\gamma}\left(1+O\left(\frac{e^{\tilde{\eta} t_\gamma}}{\gamma^{p_\gamma}} \right)\right)\,,
\end{equation*}
uniformly in $[0,\bar{r}_\gamma]$ and for all $\gamma\gg 1$ large, where $\tilde{\eta}$ is any fixed constant in $(\eta,1)$ and $w_\gamma$ is as in \eqref{WGamma1}. 
\end{prop}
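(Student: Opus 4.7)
The plan is to rewrite \eqref{EqBubble1} as an ODE for the error function $w_\gamma$ in which the leading-order nonlinear contribution is absorbed via a Liouville-type identity, and then to close a bootstrap by direct integration. Setting $a_\gamma := -\tfrac{2t_\gamma}{p_\gamma\gamma^{p_\gamma}} + \tfrac{w_\gamma}{\gamma^{p_\gamma}}$, so that $B_\gamma = \gamma(1+a_\gamma)$, Taylor expansion (valid as long as $|a_\gamma|$ stays away from $1$) gives $B_\gamma^{p_\gamma} - \gamma^{p_\gamma} = -2t_\gamma + p_\gamma w_\gamma + \gamma^{p_\gamma}R_\gamma$ with $R_\gamma = \tfrac{p_\gamma(p_\gamma-1)}{2}a_\gamma^2 + O(a_\gamma^3)$. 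Combined with the elementary identity $\Delta t_\gamma = -4 e^{-2t_\gamma}/\mu_\gamma^2$ and the normalization \eqref{BigLambdaDef1}, equation \eqref{EqBubble1} rewrites equivalently as
$$\Delta w_\gamma = \frac{8 e^{-2t_\gamma}}{p_\gamma\mu_\gamma^2}\Bigl[(1+a_\gamma)^{p_\gamma-1}e^{p_\gamma w_\gamma + \gamma^{p_\gamma}R_\gamma}-1\Bigr] - h_0\gamma^{p_\gamma-1}B_\gamma,\qquad w_\gamma(0)=w_\gamma'(0)=0.$$

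I would then bootstrap the a priori bound $|w_\gamma(r)| \le K\, t_\gamma(r)/\gamma^{p_\gamma}$ on $[0,r^\star]\subset[0,\bar r_\gamma]$ for an appropriate constant $K>0$ (the bound holds trivially near $0$ since $w_\gamma''(0) = h_0\gamma^{p_\gamma}/2$ while $t_\gamma(r)/\gamma^{p_\gamma}\sim r^2/(\mu_\gamma^2\gamma^{p_\gamma})$ and $\mu_\gamma^2\gamma^{2p_\gamma}\to 0$ by \eqref{RMuTo01}--\eqref{LpBd1}). Under this bootstrap and \eqref{BarRNotTooLarge1}, one has $|a_\gamma|\le\eta+o(1)$ and $|\gamma^{p_\gamma}R_\gamma|\le\eta t_\gamma+o(1)$, and combining the elementary inequalities $|e^x - 1| \le |x|e^{|x|}$ and $t_\gamma^j\le C_{\tilde\eta}e^{\tilde\eta t_\gamma}$ (valid for any fixed $\tilde\eta\in(\eta,1)$ and $j\ge 0$) yields
$$\bigl|(1+a_\gamma)^{p_\gamma-1}e^{p_\gamma w_\gamma + \gamma^{p_\gamma}R_\gamma}-1\bigr|\le \frac{Ce^{\tilde\eta t_\gamma}}{\gamma^{p_\gamma}},\qquad |\Delta w_\gamma|\le \frac{Ce^{-(2-\tilde\eta)t_\gamma}}{\mu_\gamma^2\gamma^{p_\gamma}}+C\gamma^{p_\gamma}.$$
Integrating the radial identity $rw_\gamma'(r) = -\int_0^r s\,\Delta w_\gamma(s)\,ds$, the substitution $u = 1+s^2/\mu_\gamma^2 = e^{t_\gamma(s)}$ bounds the first piece of the integral by $C/\gamma^{p_\gamma}$ (since $\int_1^{U(r)} u^{-(2-\tilde\eta)}du$ is uniformly bounded), while \eqref{LpBd1} handles the second. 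Examining the resulting estimate separately in the regimes $r\le\mu_\gamma$ and $r>\mu_\gamma$ yields $|w_\gamma'(r)|\le \tilde K\,t_\gamma'(r)/\gamma^{p_\gamma}$, and integration from $0$ gives $|w_\gamma(r)|\le \tilde K\, t_\gamma(r)/\gamma^{p_\gamma}$, closing the bootstrap on the whole of $[0,\bar r_\gamma]$ provided $K$ is chosen suitably.

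The inequality $B_\gamma\le\gamma$ then follows at once from $\gamma - B_\gamma = \gamma^{1-p_\gamma}(\tfrac{2t_\gamma}{p_\gamma} - w_\gamma)\ge 0$, valid once $\tilde K/\gamma^{p_\gamma}\le 2/p_\gamma$, and the expansion of $\lambda_\gamma p_\gamma B_\gamma^{p_\gamma-1}e^{B_\gamma^{p_\gamma}}$ is immediate from $\lambda_\gamma p_\gamma B_\gamma^{p_\gamma-1}e^{B_\gamma^{p_\gamma}} = \tfrac{8 e^{-2t_\gamma}}{p_\gamma\mu_\gamma^{2}\gamma^{p_\gamma-1}}(1+a_\gamma)^{p_\gamma-1}e^{p_\gamma w_\gamma + \gamma^{p_\gamma}R_\gamma}$ together with the bracket bound just established. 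The main technical obstacle is the borderline nature of the bootstrap: the exponent $p_\gamma w_\gamma + \gamma^{p_\gamma}R_\gamma$ can actually reach size $\sim\eta t_\gamma$ through the quadratic term $\gamma^{p_\gamma}R_\gamma$, so only the Gaussian-type factor $e^{-2t_\gamma}$ (of Liouville-bubble origin) can absorb it. The strict condition $\eta<1$ in \eqref{BarRNotTooLarge1} is exactly what makes $e^{(\eta-2)t_\gamma}$ integrable against $s\,ds/\mu_\gamma^2$ after the substitution $u = e^{t_\gamma(s)}$, and is also responsible for the appearance of $\tilde\eta\in(\eta,1)$ in the final error term.
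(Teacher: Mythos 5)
Your decomposition of $\Delta w_\gamma$ is essentially the one the paper uses (the reduction through the Liouville identity and the normalization \eqref{BigLambdaDef1}), and the final two claims of the proposition do follow immediately once the estimate on $w_\gamma$ is in hand. The gap is in the bootstrap step: the term $p_\gamma w_\gamma$ inside the exponent is not an error to be absorbed alongside $a_\gamma$ and $\gamma^{p_\gamma}R_\gamma$ --- it is the leading \emph{linear} term of the ODE, and when you keep track of constants the bootstrap does not close.

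Concretely, assume $|w_\gamma|\le K\, t_\gamma/\gamma^{p_\gamma}$ on $[0,r^\star]$. The $p_\gamma w_\gamma$ piece of your bracket contributes to $\Delta w_\gamma$ the full-strength linear term $\frac{8 e^{-2 t_\gamma}}{\mu_\gamma^2}\,w_\gamma$. Inserting the bootstrap bound and substituting $u=e^{t_\gamma(s)}$, one finds
\begin{equation*}
\frac{1}{2\pi}\int_{B_r(0)}\frac{8 e^{-2 t_\gamma}}{\mu_\gamma^2}\,|w_\gamma|\,dx \;\le\; \frac{4K}{\gamma^{p_\gamma}}\int_1^{e^{t_\gamma(r)}}\frac{\ln u}{u^2}\,du\,,
\end{equation*}
and the last integral tends to $1$ as $t_\gamma(r)\to+\infty$. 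Hence the identity $-r\,w_\gamma'(r)=\frac{1}{2\pi}\int_{B_r(0)}\Delta w_\gamma\,dx$ only yields $|w_\gamma'(r)|\lesssim 4K/(r\gamma^{p_\gamma})$ for $r\gg\mu_\gamma$, and, after integrating from $0$ to $r$, $|w_\gamma(r)|\lesssim 2K\,t_\gamma(r)/\gamma^{p_\gamma}$. The induced map $K\mapsto \tilde K$ has slope $\ge 2$: no choice of $K$ can give $\tilde K < K$. The situation is even worse with the uniform bound $C e^{\tilde\eta t_\gamma}/\gamma^{p_\gamma}$ you write for the bracket, since that bound is lossy precisely on this leading term and leads to $\int_1^\infty u^{-(2-\tilde\eta)}\,du\sim (1-\tilde\eta)^{-1}$, which blows up as $\tilde\eta\to 1$. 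This is a genuine obstruction: the linearized operator here is not a small perturbation that can be absorbed by a fixed-point iteration.

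The paper circumvents this by splitting the argument in two. Its bootstrap invariant is the \emph{weaker} bound $|w_\gamma|\le t_\gamma/\gamma^{p_\gamma/2}$ (see \eqref{DefRGamma}): this suffices to justify the expansion \eqref{NonlinExpan1}, but the stronger estimate \eqref{BetterEstimW1}, namely $|w_\gamma|=O(t_\gamma/\gamma^{p_\gamma})$, is obtained by a different mechanism. Rescaling to $\tilde w_\gamma = w_\gamma(\cdot/\mu_\gamma)$, the paper integrates the equation using only $|\tilde w_\gamma|\le r\|\tilde w'_\gamma\|_\infty$, keeps the linear term as a genuine linear term (see \eqref{TildeW1Estim1}), and then argues by compactness and contradiction: if $\gamma^{p_\gamma}\|\tilde w'_\gamma\|_\infty\to+\infty$ along a subsequence, the normalized $\check w_\gamma=\tilde w_\gamma/\|\tilde w'_\gamma\|_\infty$ converges to a radial solution of $\Delta\check w_\infty=8 e^{-2T_0}\check w_\infty$ with $\check w_\infty(0)=0$ and $\check w'_\infty(0)=0$, which must vanish by ODE uniqueness, contradicting the normalization $|\check w'_\infty(l)|=1$. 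This triviality of the linearized kernel is a spectral fact, not a contraction estimate, and it is exactly what your direct bootstrap is missing. To repair the proof, the closing paragraph should be replaced by this compactness step; the rest of your set-up carries over almost verbatim.
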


Once Proposition \ref{PropRadAnalysis1} is proven, we obtain first  
$$ B_\gamma(r)=\gamma-\frac{2}{p_\gamma \gamma^{p_\gamma-1}}\left( \ln \frac{1}{\mu_\gamma^2}+\ln (\mu_\gamma^2+r^2) \right)+O\left(\gamma^{1-p_\gamma} \right)$$
using \eqref{BarRNotTooLarge1} to handle the remainder term, so that we get from \eqref{BigLambdaDef1}
\begin{equation}\label{LowPOVBubble1}
B_\gamma(r)=-\left(\frac{2}{p_\gamma}-1 \right)\gamma+\frac{2}{p_\gamma \gamma^{p_\gamma-1}}\ln \frac{1}{\lambda_\gamma \gamma^{2(p_\gamma-1)}(\mu_\gamma^2+r^2)}+O\left(\gamma^{1-p_\gamma}\right)
\end{equation}
uniformly in $r\in [0,\bar{r}_\gamma]$ and for all $\gamma\gg 1$ large. While the principal part of the expression in \eqref{LowPOVBubble1} becomes negative for $r>0$ large enough, writing it in its initial form \eqref{WGamma1},  condition \eqref{BarRNotTooLarge1} and the pointwise estimate of $w_\gamma$ in Proposition \ref{PropRadAnalysis1} clearly ensure its positivity in the considered range $r\in [0,\bar{r}_\gamma]$, as claimed in \eqref{EqBubble1}.
\begin{proof}[Proof of Proposition \ref{PropRadAnalysis1}] Let $r_\gamma$ be given by
\begin{equation}\label{DefRGamma}
r_\gamma=\sup\left\{r\in [0,\bar{r}_\gamma]\text{ s.t. }|w_\gamma|\le \frac{t_\gamma}{ \gamma^{\frac{p_\gamma}{2}}} \text{ in }[0,r] \right\}
\end{equation}
for all $\gamma$. We aim to show that
\begin{equation}\label{BarREqR1}
r_\gamma=\bar{r}_\gamma
\end{equation}
for all $\gamma\gg 1$. We start by expanding the RHS in the first equation of \eqref{EqBubble1} uniformly in $[0,r_\gamma]$ as $\gamma\to +\infty$, using in a crucial way the control on $w_\gamma$ that we have by \eqref{DefRGamma}. Fix $\eta_1<\eta_2<\eta_3$ such that $\eta_k\in (\eta,1)$ for all $k$. When not specified, the expansions of this proof are uniform in $[0,r_\gamma]$ as $\gamma\to +\infty$. First, since $|w_\gamma|=o(t_\gamma)$, we get from \eqref{WGamma1} that $B_\gamma/\gamma\ge (1-\eta_1)$ in $[0,r_\gamma]$ for all $\gamma\gg 1$ large. First, for all $p\in [1,2]$ and all $x\le 1$, we notice that
$$0\le (1-x)^p-\left(1-p x \right)\le \frac{p^2}{4} x^2\,. $$
Then, we have 
$$0\le \frac{B_\gamma^{p_\gamma}}{\gamma^{p_\gamma}}-\left(1-\frac{2 t_\gamma- p_\gamma w_\gamma}{\gamma^{p_\gamma}} \right)\le \frac{t_\gamma^2}{\gamma^{2 p_\gamma}}(1+o(1))\,, $$
so  we get from \eqref{BarRNotTooLarge1} that
\begin{equation*}
\exp\left(B_\gamma^{p_\gamma}\right)=e^{\gamma^{p_\gamma}} e^{-2 t_\gamma}e^{p_\gamma w_\gamma}\left(1+O\left(\frac{t_\gamma^2}{\gamma^{p_\gamma}}e^{\eta_1 t_\gamma} \right) \right)\,.
\end{equation*}
Here and several times in the sequel, we use the elementary inequality
\begin{equation*}
\left|e^x-\sum_{j=0}^{n-1} \frac{x^j}{j!} \right|\le \frac{|x|^n}{n!} e^{|x|}
\end{equation*}
for all $x\in \mathbb{R}$ and all integers $n\ge 1$. Using also \eqref{BigLambdaDef1} and \eqref{DefRGamma} again, we get that
\begin{equation}\label{NonlinExpan1}
\begin{split}
&\lambda_\gamma p_\gamma B_\gamma^{p_\gamma-1} e^{B_\gamma^{p_\gamma}}\\
&=\frac{8~ e^{-2 t_\gamma}}{\mu_\gamma^2 \gamma^{p_\gamma-1} p_\gamma} \left(1+O\left(\frac{t_\gamma}{\gamma^{p_\gamma}} \right) \right)\times\\
&\quad  \left(1+ p_\gamma w_\gamma +O\left(\frac{t_\gamma^2}{\gamma^{p_\gamma}} \exp\left(\frac{p_\gamma t_\gamma}{\gamma^{p_\gamma/2}} \right)\right) \right)\left(1+O\left(\frac{t_\gamma^2}{\gamma^{p_\gamma}}e^{\eta_1 t_\gamma} \right)\right)\,,\\
&= \frac{8~ e^{-2 t_\gamma}}{\mu_\gamma^2 \gamma^{p_\gamma-1} p_\gamma}\left(1+ p_\gamma w_\gamma +O\left(\frac{\bar{t}_\gamma^3}{\gamma^{p_\gamma}}e^{\eta_2 t_\gamma}\right) \right)\,.
\end{split}
\end{equation}
In view of \eqref{DefRGamma}, to conclude the proof of \eqref{BarREqR1}, it is sufficient to obtain
\begin{equation}\label{BetterEstimW1}
|w_\gamma|=O\left(\frac{t_\gamma}{\gamma^{p_\gamma}} \right)\,,
\end{equation}
which we prove next. By \eqref{DefRGamma}, we have that $B_\gamma\le \gamma$ in $[0,r_\gamma]$ for all $\gamma\gg 1$. Set $\tilde{w}_\gamma=w_\gamma(\cdot/\mu_\gamma)$. Then, since $T_0:=\ln(1+|\cdot|^2)$ solves
\begin{equation}\label{Liouville1}
\Delta T_0= -4 e^{-2 T_0}\text{ in }\mathbb{R}^2\,,
\end{equation}
 we get from \eqref{EqBubble1} and \eqref{NonlinExpan1} that 
\begin{equation}\label{EqTildeW1}
\Delta \tilde{w}_\gamma = 8 e^{-2 T_0} \tilde{w}_\gamma+ O\left(\mu_\gamma^2\gamma^{p_\gamma} \right)+O\left(\frac{e^{(-2+\eta_3)T_0}}{\gamma^{p_\gamma}} \right)\,, \end{equation}
uniformly in $[0,r_\gamma/\mu_\gamma]$ as $\gamma\to +\infty$, applying $\Delta$ to \eqref{WGamma1}. By integrating \eqref{EqTildeW1} in $B_r(0)$ and also by parts, writing merely $|\tilde{w}_\gamma|\le r\|\tilde{w}'_\gamma\|_\infty$, we get that
$$-2\pi r~ \tilde{w}'_\gamma(r)=O\left(r^2 \mu_\gamma^2 \gamma^{p_\gamma} \right)+O\left(\frac{r^2}{\gamma^{p_\gamma}\left(1+r^2\right)}\right)+O\left(\frac{\|\tilde{w}'_\gamma\|_{\infty} r^3}{1+r^3}\right) \,,  $$
where $\|\tilde{w}'_\gamma\|_{\infty}$ stands for $\|\tilde{w}'_\gamma\|_{L^\infty([0,r_\gamma/\mu_\gamma])}$ and where $\tilde{w}'_\gamma=\frac{d}{dr}\tilde{w}_\gamma$, so that we get
\begin{equation}\label{TildeW1Estim1}
|\tilde{w}'_\gamma(r)|=O\left(\frac{r\mu_\gamma^2}{r_\gamma^2 \gamma^{p_\gamma}}\right)+O\left(\frac{r}{1+r^2}\left(\|\tilde{w}'_\gamma\|_{\infty}+\frac{1}{\gamma^{p_\gamma}} \right) \right)\,,
\end{equation}
uniformly in $r\in [0,r_\gamma/\mu_\gamma]$ as $\gamma\to +\infty$, using \eqref{LpBd1} and $r_\gamma\le \bar{r}_\gamma$. If $\|\tilde{w}'_\gamma\|_{\infty}=O(\gamma^{-p_\gamma})$ for all $\gamma$, \eqref{BetterEstimW1} follows from \eqref{RMuTo01},  \eqref{TildeW1Estim1} and from the fundamental theorem of calculus, using again $\tilde{w}_\gamma(0)=0$. Then, assume by contradiction that the complementary case occurs, namely that 
\begin{equation}\label{Contrad1}
\lim_{\gamma\to +\infty}\gamma^{p_\gamma}\|\tilde{w}'_\gamma\|_{\infty}=+\infty\,,
\end{equation}
maybe after passing to a subsequence. Let $\rho_\gamma\in [0,r_\gamma/\mu_\gamma]$ be such that $|\tilde{w}'_\gamma(\rho_\gamma)|=\|\tilde{w}'_\gamma\|_{\infty}$. By \eqref{RMuTo01}, \eqref{TildeW1Estim1} and \eqref{Contrad1}, up to a subsequence, $\rho_\gamma\to l$ and $r_\gamma/\mu_\gamma\to L$ as $\gamma\to +\infty$, for some $l\in (0,+\infty)$, $L\in (0,+\infty]$, $l\le L$. Setting now $\check{w}_\gamma:=\tilde{w}_\gamma/\|\tilde{w}'_\gamma\|_{\infty}$, we then get from (radial) elliptic theory and from \eqref{EqTildeW1} with \eqref{RMuTo01} and \eqref{LpBd1} that, up to a subsequence, $\check{w}_\gamma\to \check{w}_\infty$ in $C^1_{loc}([0,L))$ as $\gamma\to +\infty$ , where $\check{w}_\infty$ solves
\begin{equation}\label{SystContrad1}
\begin{cases}
&\Delta \check{w}_\infty=8 e^{-2 T_0} \check{w}_\infty\text{ in }B_L(0)\,,\\
&\check{w}_\infty(0)=0\,,\\
&\check{w}_\infty\text{ is radially symmetric}\,,\\
&|\check{w}'_\infty(l)|=1\,;
\end{cases}
\end{equation}
but by ODE theory, the only function satisfying the first three conditions in 
\eqref{SystContrad1} is the null function, which gives the expected contradiction. {Observe that we get also a contradiction in the most delicate case where $l=L$.} Indeed, since we then have $L\in (0,+\infty)$, writing \eqref{EqTildeW1} in radial coordinates  gives in this case that $(\|\check{w}_\gamma\|_{C^2([0,r_\gamma/\mu_\gamma])})_\gamma$ is bounded, so that $\check{w}'_\infty\in C^1([0,l])$ is well defined at $l$,  so that the fourth line in \eqref{SystContrad1} makes sense and holds true. As explained above, this concludes the proof of \eqref{BarREqR1}. Proposition \ref{PropRadAnalysis1} clearly follows.
\end{proof}

\section{Nonradial blow-up analysis: the case of a single bubble}\label{SectOneBubble}
Let $(p_\varepsilon)_\varepsilon$ be a sequence of numbers in $[1,2]$, let $(\mu_\varepsilon)_\varepsilon$ and $(\bar{r}_\varepsilon)_\varepsilon$ be sequences of positive real numbers. Let $(u_\varepsilon)_\varepsilon$ be a sequence of functions such that $u_\varepsilon$ is smooth in the closure of $B_{\bar{r}_\varepsilon}(0)$, where $B_{\bar{r}_\varepsilon}(0)$ is the ball of center $0$ and radius $\bar{r}_\varepsilon$ in the standard Euclidean space $\mathbb{R}^2$.  We assume that
\begin{equation}\label{GradNull2}
\nabla u_\varepsilon(0)=0
\end{equation}
for all $\varepsilon$ and that 
\begin{equation}\label{DefGammaEps2}
\gamma_\varepsilon:=u_\varepsilon(0)\to +\infty
\end{equation}
as $\varepsilon \to 0$. As for \eqref{BigLambdaDef1}, let $(\lambda_\varepsilon)_\varepsilon$ be given by
\begin{equation}\label{ScalRel2}
\lambda_\varepsilon p_\varepsilon^2 \gamma_\varepsilon^{2(p_\varepsilon-1)} \mu_\varepsilon^2 e^{\gamma_\varepsilon^{p_\varepsilon}}=8
\end{equation}
and let $t_\varepsilon, \bar{t}_\varepsilon$ be given by 
$$t_\varepsilon=\ln\left(1+\frac{|\cdot|^2}{\mu_\varepsilon^2} \right); \qquad \bar{t}_\varepsilon=t_\varepsilon+1$$ for all $\varepsilon$. Let $\eta\in (0,1)$ be fixed; assume also that
\begin{align}
&  \frac{\mu_\varepsilon}{\bar{r}_\varepsilon}=o(1)\,,\label{RMuTo02}\\
& t_\varepsilon(\bar{r}_\varepsilon)\le \eta \frac{p_\varepsilon \gamma_\varepsilon^{p_\varepsilon}}{2}\,,\label{BarRTNotTooLarge2}\\
& \int_{B_{\bar{r}_\varepsilon}(0)} u_\varepsilon^4 dx\le \bar{C}\,,\label{LpBd2Prelim}
\end{align}
for all $\varepsilon\ll 1$ small and for some given $\bar{C}>1$, and that
\begin{equation}\label{ConvLoc2}
\lim_{\varepsilon\to 0} \frac{p_\varepsilon}{2} \gamma_\varepsilon^{p_\varepsilon-1}\left(\gamma_\varepsilon-u_\varepsilon(\mu_\varepsilon\cdot) \right)=\ln\left(1+|\cdot|^2 \right)\text{ in }C^1_{loc}(\mathbb{R}^2)\,,
\end{equation}
up to a subsequence. As we will see in the subsequent blow-up analysis and in Lemma \ref{CorPropWeakPwEst}, 
the last two assumptions are indeed natural ones. 

Let $(v_\varepsilon)_\varepsilon$ be a sequence of smooth functions solving 
\begin{equation}\label{EqVEps2}
\begin{cases}
&\Delta v_\varepsilon + {h(0)}~v_\varepsilon=\lambda_\varepsilon p_\varepsilon v_\varepsilon^{p_\varepsilon-1} e^{v_\varepsilon^{p_\varepsilon}}\text{ in }B_{\bar{r}_\varepsilon}(0)\,,\\
&v_\varepsilon(0)=\gamma_\varepsilon\,\\
&v_\varepsilon\text{ is radially symmetric around }0\in\mathbb{R}^2\,,
\end{cases}
\end{equation}
for all $\varepsilon$, {where $h$ is a given smooth positive function on a neighborhood of $0\in \mathbb{R}^2$}. Let $(\varphi_\varepsilon)_\varepsilon$ be a sequence of smooth functions such that
\begin{equation}\label{CondVarphiEps2}
\lim_{\varepsilon\to 0} \varphi_\varepsilon(\bar{r}_\varepsilon\cdot)=\varphi_0\text{ in }C^2\left(\overline{B_1(0)}\right)\text{ and }\varphi_\varepsilon(0)=0
\end{equation}
for all $\varepsilon$ small. We assume that $u_\varepsilon$ solves
\begin{equation}\label{MainEqEps2}
\Delta u_\varepsilon=e^{2 \varphi_\varepsilon}\left(- {h} u_\varepsilon+\lambda_\varepsilon p_\varepsilon u_\varepsilon^{p_\varepsilon-1} e^{u_\varepsilon^{p_\varepsilon}}\right)\,,\quad u_\varepsilon>0\text{ in }B_{\bar{r}_\varepsilon}(0)\,,
\end{equation}
for all $\varepsilon$. At last, we assume that the following key gradient estimate holds true: there exists $C_G>0$ such that
\begin{equation}\label{WeakGradEstU2}
|x| |\nabla u_\varepsilon(x)| u_\varepsilon(x)^{p_\varepsilon-1} \le C_G\text{ for all }x\in B_{\bar{r}_\varepsilon}(0)
\end{equation}
for all $\varepsilon$. Letting $w_\varepsilon$ be given by
\begin{equation}\label{DefW2}
u_\varepsilon=v_\varepsilon+w_\varepsilon\,,
\end{equation}
the following proposition holds true:

\begin{prop}\label{PropRadCompar2}
We have that
\begin{equation}\label{WPointwEst}
|w_\varepsilon(x)|\le \frac{C_0 |x|}{\gamma_\varepsilon^{p_\varepsilon-1} \bar{r}_\varepsilon}\text{ for all }x\in B_{\bar{r}_\varepsilon}(0)\,,
\end{equation}
and that
\begin{equation}\label{WGradEst2}
\|\nabla w_\varepsilon\|_{L^\infty(B_{\bar{r}_\varepsilon}(0))}\le \frac{C_0}{\gamma_\varepsilon^{p_\varepsilon-1}\bar{r}_\varepsilon}
\end{equation}
for all $\varepsilon\ll 1$ small, where $C_0$ is any fixed constant greater than $(C_G/(1-\eta))+4$, for $C_G$ as in \eqref{WeakGradEstU2}
and $\eta$ as in \eqref{BarRTNotTooLarge2}. Up to a subsequence, there exists a function $\psi_0$, harmonic in $B_1(0)$, such that we have
\begin{align}
&\lim_{\varepsilon\to 0}\gamma_\varepsilon^{p_\varepsilon-1} w_\varepsilon(\bar{r}_\varepsilon \cdot)=\psi_0\text{ in }C^1_{loc}(B_1(0)\backslash\{0\})\label{HConv2}\,,\\
&\quad\quad\quad\quad\quad\quad\nabla \psi_0(0)=0 \label{CondH2}\,.
\end{align}
\end{prop}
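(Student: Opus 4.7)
\medskip

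The plan is to treat the three conclusions in the natural order: the pointwise bound \eqref{WPointwEst}, the gradient bound \eqref{WGradEst2}, and then the compactness claim \eqref{HConv2}--\eqref{CondH2}. First I record the basic facts: $w_\varepsilon(0)=u_\varepsilon(0)-v_\varepsilon(0)=0$, and $\nabla w_\varepsilon(0)=0$ because $\nabla u_\varepsilon(0)=0$ by \eqref{GradNull2} and $\nabla v_\varepsilon(0)=0$ by radial symmetry. The bubble $v_\varepsilon$ satisfies the hypotheses of Proposition \ref{PropRadAnalysis1} (with $h_0=h(0)$), so one has the pointwise bound $v_\varepsilon\le\gamma_\varepsilon$ together with the sharp expansion
$$\lambda_\varepsilon p_\varepsilon v_\varepsilon^{p_\varepsilon-1}e^{v_\varepsilon^{p_\varepsilon}}=\frac{8\,e^{-2t_\varepsilon}}{\mu_\varepsilon^2\gamma_\varepsilon^{p_\varepsilon-1}p_\varepsilon}\bigl(1+o(1)\bigr)$$
uniformly in $B_{\bar r_\varepsilon}(0)$. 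Subtracting \eqref{EqVEps2} from \eqref{MainEqEps2} I obtain
$$-\Delta w_\varepsilon=\lambda_\varepsilon p_\varepsilon\bigl(e^{2\varphi_\varepsilon}u_\varepsilon^{p_\varepsilon-1}e^{u_\varepsilon^{p_\varepsilon}}-v_\varepsilon^{p_\varepsilon-1}e^{v_\varepsilon^{p_\varepsilon}}\bigr)-\bigl(e^{2\varphi_\varepsilon}h\,u_\varepsilon-h(0)\,v_\varepsilon\bigr),$$
in which the first term is the main nonlinear difference and the second is a lower-order remainder controlled by $\varphi_\varepsilon(0)=0$ and the smoothness of $h$.

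For the pointwise estimate \eqref{WPointwEst} I would argue by contradiction on the scale-invariant quantity $M_\varepsilon:=\sup_{B_{\bar r_\varepsilon}(0)}\gamma_\varepsilon^{p_\varepsilon-1}\bar r_\varepsilon|w_\varepsilon(x)|/|x|$, which is finite since $w_\varepsilon(0)=0$ and $\nabla w_\varepsilon(0)=0$. Assume $M_\varepsilon>C_0$ along a subsequence; the boundary contribution on $\partial B_{\bar r_\varepsilon}(0)$ is controlled by integrating \eqref{WeakGradEstU2}: using \eqref{BarRTNotTooLarge2} one has $u_\varepsilon^{p_\varepsilon-1}\ge(1-\eta)^{(p_\varepsilon-1)/p_\varepsilon}\gamma_\varepsilon^{p_\varepsilon-1}$ there, so a radial integration of $|\nabla u_\varepsilon|\le C_G/(|x|u_\varepsilon^{p_\varepsilon-1})$ from a fixed interior radius out to $\bar r_\varepsilon$ gives $\gamma_\varepsilon^{p_\varepsilon-1}|w_\varepsilon|\le C_G/(1-\eta)+o(1)$ on $\partial B_{\bar r_\varepsilon}(0)$. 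The gradient estimate \eqref{WeakGradEstU2} also lets me linearize: writing $u_\varepsilon^{p_\varepsilon}=v_\varepsilon^{p_\varepsilon}+p_\varepsilon\xi^{p_\varepsilon-1}w_\varepsilon$ with $\xi$ between the two, the nonlinear difference is bounded by $C\,\lambda_\varepsilon p_\varepsilon v_\varepsilon^{p_\varepsilon-1}e^{v_\varepsilon^{p_\varepsilon}}\gamma_\varepsilon^{p_\varepsilon-1}|w_\varepsilon|$ plus a smaller term from $e^{2\varphi_\varepsilon}-1=O(\bar r_\varepsilon)$. Representing $w_\varepsilon$ via the Green's function on $B_{\bar r_\varepsilon}(0)$ and using Proposition \ref{PropRadAnalysis1} to bound the Green's-kernel convolution of $\lambda_\varepsilon p_\varepsilon v_\varepsilon^{p_\varepsilon-1}e^{v_\varepsilon^{p_\varepsilon}}$ by a constant $\le 4+o(1)$ times $|x|/\bar r_\varepsilon$, one collects the $4+C_G/(1-\eta)+o(1)$ bound on the rescaled $w_\varepsilon/|x|$, contradicting $M_\varepsilon>C_0$.

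Once \eqref{WPointwEst} is established, \eqref{WGradEst2} follows from standard interior gradient estimates applied to the rescaled function $\tilde w_\varepsilon(y):=\gamma_\varepsilon^{p_\varepsilon-1}w_\varepsilon(\bar r_\varepsilon y)$ on $B_1(0)$: the rescaled RHS is bounded in $L^1$ uniformly (using the uniform $L^1$ control on the concentrating nonlinearity from Proposition \ref{PropRadAnalysis1} and \eqref{LpBd2Prelim}), so Calder\'on--Zygmund together with the already-established $L^\infty$ bound on $\tilde w_\varepsilon$ gives uniform $\|\nabla\tilde w_\varepsilon\|_\infty\le C_0$, which is precisely \eqref{WGradEst2}. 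To extract $\psi_0$, I note that on any compact $K\Subset B_1(0)\setminus\{0\}$ the rescaled bubble term is of size $O((\mu_\varepsilon/\bar r_\varepsilon)^2/\mathrm{dist}(K,0)^4)\to 0$ thanks to \eqref{RMuTo02}, and the remaining terms involving $h-h(0)$ and $\varphi_\varepsilon$ are $O(\bar r_\varepsilon)=o(1)$ by \eqref{CondVarphiEps2}. Elliptic regularity and Arzel\`a--Ascoli yield $\tilde w_\varepsilon\to\psi_0$ in $C^1_{loc}(B_1(0)\setminus\{0\})$ along a subsequence, with $\psi_0$ harmonic on $B_1(0)\setminus\{0\}$; the bound $|\psi_0(y)|\le C_0|y|$ makes the origin a removable singularity, so $\psi_0$ is harmonic on all of $B_1(0)$.

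The main obstacle is \eqref{CondH2}: since $\psi_0$ is harmonic with $\psi_0(0)=0$ and only a linear bound near the origin, one a priori has $\psi_0(y)=\nabla\psi_0(0)\cdot y+O(|y|^2)$, and the bound $|\psi_0|\le C_0|y|$ alone does \emph{not} force $\nabla\psi_0(0)=0$. The vanishing must come from \eqref{GradNull2}, which rescales to $\nabla\tilde w_\varepsilon(0)=0$. To transfer this to the limit, I would compute $\nabla\psi_0(0)$ via the Poisson representation on a small disk $B_\rho(0)$,
$$\nabla\psi_0(0)=\frac{1}{\pi\rho}\int_{\partial B_\rho(0)}\psi_0(\zeta)\,\hat\zeta\,d\sigma(\zeta),$$
and then obtain the same representation as a limit from the $\tilde w_\varepsilon$ by writing $\tilde w_\varepsilon$ on $B_\rho(0)$ as a harmonic extension plus a Newtonian potential of the rescaled RHS $F_\varepsilon$. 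Since $F_\varepsilon$ has uniformly bounded $L^1$ mass on $B_\rho(0)$ and is supported essentially in $B_{\mu_\varepsilon/\bar r_\varepsilon}(0)$ (up to negligible tails), its Newtonian potential contributes only $o(1)$ to the gradient at $0$ after averaging against $\hat\zeta$ on $\partial B_\rho$, by symmetry of the leading part of $F_\varepsilon$. Passing to the limit in $\nabla\tilde w_\varepsilon(0)=0$ via this decomposition forces $\nabla\psi_0(0)=0$. This last symmetrization argument, which crucially uses the radial structure of the bubble nonlinearity in $v_\varepsilon$ together with \eqref{GradNull2}, is the delicate point of the proof.
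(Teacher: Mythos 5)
Your overall plan diverges from the paper's (you try to prove \eqref{WPointwEst} first and then deduce \eqref{WGradEst2} by elliptic estimates, whereas the paper proves \eqref{WGradEst2} first by a continuation argument on the quantity $r\,\gamma_\varepsilon^{p_\varepsilon-1}\|\nabla w_\varepsilon\|_{L^\infty(B_r)}$ and then gets \eqref{WPointwEst} by integrating from $0$), and there is a genuine gap in the first step. You claim that integrating $|\nabla u_\varepsilon|\le C_G/(|x|\,u_\varepsilon^{p_\varepsilon-1})$ "from a fixed interior radius out to $\bar r_\varepsilon$" gives $\gamma_\varepsilon^{p_\varepsilon-1}|w_\varepsilon|\le C_G/(1-\eta)+o(1)$ on $\partial B_{\bar r_\varepsilon}(0)$. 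This is not correct. That integration controls the oscillation of $u_\varepsilon$, not of $w_\varepsilon=u_\varepsilon-v_\varepsilon$, and in any case produces a factor $\ln(\bar r_\varepsilon/r_0)$, which is of order $\gamma_\varepsilon^{p_\varepsilon}$, not $O(1)$: both $u_\varepsilon$ and $v_\varepsilon$ individually vary by a definite fraction of $\gamma_\varepsilon$ across $B_{\bar r_\varepsilon}(0)$, and the smallness of $w_\varepsilon$ rests on cancellation between them that your integration does not capture. The paper avoids this by comparing the \emph{gradients} of $u_\varepsilon$ and $v_\varepsilon$ at one critical radius: at $r=r_\varepsilon$ one has $|\nabla w_\varepsilon|\le|\nabla u_\varepsilon|+|v_\varepsilon'|$, the first bounded by $C_G/((1-\eta)^{p_\varepsilon-1}\gamma_\varepsilon^{p_\varepsilon-1}r_\varepsilon)$ via \eqref{WeakGradEstU2} and the lower bound on $v_\varepsilon$, and the second by $(4+o(1))/(\gamma_\varepsilon^{p_\varepsilon-1}r_\varepsilon)$ from Proposition~\ref{PropRadAnalysis1}. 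That is precisely why $C_0$ is required to exceed $C_G/(1-\eta)+4$ and why the gradient bound, not the pointwise bound, is the one that propagates.

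There is a second, sharper gap in your argument for \eqref{CondH2}. You appeal to "symmetry of the leading part of $F_\varepsilon$" to discard the contribution of the Newtonian potential of the rescaled right-hand side to $\nabla\psi_0(0)$. But the rescaled source is not the bubble nonlinearity alone: after linearization it contains a term proportional to $8e^{-2T_0}w_0$, where $w_0=\lim \gamma_\varepsilon^{p_\varepsilon-1}(\bar r_\varepsilon/\mu_\varepsilon)w_\varepsilon(\mu_\varepsilon\cdot)$, and this is \emph{not} radially symmetric a priori; its first moment against $y/|y|^2$, which is what feeds into $\nabla\psi_0(0)$, need not vanish. Your symmetry claim therefore presupposes the very thing that has to be proved. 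The paper closes this by establishing that $w_0$ solves $\Delta w_0=8e^{-2T_0}w_0$ in $\mathbb R^2$ with $\nabla w_0(0)=0$ and linear growth, and then invoking the classification result of Chen--Lin (\cite[Lemma 2.3]{ChenLinSharpEst}) or Laurain (\cite[Lemma C.1]{LaurainLemma}) to conclude $w_0\equiv0$. It is this vanishing, combined with the Green's-function estimate (cf.\ \eqref{PartCC2}) comparing $w_\varepsilon$ to its harmonic extension $\psi_\varepsilon$, that forces $\nabla\psi_0(0)=0$. Without the classification lemma your argument does not close.
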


In order to make sure that the estimates of Section \ref{SectRad} can be used to control the $v_\varepsilon$'s, it will be checked in the proof below that our assumptions of this section actually imply
\begin{equation}\label{LpBd2}
\gamma_\varepsilon^{2 p_\varepsilon} \bar{r}_\varepsilon^2= O(1)\,,
\end{equation}
for all $\varepsilon$ (see \eqref{LpBd1}). Besides, if we strengthen assumption \eqref{LpBd2Prelim} and  we assume 
\begin{equation}\label{AssumpLpBd2Strong}
\int_{B_{\bar{r}_\varepsilon}(0)} e^{u_\varepsilon^{1/3}} dx=O(1)\,,
\end{equation}
for all $\varepsilon$, again guaranteed by Lemma \ref{CorPropWeakPwEst}, we will also show that \eqref{LpBd2} may be improven to
\begin{equation}\label{LpBd2Strong}
\ln \gamma_\varepsilon=o\left(\ln \frac{1}{\bar{r}_\varepsilon} \right)
\end{equation}
as $\varepsilon\to 0$.
\begin{proof}[Proof of Proposition \ref{PropRadCompar2}]
We first prove \eqref{WPointwEst}. By \eqref{EqVEps2}, we have that $v_\varepsilon'(0)=0$; by \eqref{DefGammaEps2} and  \eqref{EqVEps2}, we have that $u_\varepsilon(0)=v_\varepsilon(0)$ and we then find 
\begin{equation}\label{CIWEps}
w_\varepsilon(0)=0\text{ and }\nabla w_\varepsilon(0)=0
\end{equation}
for all $\varepsilon$, using \eqref{GradNull2} and \eqref{DefW2}. Then, in order to get \eqref{WPointwEst}, it is sufficient to prove \eqref{WGradEst2}. Let $r_\varepsilon$ be given by
\begin{equation}\label{REpsDef2}
r_\varepsilon=\sup\left\{r\in [0,\bar{r}_\varepsilon]\text{ s.t. } 
\begin{cases}
&\gamma_\varepsilon^{p_\varepsilon-1} r \|\nabla w_\varepsilon\|_{L^\infty(B_r(0))}\le C_0\,,\\
& \gamma_\varepsilon^4 r^2\le \frac{2 \bar{C}}{\pi(1-\eta)^4}
\end{cases}
 \right\}
\end{equation}
for all $\varepsilon$, with $C_0>(C_G/(1-\eta))+4$ fixed as in Proposition \ref{PropRadCompar2} and $\bar{C}$ as in \eqref{LpBd2Prelim}. Then proving \eqref{WGradEst2} reduces to show that
\begin{equation}\label{EqRBar2}
r_\varepsilon=\bar{r}_\varepsilon
\end{equation}
for all $\varepsilon\ll 1$. By \eqref{RMuTo02} and \eqref{ConvLoc2}, there exist numbers $\tilde{r}_\varepsilon$ such that $\mu_\varepsilon=o(\tilde{r}_\varepsilon)$, $\tilde{r}_\varepsilon\le \bar{r}_\varepsilon$ and such that $u_\varepsilon=\gamma_\varepsilon(1+o(1))$ uniformly in $B_{\tilde{r}_\varepsilon}(0)$:  then, we get from \eqref{LpBd2Prelim} that
$$\int_{B_{\tilde{r}_\varepsilon}(0)} u_\varepsilon^4 dx=\pi \gamma_\varepsilon^4\tilde{r}_\varepsilon^2(1+o(1))\le \bar{C} $$
and that $\gamma_\varepsilon^{2 p_\varepsilon} \tilde{r}_\varepsilon^2\le 2\bar{C}/\pi$ for all $\varepsilon\ll 1$. Then, we may use Proposition \ref{PropRadAnalysis1} in $B_{\tilde{r}_\varepsilon}(0)$, with assumption \eqref{LpBd1}, to get that $$\lim_{\varepsilon\to 0}\frac{p_\varepsilon}{2}\gamma_\varepsilon^{p_\varepsilon-1}(\gamma_\varepsilon-v_\varepsilon(\mu_\varepsilon\cdot))=\ln(1+|\cdot|^2)\text{ in }C^1_{loc}(\mathbb{R}^2)\,,$$
which implies with \eqref{ConvLoc2} that $\gamma_\varepsilon^{p_\varepsilon-1}\mu_\varepsilon \|\nabla w_\varepsilon\|_{L^\infty(B_{R\mu_\varepsilon}(0))}=o(1)$ as $\varepsilon\to 0$, for all given $R\gg 1$. Summarizing, both conditions in  \eqref{REpsDef2} give that $\mu_\varepsilon=o(r_\varepsilon)$ as $\varepsilon\to 0$ and we may now apply Proposition \ref{PropRadAnalysis1} in $B_{r_\varepsilon}(0)$: we have that
\begin{equation}\label{MaxWGE2}
\sup_{s\in [0,r_\varepsilon]} \frac{p_\varepsilon}{2}\gamma_\varepsilon^{p_\varepsilon-1} s|v_\varepsilon'(s)|\le 2+o(1)
\end{equation}
for all $\varepsilon\ll 1$. Using $w_\varepsilon(0)=0$, we get from the first condition in \eqref{REpsDef2} that $|w_\varepsilon|\le C_0\gamma_\varepsilon^{1-p_\varepsilon}$ so that $u_\varepsilon=v_\varepsilon+O\left(\gamma_\varepsilon^{1-p_\varepsilon}\right)$ in $B_{r_\varepsilon}(0)$ for all $\varepsilon\ll 1$. Independently, we get from Proposition \ref{PropRadAnalysis1} and from \eqref{BarRTNotTooLarge2} that
\begin{equation}\label{MinorV2}
v_\varepsilon\ge \gamma_\varepsilon (1-\eta+o(1))\text{ in }[0,r_\varepsilon]\,,
\end{equation}
for all $\varepsilon\ll 1$. Then, writing $|\nabla w_\varepsilon|\le |\nabla u_\varepsilon|+|\nabla v_\varepsilon|$, using first \eqref{WeakGradEstU2} and \eqref{MaxWGE2}, and then \eqref{MinorV2} together with $p_\varepsilon\in [1,2]$, we get that
\begin{equation}\label{CClPart1}
\|\nabla w_\varepsilon\|_{L^\infty(\partial B_{r_\varepsilon}(0))}\le \frac{1+o(1)}{\gamma_\varepsilon^{p_\varepsilon-1} r_\varepsilon}\left(\frac{C_G}{(1-\eta)^{p_\varepsilon-1}}+4 \right)<\frac{C_0}{\gamma_\varepsilon^{p_\varepsilon-1} r_\varepsilon} 
\end{equation} 
for all $\varepsilon\ll 1$, using our assumption on $C_0$. Independently, Proposition \ref{PropRadAnalysis1} gives that $v_\varepsilon(r)'=O\left(r^{-1}\gamma_\varepsilon^{1-p_\varepsilon}\right)$, so  we first get  that 
\begin{equation}\label{IntegrateFromBdry2}
u_\varepsilon=v_\varepsilon(r_\varepsilon)+O\left(\gamma_\varepsilon^{1-p_\varepsilon}\ln \frac{2 r_\varepsilon}{|\cdot|} \right)\,,
\end{equation}
 then, with \eqref{MinorV2}, that also 
$$u_\varepsilon(r)^4=v_\varepsilon(r_\varepsilon)^4\left[1+O\left(\left(\gamma_\varepsilon^{-p_\varepsilon}\ln \frac{2 r_\varepsilon}{r}\right)+\left(\gamma_\varepsilon^{-p_\varepsilon}\ln \frac{2 r_\varepsilon}{r}\right)^4 \right)\right] $$
uniformly in $r\in (0,r_\varepsilon]$, and at last, with \eqref{LpBd2Prelim}, that
$$\pi v_\varepsilon(r_\varepsilon)^4 r_\varepsilon^2(1+o(1))= \int_{B_{r_\varepsilon}(0)}u_\varepsilon^4 dx\le \bar{C}\,: $$
summarizing, the second inequality in \eqref{REpsDef2} is strict at $r=r_\varepsilon$ for all $\varepsilon\ll 1$, using \eqref{MinorV2} again. However by \eqref{CClPart1}, the first inequality in \eqref{REpsDef2} is strict as well at $r=r_\varepsilon$, which concludes the proof of \eqref{EqRBar2} by continuity and then, as discussed above, those of \eqref{WPointwEst} and \eqref{WGradEst2}. Since $p_\varepsilon\le 2$, we get at the same time from \eqref{REpsDef2} that \eqref{LpBd2} holds true, so that we may apply Proposition \ref{PropRadAnalysis1} from now on to estimate the $v_\varepsilon$'s in $B_{\bar{r}_\varepsilon}(0)$. We turn now to the proofs of \eqref{HConv2} and \eqref{CondH2}. First, using \eqref{MinorV2} and that $v_\varepsilon\le \gamma_\varepsilon$ by Proposition \ref{PropRadAnalysis1}, since $|w_\varepsilon|=O\left(\gamma_\varepsilon^{1-p_\varepsilon} \right)$ by \eqref{WPointwEst}, we may  first write  
$u_\varepsilon^{p_\varepsilon}=v_\varepsilon^{p_\varepsilon}+p_\varepsilon v_\varepsilon^{p_\varepsilon-1}w_\varepsilon(1+o(1)) $ and $u_\varepsilon^{p_\varepsilon-1}=v_\varepsilon^{p_\varepsilon-1}\left(1+O\left(|w_\varepsilon|/\gamma_\varepsilon \right) \right)$, then 
\begin{equation*}
\begin{split}
&u_\varepsilon^{p_\varepsilon-1} e^{u_\varepsilon^{p_\varepsilon}}\\
 &~=v_\varepsilon^{p_\varepsilon-1} e^{v_\varepsilon^{p_\varepsilon}}\left(1+p_\varepsilon v_\varepsilon^{p_\varepsilon-1} w_\varepsilon\left[1+O\left(\frac{|w_\varepsilon|}{\gamma_\varepsilon}+v_\varepsilon^{p_\varepsilon-1}|w_\varepsilon| \right)\right]+O\left(\frac{|w_\varepsilon|}{\gamma_\varepsilon} \right) \right)\\
 &~= v_\varepsilon^{p_\varepsilon-1} e^{v_\varepsilon^{p_\varepsilon}}\left(1+p_\varepsilon v_\varepsilon^{p_\varepsilon-1} w_\varepsilon \left[1+O\left( \gamma_\varepsilon^{p_\varepsilon-1} |w_\varepsilon| \right)+O\left(\gamma_\varepsilon^{-p_\varepsilon} \right)\right] \right)\,, 
 \end{split}
 \end{equation*}
and, observing also $e^{2\varphi_\varepsilon}=1+O(|\cdot|)$ by \eqref{CondVarphiEps2}, $|w_\varepsilon|=O\left(\gamma_\varepsilon^{1-p_\varepsilon}|\cdot|/\bar{r}_\varepsilon \right)$ by \eqref{WPointwEst}, and using \eqref{EqVEps2} and \eqref{MainEqEps2}, we may write at last
\begin{equation}\label{WLapl2}
\begin{split}
\Delta w_\varepsilon=~&-e^{2 \varphi_\varepsilon}w_\varepsilon+O\left(|\cdot|v_\varepsilon \right)\\
&+\lambda_\varepsilon p_\varepsilon v_\varepsilon^{p_\varepsilon-1}e^{v_\varepsilon^{p_\varepsilon}}\left(p_\varepsilon v_\varepsilon^{p_\varepsilon-1} w_\varepsilon\left[1+O\left(\frac{|\cdot|}{\bar{r}_\varepsilon}+\frac{1}{\gamma_\varepsilon^{p_\varepsilon}} \right) \right]+O(|\cdot|) \right)
\end{split}
\end{equation}
uniformly in $B_{\bar{r}_\varepsilon}(0)$ and for all $\varepsilon\ll 1$. Setting now $\tilde{w}_\varepsilon=\gamma_\varepsilon^{p_\varepsilon-1}\frac{\bar{r}_\varepsilon}{\mu_\varepsilon} w_\varepsilon(\mu_\varepsilon \cdot)$ and given any $R\gg 1$, we get from Proposition \ref{PropRadAnalysis1} and \eqref{WLapl2} that
\begin{equation*}
\begin{split}
\Delta \tilde{w}_\varepsilon ~&~=O\left(\mu_\varepsilon^2\tilde{w}_\varepsilon\right)+O\left(\mu_\varepsilon^2 \gamma_\varepsilon^{p_\varepsilon} \bar{r}_\varepsilon \right)+\left[\frac{8 e^{-2 T_0}}{p_\varepsilon \gamma_\varepsilon^{p_\varepsilon-1}}\left(1+O(\gamma_\varepsilon^{-p_\varepsilon}) \right)\right]\times\\
&\quad\quad\left(p_\varepsilon \gamma_\varepsilon^{p_\varepsilon-1} \tilde{w}_\varepsilon\left[1+O\left(\frac{\mu_\varepsilon}{\bar{r}_\varepsilon}+\gamma_\varepsilon^{-p_\varepsilon} \right) \right]+O\left(\gamma_\varepsilon^{p_\varepsilon-1}\bar{r}_\varepsilon \right) \right) 
\end{split}
\end{equation*}
uniformly in $B_{R \mu_\varepsilon}(0)$, for all $\varepsilon$. Then, by \eqref{RMuTo02}, \eqref{WGradEst2}, \eqref{LpBd2}, the first assertion in \eqref{CIWEps} and elliptic theory, we get that, up to a subsequence,
\begin{equation}\label{W0ConvLoc2}
\lim_{\varepsilon\to 0} \tilde{w}_\varepsilon=w_0\text{ in }C^1_{loc}(\mathbb{R}^2)\,,
\end{equation}
where $w_0$ satisfies 
\begin{equation}\label{WEqLin2}
\begin{cases}
&\Delta w_0=8 \exp(-2 T_0) w_0\text{ in }\mathbb{R}^2\,,\\
&|w_0|\le C_0|\cdot| \text{ in }\mathbb{R}^2\,.
\end{cases}
\end{equation}
By the second assertion in \eqref{CIWEps} and \eqref{W0ConvLoc2}, we have $\nabla w_0(0)=0$. According to the classification result stated by Chen-Lin \cite[Lemma 2.3]{ChenLinSharpEst} and also in the generality on the growth assumption that we need here by Laurain \cite[Lemma C.1]{LaurainLemma}, this last property and \eqref{WEqLin2} imply 
\begin{equation}\label{WNull2}
w_0\equiv 0\,.
\end{equation}
In order to conclude the proofs of \eqref{HConv2} and \eqref{CondH2}, we establish now the following key estimate:
\begin{equation}\label{ComparGrad2}
\lim_{\varepsilon\to 0}\gamma_\varepsilon^{p_\varepsilon-1} \bar{r}_\varepsilon\|\nabla(w_\varepsilon-(\psi_\varepsilon-\psi_\varepsilon(0)))\|_{\infty,\varepsilon}=0\,,
\end{equation}
 where $\|\cdot\|_{\infty,\varepsilon}$ denotes $\|\cdot\|_{L^\infty(B_{\bar{r}_\varepsilon}(0))}$ and where $\psi_\varepsilon$ is given by
\begin{equation}\label{PsiEq2}
\begin{cases}
&\Delta \psi_\varepsilon=0\text{ in }B_{\bar{r}_\varepsilon}(0)\,,\\
&\psi_\varepsilon=w_\varepsilon\text{ on }\partial B_{\bar{r}_\varepsilon}(0)\,,
\end{cases}
\end{equation}
for all $\varepsilon$. Let $G^{(\varepsilon)}$ be the Green's function of $\Delta$ in $B_{\bar{r}_\varepsilon}(0)$ with zero Dirichlet boundary conditions (for an explicit formula for $G^{(\varepsilon)}$, see for instance Han-Lin \cite[Proposition 1.22]{HanLin}). Then (see also for instance \cite[Appendix B]{DruThiI}), there exists $C>0$ such that
\begin{equation*}
|\nabla G^{(\varepsilon)}_y(x)|\le \frac{C}{|x-y|}\,,
\end{equation*}
for all $x,y\in B_{\bar{r}_\varepsilon}(0)$, $x\neq y$ and all $\varepsilon$. Let $(y_\varepsilon)_\varepsilon$ be any sequence such that $y_\varepsilon\in B_{\bar{r}_\varepsilon}(0)$ for all $\varepsilon$. By the Green's representation formula, we may write
\begin{equation*}
\nabla (w_\varepsilon-\psi_\varepsilon)(y_\varepsilon)=\int_{B_{\bar{r}_\varepsilon}(0)} \nabla G^{(\varepsilon)}_{y_\varepsilon}(x) (\Delta w_\varepsilon)(x) dx
\end{equation*}
for all $\varepsilon$. Then, using also \eqref{CondVarphiEps2}, \eqref{WLapl2},  Proposition \ref{PropRadAnalysis1} and the first assertion in \eqref{CIWEps}, we get that
\begin{equation}\label{EstimGreen2}
\begin{split}
&|\nabla (w_\varepsilon-\psi_\varepsilon)(y_\varepsilon)|\\
&=O\left(\int_{B_{\bar{r}_\varepsilon}(0)} \frac{(\|\nabla w_\varepsilon\|_{\infty,\varepsilon}+\gamma_\varepsilon) |x| dx}{|y_\varepsilon-x|}  \right)\\
&\quad+O\left( \int_{B_{\bar{r}_\varepsilon}(0)} \frac{ |x| e^{(-2+\tilde{\eta})t_\varepsilon(x)}\left(\|\nabla w_\varepsilon\|_{\infty,\varepsilon}+\gamma_\varepsilon^{1-p_\varepsilon} \right) dx}{\mu_\varepsilon^2 |y_\varepsilon-x|} \right)\,,
\end{split}
\end{equation}
for all $\varepsilon$, where $\tilde{\eta}$ is some given constant in $(\eta,1)$. By the change of variable $x=\bar{r}_\varepsilon y$, we  first deduce
$$\int_{B_{\bar{r}_\varepsilon}(0)} \frac{|x| dx}{|y_\varepsilon-x|}=O\left( \bar{r}_\varepsilon^2\right)\,.$$ 
If we have $|y_\varepsilon|=O(\mu_\varepsilon)$, we get that
$$\int_{B_{\bar{r}_\varepsilon}(0)} \frac{ |x| e^{(-2+\tilde{\eta})t_\varepsilon(x)} dx}{\mu_\varepsilon^2 |y_\varepsilon-x|}=O(1) $$
for all $\varepsilon$, by the change of variable $x=\mu_\varepsilon y$; otherwise, up to a subsequence, we have $\mu_\varepsilon=o(|y_\varepsilon|)$ and 
$$ \int_{B_{\bar{r}_\varepsilon}(0)} \frac{ |x| e^{(-2+\tilde{\eta})t_\varepsilon(x)} dx}{\mu_\varepsilon^2 |y_\varepsilon-x|}=\int_{B_{\bar{r}_\varepsilon/|y_\varepsilon|}(0)} \frac{1}{\tilde{\mu}_\varepsilon^2} \frac{1}{\left(1+\frac{|y|^2}{\tilde{\mu}_\varepsilon^2} \right)^{2-\tilde{\eta}}} \frac{|y| dy}{|\tilde{y}_\varepsilon-y|}=O\left(\tilde{\mu}_\varepsilon \right) $$
for all $\varepsilon\ll 1$, by the change of variable $x=|y_\varepsilon| y$, where $\tilde{y}_\varepsilon=y_\varepsilon/|y_\varepsilon|$ has norm $1$ and $\tilde{\mu}_\varepsilon=\mu_\varepsilon/|y_\varepsilon|$. Plugging these estimates in \eqref{EstimGreen2}, we get in any case
\begin{equation}\label{PartCC2}
\begin{split}
&|\nabla (w_\varepsilon-(\psi_\varepsilon-\psi_\varepsilon(0)))(y_\varepsilon)|\\~&~=O\left((\|\nabla w_\varepsilon\|_{\infty,\varepsilon}+\gamma_\varepsilon) \bar{r}_\varepsilon^2 \right) +O\left(\frac{1}{1+\frac{|y_\varepsilon|}{\mu_\varepsilon}}\left(\|\nabla w_\varepsilon\|_{\infty,\varepsilon}+\gamma_\varepsilon^{1-p_\varepsilon} \right) \right)\,,\\
~&~=\frac{1}{\gamma_\varepsilon^{p_\varepsilon-1}\bar{r}_\varepsilon}\left(O\left(\frac{1}{1+\frac{|y_\varepsilon|}{\mu_\varepsilon}}\right)+o(1) \right)
\end{split}
\end{equation}
for all $\varepsilon$. The last line in \eqref{PartCC2} uses  \eqref{WGradEst2} and \eqref{LpBd2}. {We claim now that $(\psi_\varepsilon)_\varepsilon$ from \eqref{PsiEq2} satisfies
\begin{equation}\label{HarmExtGradEst2}
\|\nabla \psi_\varepsilon\|_{\infty,\varepsilon}=O\left(\frac{1}{\gamma_\varepsilon^{p_\varepsilon-1} \bar{r}_\varepsilon} \right)\,.
\end{equation}
Writing $\nabla \psi_\varepsilon=\nabla w_\varepsilon+\nabla (\psi_\varepsilon-w_\varepsilon)$, using \eqref{PartCC2} which gives
$$\left\|\nabla (\psi_\varepsilon-w_\varepsilon) \right\|_{\infty,\varepsilon}=O\left( \frac{1}{\gamma_\varepsilon^{p_\varepsilon-1} \bar{r}_\varepsilon}\right)\,, $$
we indeed get \eqref{HarmExtGradEst2} from \eqref{WGradEst2}.} Thus, we find from \eqref{HarmExtGradEst2} and elliptic theory that
\begin{equation}\label{ConvHarmPart02}
\lim_{\varepsilon\to 0}\gamma_\varepsilon^{p_\varepsilon-1}(\psi_\varepsilon(r_\varepsilon\cdot)-\psi_\varepsilon(0))=\psi_0\text{ in }C^1_{loc}(B_1(0))\,,
\end{equation}
up to a subsequence, where $\psi_0$ is harmonic in $B_1(0)$, and we obtain at last
\begin{equation}\label{ConvHarmPart}
\lim_{\varepsilon\to 0} \gamma_\varepsilon^{p_\varepsilon-1}\frac{\bar{r}_\varepsilon}{\mu_\varepsilon}(\psi_\varepsilon(\mu_\varepsilon\cdot)-\psi_\varepsilon(0))=\langle \nabla \psi_0(0),\cdot\rangle\text{ in }C^1_{loc}(\mathbb{R}^2)\,,
\end{equation}
by \eqref{RMuTo02}, where $\langle\cdot, \cdot\rangle$ denotes the standard scalar product in $\mathbb{R}^2$.

\smallskip

Assume now by contradiction that \eqref{ComparGrad2} does not hold true, in other words that, up to a subsequence,
\begin{equation}\label{Contradiction3}
\frac{1}{\gamma_\varepsilon^{p_\varepsilon-1} \bar{r}_\varepsilon}=O\left(\|\nabla(w_\varepsilon-\psi_\varepsilon)\|_{\infty,\varepsilon} \right) 
\end{equation}
for all $\varepsilon$. First, we claim that \eqref{CondH2} holds true, for $\psi_0$ as in \eqref{ConvHarmPart02}-\eqref{ConvHarmPart}. Indeed, let $R\gg 1$ be given and let $(y_\varepsilon)_\varepsilon$ be such that $y_\varepsilon\in\partial B_{R \mu_\varepsilon}(0)$ for all $\varepsilon\ll 1$.  We get from \eqref{W0ConvLoc2}, \eqref{WNull2} and \eqref{ConvHarmPart} that
 $$\lim_{\varepsilon\to 0}\gamma_\varepsilon^{p_\varepsilon-1} \bar{r}_\varepsilon\nabla(w_\varepsilon-\psi_\varepsilon)(y_\varepsilon)=\nabla \psi_0(0)\,.$$
This estimate, combined with \eqref{PartCC2}, proves  \eqref{CondH2} since $R\gg 1$ may be chosen arbitrarily large. Secondly, we may pick $(y_\varepsilon)_\varepsilon$, such that $y_\varepsilon\in \overline{B_{\bar{r}_\varepsilon}(0)}$ and
\begin{equation}\label{AsYEps2}
\|\nabla(w_\varepsilon-\psi_\varepsilon)\|_{\infty,\varepsilon}=|\nabla(w_\varepsilon-(\psi_\varepsilon-\psi_\varepsilon(0)))(y_\varepsilon)|
\end{equation}
for all $\varepsilon$, and we get from \eqref{PartCC2} and \eqref{Contradiction3} that $|y_\varepsilon|=O\left(\mu_\varepsilon \right)$ for all $\varepsilon$. However, \eqref{WNull2} and \eqref{ConvHarmPart} with \eqref{CondH2} contradict \eqref{Contradiction3} with \eqref{AsYEps2}, which concludes the proof of \eqref{ComparGrad2}. Then \eqref{HConv2} and \eqref{CondH2} follow from both assertions in \eqref{CIWEps}, from \eqref{ComparGrad2} and from \eqref{ConvHarmPart02}, which concludes the proof of Proposition \ref{PropRadCompar2}. To end this section, we assume \eqref{AssumpLpBd2Strong} and prove \eqref{LpBd2Strong}. We have \eqref{EqRBar2} and \eqref{IntegrateFromBdry2}. Then, using \eqref{MinorV2}, $$(1+t)^{1/3}=1+O(|t|^{1/3})\text{ for all }t>-1\,,$$ $p_\varepsilon\ge 1$ and $v_\varepsilon(\bar{r}_\varepsilon)\le \gamma_\varepsilon$, we get first 
$$u_\varepsilon^{1/3}=v_\varepsilon(\bar{r}_\varepsilon)^{1/3}\left(1+O\left(\gamma_\varepsilon^{-p_\varepsilon}\ln \frac{2 \bar{r}_\varepsilon}{|\cdot|} \right) \right)^{1/3}= v_\varepsilon(\bar{r}_\varepsilon)^{1/3}+O\left(\left(\ln \frac{2 \bar{r}_\varepsilon}{|\cdot|}\right)^{1/3} \right) $$
uniformly in $B_{\bar{r}_\varepsilon}(0)\backslash \{0\}$, so that we eventually get
$$\int_{B_{\bar{r}_\varepsilon}(0)} e^{u_\varepsilon^{1/3}} dx=e^{v_\varepsilon(\bar{r}_\varepsilon)^{1/3}}\int_{B_{\bar{r}_\varepsilon}(0)} \exp\left(O\left(\left(\ln \frac{2 \bar{r}_\varepsilon}{|\cdot|}\right)^{1/3} \right)  \right) dx \gtrsim e^{\left(\frac{(1-\eta)\gamma_\varepsilon}{2}\right)^{1/3}}\bar{r}_\varepsilon^2 \,, $$
for all $\varepsilon\ll 1$, which concludes the proof of \eqref{LpBd2Strong} by \eqref{AssumpLpBd2Strong}.
\end{proof}

\section{Nonradial blow-up analysis: the case of several  bubbles}
 The following theorem is the main result of this section. It is a quantization result determining in a precise way the possible blow-up energy levels. Notice that assumption \eqref{EnergyBound3} will follow from  variational reasons. 
\begin{thm}\label{ThmBlowUpAnalysis}
 {Let $h$ be a smooth positive function on $\Sigma$.} Let $(\lambda_\varepsilon)_\varepsilon$ be any sequence of positive real numbers and $(p_\varepsilon)_\varepsilon$ be any sequence of numbers in $[1,2]$. Let $(u_\varepsilon)_\varepsilon$ be a sequence of smooth functions solving 
\begin{equation}\label{MainEqEps}
\Delta_g u_\varepsilon+{h} u_\varepsilon=\lambda_\varepsilon p_\varepsilon u_\varepsilon^{p_\varepsilon-1} e^{u_\varepsilon^{p_\varepsilon}}\,,\quad u_\varepsilon>0\text{ in }\Sigma\,,
\end{equation}
for all $\varepsilon$. Let $(\beta_\varepsilon)_\varepsilon$ be given by
\begin{equation}\label{BetaEps}
\beta_\varepsilon=\frac{\lambda_\varepsilon p_\varepsilon^2}{2} \left( \int_\Sigma  \(e^{u_\varepsilon^{p_\varepsilon}}-1\)~dv_g\right)^{\frac{2-p_\varepsilon}{p_\varepsilon}}\left(  \int_\Sigma u_\varepsilon^{p_\varepsilon} e^{u_\varepsilon^{p_\varepsilon}}~dv_g\right)^{\frac{2(p_\varepsilon-1)}{p_\varepsilon}}
\end{equation}
for all $\varepsilon$. If we assume the energy bound
\begin{equation}\label{EnergyBound3}
\lim_{\varepsilon\to 0}\beta_\varepsilon=\beta \in [0,+\infty)\,,
\end{equation}
but the pointwise blow-up of the $u_\varepsilon$'s, namely
\begin{equation}\label{BlowUp3}
\lim_{\varepsilon\to 0} \max_{\Sigma} u_\varepsilon=+\infty\,,
\end{equation}
then, there exists an integer $k\ge 1$ such that
\begin{equation}\label{Quantization}
\beta=4\pi k\,.
\end{equation}
\end{thm}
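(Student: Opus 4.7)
The plan is to run a concentration-compactness analysis around a finite family of blow-up points, replacing the Pohozaev identity by the direct radial comparison of Sections~\ref{SectRad}--\ref{SectOneBubble}. First I would iteratively extract a maximal family $(x_{i,\varepsilon})_{i=1}^{k}$ of local maxima of $u_\varepsilon$ with $\gamma_{i,\varepsilon}:=u_\varepsilon(x_{i,\varepsilon})\to+\infty$ by \eqref{BlowUp3}, defining the natural scales $\mu_{i,\varepsilon}$ by \eqref{ScalRel2} and working in isothermal coordinates so that the metric reads $e^{2\varphi_\varepsilon}$ times the Euclidean one. A standard rescaled limit argument then shows $\tfrac{p_\varepsilon}{2}\gamma_{i,\varepsilon}^{p_\varepsilon-1}\bigl(\gamma_{i,\varepsilon}-u_\varepsilon(x_{i,\varepsilon}+\mu_{i,\varepsilon}\,\cdot)\bigr)\to\ln(1+|y|^2)$ in $C^1_{loc}(\mathbb{R}^2)$, which is exactly the input \eqref{ConvLoc2} needed for the single-bubble analysis. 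The usual outcome of such a selection also yields the uniform pointwise gradient control $\min_i d_g(x,x_{i,\varepsilon})\,u_\varepsilon(x)^{p_\varepsilon-1}|\nabla u_\varepsilon(x)|\le C_G$, which provides \eqref{WeakGradEstU2} on each disk around $x_{i,\varepsilon}$.

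Next I would prove that $\lambda_\varepsilon\to 0$. Substituting the bubble convergence into \eqref{BetaEps} and using $\gamma_{i,\varepsilon}\to+\infty$ together with $\beta_\varepsilon=O(1)$ from \eqref{EnergyBound3} forces the decay of the Lagrange multiplier; this is the step referred to as ``Step~\ref{StSubcriticalCase}'' in the introduction and is specific to our family of nonlinearities. Combined with Trudinger's inequality, this also yields the integrability bounds \eqref{LpBd2Prelim}--\eqref{AssumpLpBd2Strong} on each blow-up disk. On the maximal admissible disk $B_{\bar r_{i,\varepsilon}}(x_{i,\varepsilon})$, I would then write $u_\varepsilon=v_{i,\varepsilon}+w_{i,\varepsilon}$ with $v_{i,\varepsilon}$ the radial bubble of \eqref{EqVEps2} centered at $x_{i,\varepsilon}$; Proposition~\ref{PropRadCompar2} supplies the harmonic limit $\gamma_{i,\varepsilon}^{p_\varepsilon-1}w_{i,\varepsilon}(\bar r_{i,\varepsilon}\,\cdot)\to\psi_{0,i}$ together with the crucial balance $\nabla\psi_{0,i}(0)=0$, inherited from $\nabla u_\varepsilon(x_{i,\varepsilon})=0$ (see \eqref{GradNull2}). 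A direct integration of the right-hand side of \eqref{MainEqEps} on this disk, using the radial estimates of Proposition~\ref{PropRadAnalysis1}, then yields a contribution of exactly $4\pi$ to $\beta_\varepsilon$ per bubble; that the contribution is \emph{minimal} (and not $8\pi,12\pi,\ldots$ as in tower solutions, cf.\ Remark~\ref{RemTower}) is forced by the normalization \eqref{ScalRel2} combined with $\lambda_\varepsilon\to 0$.

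The main obstacle, and the step I would spend most of the effort on, is the no-neck statement: that $\Omega_\varepsilon:=\Sigma\setminus\bigcup_{i=1}^{k}B_{\bar r_{i,\varepsilon}}(x_{i,\varepsilon})$ contributes only $o(1)$ to $\beta_\varepsilon$. I would argue by contradiction: if a positive fraction of $\int_\Sigma u_\varepsilon^{p_\varepsilon}e^{u_\varepsilon^{p_\varepsilon}}\,dv_g$ were concentrated in $\Omega_\varepsilon$, the maximality of $\bar r_{i,\varepsilon}$ would force one of the constraints \eqref{BarRTNotTooLarge2}--\eqref{LpBd2Prelim}--\eqref{WeakGradEstU2} to saturate at $\bar r_{i,\varepsilon}$; but the balance $\nabla\psi_{0,i}(0)=0$ precisely rules out the monopole-type asymptotic of $w_{i,\varepsilon}$ that such saturation would require. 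Hence $\bar r_{i,\varepsilon}$ could be extended, either absorbing the residual energy into an enlarged bubble disk or producing a new concentration point, contradicting the maximality of the extraction. Together with $\lambda_\varepsilon\to 0$, which prevents any non-trivial weak limit of $\lambda_\varepsilon p_\varepsilon u_\varepsilon^{p_\varepsilon-1}e^{u_\varepsilon^{p_\varepsilon}}\,dv_g$ from sitting on $\Omega_\varepsilon$, this shows that the $k$ bubble contributions exhaust $\beta_\varepsilon$. Passing to the limit in \eqref{BetaEps} then yields $\beta=4\pi k$, which is \eqref{Quantization}.
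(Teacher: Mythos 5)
Your high-level plan captures the paper's strategy—radial comparison in place of the Pohozaev identity, the balance condition from $\nabla u_\varepsilon(x_{i,\varepsilon})=0$, the decay of $\lambda_\varepsilon$, and bubble-by-bubble quantization—but two of your key steps contain genuine gaps, and the hardest case is not addressed.

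First, your derivation of $\lambda_\varepsilon\to 0$ is not valid. If you substitute the per-bubble contributions $\int_{B}(e^{u_\varepsilon^{p_\varepsilon}}-1)\sim\frac{8\pi}{\lambda_\varepsilon p_\varepsilon^2\gamma_{i,\varepsilon}^{2(p_\varepsilon-1)}}$ and $\int_{B}u_\varepsilon^{p_\varepsilon}e^{u_\varepsilon^{p_\varepsilon}}\sim\frac{8\pi\gamma_{i,\varepsilon}^{2-p_\varepsilon}}{\lambda_\varepsilon p_\varepsilon^2}$ into \eqref{BetaEps}, the H\"older-conjugate exponents $\frac{2-p_\varepsilon}{p_\varepsilon}$ and $\frac{2(p_\varepsilon-1)}{p_\varepsilon}$ sum to $1$, so both $\lambda_\varepsilon$ and $\gamma_{i,\varepsilon}$ cancel out and you only get $\beta\ge 4\pi k$; no information about $\lambda_\varepsilon$ survives. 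In the paper, $\lambda_\varepsilon\to0$ is instead extracted from the positivity of $u_\varepsilon$ via the \emph{global} upper bound \eqref{MajorGlobal3} on the spherical averages $\bar u_{i,\varepsilon}(r)$, evaluated at the macroscopic scale $r=\kappa\gamma_{i,\varepsilon}^{2(1-p_\varepsilon)/3}$. That bound is itself the output of Step~\ref{StFarEnough3}, the delicate induction proving \eqref{FarEnoughEq3}. So $\lambda_\varepsilon\to0$ is downstream of the hard Step~\ref{StFarEnough3}, not a quick consequence of bubble convergence and $\beta_\varepsilon=O(1)$.

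Second, your no-neck argument is too imprecise to constitute a proof. You say "the maximality of $\bar r_{i,\varepsilon}$ would force one of the constraints to saturate; but the balance $\nabla\psi_{0,i}(0)=0$ precisely rules out the monopole-type asymptotic that such saturation would require." In the paper, the balance condition \eqref{CondH2} enters in the proof of \eqref{FarEnoughEq3}: assuming $\bar r_{i,\varepsilon}^{(\eta)}=r_{i,\varepsilon}$ for some $\eta<1$, one shows (after comparing heights via a Green's-representation estimate \eqref{LowerEstBj3} and establishing $\gamma_{j,\varepsilon}\asymp\gamma_{i,\varepsilon}$ for clustered indices) that a nontrivial harmonic limit $\tilde u_0=\Lambda+\sum_{x\in\mathcal{S}_i}\alpha_x\ln\frac1{|x-\cdot|}$ exists with all $\alpha_x>0$, and the balance \eqref{CondH2} at an extreme point of the convex hull of $\mathcal{S}_i$ then gives the contradiction. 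This convex-hull/harmonic-pole argument is the concrete substitute for Pohozaev, and it is not the same as "ruling out saturation." Moreover the actual neck estimate in the paper is done with a different comparison quantity $\nu_{j,\varepsilon}$ defined by \eqref{DefNuJEps4} and a threshold $\tilde\Gamma_\varepsilon$ growing slowly, rather than by extending $\bar r_{i,\varepsilon}$.

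Finally, the borderline case $p_\varepsilon\to2$, where the nonlinearity becomes critical and the two integrals in \eqref{BetaEps} no longer track $\gamma_{i,\varepsilon}$ in the same way, is the genuinely hard case and requires the whole final section of the paper's proof (the bound $\bar u_{j,\varepsilon}(\nu_{j,\varepsilon})=O(1)$ via Green's representation, the estimate \eqref{PartialCC4}, the careful choice of $\tilde\Gamma_\varepsilon$ in \eqref{ConTildeGamma}). You do not address this case, yet it is precisely the one needed for the main Theorem~\ref{MainThm} via Corollary~\ref{CorThmBlowUpAnalysis}.
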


A quantization result on a surface and in the specific case $p_\varepsilon=2$ was partially obtained by Yang \cite{YangQuantization}, following basically the scheme of proof developed in \cite{DruetDuke} to get an analoguous result on a bounded domain. However, even in this specific case, Theorem \ref{ThmBlowUpAnalysis} is stronger {(see also Remark \ref{RemTower})}. Indeed the analysis in \cite{YangQuantization} does not exclude that a nonzero $H^1$-weak limit $u_0$ of the $u_\varepsilon$'s contributes and breaks \eqref{Quantization}, that would become  
\begin{equation}\label{BadQuantization}
\beta=4\pi k+\|u_0\|_{H^1}^2\,.
\end{equation}
 On a bounded domain and still in this specific case $p_\varepsilon=2$, starting from the so-called weak pointwise estimates and \emph{using the first quantization} in \cite{DruetDuke}, a more precise blow-up analysis was carried out and in particular the precise quantization \eqref{Quantization} was obtained recently in \cite{DruThiI}. Here on a surface and for general $p_\varepsilon$'s in $[1,2]$, our proof starts also from the weak pointwise estimates, but \emph{gives at once the precise quantization, without using any intermediate one,} by pushing techniques in the spirit of \cite{DruThiI}. As mentioned in introduction, perturbing the standard critical nonlinearity in the RHS of \eqref{ELMainEq}, as we do here, requires to be very careful, if one wants to keep the precise quantization \eqref{Quantization}, which is crucial for the overall strategy of the present paper to work. Indeed, it was recently proven in \cite{ThiMan2} that \eqref{Quantization} may actually break down for some perturbations of the nonlinearity in \eqref{ELMainEq} which are surprisingly weaker in some sense than the ones that we consider here.

\medskip

 As a byproduct of Theorem \ref{ThmBlowUpAnalysis}, we easily get the following corollary, allowing to get critical points of $F$ in \eqref{MTFunctional} constrained to $\mathcal{E}_\beta$ in \eqref{Constraint}, as the limit of critical points of $J_{p,\beta}$ as $p\to 2$, for any fixed $\beta\not \in 4\pi \mathbb{N}^\star$.
\begin{cor}\label{CorThmBlowUpAnalysis}
{Let $h$ be a smooth positive function on $\Sigma$ and let}  $\beta\in (0,+\infty)\backslash 4\pi \mathbb{N}^\star$ be given. Let $(p_\varepsilon)_\varepsilon$ be any sequence of numbers in $[1,2)$ such that $p_\varepsilon\to 2$ as $\varepsilon\to 0$. Let $(u_\varepsilon)_\varepsilon$ be a sequence of smooth functions such that \eqref{MainEqEps} holds true for $\lambda_\varepsilon>0$ given by \eqref{BetaEps} and for $\beta_\varepsilon:=\beta$ for all $\varepsilon$. Then, up to a subsequence, we have that $u_\varepsilon\to u$ in $C^2$, where $u>0$ is smooth and solves {\eqref{FormulaLambda} for $p=2$ and} \eqref{ELMainEq}.
\end{cor}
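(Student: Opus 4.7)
The plan is to deduce the corollary from the quantization Theorem~\ref{ThmBlowUpAnalysis} combined with standard elliptic regularity; the main point will be to rule out the vanishing of the $H^1$-weak limit via a simple integrated identity. Since $\beta_\varepsilon\equiv\beta$, the energy bound \eqref{EnergyBound3} is automatic, so if pointwise blow-up \eqref{BlowUp3} happened along a subsequence, Theorem~\ref{ThmBlowUpAnalysis} would force $\beta=4\pi k$ for some $k\geq 1$, contradicting $\beta\notin 4\pi\mathbb{N}^\star$. Hence $\|u_\varepsilon\|_{L^\infty}\leq M$ uniformly for some $M>0$. Multiplying \eqref{MainEqEps} by $u_\varepsilon$ and integrating on $\Sigma$ yields $\|u_\varepsilon\|_h^2=\lambda_\varepsilon p_\varepsilon\int_\Sigma u_\varepsilon^{p_\varepsilon}e^{u_\varepsilon^{p_\varepsilon}}\,dv_g$; combining this with \eqref{BetaEps} gives the key identity
$$\|u_\varepsilon\|_h^2 \;=\; \frac{2\beta}{p_\varepsilon}\left(\frac{\int_\Sigma u_\varepsilon^{p_\varepsilon}e^{u_\varepsilon^{p_\varepsilon}}\,dv_g}{\int_\Sigma (e^{u_\varepsilon^{p_\varepsilon}}-1)\,dv_g}\right)^{\!(2-p_\varepsilon)/p_\varepsilon}.$$
Since $t\leq e^t-1\leq t e^t$ for $t\geq 0$, the ratio in parentheses is pinned between $1$ and $e^{M^{p_\varepsilon}}$ while the exponent tends to $0$, so $\|u_\varepsilon\|_h^2\to\beta$. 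In particular $(u_\varepsilon)_\varepsilon$ is bounded in $H^1$, and after extracting a subsequence $u_\varepsilon\rightharpoonup u\geq 0$ weakly in $H^1$ and $u_\varepsilon\to u$ strongly in every $L^q(\Sigma)$ with $q<\infty$.

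The hard part is then to exclude the trivial limit $u\equiv 0$. For this I would use the pointwise bound $u^{p_\varepsilon-1}e^{u^{p_\varepsilon}}\geq M^{-1}\,u^{p_\varepsilon}e^{u^{p_\varepsilon}}$, valid on $\{0\leq u\leq M\}$ since $u^{p_\varepsilon-1}/u^{p_\varepsilon}=1/u\geq 1/M$ where $u>0$. Integrating \eqref{MainEqEps} (the $\Delta_g$-term drops) and inserting the bound produces
$$\int_\Sigma h\,u_\varepsilon\,dv_g \;=\; \lambda_\varepsilon p_\varepsilon\int_\Sigma u_\varepsilon^{p_\varepsilon-1}e^{u_\varepsilon^{p_\varepsilon}}\,dv_g \;\geq\; \frac{1}{M}\|u_\varepsilon\|_h^2.$$
If $u\equiv 0$, then $\int_\Sigma hu_\varepsilon\,dv_g\to 0$ by $L^1$-convergence, while the right-hand side tends to $\beta/M>0$, a contradiction. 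Thus $u\not\equiv 0$, and the strong $L^q$-convergence together with dominated convergence gives $\int_\Sigma u_\varepsilon^{p_\varepsilon}e^{u_\varepsilon^{p_\varepsilon}}\,dv_g\to\int_\Sigma u^2 e^{u^2}\,dv_g>0$, so that $\lambda_\varepsilon\to\lambda:=\beta/(2\int_\Sigma u^2 e^{u^2}\,dv_g)>0$.

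With $u_\varepsilon$ uniformly bounded in $L^\infty$ and $\lambda_\varepsilon$ convergent, the right-hand side of \eqref{MainEqEps} is uniformly bounded in $L^\infty(\Sigma)$. Calderón--Zygmund and Schauder estimates then yield uniform $C^{2,\alpha}$ bounds, and Arzelà--Ascoli upgrades the convergence $u_\varepsilon\to u$ to $C^2(\Sigma)$ along a further subsequence. Passing to the limit in \eqref{MainEqEps} gives \eqref{ELMainEq}, while passing to the limit in \eqref{BetaEps} (whose exponent $(2-p_\varepsilon)/p_\varepsilon\to 0$ kills the first factor and whose second exponent tends to $1$) produces $\beta=2\lambda\int_\Sigma u^2 e^{u^2}\,dv_g$, which is \eqref{FormulaLambda} read at $p=2$. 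Finally, rewriting the limit equation as $\Delta_g u+(h-2\lambda e^{u^2})u=0$ with bounded coefficient and applying the strong maximum principle to the nonnegative, nontrivial $u$ yields $u>0$ throughout $\Sigma$; smoothness is then immediate from Lemma~\ref{l:pos}.
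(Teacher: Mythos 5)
Your proof is correct and follows the same overall scheme as the paper's: Theorem~\ref{ThmBlowUpAnalysis} rules out blow-up since $\beta\notin 4\pi\mathbb{N}^\star$, then the uniform $L^\infty$ bound combined with \eqref{BdLambdaEps3} and elliptic theory gives $C^2$-compactness, and the last task is to exclude the trivial limit. Where you diverge is in that last step: the paper simply says a trivial limit would contradict \eqref{FormulaLambda} (i.e.\ passing to the limit in \eqref{BetaEps} would force $\beta=0$), whereas you first show $\|u_\varepsilon\|_h^2\to\beta$ via the clean identity $\|u_\varepsilon\|_h^2=\frac{2\beta}{p_\varepsilon}(B_\varepsilon/A_\varepsilon)^{(2-p_\varepsilon)/p_\varepsilon}$ and the pinching $1\le B_\varepsilon/A_\varepsilon\le e^{M^{p_\varepsilon}}$, then combine the integrated equation $\int_\Sigma h u_\varepsilon\,dv_g=\lambda_\varepsilon p_\varepsilon\int_\Sigma u_\varepsilon^{p_\varepsilon-1}e^{u_\varepsilon^{p_\varepsilon}}dv_g$ with the pointwise bound $u_\varepsilon^{p_\varepsilon-1}\ge u_\varepsilon^{p_\varepsilon}/M$ to get $\int_\Sigma h u_\varepsilon\,dv_g\ge \|u_\varepsilon\|_h^2/M\to\beta/M>0$, contradicting $u\equiv 0$. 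This is slightly longer, but it is more explicit about the $0^0$-type limit that is implicit in the paper's one-line invocation of \eqref{FormulaLambda} (where one must still observe that $A_\varepsilon^{(2-p_\varepsilon)/p_\varepsilon}$ stays bounded while $B_\varepsilon^{2(p_\varepsilon-1)/p_\varepsilon}\to 0$), and it yields $\|u\|_h^2=\beta$ as a byproduct. Both arguments are valid; yours trades brevity for transparency.
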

 For any $\lambda > 0$, $p\in [1,2]$ and $u$ satisfying \eqref{MainEquation}, observe first that we necessarily have
\begin{equation}\label{BdLambdaEps3}
{2\lambda \le \max_\Sigma h\,,}
\end{equation}
 by integrating \eqref{MainEquation} in $\Sigma$, by using $qt^{q-1} e^{t^q}\ge 2 t$, for all $t>0$ and all $q\in[1,2]$, and the assumption in \eqref{MainEquation} that $u$ is positive on $\Sigma$.

\begin{proof}[Proof of Corollary \ref{CorThmBlowUpAnalysis}]
Let $\beta$, $(p_\varepsilon)_\varepsilon$, $(u_\varepsilon)_\varepsilon$ and $(\lambda_\varepsilon)_\varepsilon$ be given as in Corollary \ref{CorThmBlowUpAnalysis}. Since $\beta \not \in 4\pi \mathbb{N}^\star$, we get from Theorem \ref{ThmBlowUpAnalysis} that \eqref{BlowUp3} cannot hold true. Then, by \eqref{BdLambdaEps3} and by standard elliptic theory as developed in \cite{Gilbarg}, up to a subsequence, $\lambda_\varepsilon\to \lambda$ and $u_\varepsilon\to u$ in $C^2$ as $\varepsilon\to 0$, for some $C^2$-function $u\ge 0$ and some $\lambda\ge 0$ satisfying the equation in \eqref{ELMainEq} and {\eqref{FormulaLambda} for $p=2$}. If $u\equiv 0$, we clearly get a contradiction with \eqref{FormulaLambda}, since $\beta>0$. Then, $u\not \equiv 0$ and $u>0$ in $\Sigma$ by the maximum principle, which concludes the proof of Corollary \ref{CorThmBlowUpAnalysis}.
\end{proof}

We now turn to the proof of Theorem \ref{ThmBlowUpAnalysis} itself. From now on, we let $(\lambda_\varepsilon)_\varepsilon$ be a sequence of positive real numbers, we let $(p_\varepsilon)_\varepsilon$ be a sequence of numbers in $[1,2]$ and we let $(u_\varepsilon)_\varepsilon$ be a sequence of smooth functions solving \eqref{MainEqEps}. Let $(\beta_\varepsilon)_\varepsilon$ be given by \eqref{BetaEps}. \emph{We also assume \eqref{EnergyBound3}}. Then, since 
\begin{equation} \label{Conjugate}
 \frac{2-p_\varepsilon}{p_\varepsilon}+\frac{2(p_\varepsilon-1)}{p_\varepsilon}=1\,,
 \end{equation} 
 H\"older's inequality gives that
\begin{equation}\label{Easy2bis}
\lambda_\varepsilon \int_\Sigma u_\varepsilon^{2(p_\varepsilon-1)} \left(e^{u_\varepsilon^{p_\varepsilon}}-1\right) dv_g=O(1)\,. 
\end{equation}
By \eqref{BdLambdaEps3}, $p_\varepsilon\in[1,2]$ and the fact that $\Sigma$ has finite volume
\begin{eqnarray}\label{Easy2tris}
\lambda_\varepsilon\int_\Sigma u_\varepsilon^{2(p_\varepsilon-1)} dv_g&=&
\lambda_\varepsilon\int_{\{u_\varepsilon\leq2\}} u_\varepsilon^{2(p_\varepsilon-1)} dv_g+\lambda_\varepsilon\int_{\{u_\varepsilon>2\}} u_\varepsilon^{2(p_\varepsilon-1)} dv_g\nonumber\\
&\leq& O(1)+\lambda_{\varepsilon}e^{-2}\int_\Sigma e^{u_\varepsilon^{p_\varepsilon}}u_\varepsilon^{2(p_\varepsilon-1)} dv_g,
\end{eqnarray}
then as a consequence
\begin{equation}\label{Easy3}
\lambda_\varepsilon \int_\Sigma u_\varepsilon^{p} e^{u_\varepsilon^{p_\varepsilon}} dv_g=O(1)
\end{equation}
for all $p\in [0,2(p_\varepsilon-1)]$ and all $\varepsilon$. We get \eqref{Easy3}, for $p=2(p_\varepsilon-1)$, combining \eqref{Easy2bis} and \eqref{Easy2tris}, and then also for $p \in [0,2(p_\varepsilon-1))$, using that $\Sigma$ has finite volume and \eqref{BdLambdaEps3}.

As a first step, observe that we may directly get the following rough, subcritical but global bounds on the $u_\varepsilon$'s.

\begin{lem}\label{CorPropWeakPwEst}
There exists $C>0$ such that
$$\int_\Sigma e^{u_\varepsilon^{{1}/{3}}} dv_g\le C $$
for all $\varepsilon$. In particular, for all given $p<+\infty$, $(u_\varepsilon)_\varepsilon$ is bounded in $L^p$.
\end{lem}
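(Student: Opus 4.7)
The plan is to combine an elementary integration of \eqref{MainEqEps} with a Brezis--Merle type exponential integrability estimate. Since $t^{1/3}=o(t)$ as $t\to+\infty$, any bound of the form $\int_\Sigma e^{\alpha u_\varepsilon}\,dv_g\le C$ for some fixed $\alpha>0$ (independent of $\varepsilon$) will immediately give the claimed bound on $\int_\Sigma e^{u_\varepsilon^{1/3}}\,dv_g$. The $L^p$ boundedness for any $p<+\infty$ will then follow from $t^p\le C_p\,e^{t^{1/3}}$ for all $t\ge 0$, which is a trivial consequence of the Taylor expansion of the exponential.

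I would first integrate \eqref{MainEqEps} against the constant function $1$ to obtain $\int_\Sigma h\,u_\varepsilon\,dv_g=\lambda_\varepsilon p_\varepsilon\int_\Sigma u_\varepsilon^{p_\varepsilon-1}e^{u_\varepsilon^{p_\varepsilon}}\,dv_g$. Since $p_\varepsilon\in[1,2]$ forces $p_\varepsilon-1\in[0,2(p_\varepsilon-1)]$, the already established estimate \eqref{Easy3} applied with $p=p_\varepsilon-1$ bounds the right-hand side uniformly in $\varepsilon$; the pointwise lower bound $h\ge h_\star>0$ on the compact surface then yields $\|u_\varepsilon\|_{L^1(\Sigma)}=O(1)$. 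Plugging back into \eqref{MainEqEps}, we see that $\Delta_g u_\varepsilon=\lambda_\varepsilon p_\varepsilon u_\varepsilon^{p_\varepsilon-1}e^{u_\varepsilon^{p_\varepsilon}}-h u_\varepsilon$ is uniformly bounded in $L^1(\Sigma)$ as well.

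I would then invoke a Brezis--Merle type estimate for the operator $\Delta_g+h$ on the closed surface: since $h>0$, this operator is strictly positive on $\Sigma$, and its Green's function has a logarithmic singularity of the form $-\frac{1}{2\pi}\log d_g(x,y)+O(1)$. Writing $u_\varepsilon$ via the Green's representation formula, applying Jensen's inequality to the probability measure $|\Delta_g u_\varepsilon+h u_\varepsilon|/\|\Delta_g u_\varepsilon+h u_\varepsilon\|_{L^1}$, and using that $d_g(x,y)^{-\alpha/(2\pi)}$ is integrable against $dv_g(x)$ uniformly in $y$ for $\alpha<4\pi$, one obtains constants $\alpha_0>0$ and $C_0>0$, depending only on $(\Sigma,g,h)$ and on the uniform $L^1$ bound above, such that $\int_\Sigma e^{\alpha_0 u_\varepsilon}\,dv_g\le C_0$ for all $\varepsilon$. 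Setting $M:=\alpha_0^{-3/2}$ and splitting $\Sigma=\{u_\varepsilon\le M\}\cup\{u_\varepsilon>M\}$, we have $e^{u_\varepsilon^{1/3}}\le e^{M^{1/3}}$ on the first piece, while on the second $u_\varepsilon^{1/3}\le\alpha_0 u_\varepsilon$ gives $e^{u_\varepsilon^{1/3}}\le e^{\alpha_0 u_\varepsilon}$, from which $\int_\Sigma e^{u_\varepsilon^{1/3}}\,dv_g\le|\Sigma|\,e^{M^{1/3}}+C_0$ follows.

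The only technical subtlety is the adaptation of the Brezis--Merle estimate to the closed surface setting with the operator $\Delta_g+h$; this is standard but forces one to choose the exponent $\alpha_0$ strictly smaller than $4\pi/\sup_\varepsilon\|\Delta_g u_\varepsilon+h u_\varepsilon\|_{L^1}$, hence potentially small. This smallness is harmless because $t^{1/3}=o(\alpha_0 t)$ at infinity regardless of how small $\alpha_0$ is, which is precisely why the exponent $1/3$ (any exponent strictly less than $1$ would do) is chosen in the statement.
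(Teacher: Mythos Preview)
Your proof is correct and takes a genuinely different route from the paper. Both arguments begin identically, integrating \eqref{MainEqEps} over $\Sigma$ and using \eqref{Easy3} with $p=p_\varepsilon-1$ to obtain the uniform $L^1$ bound on $u_\varepsilon$. After that the paths diverge. The paper tests \eqref{MainEqEps} against $\check{u}_\varepsilon^{-1/3}$ with $\check{u}_\varepsilon=\max\{u_\varepsilon,1\}$, integrates by parts to obtain a uniform bound on $\int_\Sigma|\nabla(\check{u}_\varepsilon^{1/3})|^2\,dv_g$, observes that the $L^1$ bound on $u_\varepsilon$ also controls $\|\check{u}_\varepsilon^{1/3}\|_{L^2}$, and then applies the Moser--Trudinger inequality directly to $\check{u}_\varepsilon^{1/3}$. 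Your argument instead exploits the uniform $L^1$ bound on the right-hand side $\lambda_\varepsilon p_\varepsilon u_\varepsilon^{p_\varepsilon-1}e^{u_\varepsilon^{p_\varepsilon}}$, uses the Green's representation for $\Delta_g+h$, and runs the Brezis--Merle/Jensen argument to produce $\int_\Sigma e^{\alpha_0 u_\varepsilon}\,dv_g\le C_0$ for some small $\alpha_0>0$, which is in fact a stronger intermediate conclusion than what the paper obtains. The paper's route stays within the Moser--Trudinger framework that pervades the article and avoids invoking Green's function estimates; your route is arguably more classical and yields a linear exponential bound before specializing to the $1/3$ power. Either way, the exponent $1/3$ is inessential---any fixed exponent in $(0,1)$ would work.
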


Lemma \ref{CorPropWeakPwEst} strongly relies on \eqref{EnergyBound3} and is actually the very first step to get Proposition \ref{PropWeakPwEst} below, already obtained in \cite{YangQuantization} for $p_\varepsilon=2$. This lemma is relevant to handle the term ${h} u_\varepsilon$ in the LHS of \eqref{MainEqEps}, appearing in the present surface setting. 

\begin{proof}[Proof of Lemma \ref{CorPropWeakPwEst}] 
 Integrating \eqref{MainEqEps} in $\Sigma$, we get from the consequence \eqref{Easy3} of \eqref{EnergyBound3} that $(u_\varepsilon)_\varepsilon$ is bounded in $L^1$. Set now $\check{u}_\varepsilon=\max \{ u_\varepsilon,1\}$. Multiplying \eqref{MainEqEps} by $\check{u}_\varepsilon^{-1/3}$ and integrating by parts in $\Sigma$ (see for instance \cite[Proposition 2.5]{NonlinAnaH}), we get
$$3 \int_\Sigma |\nabla (\check{u}_\varepsilon^{1/3})|^2 dv_g=+\int_\Sigma \check{u}_\varepsilon^{-1/3} {h} u_\varepsilon dv_g-\lambda_\varepsilon p_\varepsilon \int_\Sigma \check{u}_\varepsilon^{-1/3} u_\varepsilon^{p_\varepsilon-1} e^{u_\varepsilon^{p_\varepsilon}} dv_g\,.  $$
Since $\check{u}_\varepsilon\ge 1$ and $(u_\varepsilon)_\varepsilon$ is bounded in $L^1$, it is clear that $\int_\Sigma \check{u}_\varepsilon^{-1/3} u_\varepsilon dv_g=O(1)$. Concerning the last integral, writing $\Sigma=\{x\text{ s.t. }u_\varepsilon>1\}\cup \{x\text{ s.t. }u_\varepsilon\le 1\}$, we find that the integral on the latter set is of order $O(1)$ since $\Sigma$ has finite volume and by \eqref{BdLambdaEps3}, while the integral on the complement is of order $O(1)$ by \eqref{Easy3} for $p=p_\varepsilon-1$, using $\check{u}_\varepsilon\ge 1$. Similarly, since $(u_\varepsilon)_\varepsilon$ is bounded in $L^1$, $(\check{u}_\varepsilon^{1/3})_\varepsilon$ is bounded in $L^2$. Then, by the Moser-Trudinger inequality, $(\exp(\check{u}_\varepsilon^{1/3}))_\varepsilon$ is bounded in $L^1$. Obviously, the same property also holds for $(\exp(u_\varepsilon^{1/3}))_\varepsilon$, which concludes the proof.
\end{proof}

\medskip

From now on, we also assume that the $u_\varepsilon$'s blow-up, namely \emph{we assume that \eqref{BlowUp3} holds}. In order to prove Theorem \ref{ThmBlowUpAnalysis}, we need to introduce some notation and a first set of pointwise estimates on the $u_\varepsilon$'s gathered in Proposition \ref{PropWeakPwEst} below. As aforementioned, these estimates have already been proven by Yang \cite{YangQuantization} in the case where  $p_\varepsilon$ equals $2$ for all $\varepsilon$. Yet, if this last specific condition is not satisfied, note that, even in the case $p_\varepsilon\to 2^-$, we are not here in the suitable framework  to use the results from \cite{YangQuantization}, since the nonlinearity appearing in the RHS of \eqref{MainEqEps} is not of \emph{uniform} Moser-Trudinger critical growth (see \cite[Definition 1]{DruetDuke}). However, as it was already observed in the literature (see for instance \cite{DeMarchisIanniPacella}), the technique of the \emph{pointwise} exhaustion of concentration points introduced in \cite{DruetDuke} is rather robust and may be successfully adapted to a much broader class of problems. Once Lemma \ref{CorPropWeakPwEst} is obtained, the proof of Proposition \ref{PropWeakPwEst} for general $p_\varepsilon$'s  is very similar to the corresponding proof for $p_\varepsilon=2$ in \cite{YangQuantization}. 

\medskip

Concerning the notation, for all $i\in\{1,...,N\}$ and $\varepsilon\ll 1$, we may choose isothermal coordinates $(B_{\kappa_1}{(x_{i,\varepsilon})}, \phi_{i,\varepsilon}, U_{i,\varepsilon})$ around $x_{i,\varepsilon}$, such that $\phi_{i,\varepsilon}$ is a diffeomorphism from $B_{\kappa_1}(x_{i,\varepsilon})\subset \Sigma$ to $U_{i,\varepsilon} \subset \mathbb{R}^2$, where $\kappa_1>0$ is some appropriate given positive constant and $B_{\kappa_1}(x_{i,\varepsilon})$ is the ball of radius $\kappa_1$ and center $x_{i,\varepsilon}$ for the metric $g$, such that $\phi_{i,\varepsilon}(x_{i,\varepsilon})=0$, such that $B_{2\kappa}(0)\subset U_{i,\varepsilon}$, for some $\kappa>0$, and such that $(\phi_{i,\varepsilon})_\star g=e^{2 \varphi_{i,\varepsilon}} \xi$, where $\mathbb{R}^2$ is endowed with its standard metric $\xi$ (see for instance  \cite{DoCarmo,TaylorBookI}). We may also assume that $(\varphi_{i,\varepsilon})_\varepsilon$ satisfies
\begin{equation}\label{ConVarphi3}
\forall \varepsilon\,,\quad \varphi_{i,\varepsilon}(0)=0\text{ and }\lim_{\varepsilon\to 0}\varphi_{i,\varepsilon}=\varphi_i\text{ in }C^2_{loc}(B_{2 \kappa}(0))\,.
\end{equation}
 At last, we set $$u_{i,\varepsilon}=u_\varepsilon\circ \phi_{i,\varepsilon}^{-1}  {\text{ and } h_{i,\varepsilon}=h\circ \phi_{i,\varepsilon}^{-1}}$$ in $B_{2\kappa}(0)$. We denote also by $d_g(\cdot,\cdot)$ the Riemannian distance on $(\Sigma,g)$.

\begin{prop}\label{PropWeakPwEst}
 Up to a subsequence, there exist an integer $N\ge 1$ and sequences $(x_{i,\varepsilon})_{\varepsilon}$ of points in $\Sigma$ such that $\nabla u_\varepsilon(x_{i,\varepsilon})=0$, such that, setting $\gamma_{i,\varepsilon}:=u_\varepsilon(x_{i,\varepsilon})$,
\begin{equation}\label{Mui3}
\mu_{i,\varepsilon}:=\left(\frac{8}{\lambda_\varepsilon p_\varepsilon^2 \gamma_{i,\varepsilon}^{2(p_\varepsilon-1)}  e^{\gamma_{i,\varepsilon}^{p_\varepsilon}}}\right)^{\frac{1}{2}} \to 0\,,
\end{equation} 
such that
\begin{equation}\label{FarFromEachOther3}
\forall j\in\{1,...,N\}\backslash\{ i\}\,, \frac{d_g(x_{j,\varepsilon},x_{i,\varepsilon})}{\mu_{i,\varepsilon}}\to +\infty\,,
\end{equation}
and such that
\begin{equation}\label{Convloc3}
\frac{p_\varepsilon}{2}\gamma_{i,\varepsilon}^{p_\varepsilon-1}(\gamma_{i,\varepsilon}-{u}_{i,\varepsilon}(\mu_{i,\varepsilon}\cdot))\to T_0:=\ln\left(1+|\cdot|^2 \right)\text{ in }C^1_{loc}(\mathbb{R}^2)\,,
\end{equation}
as $\varepsilon\to 0$, for all $i\in\{1,...,N\}$. Moreover, there exist $C_1, C_2>0$ such that we have
\begin{equation}\label{WeakPointwEst}
\min_{i\in \{1,...,N\}} u_\varepsilon^{p_\varepsilon-1} d_g(x_{i,\varepsilon},\cdot)^2 |\Delta_g u_\varepsilon| \le C_1\text{ in }\Sigma
\end{equation}
and
\begin{equation}\label{WeakGradEst}
\min_{i\in\{1,...,N\}} u_\varepsilon^{p_\varepsilon-1} d_g(x_{i,\varepsilon},\cdot) |\nabla u_\varepsilon|_g \le C_2\text{ in }\Sigma
\end{equation}
for all $\varepsilon$. We also have that $\lim_{\varepsilon\to 0}x_{i,\varepsilon}=x_i$ for all $i$, and that there exists $u_0\in C^2(\Sigma\backslash \mathcal{S})$ such that 
\begin{equation}\label{LocConv3}
\lim_{\varepsilon\to 0}u_\varepsilon=u_0\text{ in }C^2_{loc}(\Sigma\backslash \mathcal{S})\,,
\end{equation}
where $\mathcal{S}:=\{x_1,...,x_N\}$.
\end{prop}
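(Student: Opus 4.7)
The plan is to adapt the iterative concentration-point selection scheme of Druet \cite{DruetDuke}, as already implemented in \cite{YangQuantization} for $p_\varepsilon\equiv 2$, to the present setting with variable exponent $p_\varepsilon \in [1,2]$. Having Lemma \ref{CorPropWeakPwEst} at our disposal, the linear term $hu_\varepsilon$ in \eqref{MainEqEps} is essentially harmless on any fixed chart, so the whole analysis is driven by the nonlinear piece $\lambda_\varepsilon p_\varepsilon u_\varepsilon^{p_\varepsilon-1} e^{u_\varepsilon^{p_\varepsilon}}$, which in isothermal coordinates falls exactly in the framework treated in Section \ref{SectRad}.

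First, I would select $x_{1,\varepsilon}$ as a point where $u_\varepsilon$ attains its maximum: automatically $\nabla u_\varepsilon(x_{1,\varepsilon})=0$, and \eqref{BlowUp3} gives $\gamma_{1,\varepsilon}\to+\infty$, while \eqref{BdLambdaEps3} together with the definition \eqref{Mui3} yields $\mu_{1,\varepsilon}\to 0$. Working in the isothermal chart $(B_{\kappa_1}(x_{1,\varepsilon}),\phi_{1,\varepsilon})$ and rescaling $\tilde{u}_{1,\varepsilon}(y):=\tfrac{p_\varepsilon}{2}\gamma_{1,\varepsilon}^{p_\varepsilon-1}(\gamma_{1,\varepsilon}-u_{1,\varepsilon}(\mu_{1,\varepsilon}y))$, the equation \eqref{MainEqEps} transforms into one whose linearization around $\tilde{u}_{1,\varepsilon}\equiv 0$ is of Liouville type; standard elliptic theory on compact subsets combined with the classification of the limiting profile (solutions of $\Delta T=4e^{-2T}$ on $\mathbb{R}^2$ with $T(0)=0$, $\nabla T(0)=0$, integrable curvature) forces $\tilde{u}_{1,\varepsilon}\to T_0=\ln(1+|\cdot|^2)$ in $C^1_{loc}(\mathbb{R}^2)$, giving \eqref{Convloc3} for $i=1$.

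Next, I would iterate. Given $k$ concentration points satisfying \eqref{FarFromEachOther3}--\eqref{Convloc3}, suppose \eqref{WeakPointwEst} fails with no $C_1$ that works, so that there is $y_\varepsilon\in\Sigma$ with $u_\varepsilon(y_\varepsilon)^{p_\varepsilon-1}\min_{i\le k}d_g(x_{i,\varepsilon},y_\varepsilon)^2|\Delta_g u_\varepsilon(y_\varepsilon)|\to+\infty$. Take $y_\varepsilon$ realizing the supremum of this quantity; the scaling forces $u_\varepsilon(y_\varepsilon)\to+\infty$, and using the bound on $\lambda_\varepsilon$ and the equation \eqref{MainEqEps}, a further rescaling around $y_\varepsilon$ produces (through Proposition \ref{PropRadAnalysis1} applied to a radially symmetric comparison bubble) a new concentration point $x_{k+1,\varepsilon}$, moved to the maximum of $u_\varepsilon$ on a small ball, at which $\nabla u_\varepsilon$ vanishes and the analogue of \eqref{Convloc3} holds. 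Farness \eqref{FarFromEachOther3} with respect to the previous points is built in through the failure of \eqref{WeakPointwEst}. Finally, each bubble contributes at least $4\pi+o(1)$ to $\beta_\varepsilon$ (by passing to the limit in \eqref{BetaEps} using \eqref{Convloc3}, since $\int_{\mathbb{R}^2}8e^{-2T_0}=8\pi$), so the energy bound \eqref{EnergyBound3} forces $N<+\infty$ and the process to terminate.

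Once \eqref{WeakPointwEst} holds, the gradient bound \eqref{WeakGradEst} follows by writing the equation on a geodesic ball of radius $\tfrac14\min_i d_g(x_{i,\varepsilon},x_0)$ around any $x_0\in\Sigma\setminus\mathcal{S}$ and applying standard interior gradient estimates to $u_\varepsilon^{p_\varepsilon-1}d_g(x_{i,\varepsilon},\cdot)|\nabla u_\varepsilon|_g$, with the right-hand side controlled by \eqref{WeakPointwEst}; the exponent $p_\varepsilon-1$ is treated uniformly thanks to Lemma \ref{CorPropWeakPwEst}. Finally, on any compact $K\subset\Sigma\setminus\mathcal{S}$, \eqref{WeakPointwEst} combined with elliptic regularity and $L^p$-bounds from Lemma \ref{CorPropWeakPwEst} gives a uniform $C^{2,\alpha}$ bound on $u_\varepsilon$, yielding \eqref{LocConv3} along a subsequence. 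The main obstacle, as usual with this scheme, is organizing the iteration: one must check at each extraction that the selected $x_{k+1,\varepsilon}$ is indeed well separated in the $\mu_{k+1,\varepsilon}$ scale from all previously chosen points and that the normalization \eqref{Convloc3} passes to the limit; the extra subtlety here, compared to \cite{YangQuantization}, is the uniform control of the factor $\gamma_{k+1,\varepsilon}^{p_\varepsilon-1}$ in the rescaling when $p_\varepsilon$ approaches $1$, which must be done carefully using the bounds in Section \ref{SectRad}.
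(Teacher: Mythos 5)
Your proposal takes the same route as the paper's own treatment: the paper also does not give a detailed argument, but instead observes that once Lemma~\ref{CorPropWeakPwEst} is in hand to control the term $hu_\varepsilon$, the Druet-style pointwise exhaustion of concentration points as implemented by Yang for $p_\varepsilon\equiv 2$ goes through unchanged, with the limiting profile classified via Chen--Li and the iteration terminating because each bubble contributes a fixed quota to the quantity $\frac{\lambda_\varepsilon p_\varepsilon^2}{2}\int_\Sigma u_\varepsilon^{2(p_\varepsilon-1)}e^{u_\varepsilon^{p_\varepsilon}}\,dv_g$, which is $O(1)$ by \eqref{EnergyBound3} and \eqref{Easy3}. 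Two small inaccuracies in your write-up are harmless but worth noting: the new concentration points in the iteration are classified again via Chen--Li rather than via Proposition~\ref{PropRadAnalysis1} (which enters only later, in Sections~\ref{SectRad}--\ref{SectOneBubble}, for the finer comparison with radial bubbles), and the termination estimate is more precisely $4\pi N\le\liminf_\varepsilon \frac{\lambda_\varepsilon p_\varepsilon^2}{2}\int_\Sigma u_\varepsilon^{2(p_\varepsilon-1)}e^{u_\varepsilon^{p_\varepsilon}}\,dv_g$ (using $\int_{\mathbb{R}^2}4e^{-2T_0}=4\pi$), rather than a direct contribution of $4\pi$ to $\beta_\varepsilon$ itself.
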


Observe first that $\gamma_{i,\varepsilon}\to +\infty$ as $\varepsilon\to 0$ by  \eqref{BdLambdaEps3} and \eqref{Mui3}. As an other remark, by \eqref{FarFromEachOther3} and \eqref{Convloc3}, we have that
$$ 4\pi N=N \int_{\mathbb{R}^2} 4 e^{-2T_0} dx\le \liminf_{\varepsilon\to 0} \frac{\lambda_\varepsilon p_\varepsilon^2}{2}\int_\Sigma u_\varepsilon^{2(p_\varepsilon-1)} e^{u_\varepsilon^{p_\varepsilon}} dv_g\,, $$ 
so that \eqref{EnergyBound3} and its consequence \eqref{Easy3} for $p=2(p_\varepsilon-1)$ are not only used to get \eqref{Convloc3} from the classification in  \cite{ChenLi}, but also to get that the extraction procedure of the blow-up points $(x_{i,\varepsilon})_\varepsilon$ has to stop after a finite number $N$ of steps, which eventually gives \eqref{WeakPointwEst} (see \cite[Section 3]{DruetDuke}).
{\begin{rem}\label{RemTower}
 At this stage, we have only extracted the "highest bubbles" in \eqref{Convloc3} and it is not yet clear at all whether $N$ in Proposition \ref{PropWeakPwEst} is a good candidate to be $k$ in \eqref{Quantization} (see also the discussion in \cite[Section 2]{DruThiI})). Indeed, for $p=2$, it is now known (see \cite{ThizyManciniGenMT}) that a tower of $k$-bubbles may exist for nonlinearities which are lower order perturbations of the one in \eqref{ELMainEq} and we may then have only one "highest bubble" (i.e. $N=1$) with any $k\in \mathbb{N}^\star$ in \eqref{Quantization}. 
\end{rem} }
  We get from \eqref{MainEqEps} that
\begin{equation}\label{EqUIEps3}
\Delta u_{i,\varepsilon}=e^{2 \varphi_{i,\varepsilon}}\left(- {h_{i,\varepsilon}}u_{i,\varepsilon}+\lambda_\varepsilon p_\varepsilon u_{i,\varepsilon}^{p_\varepsilon-1} e^{u_{i,\varepsilon}^{p_\varepsilon}} \right)\,,\quad u_{i,\varepsilon}>0\text{ in }B_{2 \kappa}(0)\,,
\end{equation}
for all $i$ and $\varepsilon$, where $\Delta=\Delta_\xi$ throughout the paper. For all $i\in\{1,...,N\}$, we set
\begin{equation}\label{DefRIEps3}
r_{i,\varepsilon}=
\begin{cases}
&\kappa\text{ if }N=1\,,\\
&\min\left(\frac{1}{3} \min_{j\in\{1,...,N\}\backslash\{i\}} d_g(x_{i,\varepsilon}, x_{j,\varepsilon}), \kappa\right)\text{ otherwise}\,,
\end{cases}
\end{equation}
for all $\varepsilon$, so that we get from \eqref{Mui3} and  \eqref{FarFromEachOther3} that
\begin{equation}\label{RMuITo03}
\lim_{\varepsilon\to 0}\frac{\mu_{i,\varepsilon}}{r_{i,\varepsilon}}=0\,.
\end{equation}
We set $t_{i,\varepsilon}:=\ln\left(1+\frac{|\cdot|^2}{\mu_{i,\varepsilon}^2}\right)$ in $\mathbb{R}^2$. We set also
\begin{equation}\label{DefVI3}
v_{i,\varepsilon}=B_{\gamma_{i,\varepsilon}}\,,
\end{equation}
where $B_\gamma$ is as in \eqref{EqBubble1} for $(p_\gamma)_\gamma$ and $(\mu_\gamma)_\gamma$ satisfying $p_{\gamma_{i,\varepsilon}}=p_\varepsilon$ and $\mu_{\gamma_{i,\varepsilon}}= \mu_{i,\varepsilon}$, for all $\varepsilon$ and all $i\in\{1,...,N\}$. 
 Up to renumbering, we may also assume that
\begin{equation}\label{OrderRI3}
r_{1,\varepsilon}\le r_{2,\varepsilon}\le ...\le r_{N,\varepsilon}
\end{equation}
for all $\varepsilon$.

\smallskip

 In order to link the present situation to the results of Sections \ref{SectRad} and \ref{SectOneBubble}, we need some preliminary observations. Let $l\in\{1,...,N\}$ be given. Given a parameter $\eta\in (0,1)$ that is going to take several values in the proof below, we let $r_{l,\varepsilon}^{(\eta)}$ be given by 
\begin{equation}\label{DefRiEpsEta3}
t_{l,\varepsilon}\left(r_{l,\varepsilon}^{(\eta)}\right)=\eta \frac{p_\varepsilon \gamma_{l,\varepsilon}^{p_\varepsilon}}{2}\,,
\end{equation}
  and, for $r_{l,\varepsilon}$ as in \eqref{DefRIEps3}, we set 
\begin{equation}\label{BarRDef3}
\bar{r}_{l,\varepsilon}^{(\eta)}=\min\left(r_{l,\varepsilon}, r_{l,\varepsilon}^{(\eta)}\right)
\end{equation}
 for all $\varepsilon$. By collecting the above preliminary information, we can check that Proposition \ref{PropRadCompar2} applies with $\bar{r}_\varepsilon=\bar{r}_{l,\varepsilon}^{(\eta)}$, $\varphi_\varepsilon=\varphi_{l,\varepsilon}$, $u_\varepsilon=u_{l,\varepsilon}$, $\gamma_\varepsilon=\gamma_{l,\varepsilon}$ and $v_\varepsilon=v_{l,\varepsilon}$. In particular, the definition \eqref{DefRIEps3} of $r_{l,\varepsilon}$ is used to get \eqref{WeakGradEstU2} from \eqref{WeakGradEst}, while Lemma \ref{CorPropWeakPwEst} is used to get \eqref{LpBd2Prelim} and \eqref{AssumpLpBd2Strong}. As a remark, the metrics $(\phi_{l,\varepsilon})_\star g$ and $\xi$ are equivalent in $B_{\kappa}(0)$  by \eqref{ConVarphi3}: we use this fact here and currently in the sequel. We get in particular (see \eqref{LpBd2Strong}) that $\bar{r}_{l,\varepsilon}^{(\eta)}=o(1)$ and even that
\begin{equation}\label{BarRITo03}
\ln \gamma_{l,\varepsilon}=o\left(\ln \frac{1}{\bar{r}_{l,\varepsilon}^{(\eta)}} \right)\,,
\end{equation}
so that Proposition \ref{PropRadCompar2} also applies (see the remark involving \eqref{LpBd2}), and so that we get
\begin{equation}\label{IneqViEps3}
\gamma_{l,\varepsilon}\ge v_{l,\varepsilon}=\gamma_{l,\varepsilon}\left(1-\frac{2 t_{l,\varepsilon}\left(1+O\left(\gamma_{l,\varepsilon}^{-p_\varepsilon}\right) \right)}{p_\varepsilon \gamma_{l,\varepsilon}^{p_\varepsilon}} \right)\ge (1-\eta)\gamma_{l,\varepsilon}+O\left(\gamma_{l,\varepsilon}^{1-p_\varepsilon} \right)\,,
\end{equation}
uniformly in $\left[0,\bar{r}_{l,\varepsilon}^{(\eta)}\right]$ and for all $\varepsilon\ll 1$, using Proposition \ref{PropRadAnalysis1} and \eqref{DefRiEpsEta3}. We also get from Section \ref{SectOneBubble} (see \eqref{WPointwEst}) that
\begin{equation}\label{CorSect23}
|u_{l,\varepsilon}-v_{l,\varepsilon}|=O\left( \frac{|\cdot|}{ \gamma_{l,\varepsilon}^{p_\varepsilon-1}\bar{r}_{l,\varepsilon}^{(\eta)}} \right)
\end{equation}
and (see \eqref{WGradEst2})
\begin{equation}\label{GradCompl3}
|\nabla(u_{l,\varepsilon}-v_{l,\varepsilon})|=O\left(\frac{1}{\gamma_{l,\varepsilon}^{p_\varepsilon-1}\bar{r}_{l,\varepsilon}^{(\eta)}} \right)
\end{equation}
uniformly in $B_{\bar{r}_{l,\varepsilon}^{(\eta)}}(0)$ and for all $\varepsilon\ll 1$. We get now the following result:

\begin{Step}\label{StFarEnough3}
For all $i\in\{1,...,N\}$, we have that 
\begin{equation}\label{FarEnoughEq3}
\liminf_{\varepsilon\to 0}\frac{2 t_{i,\varepsilon}(r_{i,\varepsilon})}{p_\varepsilon \gamma_{i,\varepsilon}^{p_\varepsilon}} \ge 1\,,
\end{equation}
and that there exists $C\gg 1$ such that
\begin{equation}\label{MajorGlobal3}
0<\bar{u}_{i,\varepsilon}(r)\le -\left(\frac{2}{p_\varepsilon}-1 \right) \gamma_{i,\varepsilon}+\frac{2}{p_\varepsilon \gamma_{i,\varepsilon}^{p_\varepsilon-1}}\ln \frac{C}{\lambda_\varepsilon \gamma_{i,\varepsilon}^{2(p_\varepsilon-1)} r^2} +O\left(r^{3/2}\right)
\end{equation}
uniformly in $r\in \left(0,\kappa \right]$ and for all $\varepsilon\ll 1$, where $\bar{u}_{i,\varepsilon}$ is continuous in $[0,2\kappa)$ and given by
\begin{equation}\label{DefBarUI3}
\bar{u}_{i,\varepsilon}(r)=\frac{1}{2 \pi r} \int_{\partial B_r(0)} u_{i,\varepsilon} ~d\sigma_\xi\,,
\end{equation}
for all $r\in (0,2\kappa)$, where $d\sigma_\xi$ is the volume element for the metric induced in $\partial B_r(0)$ by the standard metric $\xi$ in $\mathbb{R}^2$. 
\end{Step}

\begin{proof}[Proof of Step \ref{StFarEnough3}]
We divide the proof of Step \ref{StFarEnough3} into two parts. 
 \begin{proof}[Proof of \eqref{MajorGlobal3}] Here we show \eqref{MajorGlobal3}, assuming that \eqref{FarEnoughEq3} is already obtained for some $i$. Let $\eta_1<\eta_2$ be two given numbers in $(0,1)$. Then by \eqref{DefRiEpsEta3}, \eqref{BarRDef3} and \eqref{FarEnoughEq3}, we get
 $$\bar{r}_{i,\varepsilon}^{(\eta_1)}=r_{i,\varepsilon}^{(\eta_1)}\text{ and }\bar{r}_{i,\varepsilon}^{(\eta_2)}=r_{i,\varepsilon}^{(\eta_2)} $$
 for all $\varepsilon\ll 1$. Then \eqref{MajorGlobal3} holds true uniformly in $\left(0,r_{i,\varepsilon}^{(\eta_2)}\right]$ using  \eqref{LowPOVBubble1} and \eqref{CorSect23} for $l=i$ and parameters $\eta_1$ or $\eta_2$. We get also from \eqref{IneqViEps3} and \eqref{CorSect23} that
 \begin{equation}\label{C0Compar3}
 \bar{u}_{i,\varepsilon}\left(r_{i,\varepsilon}^{(\eta_1)} \right)=v_{i,\varepsilon}\left(r_{i,\varepsilon}^{(\eta_1)} \right)+O\left(\gamma_{i,\varepsilon}^{1-p_\varepsilon} \right)\le \gamma_{i,\varepsilon}+O\left(\gamma_{i,\varepsilon}^{1-p_\varepsilon} \right)\,,
 \end{equation}
  and from \eqref{GradCompl3} that
 \begin{equation}\label{C1Compar3}
 \|\nabla(u_{i,\varepsilon}-v_{i,\varepsilon})\|_{L^\infty\left(\partial B_{r_{i,\varepsilon}^{(\eta_1)} }(0)\right)}=O\left(\frac{1}{\gamma_{i,\varepsilon}^{p_\varepsilon-1}r_{i,\varepsilon}^{(\eta_2)}} \right)
  \end{equation}
  for all $\varepsilon\ll 1$. For $f$ a $C^2$ function around $0\in \mathbb{R}^2$ and $r\ge 0$, we let $\bar{f}(r)$ (see \eqref{DefBarUI3}) be the average of $f$ on $\partial B_r(0)$; integrating by parts, we get
\begin{equation} \label{IPP3}
- 2\pi r \bar{f}'(r)=\int_{B_{r}(0)} (\Delta f)(x) dx
\end{equation}
with the usual radial (abuse of) notation. We write with \eqref{ConVarphi3} and \eqref{EqUIEps3} that
$$
\int_{B_r(0)} (\Delta u_{i,\varepsilon}) dx\ge \int_{B_{r_{i,\varepsilon}^{(\eta_1)}}(0)} (\Delta u_{i,\varepsilon}) dx+O\left(\int_{B_r(0)\backslash B_{r_{i,\varepsilon}^{(\eta_1)}}(0)} u_{i,\varepsilon} dx\right)\,, $$ that $\int_{B_r(0)} u_{i,\varepsilon} dx=O\left(r^{3/2} \right)$ by H\"older's inequality with Lemma \ref{CorPropWeakPwEst} for $p=4$, and then, with \eqref{IPP3}, that
\begin{equation}\label{DerEstUi3}
\bar{u}_{i,\varepsilon}'(r)\le -\frac{1}{2\pi r}\left(-2\pi r_{i,\varepsilon}^{(\eta_1)} \bar{u}'_{i,\varepsilon}\left(r_{i,\varepsilon}^{(\eta_1)} \right) \right)+O\left(r^{1/2} \right)
\end{equation}
uniformly in $r\in \left[r_{i,\varepsilon}^{(\eta_1)},\kappa\right]$ and for all $\varepsilon \ll 1$.  We get from the definition \eqref{DefRiEpsEta3} of $r_{l,\varepsilon}^{(\eta_j)}$ for $l=i$ and $j\in\{1,2\}$ that
 \begin{equation}\label{RiEtaCompar3}
 \ln\frac{r_{i,\varepsilon}^{(\eta_1)}}{r_{i,\varepsilon}^{(\eta_2)}}=-\frac{p_\varepsilon \gamma_{i,\varepsilon}^{p_\varepsilon}}{4}(\eta_2-\eta_1)+o(1)
 \end{equation}
 as $\varepsilon\to 0$. We now write 
\begin{equation*}
\begin{split}
\bar{u}_{i,\varepsilon}'=v_{i,\varepsilon}'+\left(\bar{u}'_{i,\varepsilon}-v'_{i,\varepsilon} \right)=-\frac{2 t'_{i,\varepsilon}}{p_\varepsilon \gamma_{i,\varepsilon}^{p_\varepsilon-1}}\left[1+O\left(\gamma_{i,\varepsilon}^{- p_\varepsilon} \right)  +O\left( \frac{r_{i,\varepsilon}^{(\eta_1)}}{r_{i,\varepsilon}^{(\eta_2)}} \right)\right]\,,
\end{split}
\end{equation*}
at $r_{i,\varepsilon}^{(\eta_1)}$ for all $\varepsilon\ll 1$, using Proposition \ref{PropRadAnalysis1} and \eqref{C1Compar3}. This implies with \eqref{RiEtaCompar3} that
\begin{equation}\label{UiEpsPrimeEstim3}
-2\pi r_{i,\varepsilon}^{(\eta_1)} \bar{u}'_{i,\varepsilon}\left(r_{i,\varepsilon}^{(\eta_1)} \right)=\frac{8\pi}{p_\varepsilon \gamma_{i,\varepsilon}^{p_\varepsilon-1}}+O\left(\gamma_{i,\varepsilon}^{1-2p_\varepsilon} \right) \,,
\end{equation}
using also that $$r_{i,\varepsilon}^{(\eta_1)} t'_{i,\varepsilon}\left(r_{i,\varepsilon}^{(\eta_1)}\right)=2+O\left(\mu_{i,\varepsilon}^2/\left(r_{i,\varepsilon}^{(\eta_1)}\right)^2 \right)=2+O\left(\gamma_{i,\varepsilon}^{-p_\varepsilon} \right)$$ for all $\varepsilon\ll 1$, by the definition \eqref{DefRiEpsEta3} of $r_{i,\varepsilon}^{(\eta_1)}$. Then, integrating \eqref{DerEstUi3} in $[r_{i,\varepsilon}^{(\eta_1)}, s ]$ and using the fundamental theorem of calculus and \eqref{UiEpsPrimeEstim3}, we get that
\begin{equation}\label{Interccl3}
\bar{u}_{i,\varepsilon}(s)-\bar{u}_{i,\varepsilon}\left(r_{i,\varepsilon}^{(\eta_1)}\right)\le -\frac{4}{p_\varepsilon \gamma_{i,\varepsilon}^{p_\varepsilon-1}}\ln \frac{s}{r_{i,\varepsilon}^{(\eta_1)}}\left(1+O\left(\gamma_{i,\varepsilon}^{-p_\varepsilon} \right) \right)+O\left(s^{3/2} \right) 
\end{equation}
uniformly in $s\in [r_{i,\varepsilon}^{(\eta_1)}, \kappa]$, for all $\varepsilon\ll 1$, and conclude the proof of \eqref{MajorGlobal3} by evaluating $\bar{u}_{i,\varepsilon}\left(r_{i,\varepsilon}^{(\eta_1)}\right)$ with  \eqref{LowPOVBubble1} and \eqref{C0Compar3}. To get the existence of $C>0$ in \eqref{MajorGlobal3} from the remainder in \eqref{Interccl3}, we use that \eqref{Interccl3} and $\bar{u}_{i,\varepsilon}(\kappa)>0$ imply 
$$0\le \ln \frac{s}{r_{i,\varepsilon}^{(\eta_1)}}=O\left(\gamma_{i,\varepsilon}^{p_\varepsilon-1}\bar{u}_{i,\varepsilon}\left(r_{i,\varepsilon}^{(\eta_1)}\right) \right)+O\left(\gamma_{i,\varepsilon}^{p_\varepsilon-1} \right)=O(\gamma_{i,\varepsilon}^{p_\varepsilon})$$
uniformly in $s\in [r_{i,\varepsilon}^{(\eta_1)},\kappa]$ and for all $\varepsilon\ll 1$, thanks to \eqref{C0Compar3}.
\end{proof}

\begin{proof}[Proof of \eqref{FarEnoughEq3}] We now turn to the proof of \eqref{FarEnoughEq3}. We prove it by induction on $i\in \{1,...,N\}$. In particular, we assume that \eqref{FarEnoughEq3} holds true at steps $1,...,i-1$ if $i\ge 2$. By contradiction, assume in addition that \eqref{FarEnoughEq3} does not hold true at step $i$. Thus, by \eqref{DefRiEpsEta3}-\eqref{BarRDef3}, up to a subsequence, we may choose and fix $\eta\in (0,1)$ sufficiently close to $1$ such that
\begin{equation}\label{ContradCons3}
\bar{r}_{i,\varepsilon}^{(\eta)}=r_{i,\varepsilon} 
\end{equation}
for all $\varepsilon\ll 1$. Set $J_i=\left\{j\in \{1,...,N\}\text{ s.t. } d_g(x_{i,\varepsilon}, x_{j,\varepsilon})=O\left(r_{i,\varepsilon} \right) \right\}$. Obviously, we get from \eqref{DefRIEps3} that 
\begin{equation}\label{ObviousProp}
r_{l,\varepsilon}=O\left(r_{i,\varepsilon} \right)
\end{equation}
for all $\varepsilon\ll 1$ and all $l\in  J_i$. We also find from \eqref{BarRITo03} for $l=i$ and from \eqref{ContradCons3} that $r_{i,\varepsilon}\to 0$, so   we get from \eqref{ConVarphi3} that
\begin{equation}\label{LocEucl3}
g_{l,\varepsilon}:=\left[\left(\phi_{l,\varepsilon} \right)_\star g \right](r_{i,\varepsilon}\cdot)\to \xi\text{ in }C^2_{loc}(\mathbb{R}^2)\,,
\end{equation}
 as $\varepsilon\to 0$ for all $l\in  J_i$. Up to a subsequence, we may assume that
$$\lim_{\varepsilon\to 0} \frac{\phi_{i,\varepsilon}(x_{l,\varepsilon})}{r_{i,\varepsilon}}=\check{x}_l\in \mathbb{R}^2 $$
for all $l\in J_i$, and we have that $\mathcal{S}_i:=\{\check{x}_l,l\in J_i\}$ contains at least two distinct points, by \eqref{DefRIEps3}, since $r_{i,\varepsilon}\to 0$ as $\varepsilon\to 0$. We may now choose and fix $\tau\in (0,1)$ small enough such that 
$$3\tau <\min_{\left\{(x,y)\in \mathcal{S}_i^2\,, x\neq y\right\}}|x-y|$$ and such that $\mathcal{S}_i\subset B_{1/(3 \tau)}(0)$. We can check that there exists $C>0$ such that any point in $$\Omega_{i,\varepsilon}:=B_{r_{i,\varepsilon}/\tau}(0)\backslash \cup_{j\in J_i}B_{\tau r_{i,\varepsilon}}(\phi_{i,\varepsilon}(x_{j,\varepsilon}))$$ may be joined to $\partial B_{\tau r_{i,\varepsilon}}(0)$ by a $C^1$ path in $\Omega_{i,\varepsilon}$ of $\xi$-length at most $C r_{i,\varepsilon}$, for all $\varepsilon\ll 1$. Therefore, by \eqref{ContradCons3} with \eqref{IneqViEps3} and \eqref{CorSect23} for $l=i$, we may estimate first $u_{i,\varepsilon}$ on $\partial B_{\tau r_{i,\varepsilon}}(0)$ and then get from \eqref{WeakGradEst} and \eqref{LocEucl3} that
\begin{equation}\label{LocBd3}
u_{i,\varepsilon}=\bar{u}_{i,\varepsilon}(\tau r_{i,\varepsilon})+O\left( \gamma_{i,\varepsilon}^{1-p_\varepsilon}\right)\ge (1-\eta ) \gamma_{i,\varepsilon}+O(1)
\end{equation}
uniformly in $\Omega_{i,\varepsilon}$ and for all $\varepsilon\ll 1$, with $\eta\in (0,1)$ still as initially fixed in \eqref{ContradCons3}. Independently, we get from \eqref{LowPOVBubble1}, \eqref{CorSect23} for $l=i$ and \eqref{ContradCons3} that
\begin{equation}\label{EstBdryBri3}
\bar{u}_{i,\varepsilon}(\tau r_{i,\varepsilon})=-\left(\frac{2}{p_\varepsilon}-1 \right)\gamma_{i,\varepsilon}+\frac{2}{p_\varepsilon \gamma_{i,\varepsilon}^{p_\varepsilon-1}}\Bigg(\ln \frac{1}{\lambda_\varepsilon \gamma_{i,\varepsilon}^{2(p_\varepsilon-1)} r_{i,\varepsilon}^2}+O(1)\Bigg)
\end{equation}
for all $\varepsilon\ll 1$.

\smallskip

\noindent \textbullet~ We prove now that, for all $j\in J_i$
\begin{equation}\label{NoCluster3}
j<i \implies \lim_{\varepsilon\to 0}\frac{\gamma_{i,\varepsilon}}{\gamma_{j,\varepsilon}}=0\,,
\end{equation}
up to a subsequence. Then, let $j\in J_i$ such that $j<i$. By \eqref{OrderRI3}, we have that $r_{j,\varepsilon}\le r_{i,\varepsilon}$ and, by our induction assumption, we know from \eqref{FarEnoughEq3} at step $j$ and from \eqref{DefRiEpsEta3}-\eqref{BarRDef3} that, given any $\eta_2\in (0,1)$, 
\begin{equation}\label{EncadrRjEta1}
\bar{r}_{j,\varepsilon}^{(\eta_2)}=r_{j,\varepsilon}^{(\eta_2)}
\end{equation}
 for all $\varepsilon\ll 1$. Then, by \eqref{IneqViEps3}, by  \eqref{CorSect23} for $l=j$ with parameter $\eta=\eta_2$, and by the definition \eqref{DefRiEpsEta3} of $r_{j,\varepsilon}^{(\eta_2)}$ we have that
$$\bar{u}_{j,\varepsilon}\left(\bar{r}_{j,\varepsilon}^{(\eta_2)} \right)\le (1-\eta_2)\gamma_{j,\varepsilon}(1+o(1)) $$ 
as $\varepsilon\to 0$. For all $l\in J_i$, let $w_{l,\varepsilon}$ be given by 
\begin{equation}\label{WjDef3}
\begin{cases}
&\Delta w_{l,\varepsilon}=-e^{2 \varphi_{l,\varepsilon}} {h_{l,\varepsilon}} u_{l,\varepsilon}\text{ in } B_{r_{i,\varepsilon}/(2\tau)}(0)\,,\\
&w_{l,\varepsilon}=0\text{ on }\partial B_{r_{i,\varepsilon}/(2\tau)}(0)\,.
\end{cases}
\end{equation} 
By observing that $\Delta (u_{l,\varepsilon}-w_{l,\varepsilon})\ge 0$ in $B_{r_{i,\varepsilon}/(2\tau)}(0)$ by \eqref{EqUIEps3}, the maximum principle yields that \emph{$u_{l,\varepsilon}-w_{l,\varepsilon}$ attains its infimum on $B_{r_{i,\varepsilon}/(2\tau)}(0)$ at some point in $\partial B_{r_{i,\varepsilon}/(2\tau)}(0)$}.
Moreover, for all given $p\in (1,+\infty)$, we get from Lemma \ref{CorPropWeakPwEst} and \eqref{ConVarphi3} that $$\|\Delta (w_{l,\varepsilon}(r_{i,\varepsilon}\cdot))\|_{L^p(B_{1/(2\tau)}(0))}=O\left(r_{i,\varepsilon}^{\frac{2(p-1)}{p}} \right)\,,$$ so, by elliptic theory, \eqref{DefRiEpsEta3}-\eqref{BarRITo03} and \eqref{ContradCons3}, we get
\begin{equation}\label{AnLinTerm3}
w_{l,\varepsilon}(r_{i,\varepsilon}\cdot)=O\left(r_{i,\varepsilon}^{\frac{2(p-1)}{p}}\right)=o\left(\gamma_{i,\varepsilon}^{1-p_\varepsilon} \right)
\end{equation}
uniformly in $B_{1/(2\tau)}(0)$ as $\varepsilon\to 0$. Summarizing this argument for $l=j$, we get
\begin{equation}\label{PlayingWEta3}
(1-\eta)\gamma_{i,\varepsilon}\le (1-\eta_2)\gamma_{j,\varepsilon}(1+o(1)) 
\end{equation}
as $\varepsilon\to 0$, using also \eqref{LocEucl3}-\eqref{LocBd3}. Indeed, by \eqref{LocEucl3}, observe that we may choose $\tau>0$ sufficiently small from the beginning to have 
\begin{equation}\label{Inclusion3}
\partial B_{r_{i,\varepsilon}/(2\tau)}(0)\subset \phi_{l,\varepsilon}^{-1}\circ \phi_{i,\varepsilon}\left(\Omega_{i,\varepsilon}\right)\,,
\end{equation}
so that we may estimate $u_{l,\varepsilon}$ on $\partial B_{r_{i,\varepsilon}/(2\tau)}(0)$ with \eqref{LocBd3}, for all $l\in J_i$ and all $\varepsilon\ll 1$. Since $\eta_2<1$ may be chosen arbitrarily close to $1$, \eqref{PlayingWEta3} gives \eqref{NoCluster3}.

\smallskip

\noindent \textbullet~ We prove now that, for all $j\in J_i$,
\begin{equation}\label{PropGammaSim3}
\gamma_{j,\varepsilon}=O\left(\gamma_{i,\varepsilon} \right)\,.
\end{equation}
By contradiction, if \eqref{PropGammaSim3} does not hold true, we choose $j\in J_i$ such that
\begin{equation}\label{HighContrad3}
\lim_{\varepsilon\to 0}\frac{\gamma_{i,\varepsilon}}{\gamma_{j,\varepsilon}}=0\,,
\end{equation} 
up to a subsequence. In particular, we have $j\neq i$. If $j>i$, we may write that
\begin{equation}\label{tjComput3}
\begin{split}
t_{j,\varepsilon}(r_{j,\varepsilon})~&~=\ln \frac{r_{j,\varepsilon}^2}{\mu_{j,\varepsilon}^2}+ o(1)\,,\\
&=\ln\frac{r_{j,\varepsilon}^2}{r_{i,\varepsilon}^2}+t_{i,\varepsilon}(r_{i,\varepsilon})+\ln \frac{\mu_{i,\varepsilon}^2}{\mu_{j,\varepsilon}^2}+o(1)\,,\\
&=O(1)+\eta \frac{p_\varepsilon}{2} \gamma_{i,\varepsilon}^{p_\varepsilon}+\gamma_{j,\varepsilon}^{p_\varepsilon}-\gamma_{i,\varepsilon}^{p_\varepsilon}+O\left(\ln \gamma_{i,\varepsilon}+\ln \gamma_{j,\varepsilon} \right)\,,\\
&=\gamma_{j,\varepsilon}^{p_\varepsilon}(1+o(1))
\end{split}
\end{equation}
as $\varepsilon\to 0$. The first two equalities use \eqref{RMuITo03}; the third one uses first our assumption $j>i$ with \eqref{OrderRI3} and \eqref{ObviousProp}, then the definition \eqref{DefRiEpsEta3} for $\eta$ as in \eqref{ContradCons3}, and at last \eqref{Mui3}; the last equality uses \eqref{HighContrad3}. Thus, given  any $\eta_2\in (0,1)$, we get in complement of \eqref{NoCluster3} and the paragraph below that \eqref{EncadrRjEta1} holds true also if $j>i$. As a first consequence, for all given $0<\eta'_2<\eta_2<1$, we get that 
\begin{equation}\label{Ro3}
\lim_{\varepsilon\to 0}\frac{r_{j,\varepsilon}^{(\eta'_2)}}{r_{i,\varepsilon}}=0\,,
\end{equation}
using \eqref{ObviousProp}. We get from \eqref{LowPOVBubble1} and \eqref{CorSect23}, for $l=j$ and parameter $\eta'_2$, that
\begin{equation}\label{UpperEstimBj3}
\bar{u}_{j,\varepsilon}\left(r_{j,\varepsilon}^{(\eta'_2)}\right)=-\left(\frac{2}{p_\varepsilon}-1 \right)\gamma_{j,\varepsilon}+\frac{2}{p_\varepsilon \gamma_{j,\varepsilon}^{p_\varepsilon-1}}\Bigg(\ln \frac{1}{\lambda_\varepsilon \gamma_{j,\varepsilon}^{2(p_\varepsilon-1)} \left(r_{j,\varepsilon}^{(\eta'_2)}\right)^2}+O(1)\Bigg)
\end{equation}
for all $\varepsilon\ll 1$. 

\smallskip

In order to have the desired contradiction with \eqref{HighContrad3}, fixing $\eta_2\in (0,1)$, we prove now the following estimate
\begin{equation}\label{LowerEstBj3}
\bar{u}_{j,\varepsilon}\left(r_{j,\varepsilon}^{(\eta_2)}\right)\ge \bar{u}_{i,\varepsilon}(\tau r_{i,\varepsilon})+\frac{2}{p_\varepsilon \gamma_{j,\varepsilon}^{p_\varepsilon-1}}\ln \frac{r_{i,\varepsilon}^2}{ \left(r_{j,\varepsilon}^{(\eta_2)}\right)^2}+O\left(\gamma_{i,\varepsilon}^{1-p_\varepsilon} \right)
\end{equation}
for all $\varepsilon\ll 1$. Let $\psi_\varepsilon$ be given by
\begin{equation*}
\begin{cases}
&\Delta \psi_\varepsilon=0\text{ in }B_{r_{i,\varepsilon}/(2\tau)}(0)\,,\\
&\psi_\varepsilon=u_{j,\varepsilon}\text{ on }\partial B_{r_{i,\varepsilon}/(2\tau)}(0)\,,
\end{cases}
\end{equation*}
for all $\varepsilon$. We get first
\begin{equation}\label{HarmControl3}
\psi_\varepsilon=\bar{u}_{i,\varepsilon}(\tau r_{i,\varepsilon}) +O\left(\gamma_{i,\varepsilon}^{1-p_\varepsilon} \right)
\end{equation}
for all $\varepsilon\ll 1$, by \eqref{LocBd3}, \eqref{Inclusion3} and the maximum principle for the harmonic function $\psi_\varepsilon$. Let $(z_\varepsilon)_\varepsilon$ be any sequence of points such that $|z_\varepsilon|=r_{j,\varepsilon}^{(\eta_2)}$ for all $\varepsilon$. Let $G_\varepsilon$ be the Green's function of $\Delta$ in $B_{r_{i,\varepsilon}/(2\tau)}(0)$ with zero Dirichlet boundary conditions. We know that $G_\varepsilon(x,y)>0$ by the maximum principle for all $x, y\in B_{r_{i,\varepsilon}/(2\tau)}(0)$, $x\neq y$ and for all $\varepsilon$. Let $\eta_1\in (0,\eta_2)$ be fixed. By  Green's representation formula and \eqref{EqUIEps3}, using the positivity of $\Delta u_{j,\varepsilon}+ e^{2\varphi_{j,\varepsilon}} {h_{j,\varepsilon}} u_{j,\varepsilon}$ and that of $G_{\varepsilon}(z_\varepsilon,\cdot)$, we have that
\begin{equation}\label{GreensLowerBd3}
(u_{j,\varepsilon}-\psi_\varepsilon-w_{j,\varepsilon})(z_\varepsilon)\ge \lambda_\varepsilon p_\varepsilon \int_{B_{r_{j,\varepsilon}^{(\eta_1)}}(0)} G_{\varepsilon}(z_\varepsilon,y)  e^{2\varphi_{j,\varepsilon}}u_{j,\varepsilon}(y)^{p_\varepsilon-1} e^{u_{j,\varepsilon}^{p_\varepsilon}(y)} dy
\end{equation}
for all $\varepsilon\ll 1$, with $w_{j,\varepsilon}$ given by  \eqref{WjDef3}. There exists $C>0$ (see \cite[Appendix B]{DruThiI}) such that
\begin{equation*}
\left|G_{\varepsilon}(z,y)-\frac{1}{2\pi} \ln \frac{r_{i,\varepsilon}}{|z-y|} \right|\le C
\end{equation*}
for all $y\in B_{r_{i,\varepsilon}/(2\tau)}(0)$, for all $z\in B_{5r_{i,\varepsilon}/(12\tau)}(0)$, $y\neq z$ and for all $\varepsilon\ll 1$. Observe also that \eqref{RiEtaCompar3} holds true for $l=j$. Then, since $|z_\varepsilon|=r_{j,\varepsilon}^{(\eta_2)}$, we  first get that
\begin{equation}\label{GreensPrec3}
G_\varepsilon(z_\varepsilon,\cdot)=\frac{1}{2\pi }\ln\frac{r_{i,\varepsilon}}{|z_\varepsilon|}+O(1)+O\left(\frac{|\cdot|}{|z_\varepsilon|} \right)=\frac{1}{2\pi }\ln\frac{r_{i,\varepsilon}}{r_{j,\varepsilon}^{(\eta_2)}}+O(1)
\end{equation}
uniformly in $B_{r_{j,\varepsilon}^{(\eta_1)}}(0)$ and for all $\varepsilon\ll 1$. Now, by \eqref{IneqViEps3} and \eqref{CorSect23} for $l=j$ with parameter $\eta=\eta_2$, computing as in Proposition \ref{PropRadAnalysis1} or in the argument involving \eqref{WLapl2}, we get that, for some given $\tilde{\eta}\in (0,1)$,
\begin{equation}\label{ExpanPrec3}
\lambda_\varepsilon p_\varepsilon e^{2 \varphi_{j,\varepsilon}} u_{j,\varepsilon}^{p_\varepsilon-1} e^{u_{j,\varepsilon}^{p_\varepsilon}}=\frac{8 e^{-2 t_{j,\varepsilon}}}{\mu_{j,\varepsilon}^2 \gamma_{j,\varepsilon}^{p_\varepsilon-1} p_\varepsilon} \left(1+O\left(e^{\tilde{\eta} t_{j,\varepsilon}}\left[\frac{|\cdot|}{r_{j,\varepsilon}^{(\eta_2)}}+|\cdot|+\frac{1}{\gamma_{j,\varepsilon}^{p_\varepsilon}} \right] \right) \right)
\end{equation}
in  $B_{r_{j,\varepsilon}^{(\eta_1)}}(0)$ and for all $\varepsilon\ll 1$. Resuming arguments in \eqref{tjComput3} and using \eqref{Ro3}, we have that
\begin{equation}\label{tjSuite3}
0<\ln \frac{r_{i,\varepsilon}}{r_{j,\varepsilon}^{(\eta_2)}}\le \ln \frac{r_{i,\varepsilon}}{\mu_{i,\varepsilon}}+\ln \frac{\mu_{i,\varepsilon}}{\mu_{j,\varepsilon}}=\gamma_{j,\varepsilon}^{p_\varepsilon}(1+o(1))
\end{equation} 
as $\varepsilon\to 0$, since \eqref{HighContrad3} is assumed to be true. By \eqref{GreensPrec3}, \eqref{ExpanPrec3} and \eqref{tjSuite3}, we get that
 \begin{equation*}
 \begin{split}
 &\lambda_\varepsilon p_\varepsilon \int_{B_{r_{j,\varepsilon}^{(\eta_1)}}(0)} G_{\varepsilon}(z_\varepsilon,y)  u_{j,\varepsilon}(y)^{p_\varepsilon-1} e^{u_{j,\varepsilon}^{p_\varepsilon}(y)} dy\\
 &= \left( \frac{1}{2\pi }\ln\frac{r_{i,\varepsilon}}{r_{j,\varepsilon}^{(\eta_2)}}+O(1)\right)\frac{8\pi}{\gamma_{j,\varepsilon}^{p_\varepsilon-1} p_\varepsilon}\Bigg(1+O\Bigg(\left[{\mu_{j,\varepsilon}}/{r_{j,\varepsilon}^{(\eta_1)}}\right]^2\Bigg)+O\Bigg(\frac{r_{j,\varepsilon}^{(\eta_1)}}{r_{j,\varepsilon}^{(\eta_2)}}+\frac{1}{\gamma_{j,\varepsilon}^{p_\varepsilon}}\Bigg) \Bigg)\\
 &=\frac{2}{p_\varepsilon \gamma_{j,\varepsilon}^{p_\varepsilon-1}}\ln \frac{r_{i,\varepsilon}^2}{ \left(r_{j,\varepsilon}^{(\eta_2)}\right)^2}+O\left(\gamma_{j,\varepsilon}^{1-p_\varepsilon} \right)
 \end{split}
 \end{equation*}
for all $\varepsilon\ll 1$, using the definition of $r_{j,\varepsilon}^{(\eta_1)}$, \eqref{RiEtaCompar3} and  \eqref{EncadrRjEta1} with \eqref{BarRITo03} for $l=j$. By plugging this last estimate with \eqref{AnLinTerm3}, \eqref{HighContrad3} and \eqref{HarmControl3} in \eqref{GreensLowerBd3}, since $(z_\varepsilon)_\varepsilon$ is arbitrary, this concludes the proof of \eqref{LowerEstBj3}. 

\smallskip

We now plug \eqref{EstBdryBri3} and \eqref{UpperEstimBj3} in \eqref{LowerEstBj3} and we get
$$\left(\frac{2}{p_\varepsilon}-1 \right)\gamma_{j,\varepsilon}(1+o(1))+\frac{2+o(1)}{p_\varepsilon \gamma_{i,\varepsilon}^{p_\varepsilon-1}}\left(\ln \frac{1}{r_{i,\varepsilon}^2}+\ln \frac{1}{\lambda_\varepsilon} \right)\le O\left(\gamma_{i,\varepsilon}^{1-p_\varepsilon} \ln \gamma_{i,\varepsilon} \right) $$
still using \eqref{HighContrad3}. However this estimate gives a contradiction for $\varepsilon\ll 1$, by \eqref{BdLambdaEps3} and \eqref{BarRITo03} for $l=i$ and \eqref{ContradCons3}: \eqref{PropGammaSim3} is proven.

\smallskip

\noindent \textbullet~ Then, using \eqref{OrderRI3} and \eqref{NoCluster3}, \eqref{PropGammaSim3} implies that for all $l\in J_i$
\begin{equation}\label{ComplObviousProp}
r_{i,\varepsilon}\le r_{l,\varepsilon}
\end{equation}
for all $\varepsilon\ll 1$. We now claim  that there exists $\eta_3\in (\eta,1)$ such that
\begin{equation}\label{ComplContradCons3}
\bar{r}_{j,\varepsilon}^{(\eta_3)}=r_{j,\varepsilon}
\end{equation}
for all $j\in J_i$ and all $\varepsilon\ll 1$. Coming back otherwise to \eqref{DefRiEpsEta3}-\eqref{BarRDef3}, up to a subsequence, we may assume  by contradiction that there exists $j\in J_i$ such that
$$\frac{2 t_{j,\varepsilon}(r_{j,\varepsilon})}{p_\varepsilon \gamma_{j,\varepsilon}^{p_\varepsilon}}\ge 1+o(1) $$
as $\varepsilon\to 0$. As a remark, we must have $j\neq i$ by \eqref{ContradCons3}. Then, for all given $\eta_2\in(0,1)$, \eqref{EncadrRjEta1} holds true and the argument between \eqref{EncadrRjEta1} and \eqref{PropGammaSim3} gives \eqref{HighContrad3}, which does not occur by \eqref{PropGammaSim3} and proves \eqref{ComplContradCons3}. For $j\in J_i$, since $$\phi_{i,\varepsilon}\circ\phi_{j,\varepsilon}^{-1}\left(\partial B_{r_{j,\varepsilon}/2}(0) \right)\subset \Omega_{i,\varepsilon}$$ by \eqref{DefRIEps3}, \eqref{LocEucl3}, \eqref{ComplObviousProp} and the definition of $\tau$, we get from the equality in \eqref{LocBd3}
$$\bar{u}_{j,\varepsilon}(r_{j,\varepsilon}/2)=\bar{u}_{i,\varepsilon}(\tau r_{i,\varepsilon})+O\left(\gamma_{i,\varepsilon}^{p_\varepsilon-1} \right)\,, $$
so that we eventually have
\begin{equation}\label{ComplPropGammaSim3}
\gamma_{i,\varepsilon}=O\left(\gamma_{j,\varepsilon} \right)\,,
\end{equation}
using the inequality in \eqref{LocBd3} and since $\bar{u}_{j,\varepsilon}(\bar{r}_{j,\varepsilon}^{(\eta_3)}/2)\le 2\gamma_{j,\varepsilon}$ by \eqref{IneqViEps3}, \eqref{CorSect23} and \eqref{ComplContradCons3}, for all $\varepsilon\ll 1$. 

\smallskip

\noindent \textbullet~We are now in position to conclude the proof of \eqref{FarEnoughEq3}. Setting $$\tilde{u}_{\varepsilon}:=\gamma_{i,\varepsilon}^{p_\varepsilon-1} \left(u_{i,\varepsilon}(r_{i,\varepsilon}\cdot)-\bar{u}_{i,\varepsilon}(r_{i,\varepsilon})\right)\,,$$
with an argument similar to the proof of \eqref{LocBd3} one deduces from \eqref{WeakGradEst} and \eqref{LocEucl3} that $(\tilde{u}_{\varepsilon})_\varepsilon$ is uniformly locally bounded in $\mathbb{R}^2\backslash \mathcal{S}_i$ for all $\varepsilon\ll 1$, where $\mathcal{S}_i$ is given below \eqref{LocEucl3}. Then, using \eqref{IneqViEps3} and \eqref{CorSect23} for $l=i$ with \eqref{ContradCons3}, we get from \eqref{ConVarphi3} and \eqref{EqUIEps3} that
$$\Delta \tilde{u}_\varepsilon=O\left(\gamma_{i,\varepsilon}^{p_\varepsilon} r_{i,\varepsilon}^2\right)+O\left(r_{i,\varepsilon}^2 \lambda_\varepsilon\left( \gamma_{i,\varepsilon}^{p_\varepsilon-1}  v_{i,\varepsilon}^{p_\varepsilon-1} e^{v_{i,\varepsilon}^{p_\varepsilon}}\right)(r_{i,\varepsilon\cdot})\right)=o(1) $$
uniformly locally in $\mathbb{R}^2\backslash \mathcal{S}_i$ for all $\varepsilon\ll 1$. To get the last estimate, we use \eqref{BarRITo03} for $l=i$ to control the first term, while we estimate the second one first by $O(\left(\mu_{i,\varepsilon}/r_{i,\varepsilon} \right)^{2(1-\tilde{\eta})} )$ (see Proposition \ref{PropRadAnalysis1}) and then we conclude with \eqref{RMuITo03}. Hence, there exists a harmonic function $\tilde{u}_0$ such that $\tilde{u}_\varepsilon\to \tilde{u}_0$ in $C^1_{loc}(\mathbb{R}^2\backslash \mathcal{S}_i)$ as $\varepsilon\to 0$. Now observe that \eqref{WeakGradEst} also gives the existence of $C>0$ such that
$$|\nabla \tilde{u}_0|\le C\sum_{x\in \mathcal{S}_i}\frac{1}{|x-\cdot|}\text{ in }\mathbb{R}^2\backslash \mathcal{S}_i\,, $$
using the local convergence of the $\tilde{u}_\varepsilon$'s in $\mathbb{R}^2\backslash \mathcal{S}_i$ and the lower estimate in \eqref{IneqViEps3} for $l=i$. Then, by harmonic function's theory, there exist real numbers $\alpha_x$ and $\Lambda$ such that
\begin{equation}\label{DefTildeU3}
\tilde{u}_0=\Lambda+\sum_{x\in \mathcal{S}_i} \alpha_x \ln \frac{1}{|x-\cdot|}\text{ in }\mathbb{R}^2\backslash \mathcal{S}_i\,. 
\end{equation}
However, by \eqref{ObviousProp} and \eqref{ComplObviousProp}, by \eqref{PropGammaSim3} and \eqref{ComplPropGammaSim3}, Proposition \ref{PropRadCompar2} gives that the $\alpha_x$ are positive and in particular \eqref{CondH2} gives that
$$\nabla\left(\tilde{u}_0-\alpha_x \ln \frac{1}{|x-\cdot|} \right)(x)=0  $$
for all $x\in \mathcal{S}_i$. Picking now $y$ an extreme point of the convex hull of $\mathcal{S}_i$, we get from \eqref{DefTildeU3} that this last property fails for $x=y$, since $\mathcal{S}_i$ possesses at least two points. This gives the expected contradiction to \eqref{ContradCons3} and concludes the proof of \eqref{FarEnoughEq3}.
\end{proof}
\noindent Step \ref{StFarEnough3} is proven.
\end{proof}

Up to a subsequence, we assume from now on that
\begin{equation}\label{ConvP04}
\lim_{\varepsilon\to 0} p_\varepsilon=p_0\,,
\end{equation}
for some $p_0\in [1,2]$. As a first consequence of Step \ref{StFarEnough3}, we improve \eqref{BdLambdaEps3} and  conclude the proof of \eqref{Quantization} and thus that of Theorem \ref{ThmBlowUpAnalysis} in the subcritical case. A key ingredient to get the sharp quantization \eqref{Quantization} (and not \eqref{BadQuantization} for $u_0\not \equiv 0$, for instance) is given by \eqref{MajorGlobal3} in Step \ref{StFarEnough3}: roughly speaking, the only way for the RHS of \eqref{MajorGlobal3} to be positive at some $r$ not too small is that \emph{$\lambda_\varepsilon$ is quite small} (see \eqref{PosIneq4} and \eqref{MajorGlobal3UsedHere} below).

\begin{Step}\label{StSubcriticalCase}
In any case, we have that 
\begin{equation}\label{Lim0Lambda4}
\lim_{\varepsilon\to 0}\lambda_\varepsilon=0\,.
\end{equation}
Moreover, assuming that $p_0\in [1,2)$, \eqref{Quantization} holds true for $k=N$ and $N$ given by Proposition \ref{PropWeakPwEst}.
\end{Step}

\begin{proof}[Proof of Step \ref{StSubcriticalCase}]
By evaluating \eqref{MajorGlobal3} at $r=\kappa \gamma_{i,\varepsilon}^{2(1-p_\varepsilon)/3}$, we get that
\begin{equation}\label{PosIneq4}
\left(1-\frac{p_\varepsilon}{2} \right)\gamma_{i,\varepsilon}^{p_\varepsilon}+\frac{2}{3}(p_\varepsilon-1)\ln \gamma_{i,\varepsilon}\le \ln \frac{1}{\lambda_\varepsilon}+O(1)
\end{equation}
for all $\varepsilon\ll 1$ and all $i\in \{1,...,N\}$, which clearly proves \eqref{Lim0Lambda4}. Now assume that $p_0<2$ in \eqref{ConvP04}. Up to renumbering, fix $i$ such that $\gamma_{i,\varepsilon}$ is the largest of the $\gamma_{j,\varepsilon}$'s for all $\varepsilon\ll 1$ and all $j$. Given any $\eta\in (0,1)$ to be chosen later, setting $r_{l,\varepsilon}^{(\eta)}$ as in \eqref{DefRiEpsEta3}, we know from \eqref{FarEnoughEq3} that $\bar{r}_{l,\varepsilon}^{(\eta)}={r}_{l,\varepsilon}^{(\eta)}$ for all $\varepsilon\ll 1$ and all $l$. Then, we get from \eqref{IneqViEps3} and \eqref{CorSect23} (see also Proposition \ref{PropRadAnalysis1}) that
\begin{equation}\label{CC4}
\begin{split}
&\int_{B_{{r}_{l,\varepsilon}^{(\eta)}}(0)}  \frac{\lambda_\varepsilon p_\varepsilon^2}{2} u_{l,\varepsilon}^{p_\varepsilon} e^{u_{l,\varepsilon}^{p_\varepsilon}}~ e^{2 \varphi_{l,\varepsilon}}dx= \left(4\pi+o(1)\right)\gamma_{l,\varepsilon}^{2-p_\varepsilon}\,,\text{ that}\\
&\int_{B_{{r}_{l,\varepsilon}^{(\eta)}}(0)}  \frac{\lambda_\varepsilon p_\varepsilon^2}{2} e^{u_{l,\varepsilon}^{p_\varepsilon}}~ e^{2 \varphi_{l,\varepsilon}}dx= \frac{4\pi+o(1)}{\gamma_{l,\varepsilon}^{2(p_\varepsilon-1)}}
\end{split}
\end{equation}
 and that 
\begin{equation}\label{ThisLastPoint} 
\begin{split}
 u_{l,\varepsilon} ~&~=(1-\eta) \gamma_{l,\varepsilon}+O\left(\gamma_{l,\varepsilon}^{1-p_\varepsilon} \right)\,,\\
 ~&~=-\left(\frac{2}{p_\varepsilon}-1 \right)\gamma_{l,\varepsilon}+\frac{2}{p_\varepsilon \gamma_{l,\varepsilon}^{p_\varepsilon-1}}\ln \frac{1}{\lambda_\varepsilon (r_{l,\varepsilon}^{(\eta)})^2}+O(1)
 \end{split}
\end{equation}
 uniformly in $\partial B_{{r}_{l,\varepsilon}^{(\eta)}}(0)$ for all $\varepsilon\ll 1$ and for all $l$. The second equality uses also \eqref{LowPOVBubble1} with $\gamma=\gamma_{l,\varepsilon}$ and $p_\gamma=p_\varepsilon$. Up to a subsequence, by comparing the two RHS of \eqref{ThisLastPoint}, by using $p_0<2$, $r_{l,\varepsilon}^{(\eta)}\le \kappa$ and that $\eta$ (moving only here) may be arbitrarily close to $1$, we may complement \eqref{PosIneq4} here and get that 
 \begin{equation}\label{EstiLambda}
 \left(1-\frac{p_0}{2} \right)\gamma_{l,\varepsilon}^{p_\varepsilon}(1+o(1))=\ln \frac{1}{\lambda_\varepsilon}
 \end{equation}
for all $l$ as $\varepsilon\to 0$, so that we have in particular
\begin{equation}\label{GapFixed}
\gamma_{l,\varepsilon}=(1+o(1)) \gamma_{i,\varepsilon}
\end{equation}
for all $l$. Given any $\check{\eta}\in(0,\eta)$, we claim that the first equality in \eqref{ThisLastPoint} implies that 
 \begin{equation}\label{Star4}
 u_\varepsilon\le \left(1-\check{\eta} \right) \gamma_{i,\varepsilon} \text{ in }\Omega_\varepsilon:=\Sigma\backslash \cup_{l=1}^{N} \phi_{l,\varepsilon}^{-1}\left(B_{{r}_{l,\varepsilon}^{(\eta)}}(0) \right)
 \end{equation}
 for all $\varepsilon\ll 1$. Otherwise, as when proving Proposition \ref{PropWeakPwEst}, if $x_\varepsilon\in \Omega_\varepsilon$ satisfies $u_\varepsilon(x_\varepsilon)=\max_{\Omega_\varepsilon} u_\varepsilon$, then $x_\varepsilon$ is a good candidate to be another concentration point for $u_\varepsilon$: we get that $\mu_{l,\varepsilon}=o\left(d_g(x_{l,\varepsilon}, x_\varepsilon) \right)$ for all $l$ by \eqref{Convloc3} and that $\min_{l\in\{1,...,N\}} u_\varepsilon^{p_\varepsilon-1}(x_\varepsilon) d_g(x_{l,\varepsilon},x_\varepsilon)^2|(\Delta_g u_\varepsilon)(x_\varepsilon)|\to +\infty$ as $\varepsilon\to 0$, which contradicts \eqref{WeakPointwEst} and establishes \eqref{Star4}. Independently, \eqref{EstiLambda} gives
$$\lambda_\varepsilon \int_{\Omega_\varepsilon}  \left(1+u_\varepsilon^{p_\varepsilon}\right) e^{u_\varepsilon^{p_\varepsilon}} dv_g=O\left(\exp\left(\left((1-\check{\eta})^{p_0}-\left(1-\frac{p_0}{2} \right) \right)\gamma_{i,\varepsilon}^{p_\varepsilon}+o(\gamma_{i,\varepsilon}^{p_\varepsilon}) \right) \right) $$
for all $\varepsilon\ll 1$. Choosing $0<\check{\eta}<\eta<1$ sufficiently close to $1$ from the beginning (depending on the smallness of $2-p_0>0$ here), we may plug  this estimate and \eqref{CC4} in \eqref{BetaEps} to conclude the proof of \eqref{Quantization}, using also \eqref{Conjugate} and \eqref{GapFixed}. 
\end{proof}

In contrast to the case $p_0=2$ handled below (see also \cite{DruetDuke}), it is interesting to note that, due to the global nature of both integrals in \eqref{BetaEps}, we need also \eqref{GapFixed} to get the quantization \eqref{Quantization}, at least for $k>1$ and $1<p_0<2$ in \eqref{ConvP04}. At that stage, we are left with the proof of \eqref{Quantization} in the more delicate borderline case $p_0=2$. \emph{We assume from now on that $p_0=2$ in \eqref{ConvP04}.}

\begin{proof}[Conclusion of the proof of Theorem \ref{ThmBlowUpAnalysis}] 
We still use the notation and observations of \eqref{DefRiEpsEta3}-\eqref{BarRDef3} and below. On the other hand, by \eqref{FarEnoughEq3} in Step \ref{StFarEnough3}, for all given $\eta\in (0,1)$, we have that
\begin{equation}\label{PrelRem4}
r_{l,\varepsilon}^{(\eta)}=o(r_{l,\varepsilon})\quad \implies \quad \bar{r}_{l,\varepsilon}^{(\eta)}=r_{l,\varepsilon}^{(\eta)}
\end{equation}
for all $\varepsilon\ll 1$ and all $l\in \{1,...,N\}$. Then, as a consequence of Propositions \ref{PropRadAnalysis1} and \ref{PropRadCompar2}, we get that \eqref{BarRITo03}-\eqref{GradCompl3} hold true. In particular, for all given $\eta'<\eta$ in $(0,1)$, we get from \eqref{GradCompl3} that
\begin{equation}\label{GradEstSlightImprov4}
|\nabla (u_{l,\varepsilon}-v_{l,\varepsilon})|=o\left(\frac{1}{\gamma_{l,\varepsilon}^{p_\varepsilon-1} r_{l,\varepsilon}^{(\eta')}} \right)
\end{equation}
uniformly in $B_{r_{l,\varepsilon}^{(\eta')}}(0)$ for all $\varepsilon\ll 1$ and all $l$. Then, for all given $\eta'\in (0,1)$, since we also have
$$0\le v_{l,\varepsilon}-v_{l,\varepsilon}\left(r_{l,\varepsilon}^{(\eta')} \right)\le \frac{2+o(1)}{\gamma_{l,\varepsilon}^{p_\varepsilon-1}} \ln \frac{r_{l,\varepsilon}^{(\eta')}}{|\cdot|}\,, $$
using the estimate in $w'_\gamma$ in Proposition \ref{PropRadAnalysis1}, we eventually get that
\begin{equation}\label{MajorGrad4}
\left|u_{l,\varepsilon}-\overline{u}_{l,\varepsilon}\left(r_{l,\varepsilon}^{(\eta')} \right)\right|\le \frac{2+o(1)}{\gamma_{l,\varepsilon}^{p_\varepsilon-1}} \ln \frac{2r_{l,\varepsilon}^{(\eta')}}{|\cdot|}
\end{equation}
uniformly in $B_{r_{l,\varepsilon}^{(\eta')}}(0)\backslash \{0\}$ for all $\varepsilon\ll 1$ and all $l$. During the whole proof below, we choose and fix $\eta_0\in (0,1)$ and set
\begin{equation}\label{DefNuJEps4}
\begin{split}
&\nu_{j,\varepsilon}=\\
&\sup\left\{ r\in \left(r_{j,\varepsilon}^{(\eta_0)},\kappa \right]\text{ s.t. }
\begin{cases}
& \left|u_{j,\varepsilon}-\bar{u}_{j,\varepsilon}(r) \right|\\
&<5\Big(\pi C_2\bar{u}_{j,\varepsilon}(r)^{1-p_\varepsilon}\\
&\quad \quad\quad +2 \sum_{l\in I_{j,\varepsilon}(r)} \gamma_{l,\varepsilon}^{1-p_\varepsilon}\ln \frac{ 6 r}{|\cdot-\phi_{j,\varepsilon}(x_{l,\varepsilon})|} \Big)\\
&\text{in }B_r(0)\backslash \cup_{l\in I_{j,\varepsilon}(r)} B_{r_{l,\varepsilon}^{(\eta_0)}}\left(\phi_{j,\varepsilon}(x_{l,\varepsilon}) \right)
\end{cases}
 \right\}
 \end{split}
\end{equation}
for all $j\in \{1,...,N\}$ and all $\varepsilon\ll 1$, where $C_2>0$ is as in \eqref{WeakGradEst} and where $I_{j,\varepsilon}(r)$ is given by
$$I_{j,\varepsilon}(r)=\left\{l\in \{1,...,N\}\text{ s.t. }\phi_{j,\varepsilon}(x_{l,\varepsilon})\in B_{\frac{3 r}{2} }(0) \right\}\,. $$
As a first remark, it follows from the very definition \eqref{DefNuJEps4} of $\nu_{j,\varepsilon}$ and from \eqref{MajorGrad4} that, for all given $\eta_2\in [\eta_0,1)$, we have 
\begin{equation}\label{IneqNu4}
\nu_{l,\varepsilon}\ge r_{l,\varepsilon}^{(\eta_2)}
\end{equation}
for all $\varepsilon\ll 1$ and all $l\in \{1,...,N\}$. Our main goal now is to show that
\begin{equation}\label{BdNuJ4}
\bar{u}_{j,\varepsilon}(\nu_{j,\varepsilon})=O(1)
\end{equation}
for all $\varepsilon\ll 1$ and all $j\in \{1,...,N\}$. For all $j$, we may assume up to a subsequence that either \eqref{BdNuJ4} or 
\begin{equation}\label{ContradBdNu4}
\lim_{\varepsilon\to 0}\bar{u}_{j,\varepsilon}(\nu_{j,\varepsilon})= +\infty
\end{equation}
hold true. Assume from now on by contradiction that \eqref{BdNuJ4} does not hold true for all $j$ so that we may choose and fix $i\in \{1,...,N\}$ such that
\begin{equation}\label{NuIEpsDef4}
\nu_{i,\varepsilon}=\min \left\{\nu_{j,\varepsilon}\text{ s.t. \eqref{ContradBdNu4} holds true} \right\}\,.
\end{equation}
Clearly, we then have
\begin{equation}\label{LimBarU4}
\lim_{\varepsilon\to 0} \bar{u}_{i,\varepsilon}(\nu_{i,\varepsilon})=+\infty\,.
\end{equation}
By \eqref{LocConv3}, we also have that
\begin{equation}\label{NuITo0}
\lim_{\varepsilon \to 0}\nu_{i,\varepsilon}=0\,,
\end{equation}
so that, using \eqref{ConVarphi3}, the following property currently used in the sequel holds true:
\begin{equation}\label{LocEucl4}
\tilde{g}_{l,\varepsilon}:=\left(\left(\phi_{l,\varepsilon}\right)_\star g \right)(\nu_{i,\varepsilon \cdot})\to \xi\text{ in }C^2_{loc}(\mathbb{R}^2)
\end{equation}
as $\varepsilon\to 0$, for all $l\in I$, where 
$$I:=\left\{l\in \{1,...,N\}\text{ s.t. }d_g(x_{i,\varepsilon}, x_{l,\varepsilon})=O\left(\nu_{i,\varepsilon} \right)\text{ for all }\varepsilon\ll 1 \right\}\,. $$
Up to a further subsequence, we may also assume that
$$\lim_{\varepsilon\to 0} \frac{\phi_{i,\varepsilon}(x_{l,\varepsilon})}{\nu_{i,\varepsilon}}=\tilde{x}_l\in \mathbb{R}^2 $$
for all $l\in I$. Set also $\mathcal{S}=\{\tilde{x}_l~|~ l\in I \}$ so that clearly $0\in \mathcal{S}$. Fix $\tau\in (0,1)$ and $R\ge 1$ to be chosen properly later on such that
$$3 \tau<
\begin{cases}
&1\text{ if }\mathcal{S}=\{0\}\,,\\
&\min_{\{(x,y)\in \mathcal{S}^2|x\neq y \}}|x-y|\text{ otherwise}\,,
\end{cases} $$
and such that $\mathcal{S}\subset B_{3 R}(0)$. Set $D_\varepsilon=B_{R \nu_{i,\varepsilon}}(0)\backslash \cup_{l\in I} B_{\tau \nu_{i,\varepsilon}/3}(\phi_{i,\varepsilon}(x_{l,\varepsilon}))$ for all $\varepsilon\ll 1$. Let now $\tilde{w}_\varepsilon$ be given by 
\begin{equation}\label{TildeW4}
\begin{cases}
&\Delta \tilde{w}_\varepsilon=-e^{2 \varphi_{i,\varepsilon}} {h_{i,\varepsilon}} u_{i,\varepsilon}\text{ in }B_{R\nu_{i,\varepsilon}}(0)\,,\\
&\tilde{w}_\varepsilon=0\text{ on }\partial B_{R\nu_{i,\varepsilon}}(0)\,,
\end{cases}
\end{equation}
for all $\varepsilon$. Observe first by \eqref{EqUIEps3}
that $\Delta(u_{i,\varepsilon}-\tilde{w}_\varepsilon)\ge 0$ in $B_{R\nu_{i,\varepsilon}}(0)$ so that $\overline{u_{i,\varepsilon}-\tilde{w}_\varepsilon}$ is radially nonincreasing in $[0,R\nu_{i,\varepsilon}]$. Moreover, the maximum principle gives that $u_{i,\varepsilon}-\tilde{w}_\varepsilon$ attains its infimum in $B_{R\nu_{i,\varepsilon}}(0)$ at some point on $\partial B_{R\nu_{i,\varepsilon}}(0)$. Independently, for all given $p>2$, by elliptic theory, we get from Lemma \ref{CorPropWeakPwEst} and \eqref{ConVarphi3} that
\begin{equation}\label{LpBound4}
\|\tilde{w}_\varepsilon(\nu_{i,\varepsilon}\cdot)\|_{L^\infty(B_R(0))}=O\left(\|\Delta \left(\tilde{w}_\varepsilon (\nu_{i,\varepsilon} \cdot) \right)\|_{L^p(B_R(0))} \right)=O\left(\nu_{i,\varepsilon}^{\frac{2(p-1)}{p}} \right) 
\end{equation}
for all $\varepsilon\ll 1$. Summarizing, by \eqref{NuITo0} and since $\tau<1$, this argument for $R=1$ (only there) gives that $\bar{u}_{i,\varepsilon}(\tau \nu_{i,\varepsilon})\ge \bar{u}_{i,\varepsilon}(\nu_{i,\varepsilon})+o(1)$, so that \eqref{LimBarU4} leads to
\begin{equation}\label{GammaDef4}
\Gamma_\varepsilon:=\bar{u}_{i,\varepsilon}(\tau \nu_{i,\varepsilon})\to +\infty
\end{equation}
 as $\varepsilon\to 0$. Then, as a consequence of \eqref{WeakGradEst} (and \eqref{LocEucl4} again), we get that
 \begin{equation}\label{UnifEquiv4}
 u_{i,\varepsilon}=\Gamma_\varepsilon+O\left(\Gamma_\varepsilon^{1-p_\varepsilon} \right)
 \end{equation}
uniformly in $D_\varepsilon$ and for all $\varepsilon\ll 1$, using once more the mean value property on $\partial B_{\tau \nu_{i,\varepsilon}}(0)$ and the definition of $\tau$. Then, by the maximum principle-based argument below \eqref{TildeW4}, with \eqref{NuITo0} and \eqref{LpBound4}, we get that
\begin{equation}\label{MinorGlob4}
\inf_{B_{R\nu_{i,\varepsilon}}(0)} u_{i,\varepsilon}\ge \min_{\partial B_{R \nu_{i,\varepsilon}}(0)} u_{i,\varepsilon}+o(1)=\Gamma_\varepsilon+o(1)
\end{equation}
as $\varepsilon\to 0$. 

\smallskip

We prove now that
\begin{equation}\label{RatioGamma4}
\Gamma_\varepsilon=o(\gamma_{j,\varepsilon})
\end{equation}
as $\varepsilon\to 0$, for all $j\in I$, up to a subsequence. Consider first the case $j=i$ in \eqref{RatioGamma4}. For all given $\eta_2\in [\eta_0,1)$, we have that
\begin{equation}\label{InterMP4}
\bar{u}_{j,\varepsilon}\left(r_{j,\varepsilon}^{(\eta_2)}\right)=(1-\eta_2) \gamma_{j,\varepsilon}(1+o(1))\ge \Gamma_\varepsilon(1+o(1)) 
\end{equation}
for all $\varepsilon\ll 1$. The first equality comes from the definition \eqref{DefRiEpsEta3} of $r_{i,\varepsilon}^{(\eta_2)}$, from \eqref{PrelRem4}, from the equality in \eqref{IneqViEps3} and from \eqref{CorSect23} for $l=i$, while the inequality comes from \eqref{IneqNu4}, \eqref{MinorGlob4} and the above largeness assumption $\mathcal{S}\subset B_{3R}(0)$ on $R\gg 1$. Observe that \eqref{IneqNu4} implies that \eqref{IntermObservation4} below holds true for $t=i$. Since $\eta_2$ may be arbitrarily close to $1$, \eqref{InterMP4} concludes the proof of \eqref{RatioGamma4}
 for $j=i$. If now $I\neq \{i\}$, we may pick $j\in I\backslash\{i\}$ and we get from the very definition of $I$ with \eqref{DefRIEps3} and \eqref{LocEucl4} again that $r_{j,\varepsilon}=O\left(\nu_{i,\varepsilon} \right)$ for all $\varepsilon$. Then, also in the last present case $j\neq i$, using now  \eqref{PrelRem4} for $l=j$, we get \begin{equation}\label{IntermObservation4}
  \lim_{\varepsilon\to 0}\frac{r_{t,\varepsilon}^{(\eta_0)}}{\nu_{i,\varepsilon}}=0\,, 
 \end{equation}
  for all $t\in I$, and then similarly \eqref{InterMP4}, to conclude the proof of \eqref{RatioGamma4}.
  
\smallskip  
  
   At that stage, we may improve the estimate in \eqref{LpBound4}. As a consequence of \eqref{GammaDef4}, \eqref{UnifEquiv4} and Lemma \ref{CorPropWeakPwEst}, writing merely that $\|u_{i,\varepsilon}\|_{L^p(D_\varepsilon)}=O(1)$, we get that $\nu_{i,\varepsilon}^2 \Gamma_\varepsilon^p=O(1)$ for all $\varepsilon$, so that \eqref{LpBound4} gives 
 \begin{equation}\label{TildeWInter4}
 |\tilde{w}_\varepsilon|=O\left(\Gamma_\varepsilon^{1-p} \right)=o(\Gamma_\varepsilon^{1-p_\varepsilon})
 \end{equation}
uniformly in $D_\varepsilon$, for all $\varepsilon\ll 1$, since $p$ is fixed greater than $2$ just above \eqref{LpBound4}. Let $\zeta_\varepsilon$ be given by
$$\begin{cases}
&\Delta \zeta_\varepsilon=0\text{ in }B_{R\nu_{i,\varepsilon}}(0)\,,\\
&\zeta_\varepsilon=u_{i,\varepsilon}\text{ on }\partial B_{R\nu_{i,\varepsilon}}(0)
\end{cases} $$
for all $\varepsilon$. By keeping track of the constant $C_2$ of \eqref{WeakGradEst} and choosing $R\gg 1$ large enough (depending only on $\mathcal{S}$) from the beginning, using a mean value theorem on $\partial B_{R\nu_{i,\varepsilon}}(0)$, \eqref{LocEucl4} and \eqref{GammaDef4}, we may get a slightly more precise version of \eqref{UnifEquiv4} on $\partial B_{R \nu_{i,\varepsilon}}(0)$, namely we have that
\begin{equation}\label{ContrHarmPart4}
\sup_{B_{R\nu_{i,\varepsilon}}(0)}|\zeta_\varepsilon-\bar{u}_{i,\varepsilon}(R\nu_{i,\varepsilon})|\le \sup_{\partial B_{R\nu_{i,\varepsilon}}(0)} |u_{i,\varepsilon}-\bar{u}_{i,\varepsilon}(R\nu_{i,\varepsilon})|\le \frac{2\pi C_2}{\Gamma_\varepsilon^{p_\varepsilon-1}}
\end{equation}
for all $\varepsilon\ll 1$, using also the maximum principle. Observe in particular that $u_{i,\varepsilon}=(1+o(1))\Gamma_\varepsilon$ uniformly in $D_\varepsilon$. Let $\tilde{G}_\varepsilon$ be the Green's function of $\Delta$ in $B_{R\nu_{i,\varepsilon}}(0)$ with zero Dirichlet boundary condition. Let $(z_\varepsilon)_\varepsilon$ be any sequence of points such that 
\begin{equation}\label{CondZ4}
z_\varepsilon\in \overline{B_{R\nu_{i,\varepsilon}}(0))\backslash \cup_{l\in I}B_{r_{l,\varepsilon}^{(\eta_0)}}\left(\phi_{i,\varepsilon}(x_{l,\varepsilon}) \right)}
\end{equation}
 for all $\varepsilon$. We have that
 $$0<\tilde{G}_\varepsilon(z_\varepsilon,\cdot)\le \frac{1}{2\pi} \ln\frac{2R \nu_{i,\varepsilon}}{|z_\varepsilon-\cdot|} $$
 in $B_{R{\nu_{i,\varepsilon}}}(0)\backslash \{z_\varepsilon\}$ for all $\varepsilon\ll 1$. Thus, the Green's reprentation formula gives that
 \begin{equation}\label{GF4}
 \begin{split}
 &0\le \left(u_{i,\varepsilon}-\tilde{w}_\varepsilon-\zeta_\varepsilon\right)(z_\varepsilon)\\
 &\quad \quad \quad \quad \le \frac{\lambda_\varepsilon p_\varepsilon}{2\pi} \int_{B_{R\nu_{i,\varepsilon}}(0)} \ln \frac{2 R\nu_{i,\varepsilon}}{|z_\varepsilon-y|} \left(e^{2\varphi_{i,\varepsilon}} u_{i,\varepsilon}^{p_\varepsilon-1} e^{u_{i,\varepsilon}^{p_\varepsilon}}\right)(y)~ dy
 \end{split}
 \end{equation}
for all $\varepsilon$, using \eqref{EqUIEps3}. Using Step \ref{StFarEnough3} as above to employ Proposition \ref{PropRadAnalysis1} and \eqref{GradCompl3}, we have that for all $l\in \{1,...,N\}$
$$|\nabla u_{l,\varepsilon}|=O\left(\frac{1}{r_{l,\varepsilon}^{(\eta_0)} \gamma_{l,\varepsilon}^{p_\varepsilon-1}}  \right)\text{ uniformly in }B_{3 r_{l,\varepsilon}^{(\eta_0)}}\left(0 \right)\backslash B_{\frac{r_{l,\varepsilon}^{(\eta_0)}}{3}}\left( 0 \right) $$
so that, for all $j\in I$, we get as a byproduct of \eqref{DefNuJEps4} and \eqref{NuIEpsDef4} with $\tau<1$ that
$$\left|\bar{u}_{j,\varepsilon}(\tau \nu_{i,\varepsilon})-u_{j,\varepsilon} \right|=O\left(\bar{u}_{j,\varepsilon}(\tau \nu_{i,\varepsilon})^{1-p_\varepsilon} \right)+O\left(\sum_{l\in I_{j,\varepsilon}(\tau \nu_{i,\varepsilon})} \frac{1}{\gamma_{l,\varepsilon}^{p_\varepsilon-1}}\ln \frac{4\tau \nu_{i,\varepsilon}}{|\cdot-\phi_{j,\varepsilon}(x_{l,\varepsilon})|}\right) $$
uniformly in $B_{\tau \nu_{i,\varepsilon}}(0)\backslash \cup_{l\in I_{j,\varepsilon}(\tau \nu_{i,\varepsilon})} B_{2 r_{l,\varepsilon}^{(\eta_0)}/5}\left(\phi_{j,\varepsilon}(x_{l,\varepsilon}) \right)$, and then we eventually obtain with \eqref{UnifEquiv4} and our definition of $\tau$ that
\begin{equation}\label{InterSuite4}
|\Gamma_\varepsilon-u_{i,\varepsilon}|=O\left(\Gamma_\varepsilon^{1-p_\varepsilon}\right)+O\left(\sum_{l\in I_{j,\varepsilon}(\tau \nu_{i,\varepsilon})} \frac{1}{\gamma_{l,\varepsilon}^{p_\varepsilon-1}}\ln \frac{4\tau \nu_{i,\varepsilon}}{|\cdot-\phi_{i,\varepsilon}(x_{l,\varepsilon})|} \right)
\end{equation}
uniformly in $D_{j,\varepsilon}:=B_{\tau \nu_{i,\varepsilon}/2}(\phi_{i,\varepsilon}(x_{j,\varepsilon}))\backslash \cup_{l\in I_{j,\varepsilon}(\tau \nu_{i,\varepsilon})} B_{ r_{l,\varepsilon}^{(\eta_0)}/2 }\left(\phi_{i,\varepsilon}(x_{l,\varepsilon})\right)$ for all $\varepsilon$, still using \eqref{LocEucl4}. Independently, using that $|z_\varepsilon-\phi_{i,\varepsilon}(x_{l,\varepsilon})|\ge r_{l,\varepsilon}^{(\eta_0)}$, we have 
\begin{equation}\label{SplittingGreen4}
\ln \frac{2 R \nu_{i,\varepsilon}}{|z_\varepsilon-\cdot|}=\ln \frac{2 R \nu_{i,\varepsilon}}{|z_\varepsilon-\phi_{i,\varepsilon}(x_{l,\varepsilon})|}+O\left(\frac{r_{l,\varepsilon}^{(\eta_0)}}{r_{l,\varepsilon}^{(\eta_0)}+|z_\varepsilon-\phi_{i,\varepsilon}(x_{l,\varepsilon})|}\right)
\end{equation}
uniformly in $B_{r_{l,\varepsilon}^{(\eta_0)}/2 }\left(\phi_{i,\varepsilon}(x_{l,\varepsilon})\right)$ and for all $\varepsilon\ll 1$. By \eqref{GradEstSlightImprov4} for some given $\eta'\in (\eta_0,1)$ and since $u_{l,\varepsilon}(0)=v_{l,\varepsilon}(0)$, we get  $u_{l,\varepsilon}-v_{l,\varepsilon}=o(\gamma_{l,\varepsilon}^{1-p_\varepsilon})$, so  we eventually get for all given $\tilde{\eta}\in (\eta_0,1)$ that
\begin{equation}\label{UnifNonlin4}
\lambda_\varepsilon p_\varepsilon u_{l,\varepsilon}^{p_\varepsilon-1} e^{u_{l,\varepsilon}^{p_\varepsilon}}=\frac{8 e^{-2 t_{l,\varepsilon}}(1+o(e^{\tilde{\eta} t_{l,\varepsilon}}))}{\mu_{l,\varepsilon}^2 \gamma_{l,\varepsilon}^{p_\varepsilon-1} p_\varepsilon}
\end{equation} 
uniformly in $B_{r_{l,\varepsilon}^{(\eta_0)}}(0)$ and for all $\varepsilon\ll 1$, still  applying Proposition \ref{PropRadAnalysis1}. Then, using also \eqref{ConVarphi3} and \eqref{LocEucl4}, we get from \eqref{SplittingGreen4} and \eqref{UnifNonlin4} that
\begin{equation}\label{SingSource4}
\begin{split}
&\frac{\lambda_\varepsilon p_\varepsilon}{2\pi} \int_{B_{r_{l,\varepsilon}^{(\eta_0)}/2}\left(\phi_{i,\varepsilon}(x_{l,\varepsilon}) \right)} \ln \frac{2 R\nu_{i,\varepsilon}}{|z_\varepsilon-y|} \left(e^{2\varphi_{i,\varepsilon}} u_{i,\varepsilon}^{p_\varepsilon-1} e^{u_{i,\varepsilon}^{p_\varepsilon}}\right)(y)~ dy\\
&= \frac{(4+o(1))}{p_\varepsilon \gamma_{l,\varepsilon}^{p_\varepsilon-1}}\ln \frac{ \nu_{i,\varepsilon}}{|z_\varepsilon-\phi_{i,\varepsilon}(x_{l,\varepsilon})|}+O\left(\frac{1}{\gamma_{l,\varepsilon}^{p_\varepsilon-1}} \right)\,, 
\end{split}
\end{equation}
 as $\varepsilon\to 0$ and for all $l\in I$. Using the basic inequalities 
 $$|(1+t)^p-1|\le C\left(|t|+|t|^p \right) $$
 for all $t>-1$, and  
 $$\left(\sum_{t=1}^N a_t\right)^p\le C\sum_{t=1}^N a_t^p$$
for all $a_t\ge 0$ and all $p\in [1,2]$, we get first from \eqref{InterSuite4} that
\begin{equation}\label{ComputPower4}
\begin{split}
&u_{i,\varepsilon}^{p_\varepsilon}=\Gamma_\varepsilon^{p_\varepsilon}+O(1)+O\left(\sum_{l\in I_{j,\varepsilon}(\tau \nu_{i,\varepsilon})} \left(\frac{1}{\gamma_{l,\varepsilon}^{p_\varepsilon-1}}\ln \frac{4\tau \nu_{i,\varepsilon}}{|\cdot-\phi_{i,\varepsilon}(x_{l,\varepsilon})|}\right)^{p_\varepsilon}  \right)\\
&\quad \quad +O\left(\sum_{l\in I_{j,\varepsilon}(\tau \nu_{i,\varepsilon})} \left(\frac{\Gamma_\varepsilon}{\gamma_{l,\varepsilon}}\right)^{p_\varepsilon-1}\ln \frac{4\tau \nu_{i,\varepsilon}}{|\cdot-\phi_{i,\varepsilon}(x_{l,\varepsilon})|}  \right)
\end{split}
\end{equation}
uniformly in $D_{j,\varepsilon}$ and for all $\varepsilon$. Independently, we get from \eqref{BdLambdaEps3}, \eqref{Mui3}, \eqref{DefRiEpsEta3} and \eqref{PrelRem4} that
\begin{equation}\label{BasicEst4}
\begin{split}
\ln \frac{1}{\left(r_{l,\varepsilon}^{(\eta_0)}\right)^2} &=-t_{l,\varepsilon}(r_{l,\varepsilon}^{(\eta_0)})+o(1)+\ln \frac{1}{\mu_{l,\varepsilon}^2}\,,\\
&\le \left(-\frac{p_0 \eta_0}{2}+1+o(1)\right)\gamma_{l,\varepsilon}^{p_\varepsilon}\,,
\end{split}
\end{equation}
as $\varepsilon\to 0$ and for all $l$. Recall that we are now assuming that $p_0=2$ in \eqref{ConvP04}. Then, we may get from \eqref{NuITo0}, \eqref{IntermObservation4} and \eqref{BasicEst4} that
\begin{equation}\label{BasicEst24}
\begin{split}
&\left( \frac{1}{\gamma_{l,\varepsilon}^{p_\varepsilon-1}}\ln \frac{4\tau \nu_{i,\varepsilon}}{|\cdot-\phi_{i,\varepsilon}(x_{l,\varepsilon})|^2}\right)^{p_\varepsilon}\\
&= \left(\frac{1}{\gamma_{l,\varepsilon}^{p_\varepsilon}}\ln \frac{4\tau \nu_{i,\varepsilon}}{|\cdot-\phi_{i,\varepsilon}(x_{l,\varepsilon})|^2} \right)^{p_\varepsilon-1} \ln \frac{4\tau \nu_{i,\varepsilon}}{|\cdot-\phi_{i,\varepsilon}(x_{l,\varepsilon})|^2}\,,\\
&\le C(1-\eta_0+o(1))\ln \frac{4\tau \nu_{i,\varepsilon}}{|\cdot-\phi_{i,\varepsilon}(x_{l,\varepsilon})|^2} 
\end{split}
\end{equation}
uniformly in $D_{j,\varepsilon}$ as $\varepsilon\to 0$ and for all $l\in I$. Choose now $j_1,...,j_{|\mathcal{S}|}$ in $I$ such that $\{\tilde{x}_{j_1},...,\tilde{x}_{j_{|\mathcal{S}|}}\}=\mathcal{S}$. We compute and then get from \eqref{ComputPower4}-\eqref{BasicEst24} and from \eqref{RatioGamma4} that
\begin{equation}\label{ComplSingSource4}
\begin{split}
&\frac{\lambda_\varepsilon p_\varepsilon}{2\pi} \int_{D_{j_t,\varepsilon}} \ln \frac{2 R\nu_{i,\varepsilon}}{|z_\varepsilon-y|} \left(e^{2\varphi_{i,\varepsilon}} u_{i,\varepsilon}^{p_\varepsilon-1} e^{u_{i,\varepsilon}^{p_\varepsilon}}\right)(y)~ dy\\
&=O\Bigg( \lambda_\varepsilon \Gamma_\varepsilon^{p_\varepsilon-1}\exp\left(\Gamma_\varepsilon^{p_\varepsilon} \right)\times\\
&\quad \sum_{l\in I_{j_t,\varepsilon}(\tau \nu_{i,\varepsilon})}\int_{B_{\frac{\tau \nu_{i,\varepsilon}}{2}}\left(\phi_{i,\varepsilon}(x_{j_t,\varepsilon})\right)} \ln \frac{2 R\nu_{i,\varepsilon}}{|z_\varepsilon-y|} \left(\frac{4\tau \nu_{i,\varepsilon}}{|y-\phi_{i,\varepsilon}(x_{l,\varepsilon})|^2}\right)^{1-\frac{\eta_0}{2}} dy\Bigg)\,,\\
&=O\left(\lambda_\varepsilon \Gamma_\varepsilon^{p_\varepsilon-1} \exp\left(\Gamma_\varepsilon^{p_\varepsilon} \right) \nu_{i,\varepsilon}^2 \right)\,,
\end{split}
\end{equation}
for all $t\in \{1,...,|\mathcal{S}|\}$ and all $\varepsilon\ll 1$, using that $\eta_0>0$ to get the last estimate. At last, it readily follows from \eqref{UnifEquiv4} that
\begin{equation}\label{LastComplSingSource4}
\begin{split}
&\frac{\lambda_\varepsilon p_\varepsilon}{2\pi} \int_{D_{0,\varepsilon}} \ln \frac{2 R\nu_{i,\varepsilon}}{|z_\varepsilon-y|} \left(e^{2\varphi_{i,\varepsilon}} u_{i,\varepsilon}^{p_\varepsilon-1} e^{u_{i,\varepsilon}^{p_\varepsilon}}\right)(y)~ dy\\
&\quad \quad\quad\quad\quad\quad\quad\quad\quad\quad\quad\quad =O\left(\lambda_\varepsilon \Gamma_\varepsilon^{p_\varepsilon-1} \exp\left(\Gamma_\varepsilon^{p_\varepsilon} \right) \nu_{i,\varepsilon}^2 \right)
\end{split}
\end{equation}
for all $\varepsilon\ll 1$, where $$D_{0,\varepsilon}=B_{R\nu_{i,\varepsilon}}(0)\backslash \cup_{t=1}^{|\mathcal{S}|} B_{{\tau \nu_{i,\varepsilon}}/{2}}\left(\phi_{i,\varepsilon}(x_{j_t,\varepsilon})\right)\,.$$
Summarizing, by plugging \eqref{TildeWInter4}, \eqref{ContrHarmPart4}, \eqref{SingSource4}, \eqref{ComplSingSource4} and \eqref{LastComplSingSource4} in \eqref{GF4}, we get that
\begin{equation}\label{BeforeLastStep4}
\begin{split}
&|u_{i,\varepsilon}(z_\varepsilon)-\bar{u}_{i,\varepsilon}(R\nu_{i,\varepsilon})|\\
&\quad\le 2\pi C_2 \Gamma_\varepsilon^{1-p_\varepsilon}+\sum_{l\in I} \frac{2+o(1)}{p_\varepsilon \gamma_{l,\varepsilon}^{p_\varepsilon-1}} \left(2\ln \frac{ 4 \tau\nu_{i,\varepsilon}}{|z_\varepsilon-\phi_{i,\varepsilon}(x_{l,\varepsilon})|} +O(1)\right)\\
&\quad\quad \quad+O\left(\lambda_\varepsilon \Gamma_\varepsilon^{p_\varepsilon-1} \exp\left(\Gamma_\varepsilon^{p_\varepsilon} \right) \nu_{i,\varepsilon}^2 \right)
\end{split}
\end{equation}
for all $\varepsilon$, given $(z_\varepsilon)_\varepsilon$ as in \eqref{CondZ4}. By the estimate $\nu_{i,\varepsilon}^2 \Gamma_\varepsilon^p=O(1)$ just above \eqref{TildeWInter4} for $p>4/3$, we get that $\nu_{i,\varepsilon}^{3/2}=o(\Gamma_\varepsilon^{1-p_\varepsilon})$. Then, evaluating \eqref{MajorGlobal3} at $\tau \nu_{i,\varepsilon}$ and by \eqref{GammaDef4}, we get that
\begin{equation}\label{MajorGlobal3UsedHere}
\Gamma_\varepsilon\le\frac{2}{p_\varepsilon \gamma_{i,\varepsilon}^{p_\varepsilon-1}}\left(\ln \frac{1}{\lambda_\varepsilon \gamma_{i,\varepsilon}^{2(p_\varepsilon-1)} \nu_{i,\varepsilon}^2}+O(1) \right)+o\left(\Gamma_\varepsilon^{1-p_\varepsilon} \right)\,, 
\end{equation}
then with \eqref{RatioGamma4} that
$$\exp\left(\Gamma_\varepsilon^{p_\varepsilon}\right)\le\exp\left( \frac{2 \Gamma_\varepsilon^{p_\varepsilon-1}}{p_\varepsilon\gamma_{i,\varepsilon}^{p_\varepsilon-1}} \ln \frac{1}{\lambda_\varepsilon \gamma_{i,\varepsilon}^{2(p_\varepsilon-1}) \nu_{i,\varepsilon}^2}+o(1) \right) \,, $$
that
$${\lambda_\varepsilon \gamma_{i,\varepsilon}^{2(p_\varepsilon-1)} \nu_{i,\varepsilon}^2}\le \exp\left(-\frac{p_\varepsilon}{2}\Gamma_\varepsilon(1+o(1)) \gamma_{i,\varepsilon}^{p_\varepsilon-1} \right) $$
and eventually that
\begin{equation}\label{PartialCC4}
\lambda_\varepsilon \Gamma_\varepsilon^{p_\varepsilon-1} \nu_{i,\varepsilon}^2 \exp\left(\Gamma_\varepsilon^{p_\varepsilon} \right)=o(\Gamma_\varepsilon^{1-p_\varepsilon}) 
\end{equation}
for all $\varepsilon\ll 1$. By \eqref{MajorGrad4} and \eqref{BeforeLastStep4} with \eqref{PartialCC4}, we get that
$$|u_{i,\varepsilon}-\bar{u}_{i,\varepsilon}(R\nu_{i,\varepsilon})|\le (2\pi C_2+o(1)) \Gamma_{\varepsilon}^{1-p_\varepsilon}+O\left(\sum_{l\in I}\frac{1}{\gamma_{l,\varepsilon}} \ln \frac{3R \nu_{i,\varepsilon}}{|\cdot-\phi_{i,\varepsilon}(x_{l,\varepsilon})|}\right)$$
uniformly in $B_{R\nu_{i,\varepsilon}}(0)\backslash\{\phi_{i,\varepsilon}(x_{j_1,\varepsilon}),...,\phi_{i,\varepsilon}(x_{j_{|\mathcal{S}|},\varepsilon})\}$. In particular, using \eqref{RatioGamma4} again, we get  
\begin{equation}\label{Average4}
|\bar{u}_{i,\varepsilon}(\nu_{i,\varepsilon})-\bar{u}_{i,\varepsilon}(R\nu_{i,\varepsilon})|\le (2\pi C_2+o(1)) \Gamma_{\varepsilon}^{1-p_\varepsilon} 
\end{equation}
as $\varepsilon\to 0$. Then, $p_0=2$, \eqref{RatioGamma4}, \eqref{BeforeLastStep4}, \eqref{PartialCC4} and \eqref{Average4} give that
\begin{equation}\label{ContradFinal4}
\begin{split}
&|u_{i,\varepsilon}-\bar{u}_{i,\varepsilon}(\nu_{i,\varepsilon})|\\
&\quad \le \frac{9}{2}\pi C_2 \Gamma_\varepsilon^{1-p_\varepsilon}+\sum_{l\in I_{i,\varepsilon}(\nu_{i,\varepsilon})} \frac{2+o(1)}{\gamma_{l,\varepsilon}^{p_\varepsilon-1}} \ln \frac{4\tau \nu_{i,\varepsilon}}{|z_\varepsilon-\phi_{i,\varepsilon}(x_{l,\varepsilon})|} 
\end{split} 
\end{equation}
uniformly in $B_{\nu_{i,\varepsilon}}(0)\backslash\cup_{l\in I_{i,\varepsilon}(\nu_{i,\varepsilon})} B_{r_{l,\varepsilon}^{(\eta_0)}}\left(\phi_{i,\varepsilon}(x_{l,\varepsilon}) \right)$ and for all $\varepsilon$. But by \eqref{IneqNu4} for $l=i$, our assumption \eqref{ContradBdNu4} and by \eqref{LocConv3}, the inequality in \eqref{DefNuJEps4} for $j=i$ and $r=\nu_{i,\varepsilon}$ should be an equality somewhere on $\partial B_{\nu_{i,\varepsilon}}(0)$ of this set for all $\varepsilon\ll 1$, which gives a contradiction to \eqref{ContradFinal4} and concludes the proof of \eqref{BdNuJ4}.

\smallskip

 Then, picking now a sequence $(\tilde{\Gamma}_\varepsilon)_\varepsilon$ such that $\lim_{\varepsilon\to 0}\tilde{\Gamma}_\varepsilon= +\infty$ and $\tilde{\Gamma}_\varepsilon=o(\gamma_{j,\varepsilon})$, and setting 
$$\tilde{\nu}_{j,\varepsilon}=\inf\left\{r>0 \text{ s.t. }\bar{u}_{j,\varepsilon}\ge \tilde{\Gamma}_\varepsilon \text{ in }[0,r] \right\}\,, $$
we get from \eqref{BdNuJ4} that 
\begin{equation}\label{SmallProperty4}
\tilde{\nu}_{j,\varepsilon}\le \nu_{j,\varepsilon}
\end{equation}
 for all $j\in \{1,...,N\}$ and all $\varepsilon\ll 1$. By \eqref{Convloc3}, $\tilde{\nu}_{j,\varepsilon}=o(1)$. As in \eqref{UnifEquiv4}, we get from \eqref{WeakGradEst} and \eqref{LocConv3} that we can fix $0<R<1$ such that $u_\varepsilon=\tilde{\Gamma}_\varepsilon(1+o(1))$ uniformly in $\partial \phi_{j,\varepsilon}^{-1}(B_{R\tilde{\nu}_{j,\varepsilon}}(0))$ for all $\varepsilon\ll 1$ and all $j$.  Arguing now as below \eqref{Star4}, we get from \eqref{WeakPointwEst} that 
\begin{equation}\label{MaxPrin4}
\sup_{\Sigma\backslash \cup_{j} \phi_{j,\varepsilon}^{-1}\left(B_{R\tilde{\nu}_{j,\varepsilon}}(0)\right)} u_\varepsilon\le 2\tilde{\Gamma}_\varepsilon
\end{equation}
for all $\varepsilon\ll 1$. Then choose and fix $(\tilde{\Gamma}_\varepsilon)_\varepsilon$  growing slowly to $+\infty$ and more precisely such that
\begin{equation}\label{ConTildeGamma}
\begin{split}
&\lambda_\varepsilon \tilde{\Gamma}_\varepsilon^{p_\varepsilon} \exp\left((2 \tilde{\Gamma}_\varepsilon)^{p_\varepsilon} \right)=o\left(\gamma_{j,\varepsilon}^{2-p_\varepsilon}\right)\text{ and }\\
&\quad\quad\quad(2-p_\varepsilon)\ln \left(1+\lambda_\varepsilon \gamma_{j,\varepsilon}^{2(p_\varepsilon-1)} \exp((2\tilde{\Gamma}_\varepsilon)^{p_\varepsilon}) \right)=o(1) 
\end{split}
\end{equation}
for all $j$ as $\varepsilon\to 0$. The first condition is clearly possible by \eqref{Lim0Lambda4}. The second one is also possible since $\lambda_{\varepsilon} \gamma_{j,\varepsilon}^{2(p_\varepsilon-1)}=O(1)$ by \eqref{PosIneq4} and since now $p_0=2$ in \eqref{ConvP04}. We may now compute and use either \eqref{MaxPrin4} in $\Sigma\backslash \cup_{j} \phi_{j,\varepsilon}^{-1}(B_{R\tilde{\nu}_{j,\varepsilon}}(0))$, or the controls given by the inequality in \eqref{DefNuJEps4} for $r=\tilde{\nu}_{j,\varepsilon}$ thanks to  \eqref{SmallProperty4}, allowing to estimate the nonlinearity as in \eqref{ComputPower4}-\eqref{ComplSingSource4}. This leads to the following integral estimates:
\begin{equation}\label{Quantif14}
\begin{split}
&\frac{\lambda_\varepsilon p_\varepsilon^2}{2}\int_{\Sigma\backslash \cup_{j} \phi_{j,\varepsilon}^{-1}\big(B_{r_{j,\varepsilon}^{(\eta_0)} }\left(0\right)\big)} u_\varepsilon^{p_\varepsilon} e^{u_\varepsilon^{p_\varepsilon}} dx=o\left(\gamma_{j,\varepsilon}^{2-p_\varepsilon}\right)\,,\\
&\frac{\lambda_\varepsilon p_\varepsilon^2}{2}\int_{\Sigma\backslash \cup_{j} \phi_{j,\varepsilon}^{-1}\big(B_{r_{j,\varepsilon}^{(\eta_0)}}\left(0 \right)\big)} \( e^{u_\varepsilon^{p_\varepsilon}}-1\) dx=O\left(\lambda_\varepsilon\exp\left((2\tilde{\Gamma}_\varepsilon)^{p_\varepsilon} \right)\right)\,,
\end{split}
\end{equation}
while, computing as in \eqref{SingSource4}, we get that
\begin{equation}\label{Quantif24}
\begin{split}
&\frac{\lambda_\varepsilon p_\varepsilon^2}{2}\int_{\phi_{j,\varepsilon}^{-1}\big(B_{r_{j,\varepsilon}^{(\eta_0)}}\left(0 \right)\big)} u_\varepsilon^{p_\varepsilon} e^{u_\varepsilon^{p_\varepsilon}} dx=\left(4\pi+o(1) \right)\gamma_{j,\varepsilon}^{2-p_\varepsilon}\,,\\
&\frac{\lambda_\varepsilon p_\varepsilon^2}{2}\int_{\phi_{j,\varepsilon}^{-1}\big(B_{r_{j,\varepsilon}^{(\eta_0)}}\left(0 \right)\big)}\(  e^{u_\varepsilon^{p_\varepsilon}} -1\)dx=\frac{4\pi+o(1)}{\gamma_{j,\varepsilon}^{2(p_\varepsilon-1)}}
\end{split}
\end{equation}
as $\varepsilon\to 0$. {Thus, by plugging \eqref{Quantif14}-\eqref{Quantif24} in \eqref{BetaEps} and by using our conditions \eqref{ConTildeGamma} on $(\tilde{\Gamma}_\varepsilon)_\varepsilon$, we get that
\begin{equation*}
\begin{split}
\beta_\varepsilon~&~=\left(\sum_{j=1}^N \frac{4\pi+o(1)}{\gamma_{j,\varepsilon}^{2(p_\varepsilon-1)}} \right)^{\frac{2-p_\varepsilon}{p_\varepsilon}}\left((4\pi +o(1))\sum_{j=1}^N \gamma_{j,\varepsilon}^{2-p_\varepsilon} \right)^{\frac{2(p_\varepsilon-1)}{p_\varepsilon}}\,,\\
&~=4\pi (1+o(1)) \Bigg(\underset{(\star)}{\underbrace{1+\sum_{j\neq j_0} \left(\frac{\gamma_{j,\varepsilon}}{\gamma_{j_0,\varepsilon}} \right)^{2-p_\varepsilon}}} \Bigg)^{\frac{2(p_\varepsilon-1)}{p_\varepsilon}}\,,
\end{split} 
\end{equation*}
using that
$$\left(1+\sum_{j\neq j_0} \left(\frac{\gamma_{j_0,\varepsilon}}{\gamma_{j,\varepsilon}} \right)^{2(p_\varepsilon-1)} \right)^{\frac{2-p_\varepsilon}{p_\varepsilon}}=1+o(1) $$
since $p_\varepsilon\to 2$, where we choose $j_0\in\{ 1,...,N\}$ such that $\gamma_{j_0,\varepsilon}=\min_{j\in \{1,...,N\}} \gamma_{j,\varepsilon}$ for all $\varepsilon$, up to a subsequence. Then, in order to conclude the proof of \eqref{Quantization} for $k=N$, it is then sufficient to get that the term $(\star)$ converges to $N$, namely that
$$\forall j\in\{1,...,N\}\,,\quad \lim_{\varepsilon\to 0}(2-p_\varepsilon) \ln \frac{\gamma_{j,\varepsilon}}{\gamma_{j_0,\varepsilon}}=0\,. $$
To get this, we use \eqref{Lim0Lambda4}, \eqref{PosIneq4} and argue as below \eqref{ThisLastPoint} for $\eta=1/2$ to write
$$\left(2-p_\varepsilon \right)\gamma_{j,\varepsilon}^{p_\varepsilon}\le (1+o(1))\ln \frac{1}{\lambda_\varepsilon^2}\le  \frac{1+o(1)}{2} \gamma_{j,\varepsilon}^{p_\varepsilon} $$
for all $j$, so that $1\le \left(\gamma_{j,\varepsilon}/\gamma_{j_0,\varepsilon}\right)^{p_\varepsilon}=O\left(1/(2-p_\varepsilon) \right)\le +\infty$. Theorem \ref{ThmBlowUpAnalysis} is proven.}
\end{proof}

\section{Compactness at the critical levels $\beta\in 4\pi \mathbb{N}^\star$ for $p\in (1,2]$}
\noindent Our main goal in this section is to prove the following result:
\begin{thm}\label{ThmCritLevels}
Let $(\lambda_\varepsilon)_\varepsilon$ be any sequence of positive real numbers. Let $p\in (1,2]$ be given and set $p_\varepsilon=p$ for all $\varepsilon$. Let $(u_\varepsilon)_\varepsilon$ be a sequence of smooth functions solving \eqref{MainEqEps}. Let $(\beta_\varepsilon)_\varepsilon$ be given by \eqref{BetaEps}. Assume that \eqref{BlowUp3} holds true, so that \eqref{EnergyBound3} holds true for some $\beta\in 4\pi \mathbb{N}^\star$ (see Theorem \ref{ThmBlowUpAnalysis}). Then we have that
\begin{equation}\label{EqThmCritLevels}
\beta_\varepsilon>\beta
\end{equation}
for all $\varepsilon\ll 1$.
\end{thm}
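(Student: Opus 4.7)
\medskip

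\noindent \textbf{Proof proposal for Theorem \ref{ThmCritLevels}.} The plan is to argue by contradiction, assuming that $\beta_{\varepsilon_n}\le \beta=4\pi k$ along some sequence $\varepsilon_n\to 0$ and the blow-up holds, then to upgrade the integral estimates from the proof of Theorem \ref{ThmBlowUpAnalysis} to a sharp two-term expansion of $\beta_\varepsilon$ whose correction must be strictly positive, yielding a contradiction. Since the assumption \eqref{EnergyBound3} holds at level $\beta\in 4\pi\mathbb{N}^\star$, Theorem \ref{ThmBlowUpAnalysis} applies and I may work in the full blow-up framework of Section 4: there are $N=k$ concentration points $x_{i,\varepsilon}\in\Sigma$, parameters $\gamma_{i,\varepsilon}\to+\infty$ and $\mu_{i,\varepsilon}\to 0$ satisfying \eqref{Mui3}, and on each disk $B_{r_{i,\varepsilon}^{(\eta_0)}}(x_{i,\varepsilon})$ the solution $u_\varepsilon$ is approximated in $C^1$ by the radial bubble $v_{i,\varepsilon}=B_{\gamma_{i,\varepsilon}}$ of Section \ref{SectRad}, with $u_\varepsilon-v_{i,\varepsilon}$ controlled by Proposition \ref{PropRadCompar2}. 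In addition, $u_\varepsilon\to u_0$ in $C^2_{loc}(\Sigma\setminus \{x_1,\ldots,x_k\})$ where $u_0\ge 0$ solves the limit equation with Dirac-type sources at the $x_i$.

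The first step is to refine \eqref{Quantif14}--\eqref{Quantif24}. Using the explicit form of $v_{i,\varepsilon}$ and the estimate on $w_\gamma$ in Proposition \ref{PropRadAnalysis1}, together with the conformal factor expansion \eqref{ConVarphi3}, I would push the expansions of $(v_{i,\varepsilon}-\tau)^p$ and $e^{v_{i,\varepsilon}^p}$ one order further than in \eqref{NonlinExpan1}, carrying the $O(w_\gamma)$ correction and the $O(\gamma^{-p})$ next-order Taylor term in $(1-x)^p$, to obtain expansions of the form
\begin{equation*}
\frac{\lambda_\varepsilon p^2}{2}\int_{B_{r_{i,\varepsilon}^{(\eta_0)}}(x_{i,\varepsilon})} u_\varepsilon^{p} e^{u_\varepsilon^{p}}\,dv_g
=4\pi \gamma_{i,\varepsilon}^{2-p}\Bigl(1+\frac{a_i+\psi_0^{(i)}(0)+o(1)}{\gamma_{i,\varepsilon}^{p}}\Bigr),
\end{equation*}
with a fully analogous expansion for $\int (e^{u_\varepsilon^p}-1)dv_g$ on the bubble disks, and $\psi_0^{(i)}$ being the harmonic defect from Proposition \ref{PropRadCompar2}. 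The contribution outside $\cup_i B_{r_{i,\varepsilon}^{(\eta_0)}}(x_{i,\varepsilon})$ involves either the fixed weak limit $u_0$ or the decay region of the bubbles, and by the quantitative estimates \eqref{BasicEst4}--\eqref{BasicEst24} together with $\lambda_\varepsilon\to 0$ (see \eqref{Lim0Lambda4}) is absorbable into a remainder strictly smaller than the $\gamma_{i,\varepsilon}^{-p}$ scale of the main correction, provided $\eta_0$ is chosen sufficiently close to $1$.

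The second step is to plug these refined expansions into \eqref{BetaEps} and Taylor-expand the two outer powers $A_\varepsilon^{(2-p)/p}$ and $B_\varepsilon^{2(p-1)/p}$ around their leading values. After the rearrangement already carried out at the end of the proof of Theorem \ref{ThmBlowUpAnalysis}, which factored out $4\pi k$ as leading order, the next-order term has the schematic form
\begin{equation*}
\beta_\varepsilon-4\pi k=\sum_{i=1}^{k}\frac{C_i+o(1)}{\gamma_{i,\varepsilon}^{p}},
\end{equation*}
where $C_i$ is built from (a) the explicit integral of the $p$-th power correction of the pure bubble, (b) the harmonic value $\psi_0^{(i)}(0)$ (linear terms in $\psi_0$ drop by \eqref{CondH2}), and (c) a non-negative term from $u_0(x_i)$. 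The central task is to verify $C_i>0$. Modelling on \cite{MalchMartJEMS,MartMan} (where $p=2$, $\beta=4\pi$), this amounts to an explicit computation of a Liouville-type integral, giving a universal positive constant, plus a non-negative Green's function contribution. The fact that this quantity is independent of the Gauss curvature of $(\Sigma,g)$ and of other geometric data, as emphasised in the introduction, is exactly what rules out obstructions of the type encountered in \cite{ChenLinSharpEst}.

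The main obstacle I anticipate is the sign determination in the last step: the universal integral constant must be computed rigorously and shown to be strictly positive uniformly in $p\in (1,2]$, and one must check that no inter-bubble interaction term spoils the sign. Here the lack of gradient at the blow-up points, \eqref{GradNull2} and \eqref{CondH2}, plus the subcritical vanishing of $\lambda_\varepsilon$ from \eqref{Lim0Lambda4}, are the features that should make the argument robust; but carrying out the Taylor expansion uniformly in $p\in (1,2]$ near $p=2$ and simultaneously handling $k\ge 2$ bubbles requires the refined energy expansion to be performed with some care, and this is where the bulk of the technical work will lie.
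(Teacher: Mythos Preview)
Your proposal has a genuine gap at its core: the order of the correction is wrong. You claim an expansion of the form $\beta_\varepsilon-4\pi k=\sum_i (C_i+o(1))\gamma_{i,\varepsilon}^{-p}$, but the paper shows (and this is exactly the phenomenon already identified in \cite{MalchMartJEMS,MartMan} for $p=2$) that the $\gamma^{-p}$ coefficients \emph{cancel}. The first surviving positive correction sits at order $\gamma^{-2p}$; see \eqref{EnergyComputations5} and \eqref{ExpBetaEps}. A one-order Taylor push of \eqref{NonlinExpan1} as you outline would only produce the $\gamma^{-p}$ terms, which then disappear, leaving you with nothing to sign.

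Getting to $\gamma^{-2p}$ requires two ingredients your scheme lacks. First, a refined radial expansion of the bubble $B_\gamma$ with \emph{two} further correctors $w_0,w_1$ (Proposition \ref{PropRadAnalysis5}), from which an explicit computation (Corollary \ref{CorRadLastPart5}) extracts the universal constant $4(p-1)/p^2>0$. Second, the comparison $|u_\varepsilon-v_{i,\varepsilon}|$ at the scale $r_{i,\varepsilon}^{(\eta_0)}$ you propose is only $O(\gamma^{1-p})$ by \eqref{CorSect23}, far too coarse to survive at level $\gamma^{-2p}$. The paper instead restricts to the much smaller balls of radius $\bar r_{i,\varepsilon}$ with $t_{i,\varepsilon}(\bar r_{i,\varepsilon})=\sqrt{\gamma_{i,\varepsilon}}$ (see \eqref{BarRDef5}--\eqref{BarRSmall5}), where $|u_\varepsilon-v_{i,\varepsilon}|=O(e^{-\gamma_{i,\varepsilon}})$, and then uses positivity of the integrands to simply \emph{discard} the complement and obtain a lower bound \eqref{UsPosToMinor5}, followed by H\"older for vectors to decouple the bubbles. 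This bypasses entirely the harmonic defect $\psi_0$, the weak limit $u_0$, and any inter-bubble interaction you worried about; none of these enter the final estimate.
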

More precisely, if $(\gamma_{1,\varepsilon})_\varepsilon\,,...\,,(\gamma_{k,\varepsilon})_\varepsilon$ are the sequences of positive real numbers diverging to $+\infty$ given by Proposition \ref{PropWeakPwEst}, we show in the proof below that
\begin{equation}\label{ExpBetaEps}
\beta_\varepsilon\ge 4\pi \left(k+\frac{4(p-1)(1+o(1))}{p^2}\sum_{i=1}^k \gamma_{i,\varepsilon}^{-2p}\right)
\end{equation}
as $\varepsilon\to 0$. As a remark, according to the proof of Theorem \ref{ThmBlowUpAnalysis}, $N$ in Proposition \ref{PropWeakPwEst} equals $k$ in \eqref{Quantization}. Interestingly enough, the cancellation of  terms of order $\gamma_{i,\varepsilon}^{-p}$  still occurs here on a surface for all $p\in (1,2]$ and for arbitrary energies, 
as pointed out in \cite{MalchMartJEMS} concerning the unit disk for $p=2$ and in the minimal energy case $\beta=4\pi$. 

\subsection{Further estimates in the radially symmetric case}\label{SubsectRadAnalysis5}
Let $p\in (1,2]$ be given, let $(\mu_\gamma)_\gamma$ be a family of positive real numbers, and let $(\lambda_\gamma)_\gamma$ be such that \eqref{BigLambdaDef1} holds true, where $p_\gamma=p$ for all $\gamma$, let $t_\gamma, \bar{t}_\gamma$ be given by \eqref{TGammaDef1} and let $(B_\gamma)_\gamma$ be given by \eqref{EqBubble1}. Let also $(\bar{r}_\gamma)_\gamma$ be a family of positive real numbers such that \eqref{RMuTo01} holds true, and such that 
\begin{align}
&\quad t_\gamma(\bar{r}_\gamma)\le \sqrt{\gamma}\,,\label{BarRNotTooLarge5}\\
&  \gamma^{4 p}\bar{r}_\gamma^2=O(1) \label{LpBd5}
\end{align}
for all $\gamma\gg 1$. In this section we aim to get more precise estimates on the $B_\gamma$'s than in Section \ref{SectRad}, but at smaller scales around $0$, in order to be technically as simple as possible: namely, \eqref{BarRNotTooLarge5}-\eqref{LpBd5} imply \eqref{BarRNotTooLarge1}-\eqref{LpBd1}. We also restrict here to the specific case where $p$ is fixed. As already mentioned in the introduction, some issues may arise when studying compactness at the critical levels $\beta\in 4\pi \mathbb{N}^\star$ in the case $p=1$. Following \cite{MalchMartJEMS,MartMan} and still abusing the radial notation $r=|x|$, we let $w_0$ be given by
$$w_0(r)=-T_0(r)+\frac{2 r^2}{1+r^2}-\frac{1}{2} T_0(r)^2+\frac{1-r^2}{1+r^2}\int_1^{1+r^2} \frac{\ln t}{1-t} dt $$
for $T_0$ as in \eqref{Liouville1}, so that, by ODE theory, $w_0$ is the unique solution of 
\begin{equation}\label{W0Eq5}
\begin{cases}
&\Delta w_0=4e^{-2 T_0}\left(2 w_0+T_0^2-T_0 \right)\text{ in }\mathbb{R}^2\,,\\
& w_0(0)=0\,,\\
&w_0\text{ is radially symmetric around }0\in \mathbb{R}^2\,.
\end{cases}
\end{equation}
We further set
\begin{equation}\label{FRHSEq5}
\begin{split}
F=~&~2(p-1)w_0+(p-2)T_0^2-8(p-1)T_0 w_0-\frac{8p-10}{3}T_0^3\\
&~+4\left(p-1 \right)w_0^2+4(p-1)T_0^2 w_0+(p-1)T_0^4\,,
\end{split}
\end{equation}
and let $w_1$ be the unique solution of
\begin{equation}\label{W1Eq5}
\begin{cases}
& \Delta w_1=4 e^{-2T_0}\left(2w_1+\frac{4(p-1)}{p^3}F \right)\text{ in }\mathbb{R}^2\,,\\
&w_1(0)=0\,,\\
& w_1\text{ is radially symmetric around }0\in \mathbb{R}^2\,.
\end{cases}
\end{equation}
Resuming the strategy and the explicit computations in \cite[Section 3]{MartMan}, even if we do not have $w_1$ in closed form, we know that
\begin{equation}\label{Integ5}
\begin{split}
\int_{\mathbb{R}^2} \Delta w_1 dx=\frac{16(p-1)}{p^3}\bigg[ & (p-1)\left(\frac{\pi^3}{3}+\frac{33 \pi }{2} \right)\\
& +\frac{3\pi}{2}\left(p-2\right)-\frac{7(4p-5)\pi }{2} \bigg]\,.
\end{split}
\end{equation}
We also have that
\begin{equation}\label{InftyW15}
\begin{split}
& w_0(r)=-T_0(r)+O(1)\,,\\
& w_1(r)=-\frac{T_0(r)}{4\pi} \int_{\mathbb{R}^2} \Delta w_1 dx+O(1)
\end{split}
\end{equation}
as $r\to +\infty$. Note that the convention on the sign of the Laplace operator here is not the same as that in \cite{MartMan}. In complement of the computations already done in \cite{MartMan}, we compute also
$$\int_{\mathbb{R}^2} \frac{|x|^2-1}{(1+|x|^2)^3} T_0(x)^2 dx=\frac{3 \pi}{2} $$
to get \eqref{Integ5}. Let $w_{0,\gamma}$, $w_{1,\gamma}$ be given by $w_{0,\gamma}=w_0(\cdot/\mu_\gamma)$ and $w_{1,\gamma}=w_1(\cdot/\mu_\gamma)$, and let $w_\gamma$ be given by
\begin{equation}\label{WGamma5}
B_\gamma=\gamma-\frac{2 t_\gamma}{p \gamma^{p-1}}+\frac{4(p-1) w_{0,\gamma}}{p^2 \gamma^{2p-1}}+\frac{w_{1,\gamma}+w_\gamma}{\gamma^{3p-1}}\,.
\end{equation}
Proposition \ref{PropRadAnalysis1} already gives $B_\gamma\le \gamma$ and some estimates on $w_\gamma$ given by \eqref{WGamma5} in $[0,\bar{r}_\gamma]$ for all $\gamma\gg 1$. Much more precisely here, we get that $w_\gamma$ is actually a small remainder term in the following sense:

\begin{prop}\label{PropRadAnalysis5}
We have 
\begin{equation*}
w_\gamma =O(\gamma^{-p} t_\gamma)\,,\quad w'_\gamma =O(\gamma^{-p} t'_\gamma)\,,
\end{equation*}
and
\begin{equation*}
\lambda_\gamma p  B_\gamma^{p-1}e^{B_\gamma^p}=-\frac{2}{p\gamma^{p-1}}\Delta t_\gamma\left(1+O\left(\frac{e^{t_\gamma/2}}{\gamma^{3p}} \right) \right)+\frac{4(p-1)}{p^2 \gamma^{2p-1}} \Delta w_{0,\gamma}+\frac{\Delta w_{1,\gamma}}{\gamma^{3p-1}}\,,
\end{equation*}
uniformly in $[0,\bar{r}_\gamma]$ and for all $\gamma\gg 1$ large, where $w_\gamma$ is as in \eqref{WGamma5}. 
\end{prop}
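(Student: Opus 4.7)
The plan is to imitate the scheme used for Proposition \ref{PropRadAnalysis1}, but with much more careful bookkeeping in the Taylor expansions so that the structural equations \eqref{W0Eq5} for $w_0$ and \eqref{W1Eq5} for $w_1$ absorb the next two orders of the nonlinearity. Set $X_0=\frac{2t_\gamma}{p\gamma^p}$, $X_1=\frac{4(p-1)w_{0,\gamma}}{p^2\gamma^{2p}}$, $X_2=\frac{w_{1,\gamma}+w_\gamma}{\gamma^{3p}}$ and $X=X_0-X_1-X_2$, so that $B_\gamma=\gamma(1-X)$. First I will define, as in \eqref{DefRGamma},
$$r_\gamma=\sup\left\{r\in [0,\bar{r}_\gamma]\text{ s.t. }|w_\gamma|\le \frac{t_\gamma}{\gamma^{p/2}} \text{ in }[0,r]\right\}\,,$$
so that on $[0,r_\gamma]$ the quantity $X$ is $o(1)$, and the expansion $(1-X)^p=1-pX+\binom{p}{2}X^2-\binom{p}{3}X^3+O(X^4)$ is legitimate up to order $\gamma^{-3p}$. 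Since $t_\gamma\le \sqrt\gamma$ on $[0,\bar r_\gamma]$ by \eqref{BarRNotTooLarge5}, powers of $t_\gamma$ are harmless compared with powers of $\gamma^{-p}$.

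Expanding $B_\gamma^p=\gamma^p(1-X)^p$ and keeping track of every term of order at least $\gamma^{-2p}$, one obtains
$$B_\gamma^p=\gamma^p-2t_\gamma+\frac{P_1}{\gamma^p}+\frac{P_2}{\gamma^{2p}}+O\!\left(\frac{t_\gamma^4}{\gamma^{3p}}\right),$$
where $P_1=\frac{4(p-1)w_{0,\gamma}}{p}+\frac{2(p-1)t_\gamma^2}{p}$ is exactly the combination that will be matched with $w_0$ via \eqref{W0Eq5}, and $P_2$ is a polynomial expression in $t_\gamma$ and $w_{0,\gamma}$ (coming from the $X_0^2X_1$, $X_0^3$ and $X_0^4$ cross-terms) plus the linear piece $p(w_{1,\gamma}+w_\gamma)$ that will be matched with $w_1$ via \eqref{W1Eq5}. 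A similar expansion is done for $B_\gamma^{p-1}=\gamma^{p-1}(1-X)^{p-1}$. Multiplying the two and using $e^{B_\gamma^p}=e^{\gamma^p-2t_\gamma}\exp(P_1/\gamma^p+\ldots)$ together with the scaling relation \eqref{BigLambdaDef1}, a direct but lengthy computation gives
\begin{equation*}
\lambda_\gamma p B_\gamma^{p-1}e^{B_\gamma^p}= -\frac{2\Delta t_\gamma}{p\gamma^{p-1}}\left[1+\frac{A_1}{\gamma^p}+\frac{A_2}{\gamma^{2p}}+O\!\left(\frac{\bar t_\gamma^4}{\gamma^{3p}}e^{t_\gamma/2}\right)\right]-\frac{2\Delta t_\gamma}{p\gamma^{p-1}}\cdot\frac{p w_\gamma}{\gamma^{2p}}(1+o(1))\,,
\end{equation*}
where $A_1,A_2$ are explicit functions of $t_\gamma$ and $w_{0,\gamma}$. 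The crux is that the precise forms of $A_1,A_2$ are exactly designed so that
$$-\frac{2\Delta t_\gamma}{p\gamma^{p-1}}\cdot\frac{A_1}{\gamma^p}=\frac{4(p-1)}{p^2\gamma^{2p-1}}\Delta w_{0,\gamma},\qquad -\frac{2\Delta t_\gamma}{p\gamma^{p-1}}\cdot\frac{A_2}{\gamma^{2p}}=\frac{1}{\gamma^{3p-1}}\Delta w_{1,\gamma}\,,$$
which is equivalent, after the rescaling $\tilde y=y/\mu_\gamma$ and using $\Delta t_\gamma=\mu_\gamma^{-2}\Delta T_0(\cdot/\mu_\gamma)=-4\mu_\gamma^{-2}e^{-2T_0(\cdot/\mu_\gamma)}$, to the defining ODE/PDE systems \eqref{W0Eq5} and \eqref{W1Eq5} together with the quantity $F$ in \eqref{FRHSEq5}. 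This identification is the main algebraic work: once it is checked, applying the Laplacian to the ansatz \eqref{WGamma5} and using \eqref{EqBubble1} yields the equation
$$\Delta w_\gamma=8e^{-2T_0(\cdot/\mu_\gamma)}w_\gamma+R_\gamma\,,\qquad R_\gamma=O\!\left(\mu_\gamma^{-2}\gamma^{2p}\bar t_\gamma^4 e^{-(2-\eta_0)t_\gamma}\right)+O\!\left(\mu_\gamma^2\gamma^{3p}\right)$$
for some fixed $\eta_0\in(0,1)$, uniformly on $[0,r_\gamma]$.

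With this linear equation in hand the conclusion is reached by the exact same contradiction/scaling scheme as in the proof of Proposition \ref{PropRadAnalysis1}. Namely, setting $\tilde w_\gamma=w_\gamma(\mu_\gamma\cdot)$, integrating the rescaled equation on $B_r(0)$ and integrating by parts as in the derivation of \eqref{TildeW1Estim1} gives the pointwise gradient bound
$$|\tilde w_\gamma'(r)|=O\!\left(\frac{r\mu_\gamma^2}{r_\gamma^2\gamma^p}\right)+O\!\left(\frac{r}{1+r^2}\left(\|\tilde w_\gamma'\|_\infty+\gamma^{-p}\right)\right),$$
using the crucial decay $e^{-(2-\eta_0)t_\gamma}$ in the first remainder and the bounds \eqref{RMuTo01}, \eqref{LpBd5}. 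If $\gamma^p\|\tilde w_\gamma'\|_\infty\to\infty$ along a subsequence, a blow-up analysis at a point where the supremum is almost attained, combined with the Chen--Lin/Laurain classification (cf.\ \cite{ChenLinSharpEst,LaurainLemma}) of bounded solutions of $\Delta w=8e^{-2T_0}w$ with $w(0)=\nabla w(0)=0$, forces $w\equiv 0$, contradicting $|w'(l)|=1$. Hence $\|\tilde w_\gamma'\|_\infty=O(\gamma^{-p})$, which together with $w_\gamma(0)=0$ gives the pointwise bounds stated in the proposition, and then \eqref{BarREqR1} follows exactly as before (so $r_\gamma=\bar r_\gamma$ for large $\gamma$). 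Plugging these bounds back into the already-performed nonlinearity expansion yields the claimed formula for $\lambda_\gamma p B_\gamma^{p-1}e^{B_\gamma^p}$. The main obstacle here is really the bookkeeping in Step 2: one has to check that the specific polynomial $F$ in \eqref{FRHSEq5} is indeed produced by the cross-terms $X_0^2X_1$, $X_0X_1^2$, $X_0^3$, $X_0^4$ and the corresponding contributions from $e^{B_\gamma^p}$ and $B_\gamma^{p-1}$; once this algebra is done, the analytic side of the argument is a straightforward variant of Proposition \ref{PropRadAnalysis1}.
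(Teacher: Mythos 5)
Your proposal follows essentially the same route as the paper's proof: define the bootstrap radius $r_\gamma$, expand $(1-X)^p$ to fourth order using \eqref{BarRNotTooLarge5} to control powers of $t_\gamma$, match the order-$\gamma^{-p}$ and $\gamma^{-2p}$ contributions with the defining equations \eqref{W0Eq5} and \eqref{W1Eq5} via the polynomial $F$, then close the argument by the linearized Liouville equation for $\tilde w_\gamma$ and the Chen--Lin/Laurain classification exactly as in Proposition \ref{PropRadAnalysis1}. The only minor slips are bookkeeping ones (the displayed $R_\gamma$ carries spurious factors of $\gamma^{2p}\bar t_\gamma^4$ and omits the $\mu_\gamma^{-2}$ in front of $e^{-2T_0(\cdot/\mu_\gamma)}$), but the subsequent gradient estimate and conclusion are the paper's.
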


The proof of Proposition \ref{PropRadAnalysis5} follows the strategy of the proof of Proposition \ref{PropRadAnalysis1}, but the stronger assumption \eqref{BarRNotTooLarge5} basically reduces now the computations to  Taylor expansions. 

\begin{proof}[Proof of Proposition \ref{PropRadAnalysis5}] Let $r_\gamma$ be given by 
$$r_\gamma=\sup\left\{r\in [0,\bar{r}_\gamma]\text{ s.t. }|w_\gamma|\le t_\gamma \right\} $$ 
for all $\gamma$. Taking advantage of the control on $w_\gamma$ in $[0,r_\gamma]$ given by this definition, we may perform the following computations uniformly in $[0,r_\gamma]$ as $\gamma\to +\infty$. We  first get 
\begin{equation*}
\begin{split}
B_\gamma^p=~&~\gamma^p-2t_\gamma+\frac{2(p-1)}{p \gamma^p}\left(2w_{0,\gamma}+t_\gamma^2\right)+\frac{p(w_{1,\gamma}+w_\gamma)}{\gamma^{2p}}\\
&\quad-\frac{8(p-1)^2 t_\gamma w_{0,\gamma}}{p^2 \gamma^{2p}}-\frac{8(p-1)(p-2)t_\gamma^3}{6p^2 \gamma^{2p}}+O\left(\frac{\bar{t}_\gamma^4}{\gamma^{3p}} \right)\,.
\end{split} 
\end{equation*}
and 
$$ B_\gamma^{p-1}=\gamma^{p-1}\left(1-\frac{2(p-1) t_\gamma}{p \gamma^p}+\frac{4(p-1)^2 w_{0,\gamma}}{p^2 \gamma^{2p}}+\frac{2(p-1)(p-2)t_\gamma^2}{p^2 \gamma^{2p}}+O\left(\frac{\bar{t}_\gamma^3}{\gamma^{3p}} \right) \right)\,. $$
We use for this \eqref{BarRNotTooLarge5} and \eqref{InftyW15}  and the expansions of $(1+\varepsilon)^q$ as $\varepsilon\to 0$. Then, using \eqref{BigLambdaDef1}, we similarly compute and get
\begin{equation*}
\begin{split}
&\lambda_\gamma p  e^{B_\gamma^p}\\
&=\frac{8 e^{-2t_\gamma}}{p \gamma^{2(p-1)}\mu_\gamma^2}\bigg[1+\frac{2(p-1)}{p \gamma^p}\left(2w_{0,\gamma}+t_\gamma^2\right)+\\
&\quad\frac{1}{p^2 \gamma^{2p}}\bigg(p^3(w_{1,\gamma}+w_\gamma)-8(p-1)^2 t_\gamma w_{0,\gamma}-\frac{4}{3}(p-1)(p-2)t_\gamma^3\\
&\quad +8(p-1)^2 t_\gamma^2 w_{0,\gamma}+2(p-1)^2 t_\gamma^4+8(p-1)^2 w_{0,\gamma}^2 \bigg)+O\bigg(\frac{1}{\gamma^{3p}}e^{C \bar{t}_\gamma\times \left(\frac{\bar{t}_\gamma^3}{\gamma^{3 p}} \right)} \bigg) \bigg]\,,
\end{split}
\end{equation*}
so that we eventually have
\begin{equation}\label{NonlinExpan5}
\begin{split}
&\lambda_\gamma p  B_\gamma^{p-1}e^{B_\gamma^p}\\
&=\frac{8 e^{-2t_\gamma}}{p \gamma^{p-1}\mu_\gamma^2}
\bigg[1+\frac{2(p-1)}{p \gamma^p}\left(2w_{0,\gamma}+t_\gamma^2-t_\gamma\right)+O\bigg(\frac{e^{t_\gamma/2}}{\gamma^{3p}} \bigg)\bigg]+\\
&\quad\quad\quad\frac{4 e^{-2t_\gamma}}{\gamma^{3p-1}\mu_\gamma^2}\bigg[2(w_{1,\gamma}+w_\gamma)+\frac{4(p-1)}{p^3}F\left(\frac{\cdot}{\mu_\gamma} \right)  \bigg]\,,
\end{split}
\end{equation}
for $F$ as in \eqref{FRHSEq5}, using again \eqref{BarRNotTooLarge5} to write ${\bar{t}_\gamma^3}/{\gamma^{3 p}}=o(1)$. Then, setting $\tilde{w}_\gamma=w_\gamma(\cdot/\mu_\gamma)$, using now not only \eqref{Liouville1}, but also \eqref{W0Eq5} and \eqref{W1Eq5}, we get from \eqref{EqBubble1} that 
\begin{equation}\label{EqTildeW5}
\Delta \tilde{w}_\gamma=8e^{-2 T_0} \tilde{w}_\gamma+ O\left(\mu_\gamma^2\gamma^{3 p} \right)+O\left(\frac{e^{-3T_0/2}}{\gamma^{p}} \right)\,, \end{equation}
uniformly in $[0,r_\gamma/\mu_\gamma]$ as $\gamma\to +\infty$, applying $\Delta$ to \eqref{WGamma5}. The second-last term in \eqref{EqTildeW1} is obtained when controlling $B_\gamma$ in the LHS of \eqref{EqBubble1}, since our definition of $r_\gamma$ implies $B_\gamma\le \gamma$ in $[0,r_\gamma]$ for all $\gamma\gg 1$. Then, \eqref{TildeW1Estim1} may be obtained from \eqref{EqTildeW1} by using also \eqref{LpBd5}. At that stage, we may conclude the proof of Proposition \ref{PropRadAnalysis5} by following closely the lines below \eqref{TildeW1Estim1} and showing mainly that \eqref{BarREqR1} holds true for all $\gamma\gg 1$.
\end{proof}
\noindent As a direct corollary of Proposition \ref{PropRadAnalysis5}, we get the following estimates:
\begin{cor}\label{CorRadLastPart5}
Assume that \eqref{BarRNotTooLarge5} is an equality, namely that
\begin{equation}\label{BarREquality5}
t_\gamma(\bar{r}_\gamma)=\sqrt{\gamma}
\end{equation}
for all $\gamma\gg 1$, then we have that
\begin{equation*}
\begin{split}
&\frac{\lambda_\gamma p^2}{2} \int_{B_{\bar{r}_\gamma}(0)} B_\gamma^p e^{B_\gamma^p} dx\\
&=4\pi \gamma^{2-p}\bigg[1+\frac{2(p-2)}{p\gamma^p}+o\left(\frac{1}{\gamma^{2p}} \right)\\
&
+\frac{p-1}{p^2 \gamma^{2p}}\bigg(-8-\frac{2\pi^2}{3}+2(p-1)\left(\frac{\pi^2}{3}+\frac{33}{2} \right)+3(p-2)-7\left(4p-5 \right) \bigg)\bigg]\,,\\
&\frac{\lambda_\gamma p^2}{2} \int_{B_{\bar{r}_\gamma}(0)} e^{B_\gamma^p} dx\\
&=\frac{4\pi}{\gamma^{2(p-1)}}\bigg[1+\frac{4(p-1)}{p\gamma^p}+o\left(\frac{1}{\gamma^{2p}} \right)\\
&\quad\quad\quad+\frac{1}{\gamma^{2p}}\bigg(\frac{2(p-1)}{p^2}\left((p-1)\left(\frac{\pi^2}{3}+\frac{33}{2} \right)+\frac{3}{2}(p-2)-\frac{7(4p-5)}{2} \right)\\
&\quad\quad\quad\quad\quad+\frac{4(p-1)}{p}+\frac{(p-1)^2}{p^2}\left(8+\frac{2\pi^2}{3} \right) \bigg) \bigg]\,,
\end{split}
\end{equation*}
and then that
\begin{equation}\label{EnergyComputations5}
\begin{split}
& \left(\frac{\lambda_\gamma p^2}{2} \int_{B_{\bar{r}_\gamma}(0)} e^{B_\gamma^p} dx\right)^{\frac{2-p}{p}} \left(\frac{\lambda_\gamma p^2}{2} \int_{B_{\bar{r}_\gamma}(0)} B_\gamma^p e^{B_\gamma^p} dx \right)^{\frac{2(p-1)}{p}}\\
&\quad \quad=4\pi \left(1+ \frac{4(p-1)}{p^2 \gamma^{2p}}+o\left(\frac{1}{\gamma^{2p}} \right) \right)
\end{split}
\end{equation}
as $\gamma\to +\infty$.
\end{cor}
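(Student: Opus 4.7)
My plan is to derive \eqref{EnergyComputations5} through explicit Taylor expansion of the two integrands, reduce to model integrals by the scaling $y=x/\mu_\gamma$, and then assemble the final product via a careful algebraic check of the leading cancellation. The assumption \eqref{BarREquality5} gives $t_\gamma(\bar{r}_\gamma)=\sqrt{\gamma}$, so that the exponential tail $e^{-2t_\gamma(\bar{r}_\gamma)}=e^{-2\sqrt{\gamma}}$ makes every integral truncated at $\bar{r}_\gamma$ agree with the corresponding integral over $\mathbb{R}^2$ up to errors that are $o(\gamma^{-2p})$; this is what selects the scale of validity of the expansion.

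First, using \eqref{WGamma5} together with the remainder estimate $w_\gamma=O(\gamma^{-p}t_\gamma)$ from Proposition \ref{PropRadAnalysis5}, I would expand $B_\gamma^p$ by the binomial formula up to order $O(\bar{t}_\gamma^4/\gamma^{3p})$ --- exactly the expansion already written out in the proof of Proposition \ref{PropRadAnalysis5}. Writing then $e^{B_\gamma^p}=e^{\gamma^p}e^{-2t_\gamma}\exp(\text{smaller corrections})$ and Taylor-expanding the exponential of the correction, together with the normalization \eqref{BigLambdaDef1} giving $\lambda_\gamma p^2 e^{\gamma^p}/2=4/(\gamma^{2(p-1)}\mu_\gamma^2)$, reduces each of the two integrals to an explicit polynomial expression in the integrals of $T_0^a w_0^b w_1^c e^{-2T_0}$ over $\mathbb{R}^2$.

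The model integrals are then evaluated as follows: the basic value $\int_{\mathbb{R}^2}4e^{-2T_0}dy=4\pi$ and the lower moments $\int 4 T_0^j e^{-2T_0}dy$ are computed directly by the change of variables $s=1+|y|^2$; the integrals involving $w_0$ and $w_1$ are handled via integration by parts using \eqref{W0Eq5}--\eqref{W1Eq5}, which converts them into flux terms at infinity controlled by the asymptotics \eqref{InftyW15}, together with the pre-computed identity \eqref{Integ5}. The additional cross-integrals $\int w_0 T_0 e^{-2T_0}$, $\int w_0^2 e^{-2T_0}$, $\int T_0^4 e^{-2T_0}$, etc., that arise at the $1/\gamma^{2p}$ level, were essentially computed in \cite{MartMan} for the case $p=2$ and can be imported here with only a parameter-dependent bookkeeping. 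Gathering these yields the two displayed expansions of $\tfrac{\lambda_\gamma p^2}{2}\int e^{B_\gamma^p}$ and $\tfrac{\lambda_\gamma p^2}{2}\int B_\gamma^p e^{B_\gamma^p}$.

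Finally, \eqref{EnergyComputations5} follows by raising these two expansions to the powers $\tfrac{2-p}{p}$ and $\tfrac{2(p-1)}{p}$ and multiplying, using $\tfrac{2-p}{p}+\tfrac{2(p-1)}{p}=1$. The main obstacle is the verification of the cancellation of the $1/\gamma^p$ terms: the combination $\tfrac{2(p-1)}{p}\cdot\tfrac{2(p-2)}{p}+\tfrac{2-p}{p}\cdot\tfrac{4(p-1)}{p}$ vanishes identically, which is the precise numerical reflection of the phenomenon noted in the discussion after the statement and in \cite{MalchMartJEMS}. Extracting the residual coefficient $\tfrac{4(p-1)}{p^2}$ then reduces to a mechanical but delicate algebraic consolidation of all the $1/\gamma^{2p}$ contributions (the linear terms from each expansion, together with the quadratic cross-terms coming from the $1/\gamma^p$ corrections) from both integrals; this is where the strange-looking constants involving $\pi^2/3$ and $33/2$ in the two expansions enter, and their cancellation against one another is the most delicate point, mirroring the calculation of \cite{MartMan} in the minimal-energy case.
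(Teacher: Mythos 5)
Your plan is essentially the proof the paper intends: the authors explicitly defer the computation to the reader, pointing to Proposition \ref{PropRadAnalysis5}, identity \eqref{Integ5}, the model integrals listed right after the statement, and the scheme of \cite{MartMan}. Your outline correctly identifies the three load-bearing steps — negligibility of the tail thanks to $t_\gamma(\bar r_\gamma)=\sqrt\gamma$, the fact that the remainder $w_\gamma=O(\gamma^{-p}t_\gamma)$ from Proposition \ref{PropRadAnalysis5} lands below the accuracy threshold $o(\gamma^{-2p})$ after integration against $e^{-2T_0}$, and the algebraic cancellation $\frac{2(p-1)}{p}\cdot\frac{2(p-2)}{p}+\frac{2-p}{p}\cdot\frac{4(p-1)}{p}=0$ — and so matches the intended argument.
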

Since the computations to get Corollary \ref{CorRadLastPart5} from Proposition \ref{PropRadAnalysis5} basically resume those in \cite{MartMan}, we leave them to the reader. In particular, proving the  first two estimates in Corollary \ref{CorRadLastPart5} uses \eqref{Integ5} and the following computations 
\begin{equation*}
\begin{split}
&\int_{\mathbb{R}^2} \Delta w_0 dx =- \int_{\mathbb{R}^2} \Delta T_0 dx=-\int_{\mathbb{R}^2} T_0 \Delta T_0 dx=-\frac{1}{2}\int_{\mathbb{R}^2} T_0^2 \Delta T_0 dx=4\pi\,,\\
& \int_{\mathbb{R}^2} \left(w_0 (\Delta T_0)+T_0\Delta w_0 \right) dx=8\pi+\frac{2\pi^3}{3}\,.
\end{split}
\end{equation*}
Once the first two estimates of Corollary \ref{CorRadLastPart5} are obtained, proving \eqref{EnergyComputations5} is quite elementary: in particular, we observe in \eqref{EnergyComputations5} the aforementioned cancellation of the term $\gamma^{-p}$. Besides, the term $\gamma^{-2p}$ vanishes as well for $p=1$. That is the technical reason why the approach of this section does not work for $p=1$ and why we assume $p>1$ in Theorem \ref{ThmCritLevels} (see also the paragraph {above Remark \ref{RemCompactPathPEq1To2}}).

\subsection{Conclusion of the proof of Theorem \ref{ThmCritLevels}}
Let $(\lambda_\varepsilon)_\varepsilon$ be any sequence of positive real numbers. Let $p\in (1,2]$ be given and set $p_\varepsilon=p$ for all $\varepsilon$. Let $(u_\varepsilon)_\varepsilon$ be a sequence of smooth functions solving \eqref{MainEqEps}. Let $(\beta_\varepsilon)_\varepsilon$ be given by \eqref{BetaEps}. Assume that \eqref{BlowUp3} holds true, so that \eqref{EnergyBound3} holds true for some $\beta\in 4\pi \mathbb{N}^\star$ by Theorem \ref{ThmBlowUpAnalysis}. We may also apply Proposition \ref{PropWeakPwEst}, getting in particular sequences $(\mu_{i,\varepsilon})_\varepsilon$, $(x_{i,\varepsilon})_\varepsilon$, $(\gamma_{i,\varepsilon})_\varepsilon$ and $(\varphi_{i,\varepsilon})_\varepsilon$, and we resume the notation $r_{i,\varepsilon}$, $t_{i,\varepsilon}$ and $v_{i,\varepsilon}$ in \eqref{DefRIEps3}-\eqref{DefVI3}; let also $\bar{r}_{i,\varepsilon}$ be given by
\begin{equation}\label{BarRDef5}
t_{i,\varepsilon}(\bar{r}_{i,\varepsilon})=\sqrt{\gamma_{i,\varepsilon}}
\end{equation} 
for all $i\in \{1,...,k\}$ and all $\varepsilon$. By \eqref{FarEnoughEq3} in Step \ref{StFarEnough3}, we know that $\bar{r}_{l,\varepsilon}^{(1/2)}$ given by \eqref{BarRDef3} equals $r_{l,\varepsilon}^{(1/2)}$ in \eqref{DefRiEpsEta3} for all $l\in \{1,...,k\}$ and all $\varepsilon\ll 1$. Moreover, since $r_{i,\varepsilon}=O(1)$ according to \eqref{DefRIEps3}, we get that
\begin{equation}\label{BigOOfOne5}
r_{i,\varepsilon}^{(1/2)}=o(r_{i,\varepsilon})=o(1)
\end{equation}
for all $\varepsilon\ll 1$ and all $i$. By \eqref{DefRiEpsEta3} and \eqref{BarRDef5}, we deduce that
$$\ln\frac{\bar{r}_{i,\varepsilon}^2}{\left(r_{i,\varepsilon}^{(1/2)}\right)^2}=t_{i,\varepsilon}(\bar{r}_{i,\varepsilon})-t_{i,\varepsilon}\left(r_{i,\varepsilon}^{(1/2)}\right)+o(1)\le -3\gamma_{i,\varepsilon} $$
for all $i$ and all $\varepsilon\ll 1$. Then, we find from \eqref{BigOOfOne5} that
\begin{equation}\label{BarRSmall5}
\bar{r}_{i,\varepsilon}=O\left(e^{-\gamma_{i,\varepsilon}} \right)
\end{equation}
for all $\varepsilon\ll 1$ and all $i$. Proposition \ref{PropRadAnalysis1} may be applied as below \eqref{BarRDef3}.  We get that
\begin{equation*}
|u_{i,\varepsilon}-v_{i,\varepsilon}|=O\left(\frac{\bar{r}_{i,\varepsilon}}{r_{i,\varepsilon}^{(1/2)} \gamma_{i,\varepsilon}^{p-1}} \right)=O\left(e^{-\gamma_{i,\varepsilon}} \right)  
\end{equation*}
uniformly in $B_{\bar{r}_{i,\varepsilon}}(0)$ for all $\varepsilon\ll 1$ and all $i$, using also \eqref{CorSect23}. Then, using similarly Proposition \ref{PropRadAnalysis1} to get that $v_{i,\varepsilon}=\gamma_{i,\varepsilon}(1+o(1))$, we obtain that 
\begin{equation}\label{ComparInterRadFin5}
u_{i,\varepsilon}^p=v_{i,\varepsilon}^p\left(1+O\left(\frac{e^{-\gamma_{i,\varepsilon}}}{\gamma_{i,\varepsilon}} \right) \right)\,, 
\end{equation}
so that we have
\begin{equation}\label{ComparRadFin5}
e^{u_{i,\varepsilon}^p}=e^{v_{i,\varepsilon}^p}\left(1+o\left(\frac{1}{\gamma_{i,\varepsilon}^{2p}} \right) \right)
\end{equation}
uniformly in $B_{\bar{r}_{i,\varepsilon}}(0)$, for all $\varepsilon\ll 1$ and all $i$. An easy consequence of \eqref{ConVarphi3}, \eqref{DefRIEps3} and \eqref{BigOOfOne5} is that the domains $\phi_{i,\varepsilon}^{-1}(B_{\bar{r}_{i,\varepsilon}}(0))$ are two by two disjoint for all $\varepsilon\ll 1$. Then we may write that
\begin{equation}\label{UsPosToMinor5}
\begin{split}
&\frac{\lambda_\varepsilon p^2}{2} \int_\Sigma u_\varepsilon^{p} e^{u_\varepsilon^p} dv_g\ge  \sum_{i=1}^k \underset{:=a_{i,\varepsilon}}{\underbrace{\frac{\lambda_\varepsilon p^2}{2} \int_{B_{\bar{r}_{i,\varepsilon}}(0)} u_{i,\varepsilon}^{p} e^{u_{i,\varepsilon}^p} e^{2\varphi_{i,\varepsilon}} dx}}\,,\\
&\frac{\lambda_\varepsilon p^2}{2} \int_\Sigma \(e^{u_\varepsilon^p} -1\)dv_g\ge  \sum_{i=1}^k \underset{:=b_{i,\varepsilon}}{\underbrace{\frac{\lambda_\varepsilon p^2}{2} \int_{B_{\bar{r}_{i,\varepsilon}}(0)} \(e^{u_{i,\varepsilon}^p}-1\) e^{2\varphi_{i,\varepsilon}} dx}}\,.
\end{split}
\end{equation}
Using \eqref{ConVarphi3}, \eqref{BarRSmall5}, \eqref{ComparInterRadFin5} and \eqref{ComparRadFin5}, we write $e^{2\varphi_{i,\varepsilon}}=1+O\left(\bar{r}_{i,\varepsilon} \right)$ and get
\begin{equation}\label{Suirte5}
\int_{B_{\bar{r}_{i,\varepsilon}}(0)} u_{i,\varepsilon}^{p} e^{u_{i,\varepsilon}^p} e^{2\varphi_{i,\varepsilon}} dx=\left(\int_{B_{\bar{r}_{i,\varepsilon}}(0)} v_{i,\varepsilon}^p e^{v_{i,\varepsilon}^p} dx\right) \left(1+o\left(\gamma_{i,\varepsilon}^{-2p} \right) \right)\,, 
\end{equation}
for all $\varepsilon\ll 1$ and all $i$. Similar arguments give that
\begin{equation}\label{SuirteBis5}
\int_{B_{\bar{r}_{i,\varepsilon}}(0)} \(e^{u_{i,\varepsilon}^p}-1\) e^{2\varphi_{i,\varepsilon}} dx=\left(\int_{B_{\bar{r}_{i,\varepsilon}}(0)}  e^{v_{i,\varepsilon}^p} dx\right) \left(1+o\left(\gamma_{i,\varepsilon}^{-2p} \right) \right)\,, 
\end{equation}
for all $\varepsilon\ll 1$ and all $i$. By plugging \eqref{Suirte5}-\eqref{SuirteBis5} in \eqref{UsPosToMinor5} and coming back to the definition \eqref{BetaEps}, we obtain 
$$\beta_\varepsilon\ge \left(\sum_{i=1}^k b_{i,\varepsilon}  \right)^{\frac{2-p}{p}}\left(\sum_{i=1}^k a_{i,\varepsilon}\right)^{\frac{2(p-1)}{p}}\ge \sum_{i=1}^k b_{i,\varepsilon}^{\frac{2-p}{p}} a_{i,\varepsilon}^{\frac{2(p-1)}{p}}\,, $$
by H\"older's inequality for vectors in $\mathbb{R}^k$. In order to compute the RHS, since we have \eqref{BarRSmall5}, so that (see \eqref{LpBd5}) we may apply also Proposition \ref{PropRadAnalysis5} to $v_{i,\varepsilon}$ in $B_{\bar{r}_{i,\varepsilon}}(0)$ and thus use \eqref{EnergyComputations5}. This proves \eqref{ExpBetaEps} and concludes the proof of Theorem \ref{EqThmCritLevels}. 

\begin{rem}\label{RkExtremals}
The minimization of $I_\beta$ in \eqref{EnergyZeroAv} for $\beta=4\pi$ attracted some attention (see for instance  \cite{DJLWA,NolascoTarantello}): in this case we basically have $p=1$. Then, turn now to the case $p\in (1,2]$ of this section. First if $p=2$, we may get by following the strategy in \cite{MartMan} that the convergence of $(\beta_\varepsilon)_\varepsilon$ to $4\pi$ from above in \eqref{ExpBetaEps} for $k=1$ gives back the existence of a maximizer for \eqref{MTSurf} if $\beta=4\pi$ (see also \cite{CarlesonChang,StruweCrit,Flucher}). Now, if $p\in (1,2)$, we already pointed out in the introduction that 
$$-\infty<{\Theta_{p,\varepsilon}}:=\inf_{u\in H^1} J_{p,4\pi (1-\varepsilon)}(u) $$   
for all $\varepsilon\in [0,1)$, where $J_{p,\beta}$ is as in \eqref{Energy}. Moreover, the existence of a minimizer $u_\varepsilon$ for ${J_{p,4\pi (1-\varepsilon)}}$ follows from a standard minimization argument for all given $\varepsilon\in (0,1)$. Here again, the convergence of $(\beta_\varepsilon)_\varepsilon$ to $4\pi$ from above in \eqref{ExpBetaEps} for $k=1$ gives the {attainment of $\Theta_{p,0}$}, since the present $u_\varepsilon$'s then have to converge strongly in $C^2$ as $\varepsilon\to 0$. 

\medskip

We conclude this remark by a curiosity. If $G>0$ is the Green's function of $\Delta_g+{h}$ in $\Sigma$, we may write $G(x,y)=\frac{1}{4\pi}\left(\ln \frac{1}{|x-y|^2}+\mathcal{H}(x,y) \right)$ for all $x\neq y$. We know that $\mathcal{H}\in C^0(\Sigma\times \Sigma)$ and we set $M=\max_{x\in \Sigma} \mathcal{H}(x,x)$. As a byproduct of the analysis in the present paper, it can be also checked that
$$\ln \frac{1}{\lambda_\varepsilon}=\left(1-\frac{p}{2} \right) \gamma_\varepsilon^{p}+\ln \frac{p^2 \gamma_\varepsilon^{2(p-1)}}{8}+H_x(x)+(p-1)+o(1)\,,$$
as $\varepsilon\to 0$, if the $u_\varepsilon$'s blow-up at some $x\in \Sigma$ for $k=1$ in \eqref{Quantization} and solve \eqref{MainEqEps}, with $\lambda_\varepsilon$ given by \eqref{BetaEps}, for $\beta_\varepsilon=4\pi(1-\varepsilon)$, $p_\varepsilon=p$ and $\gamma_\varepsilon=\max_{\Sigma} u_\varepsilon$ for all $\varepsilon$. We may also get that
\begin{equation}\label{Curiosity}
{\Theta_{p,0}}=\inf_{u\in H^1} J_{p,4\pi}(u)<-\left(\ln \pi +M+\left(p-1 \right)+\frac{(2-p)(p-1)}{p} \right)\,.
\end{equation}
The large inequality in \eqref{Curiosity} is a byproduct of a by now rather standard test function computations (see for instance \cite[Step 3.1]{WhenExtremals}). The strict inequality is more subtle and can be seen as a consequence of the convergence of the $\beta_\varepsilon$'s from above, picking the \textbf{refined test functions provided by the blow-up analysis}, in the spirit of \cite[Section 4]{WhenExtremals}. At last, observe that the exponential of the opposite of the RHS of \eqref{Curiosity} converges to $\pi\exp\left(1+M \right)$ as $p\to 2$, which turns out to be consistent with the original works \cite{CarlesonChang,Flucher}.
\end{rem}

\section*{Conclusion of the proofs of Theorems \ref{MainThm} and \ref{MainThm2}} Let $\beta>0$ be given. Assume first that $p$ is given in $(1,2)$. By Theorem \ref{ThmVariationalPart}, there exist a sequence $(\beta_\varepsilon)_\varepsilon$ increasing to $\beta^-$ as $\varepsilon\to 0$, and $u_\varepsilon$ such that \eqref{MainEqEps} is satisfied for $p_\varepsilon=p$ and $\lambda_\varepsilon$ given by \eqref{BetaEps} for all $\varepsilon$. Now, we claim that the $u_\varepsilon$'s are uniformly bounded: this is a direct consequence of \eqref{Quantization} in Theorem \ref{ThmBlowUpAnalysis} if $\beta\not \in 4\pi \mathbb{N}^\star$ and follows from Theorem \ref{ThmCritLevels} if $\beta \in 4\pi \mathbb{N}^\star$, since the present sequence $(\beta_\varepsilon)_\varepsilon$ is assumed to increase. By elliptic theory in \eqref{MainEqEps} and \eqref{BdLambdaEps3}, we easily then get that, up to a subsequence, the $\lambda_\varepsilon$'s converge to some $\lambda$ and the $u_\varepsilon$'s converge in $C^2$ to some $u$ solving the equation in \eqref{MainEquation} and \eqref{FormulaLambda}. Observe in particular that since $\beta>0$, \eqref{FormulaLambda} gives that $u\ge 0$ is not identically zero, so that $u>0$ in $\Sigma$ by Lemma \ref{l:pos}. Then ${\mathcal{C}_{p,\beta}}\ni u$ is not empty in Theorem \ref{MainThm2}. The compactness of ${\mathcal{C}_{p,\beta}}$ also clearly follows from Theorems \ref{ThmBlowUpAnalysis} and \ref{ThmCritLevels}. For $p=1$, and $\beta\not\in 4\pi \mathbb{N}^*$, we take a sequences $(p_\ve),~ p_\ve\downarrow 1$ and $u_\ve\in \mathcal{C}_{p_\ve,\beta}$. As before, by Theorem \ref{ThmBlowUpAnalysis}, up to a subsequence $(u_\ve)$ converges to a positive function $u\in \mathcal{C}_{1,\beta}$, and  Theorem \ref{MainThm2} is proven. Assume now that $p=2$. By Theorem \ref{ThmVariationalPart} again, there exist a sequence $(\beta_\varepsilon)_\varepsilon$ increasing to $\beta^-$, a sequence $(p_\varepsilon)_\varepsilon$ increasing to $2^-$ as $\varepsilon\to 0$, and $u_\varepsilon$ such that \eqref{MainEqEps} is satisfied for $\lambda_\varepsilon$ given by \eqref{BetaEps} for all $\varepsilon$. First, if we have in addition $\beta\not \in 4\pi \mathbb{N}^\star$, we get similarly from Theorem \ref{ThmBlowUpAnalysis} that, up to a subsequence, the $\lambda_\varepsilon$'s converge to some $\lambda$ and the $u_\varepsilon$'s converge in $C^2$ to some $u$ solving the equation in \eqref{ELMainEq} and {\eqref{FormulaLambda}}. Then, we use again that $\beta$ is positive to get from {\eqref{FormulaLambda}} that $u$ is actually positive in $\Sigma$ and then that $u\in {\mathcal{C}_{2,\beta}}$. Thus, if we have now $\beta \in 4\pi \mathbb{N}^\star$, setting $\beta_\varepsilon=\beta-\varepsilon$ and $p_\varepsilon=2$, there exists $u_\varepsilon$ such that \eqref{MainEqEps} is satisfied for $\lambda_\varepsilon$ given by \eqref{BetaEps} for all $0<\varepsilon\ll 1$. By Theorem \ref{ThmCritLevels}, we similarly get that the $u_\varepsilon$'s converge in $C^2$ to some $u\in {\mathcal{C}_{2,\beta}}$ solving {\eqref{FormulaLambda}}-\eqref{ELMainEq}, up to a subsequence. The compactness of ${\mathcal{C}_{2,\beta}}$ follows from Theorems \ref{ThmBlowUpAnalysis} and \ref{ThmCritLevels} again, which concludes the proof of Theorem \ref{MainThm} in any case.

\nocite{DelPBeyond}\nocite{MasNakSan}


\begin{thebibliography}{10}

\bibitem{BarLin}
D.~Bartolucci and C.-S. Lin.
\newblock Existence and uniqueness for mean field equations on multiply
  connected domains at the critical parameter.
\newblock {\em Math. Ann.}, 359(1-2):1--44, 2014.

\bibitem{BattagliaJevnikarMalchiodi}
L.~Battaglia, A.~Jevnikar, A.~Malchiodi, and D.~Ruiz.
\newblock A general existence result for the {T}oda system on compact surfaces.
\newblock {\em Adv. Math.}, 285:937--979, 2015.

\bibitem{CarlesonChang}
L.~Carleson and S.-Y.~A. Chang.
\newblock On the existence of an extremal function for an inequality of {J}.\
  {M}oser.
\newblock {\em Bull. Sci. Math. (2)}, 110(2):113--127, 1986.

\bibitem{CCL}
S.-Y.~A. Chang, C.-C. Chen, and C.-S. Lin.
\newblock Extremal functions for a mean field equation in two dimension.
\newblock In {\em Lectures on partial differential equations}, volume~2 of {\em
  New Stud. Adv. Math.}, pages 61--93. Int. Press, Somerville, MA, 2003.

\bibitem{ChenLinSharpEst}
C.-C. Chen and C.-S. Lin.
\newblock Sharp estimates for solutions of multi-bubbles in compact {R}iemann
  surfaces.
\newblock {\em Comm. Pure Appl. Math.}, 55(6):728--771, 2002.

\bibitem{ChenLin-Liouville}
C.-C. Chen and C.-S. Lin.
\newblock Topological degree for a mean field equation on {R}iemann surfaces.
\newblock {\em Comm. Pure Appl. Math.}, 56(12):1667--1727, 2003.

\bibitem{ChenLi}
W.~X. Chen and C.~Li.
\newblock Classification of solutions of some nonlinear elliptic equations.
\newblock {\em Duke Math. J.}, 63(3):615--622, 1991.

\bibitem{CL91}
W.~X. Chen and C.~Li.
\newblock Prescribing {G}aussian curvatures on surfaces with conical
  singularities.
\newblock {\em J. Geom. Anal.}, 1(4):359--372, 1991.

\bibitem{CostaTintarev}
D.~G. Costa and C.~Tintarev.
\newblock Concentration profiles for the {T}rudinger-{M}oser functional are
  shaped like toy pyramids.
\newblock {\em J. Funct. Anal.}, 266(2):676--692, 2014.

\bibitem{Dem08}
F.~De~Marchis.
\newblock Multiplicity result for a scalar field equation on compact surfaces.
\newblock {\em Comm. Partial Differential Equations}, 33(10-12):2208--2224,
  2008.

\bibitem{Dem10}
F.~De~Marchis.
\newblock Generic multiplicity for a scalar field equation on compact surfaces.
\newblock {\em J. Funct. Anal.}, 259(8):2165--2192, 2010.

\bibitem{DeMarchisIanniPacella}
F.~De~Marchis, I.~Ianni, and F.~Pacella.
\newblock Asymptotic profile of positive solutions of {L}ane-{E}mden problems
  in dimension two.
\newblock {\em J. Fixed Point Theory Appl.}, 19(1):889--916, 2017.

\bibitem{DelPNewSol}
M.~del Pino, M.~Musso, and B.~Ruf.
\newblock New solutions for {T}rudinger-{M}oser critical equations in {$\Bbb
  R^2$}.
\newblock {\em J. Funct. Anal.}, 258(2):421--457, 2010.

\bibitem{DengMusso}
S.~Deng and M.~Musso.
\newblock Bubbling solutions for an exponential nonlinearity in {$\Bbb{R}^2$}.
\newblock {\em J. Differential Equations}, 257(7):2259--2302, 2014.

\bibitem{DJLWA}
W.~Ding, J.~Jost, J.~Li, and G.~Wang.
\newblock The differential equation {$\Delta u=8\pi-8\pi he^u$} on a compact
  {R}iemann surface.
\newblock {\em Asian J. Math.}, 1(2):230--248, 1997.

\bibitem{DingJostLiWang2}
W.~Ding, J.~Jost, J.~Li, and G.~Wang.
\newblock Existence results for mean field equations.
\newblock {\em Ann. Inst. H. Poincar\'e Anal. Non Lin\'eaire}, 16(5):653--666,
  1999.

\bibitem{Djadli}
Z.~Djadli.
\newblock Existence result for the mean field problem on riemann surfaces of
  all genuses.
\newblock {\em Commun. Contemp. Math.}, 10(2):205--220, 2008.

\bibitem{DjaMal}
Z.~Djadli and A.~Malchiodi.
\newblock Existence of conformal metrics with constant q-curvature.
\newblock {\em Ann. Math.}, 168(3):813--858, 2008.

\bibitem{DoCarmo}
M.~P. do~Carmo.
\newblock {\em Differential geometry of curves and surfaces}.
\newblock Prentice-Hall, Inc., Englewood Cliffs, N.J., 1976.
\newblock Translated from the Portuguese.

\bibitem{DruetDuke}
O.~Druet.
\newblock Multibumps analysis in dimension 2: quantification of blow-up levels.
\newblock {\em Duke Math. J.}, 132(2):217--269, 2006.

\bibitem{DruMalMarThi}
O.~Druet, A.~Malchiodi, L.~Martinazzi, and P.-D. Thizy.
\newblock {M}ulti-bumps analysis for {T}rudinger-{M}oser nonlinearities
  {II}-{E}xistence of solutions of high energies.
\newblock In preparation.

\bibitem{DruThiI}
O.~Druet and P.-D. Thizy.
\newblock {M}ulti-bump analysis for {T}rudinger-{M}oser nonlinearities
  {I}-{Q}uantification and location of concentration points.
\newblock {\em Journal of the European Mathematical Society, to appear. DOI
  10.4171/JEMS/1002}, 2020.

\bibitem{FigueroaMusso}
P.~Figueroa and M.~Musso.
\newblock Bubbling solutions for {M}oser-{T}rudinger type equations on compact
  {R}iemann surfaces.
\newblock {\em J. Funct. Anal.}, 275(10):2684--2739, 2018.

\bibitem{Flucher}
M.~Flucher.
\newblock Extremal functions for the {T}rudinger-{M}oser inequality in {$2$}
  dimensions.
\newblock {\em Comment. Math. Helv.}, 67(3):471--497, 1992.

\bibitem{Fontana}
L.~Fontana.
\newblock Sharp borderline {S}obolev inequalities on compact {R}iemannian
  manifolds.
\newblock {\em Comment. Math. Helv.}, 68(3):415--454, 1993.

\bibitem{Gilbarg}
D.~Gilbarg and N.~S. Trudinger.
\newblock {\em Elliptic partial differential equations of second order}.
\newblock Classics in Mathematics. Springer-Verlag, Berlin, 2001.
\newblock Reprint of the 1998 edition.

\bibitem{HanLin}
Q.~Han and F.~Lin.
\newblock {\em Elliptic partial differential equations}, volume~1 of {\em
  Courant Lecture Notes in Mathematics}.
\newblock New York University Courant Institute of Mathematical Sciences, New
  York, 1997.

\bibitem{NonlinAnaH}
E.~Hebey.
\newblock {\em Nonlinear analysis on manifolds: {S}obolev spaces and
  inequalities}, volume~5 of {\em Courant Lecture Notes in Mathematics}.
\newblock New York University Courant Institute of Mathematical Sciences, New
  York, 1999.

\bibitem{MasNakSan}
S.~Ibrahim, N.~Masmoudi, K.~Nakanishi, and F.~Sani.
\newblock Sharp threshold nonlinearity for maximizing the {T}rudinger-{M}oser
  inequalities.
\newblock {\em J. Funct. Anal.}, 278(1):108302, 52, 2020.

\bibitem{Judovic}
V.~I. Judovi\v{c}.
\newblock Some estimates connected with integral operators and with solutions
  of elliptic equations.
\newblock {\em Dokl. Akad. Nauk SSSR}, 138:805--808, 1961 (in Russian).

\bibitem{LRS}
T.~Lamm, F.~Robert, and M.~Struwe.
\newblock The heat flow with a critical exponential nonlinearity.
\newblock {\em J. Funct. Anal.}, 257(9):2951--2998, 2009.

\bibitem{LaurainLemma}
P.~Laurain.
\newblock Concentration of {$CMC$} surfaces in a 3-manifold.
\newblock {\em Int. Math. Res. Not. IMRN}, (24):5585--5649, 2012.

\bibitem{YYLi}
Y.~Li.
\newblock Harnack type inequality: the method of moving planes.
\newblock {\em Comm. Math. Phys.}, 200(2):421--444, 1999.

\bibitem{MTIneqSurface}
Y.~Li.
\newblock Moser-{T}rudinger inequality on compact {R}iemannian manifolds of
  dimension two.
\newblock {\em J. Partial Differential Equations}, 14(2):163--192, 2001.

\bibitem{LinLucia}
C.-S. Lin and M.~Lucia.
\newblock Uniqueness of solutions for a mean field equation on torus.
\newblock {\em J. Differential Equations}, 229(1):172--185, 2006.

\bibitem{DegreeLiouvMalch}
A.~Malchiodi.
\newblock Morse theory and a scalar field equation on compact surfaces.
\newblock {\em Adv. Differential Equations}, 13(11-12):1109--1129, 2008.

\bibitem{MalchiodiTopological}
A.~Malchiodi.
\newblock Topological methods for an elliptic equation with exponential
  nonlinearities.
\newblock {\em Discrete Contin. Dyn. Syst.}, 21:277--294, 2008.

\bibitem{MalchMartJEMS}
A.~Malchiodi and L.~Martinazzi.
\newblock Critical points of the {M}oser-{T}rudinger functional on a disk.
\newblock {\em J. Eur. Math. Soc. (JEMS)}, 16(5):893--908, 2014.

\bibitem{MartMan}
G.~Mancini and L.~Martinazzi.
\newblock The {M}oser-{T}rudinger inequality and its extremals on a disk via
  energy estimates.
\newblock {\em Calc. Var. Partial Differential Equations}, 56(4):Art. 94, 26,
  2017.

\bibitem{ThiMan2}
G.~Mancini and P.-D. Thizy.
\newblock Glueing a peak to a non-zero limiting profile for a critical
  {M}oser--{T}rudinger equation.
\newblock {\em J. Math. Anal. Appl.}, 472(2):1430--1457, 2019.

\bibitem{ThizyManciniGenMT}
G.~Mancini and P.-D. Thizy.
\newblock Critical points of {M}oser-{T}rudinger type functionals: a general
  picture.
\newblock 2020.
\newblock In preparation.

\bibitem{MoserIneq}
J.~Moser.
\newblock A sharp form of an inequality by {N}. {T}rudinger.
\newblock {\em Indiana Univ. Math. J.}, 20:1077--1092, 1970/71.

\bibitem{NolascoTarantello}
M.~Nolasco and G.~Tarantello.
\newblock On a sharp {S}obolev-type inequality on two-dimensional compact
  manifolds.
\newblock {\em Arch. Ration. Mech. Anal.}, 145(2):161--195, 1998.

\bibitem{Poho65}
S.~I. Poho\v{z}aev.
\newblock On the eigenfunctions of the equation $\delta u+\lambda f(u)=0$.
\newblock {\em Dokl. Akad. Nauk SSSR}, 165:36--39, 1965 (in Russian).

\bibitem{StruweCrit}
M.~Struwe.
\newblock Critical points of embeddings of {$H^{1,n}_0$} into {O}rlicz spaces.
\newblock {\em Ann. Inst. H. Poincar\'e Anal. Non Lin\'eaire}, 5(5):425--464,
  1988.

\bibitem{StruweTrick}
M.~Struwe.
\newblock The existence of surfaces of constant mean curvature with free
  boundaries.
\newblock {\em Acta Math.}, 160(1-2):19--64, 1988.

\bibitem{StruweBook}
M.~Struwe.
\newblock {\em Variational methods}, volume~34 of {\em Ergebnisse der
  Mathematik und ihrer Grenzgebiete. 3. Folge. A Series of Modern Surveys in
  Mathematics [Results in Mathematics and Related Areas. 3rd Series. A Series
  of Modern Surveys in Mathematics]}.
\newblock Springer-Verlag, Berlin, fourth edition, 2008.
\newblock Applications to nonlinear partial differential equations and
  Hamiltonian systems.

\bibitem{Suz92}
T.~Suzuki.
\newblock Global analysis for a two-dimensional elliptic eigenvalue problem
  with the exponential nonlinearity.
\newblock {\em Ann. Inst. H. Poincar\'{e} Anal. Non Lin\'{e}aire},
  9(4):367--397, 1992.

\bibitem{TaylorBookI}
M.~E. Taylor.
\newblock {\em Partial differential equations {I}. {B}asic theory}, volume 115
  of {\em Applied Mathematical Sciences}.
\newblock Springer, New York, second edition, 2011.

\bibitem{WhenExtremals}
P.-D. Thizy.
\newblock When does a perturbed {M}oser--{T}rudinger inequality admit an
  extremal ?
\newblock {\em Anal. PDE}, 13(5):1371--1415, 2020.

\bibitem{TrudingerOrlicz}
N.~S. Trudinger.
\newblock On imbeddings into {O}rlicz spaces and some applications.
\newblock {\em J. Math. Mech.}, 17:473--483, 1967.

\bibitem{YangQuantization}
Y.~Yang.
\newblock Quantization for an elliptic equation with critical exponential
  growth on compact {R}iemannian surface without boundary.
\newblock {\em Calc. Var. Partial Differential Equations}, 53(3-4):901--941,
  2015.

\end{thebibliography}
\end{document}